\title{Categorifications of rational Hilbert series and characters of $\FSop$ modules}
\author{Philip Tosteson}
\address{Department of Mathematics, University of Chicago, Chicago, IL}
\email{\href{mailto:ptoste@umich.edu}{ptoste@math.uchicago.edu}}
\urladdr{\url{https://math.uchicago.edu/~ptoste/}}
\thanks{This work was supported by NSF grant  DMS-1903040.}
\begin{document}
\maketitle
\begin{abstract}We introduce a method for associating a chain complex to a module over a combinatorial category, such that if the complex is exact then the module has a rational Hilbert series.  We prove homology--vanishing theorems for these complexes for several combinatorial categories including: the category of finite sets and injections,  the opposite of the category of finite sets and surjections, and the category of finite dimensional vector spaces over a finite field and injections.    

Our main applications are to modules over the opposite of the category of finite sets and surjections, known as $\FSop$ modules.  We obtain many constraints on the sequence of symmetric group representations underlying a finitely generated $\FSop$ module.  In particular, we describe its character in terms of functions that we call character exponentials.   Our results have new consequences for the character of the homology of the moduli space of  stable marked curves, and for the equivariant Kazhdan--Lusztig polynomial of the braid matroid.
\end{abstract}

\tableofcontents
\section{Introduction}

Prompted by the work of Snowden \cite{snowden2013syzygies} and Church--Ellenberg--Farb \cite{CEF},  there have been many recent applications of the representation theory of categories to sequences of group representations arising in topology, algebra, and combinatorics.  

Let $(V_n)_{n \in \bbN}$ be a sequence of representations of groups $G_n$ over a field $k$.  
In these applications,  $(V_n)_{n \in \bbN}$ can  be lifted to a representation of a category $\cC$ whose objects (or isomorphism classes) 
 are indexed by natural numbers, such that the automorphism group of  the $n^{\rm th}$ object of $\cC$ is $G_n$.   A   \emph{$\cC$--representation}, or  \emph{$\cC$--module}, is a functor $\cC \to \Mod k$.
One attempts to use the representation theory of $\cC$ to show that the whole sequence of representations is determined by a finite amount of data,  and to discover universal patterns which the sequence must satisfy.  Often, these patterns are expressed in terms of generating functions.  

One coarse invariant of a sequence of representations is its sequence of dimensions:  $(\dim V_n)_{n \in \bbN}.$  The data can be recorded as a generating function,   the \emph{Hilbert Series}  of $ (V_n)_{n \in \bbN}$  $$h_V(t):= \sum_{n}  \dim_k V_n ~ t^n \in \bbZ[[t]].$$
Often, theorems from the representation theory of $\cC$ imply that $h_V(t)$ is rational with denominator of a specific form.   We list three important instances.

\begin{itemize}
\item   Let $\FI$ be the category of finite sets and injections.  If $V$ is a finitely generated $\FI$ module  then  $h_V(t)$ is a rational function,  with denominator $(1-t)^d,$  for some $d \in \bbN$.  

Equivalently,  the sequence $(\dim V_n)_{n \in \bbN}$ eventually agrees with a polynomial in $n$, proved in characteristic $0$ in \cite{CEF,sam2016gl}  and for all fields by  Church--Ellenberg--Farb--Nagpal \cite{CEFN}.
\\

\item  Let $ \FSop$  be the opposite of the category of finite sets and surjections.    If $V$ is a finitely generated $\FSop$ module, then $h_V(t)$ is a rational function with denominator a power of $\prod_{j= 0}^{d-1} (1-jt),$ for some $d \in \bbN$.  This was proved by Sam--Snowden \cite{sam2017grobner}. \\


\item Let $\VI_q$ be the category of finite dimensional vector spaces over $\bbF_q$ and linear injections between them.  If  $V$ is a finitely generated $\VI_q$ module and  ${\rm char ~} k \neq {\rm char ~}  \bbF_q$,  then $h_V(t)$ is rational with denominator  $\prod_{j = 0}^{d-1} (1- q^jt)$  for some $d \in \bbN$. 

 Equivalently, $(\dim V_n)_{n \in \bbN}$ eventually agrees with a polynomial in $q^n$,  proved by Nagpal  \cite{nagpal2019vi}.      

\end{itemize}
In this paper  we \emph{categorify}  these three results, in a uniform way.  

  Let  $\cC = \FI,~ \FSop,$ or $\VI_q$,  and  let $w_d(t) = (1-t)^d$,  $\prod_{j = 0}^{d-1} (1-jt)$, or $\prod_{j  = 0}^{d-1} (1 - q^j t)$,  respectively.  Let $V$ be a $\cC$ module.  Using methods from poset topology, we construct  a chain complex of $\cC$ modules $\rK_d(V)$  for each $d \in \bbN$, such that exactness of $\rK_d(V)$ categorifies the equation $w_d(t) h_V(t) = p(t)$  for $p$ a polynomial.   In each case,  we prove a theorem  showing if $V$ is finitely generated,  either $\rK_d$  or a power of $\rK_d$  applied to $V$ is exact (modulo torsion).

 In particular, our categorification explains why the denominators $(1-t)^d,~\prod_{j = 0}^{d-1} (1-jt),$ and $\prod_{j  = 0}^{d-1} (1 - q^j t)$ appear.  They are the \emph{Whitney polynomials}  of the poset of subsets of a $d$-element set, of set partitions of a $d$-element set,  and of subspaces of  a $d$-dimensional vector space, respectively  (see Definition \ref{def:whitney}).

\subsection{ $\FSop$ modules}  Our main applications are in the case  where $\cC$ is the opposite of the category of finite sets and surjections,  $\FSop$.   

 In practice,  $\FSop$ modules have been used to study sequences of symmetric group representations $(V_n)_{ n \in\bbN}$ such that $\dim( V_n)$  grows at an exponential rate---too quickly for representation stability in the sense of Church--Farb \cite{church2013representation} to hold.    Proudfoot--Young used $\FSop$ to study the Kazhdan--Lusztig polynomial of the braid matroid, by constructing an action of $\FSop$ on the  intersection homology of the reciprocal plane of the braid arrangement \cite{proudfoot2017configuration}.  In \cite{tosteson2018stability},  we showed that the homology of the moduli space of stable curves is a finitely generated $\FSop$ module. In forthcoming work, we extend this result to the Kontsevich space of stable maps.  Proudfoot--Ramos constructed  finitely generated $\FSop$ modules in the cohomology of the resonance arrangement \cite{ramos2020stability}.  

In other directions, $\FSop$ was used by Sam--Snowden \cite{sam2017grobner} to prove the Lannes--Schwartz Artinian conjecture:  for a finite ring $R$ any finitely generated  $\VI_R$ module (Definition \ref{def:VIR}) restricts to a finitely generated $\FSop$ module.  The category of $\FSop$ modules is the category of right modules over the commutative operad,  and thus is closely tied to homology theories for commutative algebras, and $\FSop$  may be used to compute the homology groups of pointed mapping spaces \cite{pirashvili2000hodge}.  In forthcoming work with Sam and Snowden, we relate $\FSop$ modules to representations of the Witt Lie algebra and the monoid of endomorphisms of $\bbA^n$.

Despite these applications, the representation theory of $\FSop$ is not well understood.  For finitely generated $\FI$ modules and $\VI_q$ modules in non-describing characteristic,  Nagpal has proved structure theorems \cite{nagpal2015fi, nagpal2019vi}, known as shift theorems, which we use to establish our results. But $\FSop$ modules do not satisfy a shift theorem, and are qualitatively different:  the category of finitely generated $\FSop$ modules has infinite Krull--Gabriel dimension, as opposed to dimension $1$.   
  Little has been known about the sequence of symmetric group representations underlying a finitely generated $\FSop$ module.  

\subsection{Categorification for $\FSop$ modules}
For an $\FSop$ module $M$, the complex $\rK_d(M)$ we construct takes the form
$$\Sigma^{d} M \leftarrow \Sigma^{d-1} M^{\oplus {d \choose 2}} \leftarrow \dots \leftarrow \Sigma^1 M^{\oplus (d-1)!}.$$  Here  $\Sigma^n M$ denotes the $\FSop$ module  $x \mapsto M_{x \sqcup [n]}$,  where $[n] := \{1, \dots, n\}$ is the distinguished finite set with $n$ elements.  In general, the degree $i$ term of $\rK_d(M)$ is $\Sigma^{d-i} M^{\oplus s(d, d-i)}$,  where $s(d,d-i)$ is the  unsigned \emph{Stirling number of the first kind}:  by definition $(-1)^i s(d, d- i)$ is the coefficient of $t^i$ in the product $\prod_{j = 0}^{d-1} (1-jt)$.     We may extend the definition of $\rK_d$ to chain complexes of $\FSop$ modules, by  applying $\rK_d$ degreewise and forming the total complex.

To state our categorification for $\FSop$ modules, we recall the following definition.    

\begin{defn}  For $d \in \bbN$, we say that $M$  is \emph{finitely generated in degree $\leq d$}  if $M_x$ is a finitely generated $k$ module for all $x \in \FSop$ with $|x| \leq d$, and for any $y \in \FSop$,   $M_y$ is spanned by elements of the form $M_{f}(v)$, where $ v \in M_x, f: y \onto x$ and $|x| \leq d$.
\end{defn}

\begin{thm}\label{FSop}
Let $M$ be an $\FSop$ module over a field, which is a subquotient of an $\FSop$ module that is finitely generated in degree $\leq d$.   Then there exists $s \in \bbN,$ such that for all $r \geq s$  and all $(\ell_t) \in \bbN^r$  satisfying $\ell_t \geq d$  for all $t$ and $\ell_r \geq d+1$,  the complex  $$\rK_{\ell_1} \circ \rK_{\ell_2} \circ \dots \circ \rK_{\ell_r}(M)$$ is exact.
\end{thm}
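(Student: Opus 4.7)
The plan is to bootstrap from principal projective $\FSop$ modules, using that $\rK_\ell$ is built from shifts and direct sums and hence, in each complex degree, preserves short exact sequences. First I would analyze $\rK_d(P^x)$ for the principal projective $P^x$ generated at a set $x$ with $|x|\leq d$. Since $\rK_d$ arises from a Whitney-type construction on the partition lattice $\Pi_d$, the value $\rK_d(P^x)(y)$ should be identifiable, via poset topology, with a chain complex computing reduced homology of a subposet of $\Pi_d$ determined by $\mathrm{Hom}_{\FSop}(y,x)$. Because $\Pi_d$ and its natural filtered subposets are Cohen--Macaulay, I expect $\rK_d(P^x)$ to be exact when $|x|<d$, with a single residual homology class persisting when $|x|=d$; this residue is precisely what forces the hypothesis $\ell_r\geq d+1$, and one further shift should kill it.

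Next, for a general $M$ realized as a subquotient of a finitely generated $N$ in degree $\leq d$, I would present $N$ by a surjection $\bigoplus_i P^{x_i}\twoheadrightarrow N$ with all $|x_i|\leq d$ and use the long exact sequences obtained from $\rK_d$ applied to the two short exact sequences that extract $M$ as a subquotient. The previous step handles the principal-projective ends, and the middle terms of these sequences remain in the class \emph{subquotient of a finitely generated $\FSop$ module in degree $\leq d$}, so an induction on a complexity invariant --- for instance the smallest length of a filtration whose associated graded is a direct sum of principal projectives at objects of size $\leq d$ plus a torsion contribution --- can be set up. This invariant is bounded uniformly in $N$, which yields the constant $s$; each application of $\rK_{\ell_t}$ with $\ell_t\geq d$ strictly decreases it, and the final application with $\ell_r\geq d+1$ annihilates the residual top piece identified in the first paragraph.

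The main obstacle will be the first step: the exact computation of $H_\ast(\rK_d(P^x))$ via the partition lattice and the identification of the surviving class with the threshold $\ell_r\geq d+1$. Since $\FSop$ admits no shift theorem and has infinite Krull--Gabriel dimension, this computation cannot mimic the $\FI$ or $\VI_q$ cases and must be carried out directly, using M\"obius-function computations on $\Pi_d$ and the combinatorics of unordered surjections. Tracking the non-finitely-generated kernels through the iteration is the second significant difficulty, and is presumably the reason the theorem is stated for subquotients rather than finitely generated modules only.
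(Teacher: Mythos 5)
Your first paragraph is essentially correct: for the principal projective $P^x$ with $|x|<d$, the complex $\rK_d(P^x)$ is exact (this is an instance of what the paper's Theorem~\ref{generalprojprop} captures, after noting that $\Res^\oplus\bP(c)$ splits into principal projectives in $\FSop$), and at $|x|=d$ a top homology class survives, explaining the threshold $\ell_r\geq d+1$. But this is not where the difficulty lies, and the second paragraph has a genuine gap.

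The gap is the inductive engine in your second paragraph. You propose a complexity invariant --- the minimal length of a filtration whose associated graded is a sum of principal projectives in degrees $\leq d$ plus torsion --- and claim that each application of $\rK_{\ell_t}$ strictly decreases it. There is no justification for this claim, and for $\FSop$ it is almost certainly false. Once you present $N$ by $\bigoplus_i P^{x_i}\twoheadrightarrow N$, the kernel is an arbitrary submodule $J\subseteq\bigoplus_i P^{x_i}$, and submodules of principal projective $\FSop$ modules do not admit finite filtrations by principal projectives: this is precisely the qualitative difference from $\FI$ and $\VI_q$ that the paper emphasizes (infinite Krull--Gabriel dimension, no shift theorem). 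For $\FI$ and $\VI_q$ the paper \emph{does} run the argument you sketch, but only because Nagpal's shift theorems make a shifted module induced, hence a finite sum of modules of the form $W\otimes\bP(c)$; nothing like this is available for $\FSop$, so your induction has no base and no decreasing step.

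The paper's actual mechanism is entirely different and is the real content. It passes to $\OSop$ (Propositions~\ref{agrees1}--\ref{agrees2}), reduces via the d\'evissage in \S\ref{sec:ThmProof} to an arbitrary submodule $J\subseteq\bP(d)$ of a single principal $\OSop$ projective, and then invokes Sam--Snowden Gr\"obner theory: by Proposition~\ref{grobner} it suffices to treat the initial (monomial) submodule $\init(J)$, which is the linearization of an $\OSop$-set whose words form an ordered regular language $L$ accepted by an ordered DFA $A$. The constant $s$ is the length of $A$ as a poset, not a filtration length in the module category. The combinatorial heart is Theorem~\ref{Languages}, an exactness statement for the complexes $\rB_{(\rP(\ell_1),\dots,\rP(\ell_r))}$ applied to poset ideals $\bJ(w,L)$, proved by induction on the length of $A$ using the truncation $A_{\geq\alpha}$, the factorization of words, and the pigeonhole bound $\ell_t\geq d$ (with $\ell_r\geq d+1$ handling the final state). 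None of this is visible in your proposal, and in particular the crucial fact that a Gr\"obner degeneration reduces an arbitrary submodule of $\bP(d)$ to a monomial one is exactly what replaces the missing shift theorem.
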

In particular there exists an $s \geq 1$ such that $ \rK_{d}^{\circ s-1}(\rK_{d+1}(M))$  is exact.  This categorifies the theorem of Sam--Snowden that $h_M(t)$ is rational with denominator a power of   $\rW_{d+1}(t):=\prod_{j = 0}^{d} (1 - jt)$, by categorifying the equation  $ \rW_{d}(t)^{s-1} \rW_{d+1}(t) h_M(t) = p(t)$  for $p$ a polynomial.  We prove Theorem \ref{FSop} by using Sam--Snowden Gr\"obner theory to reduce to a combinatorial statement about regular languages associated to $M$.  The value of $s$ is related to the minimal number of states in  the DFAs accepting these languages.   

We list two cases in which Theorem \ref{FSop} applies. 
\begin{ex}
		For $i,g \in \bbN$,  the sequence of $\rS_n$ representation  $n \mapsto H_i(\bMgn, \bbQ)$,  where $\bMgn$ is the moduli space of marked stable curves, can be extended to an $\FSop$ module. In \cite{tosteson2018stability} we showed that this $\FSop$ module is a subquotient of one that is finitely  generated in degree $ \leq 8 g^2 i^2 + 29 g^2 i + 16 g i^2 + 21 g^2 + 10 g i$.  
\end{ex}

\begin{ex}
		The $i^{\rm th}$ coefficient of the equivariant Kazhdan--Lusztig polynomial of the $n^{\rm th}$ braid arrangement is the character of an $\rS_n$ representation. For fixed $i$,  Proudfoot--Young extended this  sequence of $\rS_n$  representations to an $\FSop$ module and proved it is a subquotient of one that is finitely generated in degree $\leq 2i$ \cite{proudfoot2017configuration}.  
\end{ex}

\subsection{Characters of $\FSop$ modules}

For each $n, d \in \bbN$ the group $\rS_d \times \rS_n$ acts on $\rK_d(M)_n$.  The action of $\rS_d$ is closely related to its action on the Whitney homology of the partition lattice.  Via this action, Theorem \ref{FSop} decategorifies to a system of differential equations for the Frobenius character of $M$. By solving  these equations, we obtain new results about the character of a finitely generated $\FSop$ module.   
To state them,  we take $k = \bbQ$.  

 Associated to every $\FSop$ module $M$ is its \emph{character} $\sqcup_n \rS_n \to \bbQ$,  given by $\sigma \in \rS_n \mapsto  \Tr(\sigma, M_{[n]})$.    Our first result describes the character of an $\FSop$ module in terms of character polynomials and new functions that we call character exponentials.

\begin{defn}
  Let $d \in \bbN$, $d \geq 1$.  Then $\bbX_d: \sqcup_n \rS_n \to \bbQ$ is the class function $$\bbX_d(\sigma)  := \#\{ \text{$d$-cycles of }\sigma\}.$$
A \emph{character polynomial}  is any class function  that is a polynomial in the functions $\{\bbX_d\}_{d \in \bbN}.$
\end{defn}

\begin{defn}Let $A = 1^{a_1} 2^{a_2} \dots $ be an integer partition. We define the \emph{character exponential}  of $A$  to be class function $$A^\bbX := \prod_{n \geq 1}  ( \sum_{d | n}  d a_d )^{\bbX_n}, $$
where sum is over all natural numbers $d \geq 1$  which divide $n$.   The product is well defined because any $\sigma \in \rS_m$ has finitely many cycles,  so $A^\bbX(\sigma)$ is a natural number.  
\end{defn}

In this definition, it is important that $0^0 = 1$ and $0^n = 0$ for all $n 
> 0$. We give examples to clarify:

\begin{ex}
		If  $a_i = 0$ for $i > 1$,  then  $A^\bbX  = (a_1)^{\sum_{n \geq 1} \bbX_n}$.  
\end{ex}

\begin{ex}
 If $a_1 = 0$ and $a_i = 0$ for $i > 2$,  then  $$A^\bbX(\sigma)  = \begin{cases} (2a_2)^{\sum_{n \geq 1} \bbX_{2n}(\sigma)}  &  \text{ if all cycles of $\sigma$ have even length}  \\ 0 & \text{ otherwise } \end{cases} $$
\end{ex}

In fact, to state our results we need a more general function, which is similar to (but not equal to) a product of a character polynomial and a character exponential.


\begin{defn}
Let  $A =1^{a_1} 2^{a_2} \dots$ and $\nu = 1^{m_1}2^{m_2} \dots$ be integer partitions.  We define $${\bbX \choose \nu}  A^{\bbX - \nu}:= \prod_{n \geq 1}  {\bbX_n \choose m_n} (\sum_{d|n} d a_d)^{\bbX_n - m_n},$$ where ${\bbX_i \choose m_i}(\sigma) = {\bbX_i(\sigma) \choose m_i}.$  We define the \emph{rank of $\nu$} to be $\rank(\nu) := \sum_{i \geq 1} m_i$  so that ${\bbX \choose \nu}$ is a character polynomial of degree $\rank(\nu)$.  
\end{defn}

The function ${\bbX \choose \nu }A^{\bbX-\nu}$ specializes to the character polynomial ${\bbX \choose \nu}$ when $A = 1$  and  to the character exponential $A^{\bbX}$ when $\nu = 0$.  

\begin{ex}
	If $a_i = 0$ for all $i$  and $\nu = 1^{m_1} 2^{m_2} \dots  $  then  
$${\bbX \choose \nu } A^{\bbX-\nu}(\sigma)  = \begin{cases} 1  &  \text{ if $\sigma$ is in the conjugacy class corresponding to $\nu$ }  \\ 0 & \text{ otherwise } \end{cases} $$
\end{ex}

Let $d, s \in \bbN$.  We say that an $\FSop$ module has \emph{class $(d,s)$} if it satisfies the hypothesis and the conclusion of Theorem \ref{FSop}  for $d$ and $s$.  
Then our first result on characters is:

\begin{thm}\label{characterexps} Let $M$ be an $\FSop$ module of class $(d,s)$.    Then the character function of $M_n$ takes the form:
$$\sum_{\nu, A ~{ \rm partitions}} c(\nu, A)  { \bbX \choose \nu}  A^{\bbX - \nu},$$ where $c(\nu,A) \in \bbQ$  and $c(\nu,A) = 0$  if $|A| > d$; or $\rank(\nu) \geq s$; or  $|A| = d$ and $\rank(\nu) > 1$.     
\end{thm}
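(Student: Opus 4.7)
The plan is to decategorify Theorem \ref{FSop} into a linear functional equation satisfied by the character $\chi_M \colon \sqcup_n \rS_n \to \bbQ$, and then to show that the solution space is spanned exactly by the admissible $\binom{\bbX}{\nu}A^{\bbX - \nu}$.

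First I would unpack how $\rK_d$ acts at the level of characters.  Since $\rS_d \times \rS_n$ acts on $\rK_d(M)_n$ with the $\rS_d$-factor governed by the Whitney homology of the partition lattice of $[d]$, for each $\tau \in \rS_d$ and $\sigma \in \rS_n$ taking the trace $\sum_i (-1)^i \Tr((\tau,\sigma), \rK_d(M)_{n,i})$ of the exact complex yields an identity of the form
\[ 0 = \sum_\pi \epsilon(\pi)\, \chi_{M_{n + |\pi|}}(\sigma \sqcup \tau_\pi), \]
where $\pi$ runs over $\tau$-invariant set partitions of $[d]$ and $\tau_\pi$ is the induced permutation of the blocks of $\pi$.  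Iterating using Theorem \ref{FSop} gives analogous vanishing identities for each composition $\rK_{\ell_1} \circ \cdots \circ \rK_{\ell_r}$ appearing in the theorem.

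The second step is to evaluate the resulting ``extension'' operators on the candidate basis.  Let $T_\tau$ denote the shift $(T_\tau \chi)(\sigma) := \chi(\sigma \sqcup \tau)$.  A direct calculation gives
\[ T_\tau(A^\bbX) = \Bigl(\prod_n c_n^{\bbX_n(\tau)}\Bigr) A^\bbX, \qquad c_n = \sum_{d \mid n} d\, a_d, \]
so each character exponential is a simultaneous eigenvector for all the $T_\tau$.  Consequently the Euler-characteristic operator associated to $\rK_d$ acts on $A^\bbX$ by multiplication by a polynomial in $(c_1, \dots, c_d)$ whose coefficients are controlled by the Whitney polynomial of the partition lattice.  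Letting $\tau$ vary over conjugacy classes in $\rS_d$ (equivalently, isotyping by $\rS_d$-irreducibles) produces a family of scalar constraints in the $c_n$ whose common vanishing locus is precisely the set of ``virtual alphabets of size $\leq d$,'' i.e.\ $|A| \leq d$.  The functions $\binom{\bbX}{\nu}A^{\bbX - \nu}$ then behave as generalized eigenvectors: each application of $\rK_d$ lowers $\rank(\nu)$ by one once $A^\bbX$ already lies in the eigenvalue-zero locus, so the condition $\rank(\nu) < s$ is exactly what makes $\rK_d^{\circ (s-1)} \rK_{d+1}$ annihilate such a function.

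The main obstacle is verifying that the joint kernel of the full operator system provided by Theorem \ref{FSop} equals, rather than merely contains, the claimed span of admissible functions.  In particular, the boundary condition ``$|A| = d \Rightarrow \rank(\nu) \leq 1$'' will require a careful Jordan-block analysis: it reflects the fact that the Whitney polynomial $\prod_{j=0}^{d-1}(1 - jt)$ has simple roots, so at the boundary $|A| = d$ no generalized eigenvector of rank $\geq 2$ can survive.  Combining this with linear independence of the family $\{\binom{\bbX}{\nu} A^{\bbX - \nu}\}$ as $(\nu, A)$ varies will let me extract the coefficients $c(\nu,A)$ and confirm the claimed support, completing the proof.
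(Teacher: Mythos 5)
Your high-level strategy matches the paper's: decategorify the exactness in Theorem \ref{FSop} into a system of linear constraints on the character, observe that the candidate functions are (generalized) eigenvectors for the constraint operators, and identify the joint kernel. You phrase this in the language of class functions and shift operators $T_\tau$, whereas the paper works via the Frobenius character and the adjoint differential operators ${D \choose \lambda}$ acting on $\hat \Lambda$; these are the same picture in dual notation. Your computation that $A^\bbX$ is a simultaneous eigenvector of all $T_\tau$ is correct, and the ``generalized eigenvector'' heuristic for $\binom{\bbX}{\nu}A^{\bbX-\nu}$ is the class-function shadow of the identity $(D_i - a_i)(p_\nu \exp(\sum_j a_j y_j)) = D_i(p_\nu)\exp(\sum_j a_j y_j)$.

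The problem is that what you label the ``main obstacle'' — showing that the joint kernel of the full operator system equals, and does not merely contain, the admissible span — is not an obstacle within a proof; it \emph{is} the proof. The paper devotes essentially all of \S\ref{sec:DiffEqs} to this: Theorem \ref{solutionspace} and its supporting Lemmas \ref{quallemma} and \ref{keycomputation} perform a multi-variable inductive analysis of the system ${D\choose\lambda_1}\cdots{D\choose\lambda_r}s = 0$, first truncating the power-series expansion of $s$ in the $y_i$, then reducing to a one-variable ODE of Jordan type, and finally pinning down exactly which monomials $p_\nu\exp(\sum a_i y_i)$ survive and which $A$ are allowed at each level. You name this step, announce it will ``require a careful Jordan-block analysis,'' and stop. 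Without it you have only established that the admissible functions satisfy the constraints, i.e.\ the inclusion you do not need. There is also an unaddressed gap in the first step: the weight $\epsilon(\pi)$ in your trace identity must be the trace of $\tau$ on the top relative Whitney homology $H_{\rr(\pi)}(\rN[\pi,\hat 1], Z_{[\pi,\hat 1]})$, not merely a sign, and assembling the iterated formula for $\rK_{\ell_1}\circ\cdots\circ\rK_{\ell_r}$ into the factored form $\prod_i {D\choose\lambda_i}\ch(M)=0$ for all $|\lambda_i|\geq\ell_i$ requires the plethystic and K\"unneth bookkeeping of Proposition \ref{Whitneyiso} and Theorem \ref{charactercomputation}, which you gesture at but do not supply.
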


In the case $s = 1$ this sum is finite and Theorem \ref{characterexps} sharp, in the sense that the characters of degree $d$ projective $\FSop$ modules span the space of character functions.  We emphasize that the sum in Theorem \ref{characterexps}  is not finite when $s > 1$. So by itself Theorem \ref{characterexps} does not reduce the computation of the character of $M$ to a finite problem.  However if we restrict the character of $M$ to elements in $\rS_n$ whose cycles all have length $\leq C$ (which corresponds to setting $\bbX_i = 0$ for $i > C$),  then there are only finitely many terms in the sum. In this case Theorem \ref{characterexps} can be rephrased as a rationality theorem for a multivariate generating function (left to the reader).  When $C = 1$,  we precisely recover Sam--Snowden's rationality theorem.  

\begin{ex}\label{ex:charexp}
 Let $M$ be an $\FSop$ module of class $(2,2)$.   Theorem \ref{characterexps} states that the character of $M$ takes the form  $$ c_1 ~  2^{\sum_{i\geq 1 } \bbX_i} ~+ ~ c_2 ~   ~\delta_{\rm even}~ 2^{\sum_{i \geq 1} \bbX_{2i}}  ~  + c_3 + ~ \sum_{n \geq 1}  b_n ~\bbX_n ~ + ~ \sum_{n \geq 0} f_n ~  \delta_{\text{$n$-cycle}} ,$$ where $c_1,c_2, b_n, f_n \in \bbQ$.    Here $\delta_{\rm even}(\sigma)$  (resp. $\delta_{\text{$n$-cycle}}(\sigma)$)   is  $1$ if all of the cycles of $\sigma$  have even length (resp.  if $\sigma$ is an $n$-cycle)  and zero otherwise.   
\end{ex}

There are further constraints on $\FSop$ modules of class $(d,s)$:  if $M$ is a subquotient of an $\FSop$ module generated in degree $d$  we show the character of $M$ is \emph{uniquely determined} by its restriction to elements of $\rS_n$ whose cycles have length $\leq d$.  Using this, we can prove the following.  

\begin{thm}  \label{finitedim}Let $\hat \Lambda$ be the completed ring of symmetric functions.  
		    There is a finite dimensional subspace $\cU_{d,s} \subseteq \hat \Lambda,$ such that if $M$ is an $\FSop$ module of class $(d,s)$,  then its Frobenius character $\ch(M)$ is an element of $\cU_{d,s}$.     The dimension of $\cU_{d,s}$ is $p(d) + { d+ s - 1 \choose s-1}  \sum_{i = 0}^{d-1} p(i)$  where $p(i)$ is the number of integer partitions of $i$.  
\end{thm}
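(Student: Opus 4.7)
My plan is to combine Theorem~\ref{characterexps} with the intermediate principle asserted in the preceding discussion: for $M$ a subquotient of an $\FSop$ module generated in degree $\leq d$, $\chi_M$ is uniquely determined by its restriction to permutations whose cycles all have length $\leq d$. Since Frobenius and restriction are both linear, this reduces the problem to bounding the dimension of the space of restricted characters; the subspace $\cU_{d,s} \subseteq \hat{\Lambda}$ will then be defined as the preimage under the resulting injective restriction map.

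After restriction, each summand $\binom{\bbX}{\nu} A^{\bbX - \nu}$ from the expansion of Theorem~\ref{characterexps} becomes $\prod_{n=1}^{d} \binom{\bbX_n}{m_n} (c_n^A)^{\bbX_n - m_n}$, where $c_n^A := \sum_{d' \mid n} d' a_{d'}$, and it vanishes identically unless $\nu$ is supported on $\{1,\dots,d\}$, since $\binom{0}{m_n} = 0$ for $m_n > 0$. For $|A| \leq d$ the tuple $(c_1^A,\dots,c_d^A)$ determines $A$ by M\"obius inversion, so the exponential ``cores'' $\prod_n (c_n^A)^{\bbX_n}$ for distinct $A$ are linearly independent, and multiplication by the independent polynomial prefactors $\prod_n \binom{\bbX_n}{m_n}$ preserves this. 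For the lower range $|A| < d$ the allowed pairs $(\nu,A)$ range over $\sum_{i=0}^{d-1} p(i)$ choices of $A$ and $\binom{d+s-1}{s-1}$ choices of $\nu$ with parts in $\{1,\dots,d\}$ and $\rank(\nu) \leq s-1$, yielding the second summand of the claimed dimension.

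The main obstacle lies in the top range $|A| = d$: Theorem~\ref{characterexps} permits $\rank(\nu) \in \{0,1\}$, which after restriction would naively produce $(d+1)p(d)$ independent basis functions, far more than the asserted $p(d)$. Resolving this requires more than the formal shape of Theorem~\ref{characterexps}. I expect that for a genuine class $(d,s)$ module $M$ the top contribution to $\chi_M$ is precisely the Frobenius character of the $\rS_d$-representation underlying the top graded quotient of $M$, induced through the $\FSop$ structure, and so is already a linear combination of $\{A^{\bbX} : |A| = d\}$ alone; the $\rank(\nu) = 1$ corrections at the top should then be forced to appear only as part of the lower-range expansion. I would establish this by inspecting how exactness of $\rK_{d+1}(M)$ (and of $\rK_{d}^{\circ(s-1)} \circ \rK_{d+1}(M)$ more generally) constrains the admissible coefficients $c(n^1,A)$ with $|A| = d$. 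Once this absorption step is in place, the dimensions sum to $p(d) + \binom{d+s-1}{s-1}\sum_{i=0}^{d-1} p(i)$, and $\cU_{d,s}$ is exhibited as the corresponding finite-dimensional subspace of $\hat{\Lambda}$.
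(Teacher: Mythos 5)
Your strategy---reduce via the uniqueness-of-lift principle to counting the space of restricted class functions and recover $\cU_{d,s}$ as a preimage of that finite-dimensional space under the injective restriction map---is in the same spirit as the paper's, which makes this precise via Theorem~\ref{intersect} together with the ring isomorphism of Proposition~\ref{quotientringcomputation}. Your dimension count in the lower range $|A|<d$, namely $\binom{d+s-1}{s-1}$ choices of $\nu$ per $A$ and $\sum_{i=0}^{d-1}p(i)$ choices of $A$, is correct.

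The genuine gap is exactly the one you flag yourself: the ``absorption step'' at $|A|=d$. You start from Theorem~\ref{characterexps}, whose stated constraints permit $\rank(\nu)\le 1$ when $|A|=d$, and so you are left with $(d+1)p(d)$ candidate contributions rather than $p(d)$; your plan to show the $\rank(\nu)=1$ corrections are forced to vanish or be absorbed into the lower range is a conjecture, not a proof, and it is not the route the paper takes. The resolution is to use the sharper intermediate result, Theorem~\ref{VAreform}, rather than the class-function statement: for $M$ of type $<d^{s-1}(d+1)$ it gives $\ch(M)\in\bigoplus_{A\vdash d}\cV_{A,1}\oplus\bigoplus_{|A|<d}\cV_{A,s}$, and $\cV_{A,1}$ is by definition the one-dimensional span of $\exp(\sum_i a_iy_i)$, i.e.\ only $\nu=0$. (The clause ``$|A|=d$ and $\rank(\nu)>1$'' in the statement of Theorem~\ref{characterexps} is looser than what its own proof supplies---it should be read as $\rank(\nu)>0$---and by working only from the stated inequalities you inherited that slack and then tried to argue it away.) With Theorem~\ref{VAreform} in hand, the paper simply sets $\cU_{d,s}:=\bigoplus_{A\vdash d}\cF_{\le d}\cap\cV_{A,1}\ \oplus\ \bigoplus_{|A|<d}\cF_{\le d}\cap\cV_{A,s}$ and reads off the dimension from the explicit basis $\pi_d(p_\nu\exp(\sum_i a_iy_i))$, $\nu$ a degree-$<r$ monomial in $p_1,\dots,p_d$, furnished by Theorem~\ref{intersect}. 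You should also note that your informal linear-independence argument for the restricted ``exponential cores'' needs the same justification, which Proposition~\ref{quotientringcomputation} provides via the identification of the restricted dual with $\bbQ[u_1,\dots,u_d]/(u_1,\dots,u_d)^r$.
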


		This reduces the computation of the character of an $\FSop$ module of class $(d,s)$ to a finite problem.  We describe  (refined versions of)  the space $\cU_{d,s}$  in more detail in \S \ref{sec:characterspaces} and \S\ref{sec:genfunctiondualring}.  In particular, we construct bases for $\cU_{d,s}$.  We also show that if $F$ is a (restricted) class function,  defined on elements of $\rS_n$  whose cycles have length $\leq d$,  which takes the form of Theorem \ref{characterexps},  then there is a unique lift of $F$ to a (full) class function taking the same form.  This lift corresponds to an element of $\cU_{d,s}$, and we describe how to compute it.   

Using these methods, we can refine Example \ref{ex:charexp}.  

\begin{ex}\label{ex:refined}
 Let $M$ be an $\FSop$ module of class $(2,2)$.  Then its character takes the form  \begin{equation} \label{chareq} c_1 ~ 2^{\sum_{i\geq 1} \bbX_i}  ~+ ~ c_2 ~  2^{\sum_{i\geq 1} \bbX_{2i}}  ~\delta_{\rm even} ~  + ~c_3~ +  ~c_4~ \bbX_1 ~ + ~ c_5 \sum_{n \geq 2}  n ~\bbX_n +~ \sum_{n = 0}^2 f_n ~ \delta_{ n-\text{cycle}}\end{equation} where $c_n,f_n\in \bbQ$. This behavior is representative: in general the coefficients $c(\nu, A)$ of Theorem \ref{characterexps}  are governed by recurrence relations.  There are class $(2,2)$  $\FSop$ modules whose characters span the space of characters of the form \eqref{chareq} with $c_5 = 0$.  We do not know an example of a class $(2,2)$   $\FSop$ module with $c_5 \neq 0$,  but heuristic comparisons with $\FI_d$ modules  suggest that they may exist.
\end{ex}

Our final result on $\FSop$ module characters is an analog of multiplicity stability for $\FI$ modules.  

\begin{defn}
For $\lambda$ an integer partition $\lambda_1 \geq \lambda_2 \dots $ and $n \geq \lambda_1$, we let $(n,\lambda)$  be the integer partition $n \geq \lambda_1 \geq \lambda_2 \geq \dots $.  We write $|\lambda| = \sum_{i} \lambda_i$.  Given an $\rS_{|\lambda|}$ representation $V$ over a field of characteristic zero,  we write $\mult_{\lambda}  (V)$ to denote the multiplicity of the irreducible  representation corresponding to $\lambda$  in $V$.   
\end{defn}

\begin{thm}\label{GrowingRows}
	Let $\lambda$ be an integer partition.  Let $M$ be a finitely generated $\FSop$ module over a field of characteristic zero.  Then the generating function $$\sum_{n \geq |\lambda| } \mult_{(n,\lambda)}  (M_{|\lambda| + n})  ~ t^{n + |\lambda|}$$
is rational.  If $M$ is of class $(d,s)$, then the roots of the denominator are roots of unity of order $\leq d$ and $\mult_{(n,\lambda)}  (M_{|\lambda| + n})$ is eventually a quasi-polynomial of degree $\leq ds$.  
\end{thm}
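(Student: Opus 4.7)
The plan is to pair the character of $M$ against Murnaghan's stable character polynomials and reduce to an explicit Euler product in the cycle-type data, then combine with Theorem \ref{finitedim} for the degree bound.

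First, by the classical stabilization of symmetric group characters, there is a character polynomial $P_\lambda \in \bbQ[\bbX_1, \bbX_2, \dots]$ (a finite $\bbQ$-linear combination of $\binom{\bbX}{\mu}$'s) such that $\chi^{(n,\lambda)}|_{\rS_{n+|\lambda|}} = P_\lambda|_{\rS_{n+|\lambda|}}$ for all $n$ sufficiently large. Hence
$$\mult_{(n,\lambda)}(M_{|\lambda|+n}) \;=\; \langle \chi_M,\, P_\lambda\rangle_{\rS_{n+|\lambda|}}.$$
Applying Theorem \ref{characterexps} to expand $\chi_M$ in the $\binom{\bbX}{\nu}A^{\bbX-\nu}$ basis and expanding $P_\lambda$ in the $\binom{\bbX}{\mu}$ basis, the problem reduces to studying, for each triple $(\nu, A, \mu)$ with $|A| \leq d$ and $\rank(\nu) < s$, the generating function
$$G_{\nu, A, \mu}(t) \;:=\; \sum_N t^N \Big\langle \binom{\bbX}{\nu} A^{\bbX-\nu},\, \binom{\bbX}{\mu} \Big\rangle_{\rS_N}.$$

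Second, compute $G_{\nu, A, \mu}(t)$ directly. Writing the inner product as $\sum_{\pi \vdash N} f(\pi) g(\pi)/z_\pi$ and tracking cycle-type multiplicities factor-by-factor gives the Euler product
$$G_{\nu, A, \mu}(t) \;=\; \prod_{j \geq 1} \left( \sum_{m \geq 0} \frac{(t^j/j)^m}{m!} \binom{m}{n_j}\binom{m}{l_j} b_j^{m-n_j}\right),$$
where $\nu = 1^{n_1}2^{n_2}\cdots$, $\mu = 1^{l_1}2^{l_2}\cdots$, and $b_j := \sum_{d\mid j} d\,a_d$. A direct calculation shows each local factor equals $e^{b_j t^j/j}$ times a polynomial in $t^j$. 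Using the identity
$$\sum_{j \geq 1} \frac{b_j t^j}{j} \;=\; \sum_{k \geq 1} a_k \sum_{m \geq 1} \frac{t^{km}}{m} \;=\; -\sum_k a_k \log(1-t^k),$$
the exponentials collapse to $\prod_k (1-t^k)^{-a_k}$. Thus $G_{\nu, A, \mu}(t) = R(t)\prod_k(1-t^k)^{-a_k}$ for a polynomial $R(t)$, and all poles lie at roots of unity of order $\leq \max\{k : a_k > 0\} \leq |A| \leq d$.

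Third, bound the quasi-polynomial degree by $ds$. A single $(\nu,A,\mu)$-term already yields quasi-polynomial degree $\leq |A|-1 \leq d-1$; to obtain the sharper bound $ds$, one invokes Theorem \ref{finitedim} and works with a basis of the finite-dimensional subspace $\cU_{d,s}$ adapted to the decomposition of Theorem \ref{characterexps}. Summing generating functions across basis elements with common character-exponential shape $A$, the $\binom{\bbX}{\nu}$ factors with $\rank(\nu) < s$ can raise the pole order at each root of unity by up to a factor of $s$, yielding the $ds$ bound.

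The main obstacle is establishing the quasi-polynomial degree bound $\leq ds$: rationality and the root-of-unity structure of the denominator fall out cleanly from the Euler product, but the sharper $ds$ bound requires carefully tracking how the character polynomial factors $\binom{\bbX}{\nu}$ with $\rank(\nu) < s$ combine across the basis of $\cU_{d,s}$. An alternative is to mirror the chain-level iteration of Theorem \ref{FSop}: each of the $s$ applications of $\rK_d$ contributes an additional $(1-t^d)$-type factor to the denominator of the growing-row multiplicity generating function.
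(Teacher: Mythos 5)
Your Euler-product computation of $G_{\nu,A,\mu}(t)$ for a single triple is correct, but the argument has a genuine gap at the point where these get summed. When $s > 1$, Theorem \ref{characterexps} gives an \emph{infinite} expansion of the character of $M$ (there are infinitely many $\nu$ of rank $< s$), so the target generating function is an infinite sum of the rational functions $c(\nu,A)\,G_{\nu,A,\mu}(t)$. Knowing that each summand has denominator $\prod_k(1-t^k)^{a_k}$ of degree $\leq |A| \leq d$ does not yield rationality of the sum, and it certainly does not yield the denominator degree $ds$; each individual summand already gives the \emph{smaller} bound $d$, so the factor of $s$ must come from the infinite summation, which you do not control. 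Invoking Theorem \ref{finitedim} as stated does not close this: a basis of $\cU_{d,s}$ (for instance the $L_\nu$ of \S\ref{sec:genfunctiondualring}) consists of \emph{specific infinite} linear combinations of the ${\bbX \choose \nu}A^{\bbX-\nu}$, so computing a basis element's generating function amounts to carrying out exactly the infinite sum you are trying to avoid.

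The paper's proof uses a different mechanism that sidesteps the issue entirely. It starts from the finite identity $\sum_n s_{(n,\lambda)}t^{n+|\lambda|} = q_\lambda(t) + \sum_\nu (-1)^{|\lambda|-|\nu|} s_\nu t^{|\nu|}\sum_m h_m t^m$ (sum over the finitely many $\nu$ with $\lambda/\nu$ a vertical strip), then computes the pairing $\langle s_\nu h_m, \ch(M)\rangle$ inside the quotient ring $R = \Lambda/((u_1,u_2,\dots)^r+(e_{k+1},\dots))$. In $R[[t]]$, $\sum_m h_m t^m = \left(\prod_i(1-t^i)^{a_i} - \sum_{m=1}^k E_m t^m\right)^{-1}$ and the numerator $\sum_{m=1}^k E_m t^m$ is nilpotent of order $r$, so the geometric series truncates after $r$ steps and the denominator is $\prod_i(1-t^i)^{ra_i}$ of degree $r|A|\leq sd$. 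The finiteness and the factor of $r$ (hence $s$) in the degree bound fall out automatically from the nilpotency order. Your closing alternative --- mirroring the chain-level iteration of Theorem \ref{FSop} so that each application of $\rK_d$ contributes a $(1-t^d)$-type factor --- is essentially the content of this quotient-ring argument, but as written your proof stops short of it.
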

Whereas for $\FI$ modules the multiplicities in Theorem \ref{GrowingRows} are eventually constant,  for  $\FSop$ modules they are eventually quasi-polynomials.





\subsection{Construction of chain complexes}
 Let $(\cC, \oplus)$  be a small monoidal category. Let $d \in \cC$ be an object, and $\cC/d$ be the over-category.  To construct functors $\Mod \cC \to \Ch(\Mod \cC)$  from $\cC$ modules to chain complexes of $\cC$ modules,   we proceed in the following simple way.  First we restrict along $\oplus: \cC \times \cC/d \to \cC$  to obtain a functor $\Mod \cC \to \Mod(\cC \times \cC/d)$.   Then we apply a functor  $F:\Mod (\cC/d) \to \Ch(\Mod k)$ in the second factor  to obtain  $\Mod(\cC \times \cC/d) \to  \Ch(\Mod \cC)$.  The functors we use are the composite, $(\id \times F) \circ \Res^{\oplus}:  \Mod \cC \to \Ch(\Mod \cC)$.

Of course, there are many possible choices for $F$: up to quasi-isomorphism they are parameterized by chain complexes of $(\cC/d)\op$ modules.  When  $\cC/d$  is equivalent to a poset $P$  
there is a particularly natural choice:  a bar construction $\rB_P: \Mod P \to \Mod k$, which we recall in \S \ref{sec:Poset}.  When  $P$ satisfies a Cohen--Macaulay property,  the functor $\rB_P$ can be replaced by a  smaller functor  $\rK_P$, which is quasi-isomorphic to $\rB_P$.  Corresponding to these two constructions, there are functors $\rK_d, \rB_d: \Mod \cC \to \Ch(\Mod \cC)$.  The conditions for $\rB_d$ and $\rK_d$ to exist are satsified for any object $d \in \cC$ when $\cC$ is one of the categories $\FI, \FSop, \VI_L,$ or $\VI\bC_L$.  Here $L$ is a field and $\VI_L$  (resp.  $\VI\bC_L$) denotes the category of finite dimensional vector spaces over $L$  and with injections (resp. injections together with a splitting, see \cite{putman2017representation}).  

When $\cC$ is an EI-category, there is a natural notion of $\cC$-module homology, which in the case of $\FI$ modules reduces to $\FI$ module homology \cite{church2017homology}.  The complexes we use admit a more general definition in terms of $\cC$ module homology which we discuss in \S\ref{extended}, although we do not use it elsewhere in the paper.

One feature of our arguments is that, for the most part, we do not deal with the complexes $\rK_d$ or $\rB_d$ directly.  Instead, we work with poset representations before applying the functors $\rK_d$ or $\rB_d$,  and use a couple of general facts to prove exactness.  So although,  when it exists, the complex $\rK_d$ is more elegant than $\rB_d$, for the first half of the paper the two are interchangeable. The applications to $\FSop$ modules in the second part use $\rK_d$ and the homology of the partition lattice.

\subsection{Relation to other work}

Beyond Sam--Snowden's theorem on the rationality of Hilbert series, we are only aware of two previously-known statements about the characters of finitely generated $\FSop$ modules in the literature.  They are both elementary: they follow from decomposition of the free $\FSop$ module generated in degree $d$ into irreducibles.  The first states that if $M$ is a subquotient of an $\FSop$ module generated in degree $d$, then the irreducible representations appearing in its decomposition have at most $d$ rows.  The second is that the multiplicity of $ (n, \lambda)$ in $M_{n + |\lambda|}$ is bounded above by a polynomial.  

There is a long history of using poset topology for what we now call categorification,  going at least back to Rota \cite{rota1964foundations}.  Our only novelty is in the specific application of these methods. The complexes associated to poset representations  that we use were first introduced by Baclawski \cite{baclawski1976whitney,baclawski1980cohen}, in terms of the cohomology of constructible sheaves on the Alexandroff space of the poset.  Baclawski also gave explicit Bar constructions, and the smaller complex $\rK_P$ in the case of Cohen--Macaulay posets.  They may also be thought of as computing derived functors for the tensor product with a representation of the opposite poset, as discussed in \S \ref{extended}. 
To minimize technical overhead, we use explicit complexes.

In the case $\cC = \FI$  the complex $\rK_d(M)$ is simply the complex obtained by iterating the functor $$M \mapsto \cone(M \to \Sigma M),$$  $d$ times. This construction and its categorification of rationality is well-known.  The complex $\rK_d(M)$ appears implicitly in many of arguments about the structure theory of $\FI$-modules, which proceed inductively using $M$, $\Sigma M$ and the cokernel of $M \to \Sigma M$.

    The way we construct chain complexes is, heuristically, adjoint to the central stability complexes which appear in the representation stability literature, introduced in \cite{putman2017representation} and axiomatized in \cite{patzt2019central}.  Whereas central stability complexes are constructed by induction along the monoidal functor $-\oplus-: \cC \times \cC \to \cC$, ours are constructed via restriction.  We do not know of a direct relationship between these two types of complexes,  although there is one with $\FI$-module homology \cite{church2017homology, putman2015stability} (which goes under the name central stability homology in Putman's original paper on central stability).

In \cite[\S 5]{sam2018hilbert}  Sam--Snowden  prove that the Hilbert series of a  finitely generated $\FI_r$ module (see \cite[\S7.1]{sam2017grobner} for definition) is a sum of functions of the form ${X \choose \nu} A^{\bbX - \nu}$  for $A = 1^{d}$ and $d \leq r$.   They also prove a statement that categorifies the rationality of Hilbert series of finitely generated $\FI_r$ modules. Our categorifications are in the same spirit as Sam--Snowden's, but their complexes do not fit into our framework.   Given a finitely generated $\FI_r$ module $M$, the complex Sam--Snowden use to categorify the rationality of $h_M(t)$ depends on the choice of subspaces of $k^{\oplus r}$ specifically adapted to $M$.  In contrast, the complexes we use depend on a discrete set of choices.  By generalizing our definitions slightly, it is possible to obtain Sam--Snowden's complexes corresponding to the subspaces of $k^{\oplus r}$  that are linearizations of subsets $s \subseteq [r]$.  

 Sam--Snowden's Gr\"obner theory, developed in \cite{sam2017grobner}, is the source of many results about Hilbert series of representations of combinatorial categories.  Our proof of Theorem \ref{FSop} relies heavily on their ideas:  we reduce to the case of monomial $\OSop$ modules, and eventually to proving a combinatorial statement about poset ideals associated to certain ordered DFAs   (concepts they introduced).   In \cite[\S11.2]{sam2017grobner}  they ask whether it is possible to prove rationality results for enhanced Hilbert series for the combinatorial categories they consider.  We view our results as an affirmative answer in the case of $\FSop$ modules, and the framework of this paper as a new strategy for approaching other cases.

\subsection{Guide to the Paper}For a reader who would like to extract  detailed results about the characters of $\FSop$ modules as quickly as possible, we suggest the following. Read Definitions \ref{def:yn}, \ref{def:Type}, and \ref{def:VAr}  for the symmetric functions $y_n$, the type of an $\FSop$ module, and the space $\cV_{A,r} \subseteq \hat \Lambda$.   Then read Definitions \ref{def:rank} and \ref{def:Fk} for the rank of an $\FSop$ module and the space $\cF_{\leq k} \subseteq \hat \Lambda$.  Then read \S \ref{sec:genfunctiondualring}, referring back as necessary.   Our conventions for symmetric functions are in \S\ref{symmconventions}, and we explain how to translate from symmetric functions to character functions  in \S \ref{sec:translation}.

In \S\ref{sec:Poset}, following Baclawski \cite{baclawski1980cohen} we discuss how to construct a chain complexes associated to representations of a poset.  In \S\ref{sec:Category} we apply the constructions of \S \ref{sec:Poset}  to build chain complexes associated to modules over combinatorial categories.  Then we prove a general result, Theorem \ref{generalprojprop}, which gives a condition for our complexes to be exact when applied to projective modules.   Combining Theorem \ref{generalprojprop}  with the work of Nagpal,  we obtain our categorifications for $\VI_q$ and $\FI$ modules.  In \S\ref{extended} we discuss a general definition of chain complexes in the setting of small monoidal EI categories.  

The next few sections of the paper are devoted to proving Theorem \ref{FSop}.  In  \S\ref{sec:Grobner}, we prove a proposition that we will use to reduce to the case of monomial $\OSop$ modules.    The heart of the proof is in \S\ref{sec:Languages}, which shows that the poset homology of certain ideals associated to ordered regular languages vanishes.   In \S \ref{sec:ThmProof} we introduce $\OSop$ and prove Theorem \ref{FSop}, using Sam--Snowden Gr\"obner theory and \S\ref{sec:Grobner} to reduce to the combinatorial result of \S\ref{sec:Languages}.
 
The second half of the paper is about the characters of $\FSop$ modules.  In \S\ref{sec:CharacterofKd} 
 we compute the Frobenius character of the complex $\rK_d(M)$  in terms of $\ch(M)$ and certain differential operators.   Using this computation, Theorem \ref{FSop} translates  into a system of differential equations for  $\ch(M)$. 
	In \S \ref{sec:DiffEqs} we solve these differential equations,  introduce the notion of the \emph{type}  of an $\FSop$ module,  and prove Theorem \ref{characterexps}.   In \S\ref{sec:characterspaces}, we study the intersection between the solution space of \S\ref{sec:DiffEqs} and the space spanned by characters of  $\rS_n$ representations whose Young diagrams have $\leq k$ rows, and  prove Theorem \ref{finitedim}.  In \S \ref{sec:genfunctiondualring} we summarize the results of previous sections, and use generating function methods to compute a basis of $\cU_{d,s}$ and prove Theorem \ref{GrowingRows}.


\subsection{Acknowledgements}  I would like to thank Steven Sam and Andrew Snowden for many helpful conversations about $\FSop$ modules,  during which I first defined the complex $\rK_d(M)$.   I would also like to thank Andrew Snowden for suggesting that I consider the category  $\VI_q$,  Nate Harman for conversations about $\VI$ and $\VI\bC$ modules, and Trevor Hyde for conversations about symmetric functions.   

\subsection{Conventions and Notation} $\,$ \label{notation}

\noindent\textbf{Categories:} Given a category $\cC$ and an object $c \in \cC$, we write $\cC/c$ for the \emph{over-category} of $C$.  This is the category whose objects are morphisms $f: d \to c$  and whose morphisms are morphisms $g: d \to d'$ such that $f' \circ g = f$.  For $c,d \in \cC$ we write $\cC(c,d)$ for the set of morphisms from $c$ to $d$.

We write $\Mod \cC = \Mod k^{\cC}$ to denote the category of functors from $\cC$ to $k$-modules, and $\Ch(\Mod \cC)$ to denote chain complexes of $\cC$ modules. We refer to elements of $\Mod \cC$ as $\cC$ represetations or $\cC$ modules.  For any functor $M: \cC \to \Mod k$ we write  $M_c$ for the value of $M$ at $c \in \cC$,  and $M_f: M_c \to M_d$ for any $f \in \cC(c,d)$.     If $M$ is a $\cC$ module, and $d \in \cC$,  we write $M|_{\cC/d}$  for the functor $\cC/d \to \Mod k$ defined by $f: c \to d \mapsto M_c$.  

  When a group $G$ acts (strictly) on the category $\cC$,  a $G$-equivariant $\cC$ representation is a representation $M$ of $\cC$  together with morphisms $M_{c} \to M_{g(c)}$ for every $g \in G$ which are compatible with  the maps $M_f: M_c \to M_d$ for $f \in \cC(c,d)$, and which satsify the identity and associativity conditions.  Concisely, $M$ is a representation of the Grothendieck construction $G \ltimes \cC$.

On size issues:  as usually defined  $\FSop, \VI_q$ and $\FI$ are essentially small, but not small.  However, we may replace these categories by equivalent small full subcategories containing all of the objects that we consider.    We do this implicitly, and treat these categories as small categories. 

   If $\cC$ and $\cD$ are categories and $F$ is a functor $F: \Mod \cC \to \Ch(\Mod \cD)$, there is a canonical extension of $F$ to a functor $\Ch(\Mod \cC) \to \Ch(\Mod \cD)$  given by applying $F$ levelwise and forming the total complex of the resulting bicomplex (using sums).   We will also denote this functor by $F$.  We do not specify our sign conventions for the total complex, since our results are independent of this choice.  \\

 \noindent\textbf{Posets:} \label{posetnotation} For $p,q$ elements of a poset $P$, we write $p \prec q$ if $p < q$ and there does not exist $r \in P$ such that $p < r < q$.  If $P$ has a top (resp. bottom) element , we denote it  by  $\hat 1$  (resp. $\hat 0$).   An \emph{upward ideal} or simply \emph{ideal} of a poset, $P$ is a subset $S \subseteq P$  which is \emph{upward closed}:  if $s \in S$ and $p \geq s$ then $s \in S$.   The ideal $S$ may be empty.   All of the poset ideals appearing in this paper are upward closed: we do not use downward closed ideals.  We write $P_{\geq x}$  for the ideal  $\{p ~|~ p \geq x\}$.

To any poset we may associate a category.   The objects of the category are the elements of  the poset, and there is a unique morphism $p \to q$  if $p \leq q$, and no morphisms otherwise.   We treat the poset and its associated category interchangeably and use the same symbol to denote both of them.  \\

\noindent{\textbf{Integer Partitions:}} An integer partition $\lambda$ is a descending sequence of nonnegative numbers $\lambda_1 \geq \lambda_2 \dots \geq \lambda_r$.  We may also specify $\lambda$ by its vector of multiplicities in $\bbN^{\oplus\infty}$:  we write $\lambda = 1^{m_1} 2^{m_2} \dots $  where $m_i$ is the number of times $i$ appears in the sequence $\lambda_1 \geq \dots$.   We say that $\lambda$ is a partition of $n$ and write $\lambda \vdash n$  or $|\lambda| = n$  when  $n = \sum_{i \geq 1} \lambda_{i\geq 1} = \sum_i i m_i$.    We will also refer to the Young diagram associated to $\lambda$  see \cite[\S7.2]{StanleyEnum}.

\noindent\textbf{Miscellaneous:}
\begin{itemize}
\item For an abelian category $\cA$  write $\Ch(\cA)$ (resp. $\Ch^{-}(A)$) for the category of chain complexes in $\cA$ (resp. non-negatively graded complexes).  
\item If $X$ is a set and $k$ is a ring, we write $k X$ for the free $k$ module with basis $X$.  Given a functor $F: C \to \Set$,  we say that the \emph{linearization of $F$}  is the functor $kF: C \to \Mod k$  given by $(kF)_c := k (F_c)$.  

\item We write $\bbN$ for the set of non-negative natural numbers, and $\bbN^{\oplus \infty}$ for the set of eventually zero sequences of natural numbers.

\end{itemize}

\part{Chain complexes categorifying rationality}
  
\section{Poset homology}\label{sec:Poset}

 In this section,  following Baclawski \cite{baclawski1980cohen}, we associate chain complexes to poset representations.  Then we discuss several examples, and recall elementary properties of these complexes.   Throughout, we let $k$ be a commutative ring.

\begin{defn} For any poset $Q$, we write $\rN Q$ for the \emph{order complex of $Q$}, the simplicial complex whose $r$ simplices are the set of chains  $$q_0 < q_1 < \dots < q_r \in Q.$$  Equivalently $\rN Q$ is the geometric realization of the simplical set $[r] \mapsto \{ q_0 \leq \dots \leq q_r \in Q\}.$ \end{defn}

\begin{defn}
	When $Q$ has top and bottom elements $\hat 1, \hat 0$,  we define $Z_Q \subseteq \rN Q$ to be the subcomplex $\rN(Q - \hat 0) \cup \rN(Q - \hat 1)$.  In other words,  $Z_Q$ is the union of all simplices corresponding to chains $q_0 < \dots < q_r$  where either $q_0\neq \hat 0$ or $q_1 \neq \hat 1$.     When $\hat 0 = \hat 1$,  $Z_Q = \emptyset$. 
\end{defn}

\begin{defn}
	We say that  a poset $P$ is \emph{graded} if for any $p \leq q \in P$ all maximal chains $p = p_0    \prec p_1  \prec   \dots \prec p_{\ell} = q $  have the same length.   When $P$ is graded, we define $\rr(p,q)$ to be $\ell$.  When $P$ has a top element, we put $\rr(p) := \rr(p, \hat 1).$
\end{defn}

\begin{defn}
		Let $P$ be a finite poset, with top element $\hat 1$.  For $p \in P$ we define the \emph{M\"obius number} of $p$, denoted $\mu(p)$, to be the Euler characteristic of the pair $(\rN[p, \hat 1], Z_{[p, \hat 1]})$.  When $P$ is graded, we define the \emph{unsigned M\"obius number} to be $\tilde \mu(p) = (-1)^{\rr(p)} \mu(p)$.  
\end{defn}

\begin{rmk} \label{CMcomparison}
 		Our definition of $\mu(p)$ agrees with $\mu(p, \hat 1)$  as usually defined (as for example in  \cite[\S1.2]{WachsPosetTopology}).  Instead of using the pair $(\rN[p,\hat 1], Z_{[p, \hat 1]})$,  it is more common to interpret the M\"obius function topologically in terms of the reduced homology of $\rN(p, \hat 1)$. Whenever $p \neq \hat 1$, the homology groups of these complexes are related by  $$\tilde H_{s-2} (\rN(p, \hat 1)) = H_s(\rN[p,\hat 1], Z_{[p, \hat 1]}) \text{ for all } s \geq 0,$$ as can be seen by writing the simplicial chains for both.    In this paper, we prefer the latter homology groups because they lead to cleaner formulas and behave more simply for products of posets.  
\end{rmk}

\begin{defn}\label{def:whitney}
		Let $P$  be a finite graded poset with top element $\hat 1$.  The \emph{Whitney Polynomial} of $P$ is $$W_P(t):=\sum_{p \in P}  \mu(p)  ~t^{\rr(p)}.$$
\end{defn}

Now we define the Bar complex associated to a $P$ representation.



\begin{defn}
A \emph{representation of $P$} is a functor $P \to \Mod k$.  For a representation $M$,  we write $M_p$ for the value of $M$ at $p$,  and if $p \leq q \in P$  we write $M_{pq}$ for the associated morphism $M_p \to M_q$.
\end{defn}


\begin{defn} \label{barconstruction} Let $P$ be a poset with top element $\hat 1$,  and let $M$ be a $P$ representation.  We define  $\rB_P(M)$ be the following chain complex.  In degree $s \in \bbN$ $$\rB_P(M)_s := \bigoplus_{\hat 1 > p_1  > p_2 > \dots > p_s} M_{p_s}.$$ The differential of $\rB_P(M)$ is given by an alternating sum $d_s = \sum_{i = 0}^s (-1)^i \del_i$, where if we represent an element of $\rB_P(M)_s$ as a tuple $(\hat 1> \dots > p_s, m)$ for $m \in M_{p_s}$, the boundary $\del_i$ is defined by $$ \del_i(\hat 1 > \dots > p_s, m) = \begin{cases} 0 & \text{if $i = 0$}  \\

(\hat 1 > \dots > p_{i-1} > p_{i+1} > \dots > p_s, m) & \text{if $s > i > 0$ } \\

(\hat 1 > \dots >  p_{s-1}, M_{p_{s}p_{s-1}}(m)) & \text{if $i = s$}.
	
 \end{cases}$$
\end{defn}

\begin{defn}
		Let $P$ be a poset, and $G$ a group acting on $P$. We say that $M$ is a  \emph{$G$-equivariant $P$ representation} if $M$ is a $P$ representation and there are morphisms $g: M_{p} \to M_{g(p)}$ for every $g \in G$, which are compatible with the maps $M_{pq}: M_p \to M_q, p \leq q \in P$ and satisfy the obvious associativity and identity properties.  
\end{defn}

When $P$ is a poset with a top element and $M$ is a $G$-equivariant $P$ representation for some $G$-action on $P$,  the chain complex $\rB_P(M)$ is a naturally a complex of $G$-representations.  The element $g \in G$ acts by taking $(p_0 < \dots < p_r, m)$ to $(g(p_0) < \dots < g(p_r), g(m))$.

\subsection{Koszul complexes} \label{sec:Koszul}  Following Baclawski \cite{baclawski1980cohen},  when the poset $P$ satisfies a connectivity condition it is possible to replace $\rB_P(M)$  by a smaller  complex  $\rK_P(M)$, which we define in this subsection.


 \begin{defn}
We say that a poset $P$ is \emph{upper CM over $k$}  if it is graded, has a top element $\hat 1$, and satisfies the following conditions for every $p \in P$:  \[ H_{j}(\rN[p, \hat 1], Z_{[p, \hat 1]}; k ) = 0,~  \text{ for all }     0 \leq j < \rr(p),\] and $H_{\rr(p)}(\rN[p, \hat 1], Z_{[p, \hat 1]}; k )$ is a free $k$ module.  
\end{defn} 

\begin{ex} If $P$ is Cohen--Macaulay over $k$ then by defintion $\tilde H_{s-2}(\rN(x,y)) = 0$ for $s < \rank(x,y)$ \cite[Defn 3.1]{bjorner1982introduction}. Therefore $P$ is upper CM over $k$.    Examples include arbitrary geometric lattices  \cite{folkman1966homology}.
\end{ex}

    Since $\rN[p, \hat 1]$ is $\rr(p)$ dimensional,  when $P$ is upper CM  the unsigned M\"obius number computes the rank of the the top nonvanishing homology group of $(\rN[p, \hat 1], Z_{[p,1]})$.  In other words we have $$H_{\rr(p)}(\rN[p, \hat 1], Z_{[p, \hat 1]}; k ) \iso k^{ \oplus \tilde \mu(p)}.$$

\begin{defn}
	When $P$ is upper CM over $k$,  there is a functor from $P$ representations to chain complexes of $k$ modules,  $\rK_P$,  defined as follows.   

 Let $M$ be a $P$-representation.  
We filter $\rB_P(M)$ by defining $$F_i(\rB_P(M)_s):= \bigoplus_{\hat 1 > \dots > p_s ~|~  \rr(p_s) \leq i}  M_{p_s}.$$  The complex $\rK_P(M)$ is defined to be the $E_1$ page of the spectral sequence associated to this filtration. 
\end{defn}

Let us describe $\rK_P(M)$ more concretely, by computing the $E_1$ page.  The associated graded complex of the filtration $F_i$ has the same chain groups as $\rB_P(M)$ but $\del_s$ acts by zero: in other words the differential of the $E_0$ page is   $d_{0}^s =\sum_{i = 0}^{s-1} (-1)^i \del_i .$  Since $d_0^s$ preserves the grading by $p \in P$,  the $E_0$ page  splits as a direct sum of complexes $\bigoplus_p C(p) \otimes M_p$ for $p \in P$  where $$C(p)_s := \bigoplus_{\hat 1 > p_1>  \dots > p_{s-1} > p}  k.$$ By definition, we see that $C(p)$ is the chain complex  computing the simplicial homology of $(\rN[p, \hat 1], Z_{[p,\hat 1]})$.  Because $P$ is upper CM  the homology of $C(p)$ is concentrated in degree $\rr(p)$, and it has rank $\tilde \mu(p)$.  Therefore the spectral sequence associated to the filtration degenerates after the $E_1$ page, and on the $E_1$ page it takes the form  $$M_{\hat 1} \leftarrow \bigoplus_{p,~ \rr(p,\hat 1) = 1} M_{p} \leftarrow \dots \leftarrow \bigoplus_{p, ~\rr(p, \hat 1) = s}   H_{s}(\rN[p, \hat 1], Z_{[p,\hat1]}; k) \otimes M_p \leftarrow \dots $$
This is the complex $\rK_P(M)$. Non-canonically it takes the form    $$M_{\hat 1} \leftarrow \bigoplus_{p\in P,~ r(p) = 1} M_p \leftarrow  \bigoplus_{p \in P,~ \rr(p) = 2}  M_p^{ \oplus \tilde \mu(p)} \leftarrow \dots  .$$ 
The differential is induced by the action of $(-1)^s\del_s$.

 Both $\rK_P(M)$ and $\rB_P(M)$ compute the same homology groups,  but $\rK_P(M)$ depends more closely on the combinatorics of $P$  and is much smaller than $\rB_P(M)$.  
\begin{prop}\label{KBarcomparison}There is an edge map $\rK_P(M) \into \rB_P(M)$ which induces a canonical isomorphism $H_\bdot(\rK_P(M)) \iso H_\bdot(\rB_P(M))$.\end{prop}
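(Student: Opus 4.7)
The plan is to realize $\rK_P(M)$ as a concrete subcomplex of $\rB_P(M)$, and then deduce the quasi-isomorphism from the spectral sequence already implicit in the construction.

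First I would construct the edge map explicitly. For each $p \in P$ of rank $s$, the complex $C(p)$ is supported in degrees $1,\dots,s$ and, by the upper CM hypothesis, has homology concentrated in top degree $s$. Because $C(p)$ vanishes in degree $s+1$, its top homology literally equals its top cycles: $H_s(C(p)) = Z_s(C(p)) \subseteq C(p)_s$. Thus $\rK_P(M)_s = \bigoplus_{p:\rr(p)=s} Z_s(C(p)) \otimes M_p$ is a submodule of $F_s \rB_P(M)_s = \bigoplus_{p:\rr(p)=s} C(p)_s \otimes M_p \subseteq \rB_P(M)_s$, and the edge map $\varphi \colon \rK_P(M) \hookrightarrow \rB_P(M)$ is defined to be this inclusion in each degree. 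It is manifestly injective.

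Next I would check that $\varphi$ is a chain map. Take $z \otimes m \in Z_s(C(p)) \otimes M_p$ with $\rr(p)=s$. Since $z$ is $d_0$-closed, one has $d_\rB(z\otimes m) = (-1)^s \del_s(z\otimes m)$. A chain $\hat 1 > q_1 > \dots > q_{s-1} > p$ appearing in $z$ forces $\rr(q_{s-1})$ to be strictly less than $\rr(p)=s$ and at least $s-1$, hence equal to $s-1$ by the grading. So $\del_s(z\otimes m)$ is supported on chains whose last element has rank exactly $s-1$, i.e.\ lies in $F_{s-1}\rB_P(M)_{s-1}$. Applying $d_\rB^2 = 0$ and projecting to the associated graded at filtration level $s-1$, each component at such a $q$ is a $d_0$-cycle and thus lies in $Z_{s-1}(C(q)) \otimes M_q = \rK_P(M)_{s-1}$. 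Thus $d_\rB\,\varphi$ already factors through $\varphi(\rK_P(M))$, and an inspection of the construction of the $d_1$-differential on the $E_1$ page of the filtration spectral sequence shows that $\varphi^{-1} d_\rB \varphi$ agrees with $d_\rK$.

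Finally I would deduce that $\varphi$ is a quasi-isomorphism from the comparison theorem for spectral sequences. Equip $\rK_P(M)$ with its trivial filtration by homological degree; then $\varphi$ is a morphism of filtered complexes, and by definition of $\rK_P(M)$ as $E_1$ of the filtration on $\rB_P(M)$, the map $\varphi$ induces the identity on $E_1$ pages (with matching $d_1$ by the previous paragraph). Both filtrations are bounded since $P$ is finite, so $\varphi$ is a quasi-isomorphism. Canonicity of the induced iso on homology reflects the fact that the $E_1 = E_\infty$ page is concentrated along the diagonal $q=0$, which forces the filtration on $H_\bdot(\rB_P(M))$ to have at most one non-trivial piece per total degree, giving a canonical identification of $H_\bdot(\rB_P(M))$ with $E_\infty = H_\bdot(\rK_P(M))$.

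The main delicate point is the chain-map verification: showing that $\del_s z$ actually lands in the smaller summand $\bigoplus_{q:\rr(q)=s-1} Z_{s-1}(C(q)) \otimes M_q$ rather than somewhere deeper in the filtration, and matching the resulting induced differential with the $E_1$-differential defining $d_\rK$. Once this bookkeeping is in place, the quasi-isomorphism statement is essentially formal from the spectral sequence degeneration already established for upper CM posets.
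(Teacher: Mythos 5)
Your proposal is correct and follows the same route as the paper: identify $\rK_P(M)$ with the subcomplex $\ker d_0 \subseteq \rB_P(M)$ and invoke degeneration of the filtration spectral sequence. The paper asserts the subcomplex identification without proof; you supply the chain-map verification, which is a welcome addition but not a different strategy.
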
  
\begin{proof}
	The complex $\rK_P(M)$  can be identified with the subcomplex of $\rB_P(M)$ which in degree $s$ is $$ \ker(\sum_{i = 0}^{s-1} (-1)^i \del_i) \subseteq  \bigoplus_{\hat 1 \succ p_1 \succ \dots \succ p_s}  M_{p_s}.$$  Since the spectral sequence degenerates, this inclusion is a quasi-isomorphism.  
\end{proof}

When $P$ is an upper CM poset, $G$ is a group acting on $P$, and $M$ is a $G$-equivariant $P$ representation,  the $G$ action on $\rB_P(M)$ induces one on $\rK_P(M)$.    The degree $s$ term, $$\bigoplus_{p, ~\rr(p, \hat 1) = s}   H_{s}(\rN[p, \hat 1], Z_{[p,\hat1]}; k) \otimes M_p,$$ is a $G$ representation in the obvious way:  $g \in G$ acts by tensoring and summing the maps $g: M_p \to M_{g(p)}$ and $g: H_{s}(\rN[p, \hat 1], Z_{[p,\hat1]}; k) \to  H_{s}(\rN[g(p), \hat 1], Z_{[g(p),\hat1]}; k)$ induced by the functoriality of homology.

\begin{rmk}\label{abstractkoszulremark}From an abstract perspective both $\rB_P(M)$ and $\rK_P(M)$ compute the derived functors 
 of tensoring with a  $P\op$ representation  $S({\hat 1})$ (see Definition \ref{S(hat1)}).  The complex $\rB_P(M)$ arises from the Bar resolution of $S({\hat 1})$, and the complex $\rK_P(M)$ arises from a minimal resolution (assuming that $k$ is a field).   Essentially, the complex $\rK_P(M)$ is a Koszul complex:  there is a generalization of the theory of Koszul duality from graded algebras to graded linear categories, such that the category $kP$ is Koszul if $P$ is Cohen--Macaulay  \cite{woodcock1998cohen,li2014generalized}. In this case, $\rK_P(M)$ is a Koszul complex.
\end{rmk}

\subsection{Examples}  \label{posetexamples}
We discuss the posets that arise in the main examples of this paper.

\begin{defn}
Let $x$ be a finite set. A \emph{(set) partition} $p$ of $x$ is a set of non-empty subsets of $x$, such that each $i \in x$ is contained in exactly one element of $p$.  We call the elements of $p$  the \emph{blocks of $p$}. Let $\rP(x)$ be the set of all partitions of $x$.  Given $p,q \in \rP(x)$, we define $p \leq q$ if every block of $q$ is contained in a block of $p$.    Under this relation,  $\rP(x)$ is a finite lattice,   called the \emph{partition lattice of $x$}.  The top element is the discrete partition and the bottom element is the indiscrete partition.    When $n \in \bbN$, we define $\rP(n) := \rP([n])$. 
\end{defn}

\begin{ex} The partition lattice $\rP(n)$ is Cohen--Macaulay over any ring and for $p \in \rP(n)$ $$\tilde \mu(p) = \prod_{b \text{ block of $p$}} (|{b}| - 1)!,$$  
and its Whitney polynomial is $$  W_{\rP(n)}(t) = \prod_{j= 1}^{n-1}  (1 -j t),$$  see  \cite[\S7]{stanley1982some}.
\end{ex}

\begin{defn}
  For a finite set $x$  the  \emph{Boolean lattice}  $\bB(x)$ is the poset of subsets of $x$.  We write $\bB(n) := \bB([n])$
\end{defn}

\begin{ex} The Boolean lattice $\bB(n)$ is Cohen--Macaulay  and $\tilde \mu(s) = 1$ for every $s \in \bB(n)$.  
Its Whitney polynomial is $W_{\bB(n)}(t) = (1-t)^n.$ See  \cite[\S4]{stanley1982some}. 
\end{ex}

\begin{defn}
	Let $V$ be finite dimensional vector space over the finite field $\bbF_q$.  We write $\bB_q(V)$ for the poset of subspaces of $V$ and $\bB_q(n):= \bB_q(\bbF_q^{\oplus n})$.    
\end{defn}

\begin{ex}
 The poset fon $\rB(V)$ is $\bB_q(n)$ is Cohen--Macaulay and $\tilde \mu(w) = q^{ c \choose 2},$ where $c= \rr(w)$ is the codimension of $w \subseteq \bbF_q^{n}$.  It follows from the $q$-binomial theorem that its Whitney polynomial is 
  $$W_{\bB_q(n)} (t) = \prod_{i = 0}^{n-1} (1 - q^{i} t).$$ See \cite[\S5]{stanley1982some}.  It is natural to think of $\bB(n)$ as a specialization of $\bB_q(n)$ to $q = 1$.  
\end{ex}

In fact, if $V$ is a finitely generated module over $L$ where $L$ is a field or a finite ring,  then the poset of submodules of $V$ is Cohen--Macaulay by the Solomon--Tits theorem and the Cohen-Macaulayness of supersolvable lattices, respectively.   There is another example, related to the category $\VI\bC_L$.   

\begin{ex}\label{splitbuildingex}
	Let $L$ be a field, and $V$ be a vector space over $L$.  There is a poset whose elements are pairs of subspaces $(W,W')$ satisfying $V = W \oplus W'$, with relation defined by $(W_1,W_2') \leq (W_2, W_2')$ if  $W_1 \subseteq W_2$ and $W_1' \supseteq W_2'$.   Lehrer--Rylands proved that this poset is Cohen--Macaulay \cite{lehrer1993split}.  
\end{ex}

\subsection{Complexes associated to product posets}
 
Let $P \times Q$ be a product of two posets with top elements $\hat 1_P$ and $\hat 1_Q$.  Suppose we are given a $P \times Q$ module $M$.   We may consider $M$ as a functor $P \to \Mod Q$,  and post-composing with $\rB_Q$ we obtain a chain complex of $P$ modules, $p \mapsto \rB_Q(M_{(p,-)})$.  We may apply $\rB_P$ to this complex, as described in \S\ref{notation}, to obtain a chain complex of $k$ modules. 

\begin{defn}
	We define $\rB_{(P,Q)}(M)$ to be the chain complex $\rB_P(p \mapsto \rB_Q(M_{(p,-)}))$  and $\rB_{(Q,P)}(M)$ to be the chain complex $\rB_Q(q \mapsto \rB_P(M_{(-,q)}))$.  
\end{defn}

There is an isomorphism of complexes $\rB_{(P,Q)}(M) \iso \rB_{(Q,P)}(M)$.  More concretely, their degree $n$ term is $$\bigoplus_{i+j = n} \bigoplus_{p_i < \dots < \hat 1_P} \bigoplus_{q_j < \dots < \hat 1_Q} M_{(p_i, q_j)},$$ and their differentials agree up to a sign introduced by the definition of the total complex.

\begin{prop}\label{product}
Let $P, Q$ be finite posets with top elements.   Let $M$ and $N$ be $P$ and $Q$ representations, respectively. Let $M \sqtimes N$ be the $P \times Q$ representation $(p,q) \mapsto M_{p} \otimes N_q$. Then $\rB_{(P,Q)}(M \sqtimes N) \iso \rB_P(M) \otimes \rB_Q(N)$.  In particular if $\rB_Q(N)$ is exact, then $\rB_{(P,Q)}(M \sqtimes N)$ is exact.  
\end{prop}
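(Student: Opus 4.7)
The plan is to prove the isomorphism $\rB_{(P,Q)}(M \sqtimes N) \iso \rB_P(M) \otimes \rB_Q(N)$ by unpacking the nested definitions, and then deduce the ``in particular'' exactness statement from a standard spectral sequence / Künneth argument applied to the resulting tensor product of chain complexes.

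The central observation is that $\rB_Q$ is linear in tensoring the representation with a fixed $k$-module: if $N' = V \otimes N$ for a $k$-module $V$ (meaning $q \mapsto V \otimes N_q$, with structure maps $\id_V \otimes N_{qq'}$), then
\[
  \rB_Q(N')_j \;=\; \bigoplus_{\hat 1_Q > q_1 > \dots > q_j} V \otimes N_{q_j} \;=\; V \otimes \rB_Q(N)_j,
\]
and the differentials agree since $\partial_0,\ldots,\partial_j$ depend only on $N$. Applied with $V = M_p$ to $(M \sqtimes N)_{(p,-)}$, this gives a natural isomorphism of chain complexes of $P$-representations
\[
  \bigl(p \mapsto \rB_Q((M \sqtimes N)_{(p,-)})\bigr) \;\iso\; \bigl(p \mapsto M_p \otimes \rB_Q(N)\bigr).
\]
The same linearity, applied to $\rB_P$ at each fixed chain degree $j$ (so that $\rB_Q(N)_j$ is the ``constant'' $k$-module), identifies $\rB_P(p \mapsto M_p \otimes \rB_Q(N)_j) \iso \rB_P(M) \otimes \rB_Q(N)_j$ functorially in $j$. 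Taking the total complex (using sums, as in \S\ref{notation}) recovers exactly the tensor product of chain complexes $\rB_P(M) \otimes \rB_Q(N)$, since the total-complex convention is designed to reproduce the standard Koszul-sign differential $d_P \otimes 1 + (-1)^{\deg} \, 1 \otimes d_Q$. This establishes the first half of the proposition, and the analogous argument with the roles of $P$ and $Q$ swapped gives the already-observed symmetric identification $\rB_{(P,Q)} \iso \rB_{(Q,P)}$.

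For the exactness conclusion, I would view $\rB_P(M) \otimes \rB_Q(N)$ as the total complex of the first-quadrant bicomplex $C_{i,j} = \rB_P(M)_i \otimes \rB_Q(N)_j$ and run the spectral sequence obtained by taking vertical ($\rB_Q$-direction) homology first. Its $E_1$-page is $E_1^{i,j} = H_j(\rB_P(M)_i \otimes \rB_Q(N))$, which vanishes when $\rB_Q(N)$ is acyclic (using flatness of the coefficients, which holds in the main applications where $k$ is a field); since the bicomplex is bounded below, the spectral sequence converges to the homology of the total complex and the conclusion follows. The main obstacle is really just bookkeeping around signs and total-complex conventions, since the substantive content is purely formal — everything reduces to the naturality of $\rB_Q$ and $\rB_P$ in the representation and to the standard Künneth argument for an acyclic tensor factor.
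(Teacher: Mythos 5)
Your proof is correct and follows the same route as the paper's one-line proof, which simply cites ``the definition of the tensor product of chain complexes.'' You have spelled out the linearity of $\rB_Q$ and $\rB_P$ with respect to tensoring a representation by a fixed $k$-module, which is exactly what that one line leaves implicit. The one genuinely useful addition is your caution about flatness in the exactness clause: over an arbitrary commutative ring $k$, exactness of $\rB_Q(N)$ alone does not force exactness of $\rB_P(M) \otimes \rB_Q(N)$; one needs the terms of one of the two (bounded) complexes to be flat so that the $E_1$ page of the appropriate spectral sequence of the bicomplex vanishes. The paper does not say this, but it is satisfied automatically wherever the proposition is invoked: either $k$ is a field, or the representations involved are linearizations $kI$ of poset ideals, whose values are copies of $k$, so the terms of $\rB_P(M)$ and $\rB_Q(N)$ are free.
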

\begin{proof}
	This is true by the definition of the tensor product of chain complexes. 
\end{proof}

We may extend the definition of $\rB_{(P,Q)}$ to a finite product of posets with top elements $P_1  \times P_2 \times \dots \times P_r$  in the obvious way to obtain a functor $\rB_{(P_1, \dots, P_r)}$.  Proposition \ref{product} carries over to this setting, and we also have the following.

\begin{prop}\label{filtration}
Let $P_1, \dots, P_r$ be finite posets with top elements and let $L$ be a $P_1 \times \dots \times P_r$ representation.  We put $Q := P_2 \times \dots \times P_r$, and for $p \in P_1$ write $L|_{p \times Q}$ for the $Q$ representation $q \mapsto L_{(p,q)}$.  Suppose that that for every $p \in P_1$  we have that $\rB_{(P_2, \dots, P_r)}(L|_{p \times Q})$ is exact.   Then  $\rB_{(P_1, \dots ,P_r)}(L)$ is exact.
\end{prop}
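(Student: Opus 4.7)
The plan is to exhibit $\rB_{(P_1, \ldots, P_r)}(L)$ as the total complex of a bounded double complex and then kill it via the spectral sequence that applies the hypothesis first. Unpacking the definition, with $Q = P_2 \times \cdots \times P_r$ and $C(p) := \rB_{(P_2,\ldots,P_r)}(L|_{p \times Q})$ regarded as a chain complex of $P_1$-modules indexed by $p \in P_1$, we have by construction
$$\rB_{(P_1, \ldots, P_r)}(L) = \rB_{P_1}\bigl(p \mapsto C(p)\bigr),$$
which is the total complex of the double complex
$$B_{s,t} \;=\; \bigoplus_{\hat 1_{P_1} > p_1 > \cdots > p_s} C(p_s)_t,$$
with horizontal differential induced by the $\rB_{P_1}$ construction (the alternating sum $\sum (-1)^i \del_i$ from Definition \ref{barconstruction}) and vertical differential inherited from the internal differential of each $C(p_s)$.

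First I would check that this is a bounded double complex: since each $P_i$ is finite, both the index $s$ (bounded by the length of the longest chain in $P_1$) and the index $t$ (bounded by the sum of the longest chain lengths in $P_2, \ldots, P_r$) are bounded. Hence both spectral sequences associated to the filtrations of the total complex converge strongly to the homology of $\rB_{(P_1,\ldots,P_r)}(L)$.

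Next I would run the spectral sequence whose $E_1$-page is obtained by taking homology with respect to the \emph{vertical} (internal) differential first. For fixed $s$, this amounts to computing
$$\bigoplus_{\hat 1_{P_1} > p_1 > \cdots > p_s} H_\bdot\bigl(\rB_{(P_2, \ldots, P_r)}(L|_{p_s \times Q})\bigr),$$
and by the hypothesis every summand vanishes. Thus $E_1 = 0$, so $E_\infty = 0$, and by the convergence of the bounded spectral sequence the total complex $\rB_{(P_1, \ldots, P_r)}(L)$ is exact.

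There is no substantive obstacle here beyond unravelling the definitions; the only mild care required is in confirming boundedness of the double complex so that convergence of the spectral sequence is automatic, and in noting that the filtration that gives the desired $E_1$ page is simply filtration by the $\rB_{P_1}$-degree. Alternatively, one can phrase the same argument without spectral sequences by exhibiting an explicit contracting homotopy built levelwise from contracting homotopies of the $C(p_s)$, but the spectral sequence formulation is cleaner and extends immediately to the $\rK$-version whenever all $P_i$ are upper CM.
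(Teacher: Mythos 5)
Your proof is correct and takes essentially the same route as the paper: the paper views $\rB_{(P_1,\dots,P_r)}(L)$ as $\rB_{P_1}$ applied levelwise to the complex $C = \rB_{(P_2,\dots,P_r)}(L)$ of $P_1$-representations, which is exact by hypothesis, and then invokes the exactness of $\rB_{P_1}$ together with totalization. You simply make the underlying double-complex spectral sequence (and the boundedness that justifies its convergence) explicit where the paper compresses it into the single phrase "$\rB_{P_1}$ is exact."
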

\begin{proof}
Applying $\rB_{(P_2, \dots, P_r)}$ to $L$, by hypothesis we obtain an exact complex of $P_1$ representations $p \mapsto \rB_{(P_2, \dots, P_r)} (L|_{p \times Q})$. Denote this complex by $C$.  Since $\rB_{P_1}$ is exact,  $\rB_{P_1}(C) \iso \rB_{(P_1,\dots, P_r)}(L)$ is exact. 
\end{proof}

\begin{remark} If the definition of $\rB_{(P,Q)}$  seems ad hoc, we remark that for any $P \times Q$ representation  $L$   there is a canonical quasi-isomorphism $EZ : \rB_{(P,Q)}(L) \simto \rB_{P \times Q}(L)$.  To construct it, note that $\rB_{(P,Q)}(L)$ is the totalization of a normalized  bicomplex of a bisimplicial abelian group,  and $\rB_{P \times Q}(L)$ is the normalized chain complex of that bisimpicial group's diagonal.  Then apply the Eilenberg--Zilber theorem stated in \cite[\S8.5]{weibel1995introduction}.   In fact, if $P$ and $Q$ are upper CM,  then so is $P \times Q$  and $EZ$ induces an \emph{isomorphism} between the subcomplexes $\rK_{(P,Q)}(L)$ and $\rK_{P \times Q}(L)$.  We use $\rB_{(P,Q)}$ rather than $\rB_{P \times Q}$, because all the properties we need are immediate for it.  
\end{remark}

\subsection{Poset homology of ideals}

\begin{defn}
Associated to any upward ideal $I$ in a poset $P$, there is a functor $\rS I: P \to \Set$  given by $$p \mapsto \begin{cases}  * & \text{if $p \in I$}
\\ \emptyset &\text{otherwise},
\end{cases}$$ where $*$ denotes the terminal set $\{\{ \emptyset\}\}$. (The action of $\rS I$ on morphisms is uniquely determined by its value on objects).  Postcomposing with the free functor from sets to $k$ modules we obtain a $P$ representation, denoted $kI$, such that  $kI_p = k$ if $p \in I$ and $kI_p = 0$ otherwise.  
\end{defn}

The order complex $\rN(I - \hat 1)$ is a subcomplex of $\rN(I)$, and by definition $\rB(kI)$ is the chain complex computing the relative simplicial homology $(\rN I, \rN( I - \hat 1))$ with coefficients in $k$.
We need to know that the homology of the bar complex applied to a principal ideal is zero.  

\begin{prop}\label{minimalelt}
	Let $P$ be a finite poset with top element $\hat 1$.  Let $x \in P, x \neq \hat 1$, and let $P_{\geq x} = \{p \in P~|~ p \geq x\}$ be the principal ideal generated by $x$.  Then the homology of $\rB(k P_{\geq x})$ vanishes.  
\end{prop}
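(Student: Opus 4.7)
The plan is to exhibit an explicit null-homotopy of $\rB(kP_{\geq x})$, showing the complex is contractible. The key structural observation is that the upward ideal $I := P_{\geq x}$ has a minimum element $x$, and $x \neq \hat 1$: so every generating chain $\hat 1 > p_1 > \dots > p_s$ of $\rB_s$ either terminates at $p_s = x$ (and cannot be extended downward) or has $p_s > x$ (in which case $x$ may be appended).

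Using this dichotomy, I would define $h \colon \rB_s \to \rB_{s+1}$ on basis chains by
\[
h(\hat 1 > p_1 > \dots > p_s, m) := \begin{cases} (-1)^{s+1}\,(\hat 1 > p_1 > \dots > p_s > x, m), & p_s > x, \\ 0, & p_s = x, \end{cases}
\]
with the convention $p_0 = \hat 1$, so that $h(\hat 1, m) = -(\hat 1 > x, m)$ in degree zero. I would then verify $dh + hd = \id$ by case analysis. When $p_s > x$: the $\partial_{s+1}$-term of $dh$ contributes the identity summand $(\hat 1 > \dots > p_s)$ via $M_{x, p_s} = \id$; the $i = s$ contributions to $dh$ and to $hd$ both yield $\pm(\hat 1 > \dots > p_{s-1} > x)$ and cancel; and the $i < s$ terms of $dh$ and $hd$ cancel pairwise with opposite signs. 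When $p_s = x$: $dh = 0$, and in $hd$ every $\partial_i$ for $i < s$ produces a chain still ending at $x$ (annihilated by $h$), while $\partial_s$ produces $\hat 1 > \dots > p_{s-1}$, which $h$ sends to a nonzero element (since $p_{s-1} > p_s = x$) reproducing the input.

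The only real obstacle is sign bookkeeping, which is unavoidable given the convention $\partial_0 = 0$ for the bar differential. Topologically, this argument is just the observation that $\rB(kI)$ can be identified with the relative simplicial chain complex $C_\bdot(\rN I, \rN(I \setminus \hat 1))$; both nerves are cones with apex $x$ (as $x$ is a common minimum of $I$ and of $I \setminus \hat 1$), hence contractible, so the long exact sequence of the pair immediately forces the relative homology to vanish. The null-homotopy $h$ above is simply the chain-level realization of the ``join with $x$'' cone contraction.
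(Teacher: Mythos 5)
Your proposal is correct, and the final remark in your write-up is essentially the paper's entire proof: it simply observes that $\rN I$ and $\rN(I - \hat 1)$ both have $x$ as a common minimum, hence are cones (contractible), so the relative homology vanishes. Your explicit null-homotopy $h$ is the chain-level realization of that cone contraction — more work than the paper chooses to write out, but the same idea, and your sign bookkeeping and case analysis check out.
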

\begin{proof}
		Since both $I$ and $I - \hat 1$  contain $x$ as their smallest element, $\rN I$ and $\rN(I - \hat 1)$ are both contractible, and so the relative homology vanishes.
\end{proof}

We will also need a simple fact about $P$-sets.  
\begin{prop}\label{PSet}
	Let $P$ be a poset with top element $\hat 1$.  Let $F: P \to \Set$ be a functor such that $F_{pq}$ is an injection for all $p \leq q \in P$.   Given $x \in F_{\hat 1}$,  let $F\langle x\rangle_p := \{y \in F_p ~|~ F_{p \hat1}(y) = x\}$.  Then $F\langle x\rangle$ is a subfunctor, and there are isomorphisms $\bigsqcup_{x \in F_{\hat 1} } F\langle x\rangle \iso F$
and $F\langle x\rangle \iso \rS I \langle x \rangle$  where $I\langle x\rangle$ is the ideal  $$ I\langle x \rangle := \{p \in P ~|~ \exists y \in F_p, F_{p \hat 1}(y) = x\}.$$
\end{prop}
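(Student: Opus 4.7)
The plan is to unpack each of the three claims and observe that they all follow directly from functoriality of $F$ together with the hypothesis that every map $F_{pq}$ is injective. None of the steps should pose serious difficulty, so this is really a bookkeeping exercise.

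First I would verify that $F\langle x\rangle$ is a subfunctor. Given $p \leq q$ and $y \in F\langle x\rangle_p$, by functoriality $F_{q\hat 1} \circ F_{pq} = F_{p\hat 1}$, so $F_{q\hat 1}(F_{pq}(y)) = F_{p\hat 1}(y) = x$, which shows $F_{pq}(y) \in F\langle x\rangle_q$. Hence the restriction of $F_{pq}$ sends $F\langle x\rangle_p$ into $F\langle x\rangle_q$ and defines the subfunctor structure.

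Next I would establish the coproduct decomposition. For any $p \in P$ and any $y \in F_p$, the element $F_{p \hat 1}(y) \in F_{\hat 1}$ is uniquely determined by $y$, so the assignment $y \mapsto F_{p\hat 1}(y)$ partitions $F_p$ into the disjoint fibers $F\langle x\rangle_p$ as $x$ ranges over $F_{\hat 1}$. This decomposition is compatible with the transition maps $F_{pq}$ by the computation in the previous paragraph, so it assembles into the claimed isomorphism $\bigsqcup_{x \in F_{\hat 1}} F\langle x\rangle \iso F$ of functors $P \to \Set$.

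Finally I would show $F\langle x\rangle \iso \rS I\langle x\rangle$. Here the injectivity hypothesis enters: if $y, y' \in F\langle x\rangle_p$, then $F_{p\hat 1}(y) = x = F_{p\hat 1}(y')$, and injectivity of $F_{p\hat 1}$ forces $y = y'$. Thus $F\langle x\rangle_p$ is either empty or a singleton, and by the definition of $I\langle x\rangle$ it is a singleton precisely when $p \in I\langle x\rangle$. One also checks that $I\langle x\rangle$ is an upward ideal: if $p \in I\langle x\rangle$ with witness $y$ and $p \leq q$, then $F_{pq}(y)$ witnesses $q \in I\langle x\rangle$ by the functoriality identity above. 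Since there is at most one natural transformation between two functors to $\Set$ whose values are singletons or empty (determined by the support), the identification $F\langle x\rangle \iso \rS I\langle x\rangle$ is automatic. The only place where a genuine hypothesis is used is the final injectivity step; everything else is formal.
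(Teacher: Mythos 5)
Your proof is correct and essentially matches the paper's (unstated but implicit) approach: partition each $F_p$ into fibers over $F_{\hat 1}$, then observe that injectivity of $F_{p\hat 1}$ forces each fiber to be a singleton or empty, which identifies $F\langle x\rangle$ with $\rS I\langle x\rangle$. You are slightly more careful than the paper in two respects: you note that the coproduct decomposition follows purely from $F_{p\hat1}$ being a function (no injectivity needed there), whereas the paper attributes the disjointness of the $F\langle x\rangle$ to the injectivity hypothesis; and you explicitly check that $I\langle x\rangle$ is upward-closed, which the paper omits. Both are minor clarifications, not differences of approach.
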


\begin{proof}
  This is straightforward. The map  $\bigsqcup_{x \in F_{\hat 1} } F\langle x\rangle \to F$ is induced by the inclusions $F\langle x\rangle \subseteq F$  and it is an injection because if $x,y$ are distinct elements of $F_{\hat 1}$  the fact that the transition maps are inclusion implies that $F\langle x\rangle \cap F\langle y\rangle = \emptyset$.  The isomorphism $\rS I\langle x \rangle \to F\langle x\rangle$ is given by defining the image of $*$ in $F\langle x\rangle_p$ to be the unique element $y$ such that   $F_{p \hat 1}(y) = x$, if it exists.  
\end{proof}

\section{Chain complexes associated to categories}\label{sec:Category}
 
In this section, we apply the definitions of $\rB_P$ and $\rK_P$  to construct chain complexes associated to modules over combinatorial categories.

\subsection{Poset vs Posetal} In our main examples, $\FI, \VI_q,$ and  $\FSop$,  the over categories of objects are equivalent to posets, but \emph{not} isomorphic to posets.  Although this may seem to be a fussy distinction, we want our complexes to be canonical so that they carry group actions.    Accordingly, we recall the following notion.

\begin{defn}
	A small category $\cJ$ is \emph{posetal}  if there is at most one morphism between any two objects of $\cJ$.    
\end{defn}

When $\cJ$ is posetal, its set of isomorphism classes $|\cJ|$ forms a poset, by defining $[j] \leq [j']$ if there is a morphism $j \to j'$. There is a canonical functor $$ \cJ \to (|\cJ|, \leq), ~ j \mapsto [j]$$
which is an equivalence of categories.  Inducing and restricting along this functor, we obtain an equivalence of categories between $\Mod \cJ$ and $\Mod |\cJ|$.    Given a $\cJ$ module $M$,  the associated $|\cJ|$ module takes an equivalence class $x \in |\cJ|$ to $$|M|_x := \frac{\bigsqcup_{j \in \cJ,~ [j] = x} M_j}{\text{ $(j, m) \sim (j',m')$ if for the unique map $f:j \to j'$, $M_f(m) = m'$}}.$$  For any $j \in \cJ$  such that $[j] = x$ there is a canonical isomorphism $M_j \to |M|_x$.  And if $x \leq y \in |\cJ|$  there is a unique $|M|_x \to |M|_y$  such for any $j,k \in \cJ$ with $[j] = x, [k] = y$  the obvious diagram commutes.

\begin{defn}
	Let $\cJ$ be a posetal category.  For any functor $F_{|\cJ|}: \Mod |\cJ| \to \cD$  then we define $F_{\cJ}$ to be the functor $F_\cJ(M) := F_{|\cJ|}(|M|)$.  
\end{defn}

Thus when $\cJ$ is posetal and has a terminal object  (equivalently $|\cJ|$ has a top element),  then we have a definition of $\rB_{\cJ}(M)$. And when $|\cJ|$ is upper CM we have a definition of $\rK_\cJ(M)$.  Similarly, given a product of posetal categories with terminal objects $\cJ_1 \times \dots \times \cJ_r$,  we may define $\rB_{(\cJ_1, \dots, \cJ_r)}$. 

Finally, suppose that $G$ is a group which acts on $\cJ$ by isomorphisms.  Then $G$ acts on $|\cJ|$, and if $M$ is a $G$-equivariant $\cJ$ representation,  $|M|$ is a $G$-equivariant $|\cJ|$ representation.  If $x \in |\cJ|, j \in \cJ$ and $[j] = x$, then for all $g \in G$ we have $g(x) = g([j])$  and $g: M_{x} \to M_{g(x)}$ is the unique morphism compatible with $g: M_{j} \to M_{g(j)}$.

\subsection{Construction of complexes}
Let $(\cC, \oplus)$ be a monoidal category, and $d \in \cC$.  Restricting along the functor $\oplus: \cC \times \cC/d \to \cC$, $(c,c' \to d) \mapsto c \oplus c'$, we obtain a functor $$\Mod \cC \to  (\Mod \cC/d)^{\cC},$$ taking a $\cC$ module $M$ to the functor $c \mapsto (M_{c \oplus -})|_{\cC/d}$. 
Note that $\id: d \to d$ is always a terminal object of $\cC/d$, so if $\cC/d$ is posetal  contains a top element.  

\begin{defn}
	If  $\cC/d$ is posetal, we define $\rB_{d}: \Mod \cC \to \Ch(\Mod \cC)$ to be the composite $$\Mod \cC \to  (\Mod \cC/d)^{\cC} \overset{\rB_{\cC/d}}\longrightarrow  \Ch(\Mod k)^\cC = \Ch (\Mod \cC).$$  Thus for every $c \in \cC$, the complex $\rB_d(M)_c \in \Ch(\Mod k)$ is $\rB_{\cC/d}(M_{c \oplus -} |_{\cC/d})$.  
\end{defn}

We define the complex $\rK_d$ similarly.  

\begin{defn}If $\cC/d$ is equivalent to an upper CM poset,  then we define $\rK_d: \Mod \cC \to \Mod \cC$ to be the composite  functor  $ \Mod \cC \to  (\Mod \cC/d)^{\cC} \overset{\rK_{\cC/d}}\longrightarrow \Ch(  \Mod \cC)$.  
\end{defn}  

The following elementary proposition shows that $\rB_d$ is always defined for categories in which all morphisms are monomorphisms.  

\begin{prop}
 	Let $\cC$ be a category.   Then  $\cC/d$ is posetal for all $d \in \cC$ if and only if every morphism of $\cC$ is a monomorphism.  
\end{prop}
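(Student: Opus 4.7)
The plan is to unpack both sides directly from the definitions. Recall that $\cC/d$ is posetal means that for any objects $f: c \to d$ and $f': c' \to d$ of $\cC/d$, there is at most one morphism between them, i.e.\ at most one $g: c \to c'$ in $\cC$ satisfying $f' \circ g = f$. And $f': c' \to d$ being a monomorphism means that $f' \circ g = f' \circ g'$ implies $g = g'$ for any parallel pair $g, g': c \to c'$. The two conditions are essentially the same statement read in two directions, so the proof is short.

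For the implication ($\Leftarrow$), I assume every morphism of $\cC$ is a monomorphism, and fix $d \in \cC$. Given objects $f: c \to d$ and $f': c' \to d$ of $\cC/d$, any two morphisms $g, g': c \to c'$ in $\cC/d$ between them satisfy $f' \circ g = f = f' \circ g'$; since $f'$ is a monomorphism this forces $g = g'$. Hence $\cC/d$ is posetal.

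For ($\Rightarrow$), I assume $\cC/d$ is posetal for every $d$, and let $h: a \to b$ be an arbitrary morphism of $\cC$; the goal is to show $h$ is a monomorphism. Consider $\cC/b$. Suppose $g, g': c \to a$ are parallel morphisms in $\cC$ with $h \circ g = h \circ g'$, and call this common composite $f: c \to b$. Then both $g$ and $g'$ are morphisms in $\cC/b$ from the object $f: c \to b$ to the object $h: a \to b$. Since $\cC/b$ is posetal, $g = g'$, so $h$ is a monomorphism.

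There is no real obstacle here; the statement is a direct translation between the two formulations, and the only thing to be careful about is keeping track of which morphism plays the role of the potential monomorphism (the target morphism $f'$, equivalently $h$, in each direction).
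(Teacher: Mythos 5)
Your proof is correct. The paper states this proposition without proof, labeling it elementary, and your argument is exactly the direct translation one would expect: in one direction, the defining equation $f' \circ g = f$ for a morphism in the over-category plus the mono condition on $f'$ forces uniqueness; in the other, testing with the common composite $h \circ g = h \circ g'$ as the object of $\cC/b$ recovers the mono condition. Nothing to add.
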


For $d \in \cC$, the group  $\Aut(d)$  acts on the category $\cC/d$  by postcomposition.  For $N$ a $\cC$ module,  the $\cC/d$ module  $N|_{\cC/d}$ is $\Aut(d)$ equivariant: an element $g \in \Aut(d)$ acts by $(f, n) \mapsto (g \circ f, n)$  for $f: c \to d$ and $n \in N_f$.    Therefore, when $\cC/d$ is posetal  (resp. $\cC/d$ is equivalent to an upper $CM$ poset) the group $\Aut(d)$ acts on  $\rB_{d}(M)$  (resp. $\rK_d(M)$) for all $\cC$ modules $M$.

Assume that $\cC/d$ is posetal.  Instead of passing through $|\cC/d|$, we may obtain a complex that is isomorphic to $\rB_d(M)$  by restricting $(M_{c \oplus -})|_{\cC/d}$  to a skeleton  $\cS \subseteq \cC/d$, and applying  $\rB_\cS$ (since $\cS$ is necessarily a poset).  Recall that a full subcategory  $\cS$ is \emph{a skeleton of $\cC/d$} if any two isomorphic objects  of $\cS$ are equal, and every object of $\cC/d$ is isomorphic to one in $\cS$.

\subsection{Examples} Now we discuss the complexes arising in our main three examples.  

\begin{defn}When $\cC, \oplus$ is a monoidal category, for any $c \in \cC$ we define $\Sigma^c M$ to be the $\cC$ module $ (\Sigma^c M)_{c'} := M_{c \oplus c'}$.  
\end{defn}

\begin{ex}
	 Let $d \in \FSop$.  Then $|\FSop/d|$ is $\rP(d)$, the lattice of set partitions of $d$.   A surjection $f: d \onto y$ corresponds to the following partition of $d$:   $i,j \in d$ are in the same block of $p$ if $f(i) = f(j)$. Two surjections are isomorphic if and only if their associated partitions agree. 

Let $M$ be an $\FSop$ module.  Then $\rK_d(M)$ takes the form $$\Sigma^{[d]} M \leftarrow \bigoplus_{p \in \rP(d), ~r(p) = 1} \Sigma^{p} M \leftarrow  \dots \leftarrow \bigoplus_{p \in \rP(d),~ r(p) = s} \Sigma^p M  \otimes H_{s}(\rN[p, \hat 1], Z_{[p, \hat 1]}) \leftarrow \dots $$ (Recall that for us the partition $p$ equals its set of blocks).   The first differential is given by summing the maps induced by the surjection $d \onto p$ for each partition $p$.  
\end{ex}

\begin{defn}
	Let $\FI$ be the category of finite sets and injections. 
\end{defn}

\begin{ex}
	Let $d \in \FI$.  Then $|\FI/d|$ is the poset of subsets of $d$, and for an $\FI$ module $M$, the complex $\rK_d(M)$ takes the form $$\Sigma^d M \leftarrow \bigoplus_{i \in d} \Sigma^{d-i} M \bigoplus_{\{i,j\} \subseteq d} \leftarrow \Sigma^{d - \{i,j\}} M  \leftarrow \dots \leftarrow M,$$ where, up to signs, the differentials are induced by the inclusions of subsets.   Notice that the action by $\rS_d$ is nontrivial: for instance $\rS_d$ acts on the highest term of the complex by the $\id_M \otimes \sgn$.  
\end{ex}

\begin{defn}\label{def:VIR} Let $R$ be a ring.  Then  $\VI_R$ is the category of $R$ modules that are isomorphic to $R^{\oplus n}$  for some $n \in \bbN$,  and injections between them.   Let $q$ be a prime power.  We define $\VI_q := \VI_R$.  
\end{defn}

\begin{ex}
 For $w \in \VI_q$ the poset $|\VI_q/w|$ is the poset of subspaces of $w$.  Then $\rK_w(M)$ takes the form $$\Sigma^w M \leftarrow  \dots \leftarrow M \otimes {\rm Stein}(w),$$  where ${\rm Stein}(w)$ is the Steinberg representation of $\GL(w)$.    The degree $s$ term is  $$\bigoplus_{v \subseteq w, ~\codim(v) = s} \Sigma^{v} M  \otimes {\rm Stein}(w/v).$$ 
\end{ex}

More generally, the complex $\rK_d$ is defined for any object $d \in \VI_L$  where $L$ is a field or a finite ring,  since as discussed in \S\ref{posetexamples}, the poset of submodules of a finitely generated module is Cohen--Macaulay. 

\begin{ex}  Let $L$ be a field and $\VI \bC_L$ be the category of vector spaces and complemented injections of \cite{putman2017representation}.  For any $V \in \VI \bC_{L}$,  the  overcategory  $\VI \bC_{L} / V$ is equivalent to the poset defined in Example \ref{splitbuildingex}.  Therefore the complex $\rK_V$ is defined.  
\end{ex}

\subsection{Rational Hilbert series}
We discuss how exactness of the complex $\rB_d(M)$ categorifes rationality, in an axiomatic setting.  (In our three examples, it is easy to see the categorification directly using $\rK_d(M)$).   The key is the following general identity. 

\begin{prop}\label{Kclass} Let $\cC$ be a small monoidal category, and let $d \in \cC$ be an object with $\cC/d$ equivalent to a finite poset.    Let $M$ be a $\cC$ module over a commutative Noetherian ring $k$ , such that $M_c$ is finitely generated for all $c \in \cC$. Then for every $c \in \cC$ we have $$ [\rB_d(M)_c] = \sum_{[f : c' \to d] \in |\cC/d|}  \mu([f]) [M_{c \oplus c'}]$$ in the Grothendieck group of finitely generated $k$ modules.  
\end{prop}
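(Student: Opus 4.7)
The plan is to compute the class $[\rB_d(M)_c]$ in the Grothendieck group directly, by taking the alternating sum of chain groups and recognizing the resulting coefficient of each $[M_{c \oplus c'}]$ as a M\"obius number.

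First I would unfold the definition: $\rB_d(M)_c$ equals $\rB_{|\cC/d|}(N)$, where $N$ is the representation of $P := |\cC/d|$ sending $[f : c' \to d]$ to $M_{c \oplus c'}$ (this is well-defined on isomorphism classes in $\cC/d$ because an isomorphism $c' \to c''$ over $d$ induces $c \oplus c' \iso c \oplus c''$ and hence equal classes). Since $\cC/d$ is equivalent to a finite poset and $k$ is Noetherian with each $M_y$ finitely generated, every chain group is a finite direct sum of finitely generated $k$-modules, so $[\rB_d(M)_c]$ makes sense. Summing degreewise and swapping the order of summation yields
$$[\rB_d(M)_c] = \sum_{p \in P} [N_p] \cdot \sum_{s \geq 0} (-1)^s \, \#\{\hat 1 = p_0 > p_1 > \dots > p_s = p\},$$
so it remains to identify the inner coefficient with $\mu(p)$.

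Second, I would make the following geometric observation. A chain $\hat 1 = p_0 > p_1 > \dots > p_s = p$ is precisely an $s$-simplex of the order complex $\rN[p, \hat 1]$ that uses both endpoints of the interval $[p, \hat 1]$, or equivalently an $s$-simplex of $\rN[p, \hat 1]$ which is not contained in the subcomplex $Z_{[p, \hat 1]} = \rN([p, \hat 1] - p) \cup \rN([p, \hat 1] - \hat 1)$. Therefore these chains provide a basis for the relative simplicial chain complex $C_\bullet(\rN[p, \hat 1], Z_{[p, \hat 1]})$, and the alternating sum of their counts is the Euler characteristic $\chi(\rN[p, \hat 1], Z_{[p, \hat 1]})$, which by definition equals $\mu(p)$. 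Combining with $N_p = M_{c \oplus c'}$ for $p = [f : c' \to d]$ gives the stated identity.

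I expect no substantial obstacle: the argument is essentially the standard reformulation of Philip Hall's M\"obius formula in this categorical setting. The only minor checks are the boundary case $p = \hat 1$ (covered by the empty chain at $s = 0$, consistent with $\mu(\hat 1) = 1$ since $\rN[\hat 1, \hat 1]$ is a point and $Z_{[\hat 1, \hat 1]} = \emptyset$) and the independence of the chain count from the choice of representative $c'$ in $[f]$, both of which are immediate.
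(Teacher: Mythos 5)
Your proof is correct and takes essentially the same route as the paper: both neglect the differential, group the chain groups of $\rB_d(M)_c$ according to the terminal element $[f]$ of the chain, and identify the resulting integer coefficient as $\chi(\rN[p,\hat 1], Z_{[p,\hat 1]}) = \mu(p)$. Your version simply spells out the identification of the coefficient with a relative simplex count more explicitly than the paper, which refers back to the $C(p)$ decomposition from \S\ref{sec:Koszul}.
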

\begin{proof}
	We may neglect the differentials of $\rB_d(M)$.  Then, as in \S\ref{sec:Koszul},  the complex $\rB_d(M)$ splits as a sum of $C([f]) \otimes M_{c \oplus c'}$  over $[f: c' \to d] \in |\cC/d|$.    By the definition of $\mu$ as an Euler characteristic,   $[ C([f])] = \mu([f]) [k]$, so the result follows.
\end{proof}

We specialize to the following situation, which holds in our main examples. 

 Let $k$ be a field, and let $\cC$ be a monoidal category, all of whose morphisms are monomorphisms.  Its set of isomorphism classes $|\cC|$ is a monoid,  suppose that we have an isomorphism $s:(|\cC|, \oplus) \to (\bbN, +)$, which we use to identify isomorphism classes of objects with natural numbers.   Suppose that if there is a morphism from the $n^{\rm th}$ object to the $m^{\rm th}$ one, then $n \leq m$.   For any $f: c \to d \in \cC/d$  define $r(f) = s(d) - s(c)$.

\begin{defn}
 	The \emph{Hilbert series}  of a pointwise finite dimensional $\cC$ module  (relative to $s$) is $$H_M(t):= \sum_{n \in \bbN}  \dim( M_n) ~ t^{n} \in \bbZ[[t]]. $$
\end{defn}

\begin{defn}
 	 We define the \emph{Whitney polynomial} of $\cC/d$ to be  $$W_{\cC/d}(t) := \sum_{p \in |\cC/d|}   \mu(p)  ~t^{r(p)}  \in \bbZ[t].$$
\end{defn}

In the cases $\cC = \FI, \VI_q,$ and $\FSop$,  this definition of the Whitney polynomial agrees with the previous one.  The following statement shows that when $\rB_d(M)$ is exact the Hilbert series of $M$ is rational.

\begin{prop}
	Let $M$ be a $\cC$ module, and suppose that $\rB_{d_1} \circ \dots \circ \rB_{d_r} (M)_c$ is exact for all  $c \in \cC$ with $s(c) \gg 0$.   Then $H_M(t)$ is rational, with denominator $\prod_{i = 1}^r W_{\cC/d_i}(t)$.
\end{prop}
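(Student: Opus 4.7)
The strategy is to promote the Grothendieck--group identity in Proposition \ref{Kclass} to a generating function identity relating the Hilbert series of $\rB_d(M)$ and $M$, and then to iterate. Throughout, for a bounded complex $C$ of pointwise finite--dimensional $\cC$--modules, define the (Euler characteristic) Hilbert series $H_C(t) := \sum_{c} \left(\sum_{i} (-1)^i \dim_k C^i_c\right) t^{s(c)}$; this is a formal power series in $\bbZ[[t]]$, and for a concentrated complex it reduces to the usual $H_M(t)$. Note that $\rB_d(M)$ is a bounded complex, and since $\cC/d$ is finite each term is a finite sum of shifts of $M$, so pointwise finite-dimensionality is preserved.

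The first step is to prove the identity
\begin{equation*}
W_{\cC/d}(t)\, H_C(t) \;=\; t^{s(d)}\, H_{\rB_d(C)}(t) \;+\; q_{d,C}(t),
\end{equation*}
where $q_{d,C}(t) \in \bbZ[t]$ has degree strictly less than $s(d)$. By Proposition \ref{Kclass} (applied degreewise in $C$ and summed with signs), the Euler characteristic of $\rB_d(C)_c$ equals $\sum_{[f : c' \to d]} \mu([f]) \chi(C_{c \oplus c'})$. Multiplying by $t^{s(c)} = t^{s(c \oplus c') - s(c')}$ and summing over $c$, one gets
\begin{equation*}
H_{\rB_d(C)}(t) \;=\; \sum_{[f : c' \to d]} \mu([f])\, t^{-s(c')}\!\left(H_C(t) - \sum_{m < s(c')} t^m \chi(C_{m})\right).
\end{equation*}
Here I used that $s : |\cC| \simto \bbN$, so each fibre of $s$ contains a unique isomorphism class. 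Multiplying by $t^{s(d)}$ and recognising $\sum_{[f]} \mu([f]) t^{s(d) - s(c')} = W_{\cC/d}(t)$ on the first term yields the displayed identity, with $q_{d,C}(t)$ the finite polynomial assembled from the ``missing'' low-degree contributions; since $s(c') \leq s(d)$, each of these contributes monomials of degree $< s(d)$.

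From here I would iterate. Setting $C_0 = M$ and $C_j := \rB_{d_{r-j+1}}(C_{j-1})$, the identity gives, after multiplying the $j$th relation by $\prod_{i < r-j+1} W_{\cC/d_i}(t)$ and summing telescopically,
\begin{equation*}
\Bigl(\prod_{i=1}^r W_{\cC/d_i}(t)\Bigr) H_M(t) \;=\; t^{\sum_i s(d_i)}\, H_{\rB_{d_1}\circ \cdots \circ \rB_{d_r}(M)}(t) \;+\; Q(t),
\end{equation*}
with $Q(t) \in \bbZ[t]$ a polynomial (a $\bbZ[t]$-linear combination of the individual $q_{d_i, C_{r-i}}$). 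By the exactness hypothesis, the alternating sum of dimensions of $\rB_{d_1}\circ \cdots \circ \rB_{d_r}(M)$ vanishes at every $c$ with $s(c) \gg 0$, so $H_{\rB_{d_1}\circ \cdots \circ \rB_{d_r}(M)}(t) \in \bbZ[t]$. The right--hand side is thus a polynomial, and $H_M(t)$ is rational with denominator dividing $\prod_{i=1}^r W_{\cC/d_i}(t)$, as claimed. The only subtle point is bookkeeping: one must verify that pointwise finite--dimensionality persists through each $\rB_{d_i}$ and that the polynomial corrections $q_{d,C}$ produced at each stage indeed have degree $< s(d_i)$, so that no spurious infinite tail appears when they accumulate. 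Both facts follow from the finiteness of $|\cC/d_i|$ and the bound $s(c') \leq s(d_i)$ for $[f : c' \to d_i] \in |\cC/d_i|$.
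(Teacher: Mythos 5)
Your proof is correct and takes essentially the same approach as the paper: both use Proposition \ref{Kclass} to express $\chi(\rB_d(M)_c)$ as a M\"obius-weighted sum, multiply through by a power of $t$ to recognize $W_{\cC/d}(t) H_M(t)$ up to a polynomial correction, and conclude from exactness. You are somewhat more explicit than the paper in two places: you track the low-degree polynomial correction $q_{d,C}(t)$ and its degree bound precisely (the paper elides this with the $\approx$ notation), and you spell out the iteration over $d_1, \dots, d_r$ telescopically where the paper simply remarks that ``the general case is similar.'' These are refinements of the same argument, not a different route.
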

\begin{proof}
		We consider the special case $r = 1$, and put $d := d_1$.   We write $h_1(t) \approx h_2(t) \in \bbZ[[t]]$ if their difference is a polynomial.  Then we have that by hypothesis \begin{align}0 \approx \sum_{n \geq 0} \chi(\rB_d(M)_n) t^{n + s(d)} & = \sum_{n \geq 0} \sum_{[f: c \to d] \in |\cC/d|} \mu(f) \chi(M_{n + s(c)}) ~ t^{n + s(d)}    \nonumber
\\ \nonumber & =  ~\sum_{[f: c \to d] \in |\cC/d|} ~\sum_{m \geq s(c)} \chi(M_m) \mu(f) t^{m + r(f)}  \approx H_M(t)  W_{\cC/d}(t),
\end{align}
where the first equality follows from Proposition \ref{Kclass}.  (Here $\chi(\rB_d(M)_n)$ denotes $\chi(\rB_d(M)_c)$ for some $c \in \cC$ with $s(c) = n$).  Therefore $H_M(t) W_{\cC/d}(t)$ is a polynomial, and so $H_M(t)$ is rational with denominator $W_{\cC/d}(t)$.    The general case is similar,  iterating applications of Proposition \ref{Kclass}.  
\end{proof}

\subsection{Composites and products}\label{subsec:prodcomposite}
Suppose that $\cC$ is a small monoidal category and every morphism is a monomorphism.  Given $d_1, \dots, d_r \in \cC$  we may iterate $\rB_{d_i}$  to obtain a complex  $$\rB_{d_1} \circ \rB_{d_2} \circ \dots \circ \rB_{d_r}(M)$$
for every $M \in \Mod \cC$.  When the over categories of $\cC$ are equivalent to upper CM posets, we may do the same with $\rK_{d_i}$. Alternately, we may construct this complex in one step, by using the functor $$\oplus: \cC \times \cC/d_{1} \times \dots \times \cC/d_r \to  \cC$$
to restrict a $\cC$ module to an object of $\Mod (\cC/d_1 \times \dots \times \cC/d_r)^{\cC}$  and applying the construction  $\rB_{(\cC/d_1, \dots, \cC/d_r)}$.  The complex we obtain is isomorphic to $\rB_{d_1} \circ \dots \circ \rB_{d_r}(M)$.  

Similarly, we note the following elementary proposition.  
\begin{prop}\label{symmonoidal}
	Suppose that $\cC, \oplus$ is a symmetric monoidal small category, and every morphism is a monomorphism.  Then there are natural isomorphisms  $\rB_d(\Sigma^c M) \iso \Sigma^c \rB_d(M)$ and $\rB_{d_1} \circ \rB_{d_2}(M) \iso \rB_{d_2} \circ \rB_{d_1}(M)$  for every $c,d,d_1,d_2 \in \cC$.  
\end{prop}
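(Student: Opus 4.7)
The plan is to unwind the definitions and observe that both isomorphisms come from the structural isomorphisms of the symmetric monoidal product $\oplus$. Recall that, by definition,
\[
\rB_d(M)_{c'} \;=\; \rB_{\cC/d}\bigl( (M_{c'\oplus -})|_{\cC/d} \bigr),
\]
so the complex $\rB_d(M)_{c'}$ is assembled from the values of $M$ on objects of the form $c'\oplus c''$ with $c''\to d$ in $\cC$. The hypothesis that every morphism of $\cC$ is a monomorphism guarantees that each $\cC/d_i$ is posetal, so the constructions of the previous section all apply.

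For the first isomorphism $\rB_d(\Sigma^c M) \iso \Sigma^c \rB_d(M)$, one simply evaluates both sides at an arbitrary object $c'\in\cC$ and traces through the definitions. One finds
\[
\rB_d(\Sigma^c M)_{c'} \;=\; \rB_{\cC/d}\bigl( (f:c''\to d) \mapsto M_{c\oplus(c'\oplus c'')} \bigr),
\]
while
\[
\bigl(\Sigma^c \rB_d(M)\bigr)_{c'} \;=\; \rB_d(M)_{c\oplus c'} \;=\; \rB_{\cC/d}\bigl( (f:c''\to d) \mapsto M_{(c\oplus c')\oplus c''} \bigr).
\]
The associator of $\oplus$, together with the functoriality of $M$, gives a canonical isomorphism between these two $\cC/d$ modules; applying $\rB_{\cC/d}$ yields the desired isomorphism of chain complexes, visibly natural in $c'$ and $M$.

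For the second isomorphism $\rB_{d_1}\circ\rB_{d_2}(M) \iso \rB_{d_2}\circ\rB_{d_1}(M)$, I would invoke the description from \S\ref{subsec:prodcomposite}: both sides are canonically isomorphic to the complex obtained by restricting $M$ along the functor $\cC \times \cC/d_1 \times \cC/d_2 \to \cC$ given by $(c,f_1:c_1\to d_1,f_2:c_2\to d_2) \mapsto c \oplus c_1 \oplus c_2$, followed by $\rB_{(\cC/d_1, \cC/d_2)}$ (respectively $\rB_{(\cC/d_2, \cC/d_1)}$ after permuting the two middle factors). The symmetry isomorphism $c_1 \oplus c_2 \iso c_2 \oplus c_1$ identifies the two restrictions of $M$, and the canonical identification $\rB_{(\cJ_1,\cJ_2)} \iso \rB_{(\cJ_2,\cJ_1)}$ recorded just after the definition of $\rB_{(P,Q)}$ swaps the order in which the iterated bar construction is carried out. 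Composing these two canonical isomorphisms produces the required natural isomorphism.

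There is no real obstacle: each step is a direct consequence of unwinding definitions together with the coherence isomorphisms of a symmetric monoidal category and the Fubini-type identification of $\rB_{(\cJ_1,\cJ_2)}$ with $\rB_{(\cJ_2,\cJ_1)}$. The only mild subtlety is keeping track of the posetalization functor $|\cdot|$ when passing between $\cC/d$ and the underlying poset, but this is purely bookkeeping because each $\cC/d_i$ is posetal.
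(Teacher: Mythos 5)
Your proof is correct, and since the paper states Proposition \ref{symmonoidal} as ``elementary'' without supplying an argument, you are filling in details the paper leaves implicit. Your approach is exactly the intended one: for the first isomorphism you unwind $\rB_d(\Sigma^c M)_{c'}$ and $(\Sigma^c \rB_d M)_{c'}$ to the two $\cC/d$ modules $f\mapsto M_{c\oplus(c'\oplus c'')}$ and $f\mapsto M_{(c\oplus c')\oplus c''}$ and identify them via the (natural) associator before applying $\rB_{\cC/d}$; for the second you appeal to the ``one-step'' reformulation in \S\ref{subsec:prodcomposite} together with the symmetry isomorphism of $\oplus$ and the Fubini identification $\rB_{(P,Q)}\iso\rB_{(Q,P)}$ given after Definition of $\rB_{(P,Q)}$. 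One small observation worth recording: the first isomorphism uses only the monoidal (associator) structure and naturality of $M$, not symmetry; only the second genuinely needs the braiding. Also, when you invoke the identification $\rB_{(P,Q)}\iso\rB_{(Q,P)}$ you should note, as the paper does, that the identification holds only up to the sign conventions built into forming the total complex, which the paper explicitly declares it does not fix; this is harmless because the results downstream are insensitive to that choice, but it is the one place where ``canonical isomorphism'' should be read with that caveat.
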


\subsection{Categorifications for $\FI$ and $\VI_q$}

 Let $k$ be a commutative ring.  Let $\cC$ be a category such that every morphism is a monomorphism, and every endomorphism is an isomorphism.  Because every endomorphism is an isomorphism, the isomorphism classes of $\cC$ inherit a natural partial order defined by $[c] \leq [d]$ if there is a map from $c$ to $d$.   We write $\bP(c)$ for the projective $\cC$ module $d \mapsto k \cC(c,d)$.  Then we have the following general result.

 \begin{thm}\label{generalprojprop}
Let $c \in \cC$.  Suppose that $\Res^{\oplus } \bP(c)$  splits as a sum of $\cC \times \cC$  modules of the form $\bP(c_1) \otimes_k \bP(c_2)$ where $c_i \in \cC$ and  $[c_i] \leq [c]$.   Then   $\rB_d(\bP(c))$ is exact for all $d \in \cC$ such that $[d] > [c]$.  
\end{thm}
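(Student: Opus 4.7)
The plan is to reduce the statement to a computation about principal upward ideals in $|\cC/d|$, using the splitting hypothesis plus Propositions \ref{minimalelt} and \ref{PSet}.

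First, I would unwind the definition of $\rB_d$: it is $\rB_{\cC/d}$ applied in the second factor to $\Res^{\oplus} \bP(c)$, regarded as a functor $\cC \to \Mod(\cC/d)$. By the splitting hypothesis, $\Res^{\oplus} \bP(c)$ decomposes as a sum of outer tensor products $\bP(c_1) \boxtimes \bP(c_2)$ with $[c_i] \leq [c]$, so $\rB_d(\bP(c))$ decomposes as a sum of complexes of the form $\bP(c_1) \otimes \rB_{\cC/d}(\bP(c_2)|_{\cC/d})$. It therefore suffices to prove that $\rB_{\cC/d}(\bP(c_2)|_{\cC/d})$ is exact whenever $[c_2] \leq [c] < [d]$.

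Next I would pick a skeleton $\cS \subseteq \cC/d$ containing $\id_d$ as its top element $\hat 1$, and study $\bP(c_2)|_{\cS}$. As a functor $\cS \to \Set$ before linearization, this sends $(f : e \to d)$ to $\cC(c_2,e)$, with transition maps given by postcomposition. Since every morphism in $\cC$ is a monomorphism, all transition maps are injective, so Proposition \ref{PSet} applies. Decomposing according to the value at $\hat 1 = \id_d$, I get a disjoint union indexed by $g \in \cC(c_2,d)$, where the summand $F\langle g\rangle$ records factorizations of $g$ through $f$. A direct check identifies the supporting ideal $I\langle g\rangle$ with the principal upward ideal $\cS_{\geq [g]}$, because $g = f \circ h$ is precisely the data of a morphism $[g] \to (f:e\to d)$ in $\cS$. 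Linearizing and using that $\rB$ commutes with direct sums, I reduce to showing $\rB(k \cS_{\geq [g]})$ is acyclic for every $g \in \cC(c_2,d)$.

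The final step is to apply Proposition \ref{minimalelt}, which requires $[g] \neq \hat 1$, i.e.\ that $g$ is not an isomorphism in $\cC/d$. This is exactly where the hypothesis $[d] > [c]$ enters: if $g : c_2 \to d$ were an isomorphism, then $[c_2] = [d]$, contradicting $[c_2] \leq [c] < [d]$ in the poset of isomorphism classes. Hence each $[g]$ is a strict minimum of $\cS_{\geq [g]}$ sitting below $\hat 1$, Proposition \ref{minimalelt} gives the required acyclicity, and the whole complex $\rB_d(\bP(c))$ is exact. I do not see a serious obstacle; the subtlety to watch is the posetal-versus-poset distinction when passing between $\cC/d$ and its skeleton, and making sure the monomorphism hypothesis is used to justify the injectivity needed for Proposition \ref{PSet}.
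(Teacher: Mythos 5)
Your proof is correct and follows essentially the same path as the paper: split $\rB_d(\bP(c))$ using the hypothesis into terms $\bP(c_1)\otimes \rB_{\cC/d}(\bP(c_2)|_{\cC/d})$, decompose $\bP(c_2)|_{\cC/d}$ into pieces supported on principal ideals indexed by $g \in \cC(c_2,d)$, and apply Proposition~\ref{minimalelt} after observing that $[c_2] < [d]$ forces $[g] < \hat 1$. The only cosmetic difference is that you route the decomposition through Proposition~\ref{PSet}, whereas the paper writes the Yoneda decomposition $\bP(c_2)|_{\cC/d} \iso \bigoplus_{f: c_2 \to d} k(\cC/d)(f,-)$ directly and identifies each summand with a principal ideal under the equivalence $\Mod \cC/d \simeq \Mod |\cC/d|$.
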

\begin{proof}
	 
   Fix $c' \in \cC$.  To show that $\rB_d(\bP(c))_{c'}$ is exact, we consider the  representation $\cC/d \to \Mod k$  $$ (b \to d)  \mapsto  \bP(c)_{c' \oplus b},$$ from which $\rB_d(\bP(c))_{c'}$ is constructed by applying $\rB_{\cC/d}$.     By our hypothesis, this representation splits as a sum of representations of the form  $(\bP(c_1)_{c'}) \otimes_k \bP(c_2)|_{\cC/d}$  for some choice of $c_i \in \cC$ with $[c_i ]\leq [c] < [d]$.   Since  $(\bP(c_1)_{c'})$ is a free $k$ module and $\rB_{\cC/d}(-)$ preserves sums and tensor products with $k$-modules,  it suffices to show that the homology of $\rB_{\cC/d}(\bP(c_2)|_{\cC/d})$ vanishes for all $c_2 \in \cC$ with $[c_2] < [d]$.  
Further, note that $$\bP(c_2)|_{\cC/d} = \bigoplus_{f: c_2 \to d } k(\cC/d)(f, -).$$
Under the equivalence between $\cC/d$ modules and $|\cC/d|$ modules,   $k(\cC/d)(f,-)$ corresponds to the $|\cC/d|$ module associated to the principal ideal $\{ x \in |\cC/d|,  ~ x \geq [f] \}$.   Since $[c_2] < [d]$,  $f$ is never an isomorphism, and so $[f] < \hat 1_{|\cC/d|}$.  So  by Proposition \ref{minimalelt}, it follows that $\rB_{\cC/d}(k\cC/d(f,-))$ is exact.  Thus $\rB_d(\rP(c))$ is exact.  
\end{proof}

Theorem \ref{generalprojprop} is formal, and can be extended to a general setting where  we no longer assume every morphism of $\cC$ is a monomorphism, as discussed in \S\ref{extended}.   Using it, we may prove our categorification theorems for $\FI$ and $\VI_q$ modules.

\begin{thm}\label{VIq}
Let $k$ be a commutative noetherian ring and let $q$ be a prime power that is invertible in $k$.  Let $M$ be a finitely generated $\VI_q$ module over $k$.  Then there exists $D \geq 0$ and $n \geq 0$   such for all $d \geq D$  and $W \in \VI_q$ with $\dim W \geq  n$ the homology of $$(\rK_{\bbF_q^{d}}(M))_W$$ vanishes.
\end{thm}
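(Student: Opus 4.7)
The plan is to combine Theorem \ref{generalprojprop} with Nagpal's shift theorem \cite{nagpal2019vi} for $\VI_q$ modules in non-describing characteristic.

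First, I would verify that $\VI_q$ satisfies the hypothesis of Theorem \ref{generalprojprop}: for every $V \in \VI_q$, the $\VI_q \times \VI_q$-module $\Res^{\oplus} \bP(V)$ decomposes as a direct sum of tensor products of projectives of source dimension at most $\dim V$. The key combinatorial input is the natural bijection
$$\VI_q(V, W \oplus W') \;\; \longleftrightarrow \;\; \bigsqcup_{\substack{V_1, V_2 \subseteq V \\ V_1 \cap V_2 = 0}} \VI_q(V/V_2, W) \times \VI_q(V/V_1, W'),$$
sending an injection $f : V \hookrightarrow W \oplus W'$ to $V_1 := f^{-1}(W)$, $V_2 := \ker(\pi_W \circ f)$, and the induced injections $V/V_2 \hookrightarrow W$ and $V/V_1 \hookrightarrow W'$. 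Since the pair $(V_1,V_2)$ is intrinsic to $f$ and independent of the target, this bijection is functorial in $(W, W')$, producing the decomposition
$$\Res^{\oplus} \bP(V) \;\cong\; \bigoplus_{\substack{V_1, V_2 \subseteq V \\ V_1 \cap V_2 = 0}} \bP(V/V_2) \otimes_k \bP(V/V_1),$$
with $\dim(V/V_i) \leq \dim V$. Theorem \ref{generalprojprop} then yields that $\rK_{\bbF_q^d}(\bP(V))$ is exact whenever $d > \dim V$.

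Second, I would invoke Nagpal's shift theorem: since $q$ is invertible in $k$ and $M$ is finitely generated, there exist integers $N$ and $D_0$ such that $\Sigma^{\bbF_q^N} M$ admits a finite filtration (or finite resolution) whose associated graded is a direct sum of standard projectives $\bP(V_{i,j})$ with $\dim V_{i,j} \leq D_0$. Third, because $\rK_{\bbF_q^d}$ is built out of direct sums of shifts $\Sigma^{V'}$ and is therefore an exact functor, it carries this filtration/resolution to a filtration/bicomplex whose associated graded is exact by step one (provided $d > D_0$). A standard spectral-sequence argument then yields exactness of $\rK_{\bbF_q^d}(\Sigma^{\bbF_q^N} M)$, which by Proposition \ref{symmonoidal} equals $\Sigma^{\bbF_q^N} \rK_{\bbF_q^d}(M)$. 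Setting $D := D_0 + 1$ and $n := N$, we conclude that $\rK_{\bbF_q^d}(M)_W$ is exact for $d \geq D$ and $\dim W \geq n$.

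The main obstacle is the combinatorial identity in step one. Unlike the $\FI$ case, where an injection of sets into a disjoint union splits tautologically, an injection of vector spaces $V \hookrightarrow W \oplus W'$ does not split as a pair of injections, since its projections onto each summand may fail to be injective. The correct parameterization requires double-indexing by the pair of subspaces $(V_1, V_2)$ satisfying the transversality condition $V_1 \cap V_2 = 0$; this condition is precisely what ensures each summand is a clean tensor product of standard projectives, rather than involving auxiliary $\Hom$ factors coming from the extension data of $\phi|_{V_1}$ to all of $V$.
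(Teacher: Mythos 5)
Your overall plan---verify the hypothesis of Theorem~\ref{generalprojprop} via a combinatorial decomposition, then invoke Nagpal's shift theorem and a spectral sequence---matches the paper's proof, and your explicit bijection
$$\VI_q(V, W \oplus W') \;\cong\; \bigsqcup_{V_1 \cap V_2 = 0} \VI_q(V/V_2, W) \times \VI_q(V/V_1, W')$$
with $V_1 = \ker(\pi_{W'} \circ f)$, $V_2 = \ker(\pi_W \circ f)$, is exactly the identity the paper uses (the paper states the identity without spelling out the maps, and your check of functoriality in $(W,W')$ is a genuine addition; the observation at the end about why the naive $\FI$-style split fails is also correct).

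There is, however, a real gap in your second step. Nagpal's shift theorem does \emph{not} provide a filtration or resolution of $\Sigma^{\bbF_q^N} M$ whose associated graded pieces are direct sums of standard projectives $\bP(V_{i,j})$. It provides pieces that are \emph{induced} modules, i.e.\ of the form $\bigoplus_n W_n \otimes_{\GL_n(\bbF_q)} \bP(\bbF_q^n)$ for finitely many nonzero $\GL_n(\bbF_q)$-representations $W_n$. Since $k$ is only assumed to be noetherian with $q$ invertible (so $|\GL_n(\bbF_q)|$ need not be invertible in $k$), $W_n$ is in general not a free or projective $k[\GL_n]$-module, and an induced module is not a direct sum of $\bP(V)$'s. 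Consequently, exactness of $\rK_{\bbF_q^d}(\bP(V))$ for $d > \dim V$, which you establish correctly, does not directly imply exactness of $\rK_{\bbF_q^d}$ applied to the associated graded. The paper bridges this by observing that $\GL_n(\bbF_q)$ acts freely on $\VI_q(\bbF_q^n, V)$, so each $\bP(\bbF_q^n)_V$ is a free (hence flat) $k[\GL_n(\bbF_q)]$-module; therefore the complex $\rB_{\bbF_q^d}(\bP(\bbF_q^n))$ is a complex of flat $\GL_n$-modules, and applying $W_n \otimes_{\GL_n(\bbF_q)} -$ to an exact complex of flat modules preserves exactness. Without this flatness step your argument handles only modules that shift to have a free (not merely induced) associated graded, which is strictly weaker than what Nagpal's theorem supplies and insufficient for the conclusion.
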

\begin{proof}  By Proposition \ref{KBarcomparison} we may prove the statement for $\rB_{\bbF_q^d}(M)$.   Since the direct sum functor is symmetric monoidal, by Proposition \ref{symmonoidal} have that $\rB_{\bbF_q^d}(\Sigma^{V} M) \iso \Sigma^{V} \rB_{\bbF_q^d}(M)$.  Thus to prove the statement for $M$ it suffices to  prove  that  $\rB_{\bbF_q^d}(\Sigma^V M)$  is exact for $\dim V \gg 0$ and $d \gg 0$.  
Theorem $1.2$ of Nagpal \cite{nagpal2019vi} states that if $q$ is invertible in $k$ there is $V \in \VI_q$ such that $\Sigma^V M$ admits a finite filtration whose associated graded modules are induced.  The spectral sequence for this filtration reduces us to the case where $M$ is an induced module.   Recall that $M$ is \emph{induced} if there is a sequence of $\GL_n(\bbF_q)$ representations,  $(W_n)_{n \in \bbN}$ with $W_n  = 0$ for $n \gg 0$  such that $$M \iso \bigoplus_{n \in \bbN}  W_n  \otimes_{\GL_n(\bbF_q)} \bP(\bbF_q^n).$$  We have that $$\rB_{\bbF_q^d}(M) \iso \bigoplus_n W_n\otimes_{\GL_n(\bbF_q)} \rB_{\bbF_q^d}(\bP(\bbF_q^n)).$$ For every $V \in \VI_q$  the $k[\GL_n(\bbF_q)]$ module $\bP(\bbF_q^n)_V$ is flat, because $\GL_n(\bbF_q)$ acts freely on the set of linear injections $\VI(\bbF_q^n, V)$.  Thus $\rB_{\bbF_q^d}(\bP(\bbF_q^n))$ is a complex of flat $\GL_n(\bbF_q)$ modules, and so it suffices to prove that the homology of $\rB_{\bbF_q^d}(\bP(\bbF_q^n))$ vanishes for $d > n$. 

To complete the proof, we verify the hypothesis of Theorem \ref{generalprojprop} for $(\VI_q, \oplus)$.    Indeed we have that for $U,V_1, V_2 \in \VI_q$ $$ \VI_q(U, V_1 \oplus V_2) = \bigsqcup_{A_1, A_2 \subseteq U,~ A_1 \cap A_2 = 0}  \VI_q(U/A_1, V_1) \times  \VI_q(U/A_2, V_2) .$$
Thus $$\Res^{\oplus}(\bP(U)) \iso \bigoplus_{A_1, A_2 \subseteq U,  A_1 \cap A_2 = 0} \bP(U/A_1) \otimes \bP(U/A_2),$$ and $U/A_i \leq U$ in the order on the set of objects of $\VI_q$.    
\end{proof}

The analog for $\FI$ modules has an identical proof, using the identity $$\FI(x, a \sqcup b) = \bigsqcup_{x = r \sqcup s} \FI(r,a) \times \FI(s,b),$$ to apply Theorem \ref{generalprojprop} and Nagpal's shift theorem for $\FI$ modules  \cite[Theorem A]{nagpal2015fi}.

\begin{thm}\label{FI}
 Let $k$ be a commutative noetherian ring.   Let $M$ be a finitely generated $\FI$ module over $K$.  Then there exist $D \geq 0$ and $n \geq 0$   such that  for every $d \geq D$ the complex $(\rK_{[d]}(M))_x$ is exact for all $x \in \FI$  of size $\geq n$. 
\end{thm}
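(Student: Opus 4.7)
The plan is to mirror the proof of Theorem \ref{VIq}, substituting Nagpal's shift theorem for $\FI$ modules \cite{nagpal2015fi} for its $\VI_q$ analog, and using that disjoint union is symmetric monoidal on $\FI$ in the same way that $\oplus$ is on $\VI_q$.

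First I would invoke Proposition \ref{KBarcomparison} to reduce the problem to showing exactness of $\rB_{[d]}(M)_x$ for $|x|$ and $d$ large. Since $(\FI, \sqcup)$ is symmetric monoidal and every morphism is a monomorphism, Proposition \ref{symmonoidal} yields a natural isomorphism $\rB_{[d]}(\Sigma^{[m]} M) \iso \Sigma^{[m]} \rB_{[d]}(M)$, so it suffices to prove exactness of $\rB_{[d]}(\Sigma^{[m]} M)$ in high enough degree after replacing $M$ with a sufficiently large shift. Next, Nagpal's shift theorem \cite[Theorem A]{nagpal2015fi} supplies an $m \geq 0$ such that $\Sigma^{[m]} M$ admits a finite filtration whose associated graded pieces are induced $\FI$ modules. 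Running the spectral sequence of this filtration through the exact functor $\rB_{[d]}$ reduces the problem to the case that $M$ is itself induced, that is, $M \iso \bigoplus_{n} W_n \otimes_{\rS_n} \bP([n])$ with $W_n = 0$ for $n \gg 0$.

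To handle the induced case, I would argue that $\bP([n])_x = k \FI([n], x)$ is a free $k[\rS_n]$ module (as $\rS_n$ acts freely on $\FI([n], x)$ by precomposition), hence $\rB_{[d]}(\bP([n]))$ is a complex of flat $\rS_n$ modules, and by commuting $W_n \otimes_{\rS_n}(-)$ past homology it suffices to show that $\rB_{[d]}(\bP([n]))$ is exact whenever $d > n$. This is exactly the hypothesis verified by Theorem \ref{generalprojprop}: using the elementary identity
\[
\FI(U,\, A \sqcup B) \;=\; \bigsqcup_{U = S_1 \sqcup S_2}  \FI(S_1, A) \times \FI(S_2, B),
\]
the restriction $\Res^\sqcup \bP(U)$ splits as a direct sum of $\bP(S_1) \otimes_k \bP(S_2)$ with $|S_i| \leq |U|$. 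For $d > n$ we have $[n] < [d]$ in the order on isomorphism classes of $\FI$, so Theorem \ref{generalprojprop} applies and gives exactness of $\rB_{[d]}(\bP([n]))$ at every object of $\FI$.

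Combining these steps, if $M$ is finitely generated then the Nagpal-type filtration on $\Sigma^{[m]} M$ forces us to take $D$ strictly larger than the generation degrees of the induced pieces, and the shift $m$ plus the number of chain degrees bounds the size $n$ beyond which exactness holds at $x$. The only genuine obstacle is the black-box input of Nagpal's shift theorem; everything else is formal manipulation of the constructions of \S\ref{sec:Category} together with the combinatorial identity for $\FI(U, A \sqcup B)$, and follows the $\VI_q$ proof verbatim with $q$-subspaces replaced by subsets.
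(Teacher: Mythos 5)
Your proposal is correct and is exactly the argument the paper intends: the paper states explicitly that Theorem~\ref{FI} has a proof identical to that of Theorem~\ref{VIq}, using Nagpal's shift theorem for $\FI$ modules and the identity $\FI(x, a \sqcup b) = \bigsqcup_{x = r \sqcup s} \FI(r,a) \times \FI(s,b)$ to verify the hypothesis of Theorem~\ref{generalprojprop}. Your step-by-step transcription of the $\VI_q$ proof (reduce to $\rB$ via Proposition~\ref{KBarcomparison}, commute shifts via Proposition~\ref{symmonoidal}, filter via Nagpal, reduce to projectives via flatness of $\bP([n])_x$ over $k[\rS_n]$, then apply Theorem~\ref{generalprojprop} and Proposition~\ref{minimalelt}) matches the paper's intended argument.
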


\begin{rmk}		Theorem \ref{VIq}  is a categorification of the result, due to Nagpal, that if $M$ is a finitely generated $\VI_q$ module in non-describing characteristic then  $h_M(t)$ is rational with denominator $\prod_{i = 0}^{d-1} (1 - q^{d} t)$  for some $d > 0$.   However, when $k$ has the same characteristic as $\bbF_q$, this is false.  For instance, consider the case $k = \bbF_q$.  Then if $M$ is the $\VI_q$ module defined by the identity functor $V \mapsto V$, we have $$h_M(t) = \sum_{i}  i t^i = \frac{t}{(1-t)^2}.$$  Therefore  Theorem \ref{VIq} cannot extend to representations of $\VI_q$ in equal characteristic.    However, in analogy with Theorem \ref{FSop}, we make the following conjecture.
\end{rmk}

\begin{conj}\label{VIconjecture}
Let $k$ be an arbitrary field.   Let  $M$ be a $\VI_q$ module which is a subquotient of a module generated in degree $d$.  Then there exists an $r \in \bbN$ such that the homology of $(\rK_{\bbF_q^{d+1}})^{\circ r}(M)$  vanishes in sufficiently large degrees.
\end{conj}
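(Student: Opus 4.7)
The plan is to mirror the proof of Theorem \ref{FSop}, replacing the partition lattice and $\OSop$-module theory with their $\VI_q$ analogs. On the decategorified side, the Sam--Snowden rationality theorem for Hilbert series of finitely generated $\VI_q$ modules in equal characteristic tells us exactly which identity $\rK_{\bbF_q^{d+1}}^{\circ r}$ should categorify: multiplication of $h_M(t)$ by a suitable power of the Whitney polynomial $\prod_{j=0}^{d}(1-q^j t)$ of the subspace lattice $\bB_q(d+1)$.

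First I would set up the Gr\"obner-theoretic framework of \cite{sam2017grobner} for $\VI_q$, using an ordered variant (call it $\mathrm{OVI}_q$) whose modules admit a canonical total order on basis elements and whose monomial submodules are classified by upward-closed subsets of an explicit ordered semigroup $\Sigma$ (in Sam--Snowden's treatment these arise from ordered bases for subspaces). A filtration argument parallel to that of \S\ref{sec:Grobner} then reduces the problem for a subquotient $M$ of a module generated in degree $d$ to the case where $M = kI$ is the linearization of a functor supported on an ideal $I \subseteq \Sigma$ arising from an ordered DFA over an appropriate alphabet.

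The combinatorial heart would then be an analog of the main result of \S\ref{sec:Languages}: for any such ideal $I$, some iterate $\rK_{\bbF_q^{d+1}}^{\circ r}(kI)$ has vanishing homology in sufficiently large degree, with $r$ controlled by the number of states of the DFA. As in the $\FSop$ case, the strategy is to use Proposition \ref{PSet} to split the relevant $\VI_q/\bbF_q^{d+1}$-set fiberwise into orbits, each of which becomes a principal ideal whose poset homology vanishes by Proposition \ref{minimalelt}; iteration is what allows each DFA state to be pushed into a context where it reads a full subspace flag of length $d+1$, so that the relevant $\VI_q/\bbF_q^{d+1}$-set has a canonical minimum.

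The main obstacle is the adaptation of \S\ref{sec:Languages} itself. Unlike the partition lattice, the M\"obius numbers $\tilde \mu(w) = q^{\binom{c}{2}}$ of $\bB_q(n)$ interact with the coefficient ring $k$ through the Steinberg representation of $\GL_n(\bbF_q)$, whose modular behavior when $\mathrm{char}\, k = \mathrm{char}\, \bbF_q$ is genuinely more subtle than that of the Lie representation of $\rS_n$ governing the $\FSop$ case. Ensuring that the iterative argument does not secretly rely on the invertibility of $q$, and that the DFA-to-ideal translation respects the $\GL$-equivariance needed to deduce homology vanishing from exactness of principal-ideal complexes, is where the real work will lie. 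Establishing the Gr\"obner properties of an appropriate ordered enhancement of $\VI_q$ over an arbitrary field is a prerequisite that may itself require care.
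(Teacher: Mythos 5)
The statement you are addressing is labeled a \emph{conjecture} in the paper, and the paper offers no proof of it: the author explicitly names it in the concluding ``Further Questions'' section as an open direction, asking for which combinatorial categories the method of proof of Theorem \ref{FSop} can be pushed through. So there is no argument in the paper to compare against. Your proposal is a research plan, not a proof, and to your credit you say so: you correctly identify the architecture of the $\FSop$ argument (Gr\"obner reduction in \S\ref{sec:Grobner} feeding into the combinatorial vanishing of \S\ref{sec:Languages}) and correctly flag that the analog of the combinatorial heart for the subspace lattice $\bB_q$ is what has to be built from scratch.

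That said, I would sharpen the diagnosis of where the difficulty lies, since several features of \S\ref{sec:Languages} are genuinely specific to the partition lattice and do not transport automatically. The $\OSop$ Gr\"obner theory embeds morphism sets into words over a $d$-letter alphabet, so monomial ideals become regular languages; property ($*$) encodes the rigid operation of merging two preimage points of a surjection; and the pigeonhole step forces a repeated letter, i.e.\ a proper element of $\rP(\ell_t)$ below which the relevant ideal is principal, so that Proposition \ref{minimalelt} applies. For $\VI_q$ the analogs are strictly more delicate: ``linear dependence among $d+1$ vectors in $\bbF_q^d$'' replaces ``repeated letter,'' but dependence is a $q$-parameterized relation rather than an equality, so the resulting ideal need not be principal or even have a useful minimal element, and it is not clear that a single letter of the putative alphabet identifies which subspace to descend to. There is also the issue you raise only in passing: even granting an ordered Gr\"obner enhancement $\mathrm{OVI}_q$, Sam--Snowden establish Noetherianity of $\VI_R$ by \emph{restricting to} $\FSop$, not by endowing $\VI_q$ with its own lingual structure, so the existence and O-linguality of the needed DFA encoding is itself a nontrivial prerequisite, not a given. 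In short, the plan is aligned with the intended strategy, but the two key inputs---the ordered-DFA encoding for $\VI_q$ and the $\bB_q$ analog of Theorem \ref{maintechnical} over arbitrary $k$---are missing, and filling them is precisely the content of the open conjecture.
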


\section{Extension to EI categories}
\label{extended}

In this section, we explain how the complexes $\rB_d$ can be interpreted using $\cC$  module homology.   This allows us to define the complexes $\rB_d$ more generally  when $\cC$ is a monoidal EI category.   The material in this section is not used elsewhere in the paper, and it is also more technical.  We include it since it may be of interest to some readers,  but others may prefer to skip it.

\subsection{Homology of EI categories}
First we recall the definition of the homology of EI categories, paralleling the discussion for $\FI$ module homology in \cite[\S5]{church2017homology}.  
\begin{defn}
		An \emph{EI category} is a category in which every endomorphism is an isomorphism. If $\cD$ is an EI category, then the set of isomorphism classes of $\cD$ carries a natural partial order: define $[d_1] \leq [d_2]$ if there is a morphism $d_1 \to d_2$ in $\cD$.  
\end{defn}

For the remainder of this section, $\cD$ denotes a small EI category.  

\begin{defn}	
 Let $M: \cD \to \Mod k$ be a $\cD$ module. Define $$H_0^{\cD}(M)_d := \frac{M_{d}}{\{M_f(m) ~|~  f: c \to d,~ [c] < [d],~  m \in  M_c\}},$$ the left derived functors of $H_0^{\cD}(-)_d$  are denoted by $H_i^{\cD}(-)_d$ and called the $\cD$ module homology in degree $d$.
\end{defn}

The functor $H_0^{\cD}(-)_d$  is naturally isomorphic to $S(d) \otimes_{\cD} -$  for the following $\cD \op$ module.  

\begin{defn}
Let $d \in \cD$.  We let $S(d) : \cD\op \to \Mod k$  be the $\cD \op$ representation  $$c \mapsto \frac{ k \cD(c,d)}{k\{f ~|~ f:c  \to d ,~ [d] < [c]\}}.$$ Concretely it takes $c$ to $k\cD(c,d)$ if $c$ is isomorphic to $d$,   and zero otherwise.  
\end{defn}



For any category $\cC$,  using  $H_0^\cD(-)_d: \Mod \cD \to \Mod k$ we may construct a functor $\id \times H_0^\cD(-)_d$ $$ \Mod (\cC \times \cD) = \Mod(\cD)^{\cC} \longrightarrow^{H_0^{\cD}(-)} (\Mod  k)^\cC  = \Mod {\cC}.$$ 
The following proposition interprets $\rB_d$ in terms of $\cD$ module homology.

\begin{prop}\label{FIhomologyinterpretation}
		Let $\cD$ be a monoidal EI category, and $d \in D$. If $\cD /d$ is equivalent to a poset, then $M \mapsto \rB_d(M)$ is a model for the derived functor $$(\rL(\id \times H_0^{\cD}(-)_d ) )\circ \Res^{\oplus}: \bD^-(\Mod \cD) \to \bD^-(\Mod \cD).$$    Concretely if $\rC_d^{\cD}(-)$ is a functorial complex computing $\cD$ module homology in degree $d$,  coming from a projective $\cD\op$ resolution of $S(d)$,  then there is a quasi-isomorphism $$(\id \times \rC_d ^{\cD}(-)) \circ \Res^\oplus (M) \simeq  \rB_d(M)$$ for any $\cD$ module $M$.
\end{prop}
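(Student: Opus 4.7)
The plan is to interpret both complexes as computing the same Tor groups and then invoke uniqueness of projective resolutions.

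\emph{Step one: identify $\rB_{\cD/d}$ as a derived tensor product.} Remark \ref{abstractkoszulremark} states that for any finite poset $P$ with top element $\hat 1$, the complex $\rB_P(N)$ is naturally quasi-isomorphic to $S(\hat 1) \otimes^L_P N$. Applying this to $P = |\cD/d|$, whose top element is $[\id_d]$, and observing that $S(\id_d) \otimes_{\cD/d} -$ is, on the abelian-category level, the functor $H_0^{\cD/d}(-)_{\id_d}$, we get $\rB_{\cD/d}(N) \simeq \rL H_0^{\cD/d}(N)_{\id_d}$ for every $\cD/d$-module $N$.

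\emph{Step two: match $H_0^\cD$ at $d$ with $H_0^{\cD/d}$ at $\id_d$.} For any $\cD$-module $N$, restriction along $\cD/d \to \cD$, $(c' \to d) \mapsto c'$, is exact. In the EI setting, any morphism between isomorphic objects is an isomorphism, so an object $f: c' \to d$ of $\cD/d$ is isomorphic to $\id_d$ iff $[c'] = [d]$. Hence objects non-isomorphic to $\id_d$ correspond exactly to morphisms $c' \to d$ with $[c'] < [d]$, and the explicit defining relations give $H_0^{\cD/d}(N|_{\cD/d})_{\id_d} = H_0^\cD(N)_d$ naturally in $N$. Exactness of restriction then upgrades this to an identification $\rL H_0^{\cD/d}(N|_{\cD/d})_{\id_d} = \rL H_0^\cD(N)_d$ in the derived category.

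\emph{Step three: assemble.} For each $c \in \cD$, $\rB_d(M)_c = \rB_{\cD/d}(M_{c \oplus -}|_{\cD/d})$, which by Steps one and two is naturally quasi-isomorphic to $\rL H_0^\cD(M_{c \oplus -})_d$. As $c$ varies this is the value at $\Res^\oplus(M)$ of $\rL(\id \times H_0^\cD(-)_d)$ (interpreting the abuse $\id \times H_0^\cD(-)_d$ applied to $\cD/d$-valued data via the identification of Step two). For the concrete quasi-isomorphism with $(\id \times \rC_d^\cD(-)) \circ \Res^\oplus(M)$, both complexes compute the same derived functor applied to $\Res^\oplus M$, and uniqueness up to chain homotopy of projective $\cD\op$-resolutions of $S(d)$ supplies the natural comparison quasi-isomorphism.

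\emph{Main obstacle.} Most of the real content is already carried by Remark \ref{abstractkoszulremark}; what is new here is the bookkeeping in Step two and the careful interpretation of $\id \times H_0^\cD(-)_d$ applied after $\Res^\oplus$. The most delicate point is arranging the chain-level comparison in Step three to be natural in $M$, which one handles by choosing a functorial projective resolution of $S(d)$ and comparing it term-by-term with the bar resolution transported from $\cD/d$ along the inclusion $\cD/d \to \cD$.
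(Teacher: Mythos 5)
Your Step two contains a genuine gap. You observe (correctly) that $H_0^{\cD/d}(N|_{\cD/d})_{\id_d} \iso H_0^\cD(N)_d$, and then assert that \emph{exactness} of $r^* : \Mod\cD \to \Mod(\cD/d)$ ``upgrades this to an identification'' of left derived functors. That implication is false: exactness of $r^*$ shows that $r^*$ descends to derived categories, but to compute $\rL\bigl(H_0^{\cD/d}(-)_{\id_d}\circ r^*\bigr)$ as $\rL H_0^{\cD/d}(-)_{\id_d} \circ r^*$ you must also know that $r^*$ sends projective $\cD$-modules to $H_0^{\cD/d}(-)_{\id_d}$-acyclic $\cD/d$-modules. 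That is exactly the missing lemma, and it is the computational heart of the paper's proof: one checks directly that
\[
r^*\bigl(k\,\cD(c,-)\bigr) \;\iso\; \bigoplus_{f : c \to d} k\,(\cD/d)(f,-),
\]
a coproduct of representables, so $r^*$ preserves projectives. Without this, the derived-functor identification does not follow.

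A second (smaller) gap is in Step three, where you gesture at ``interpreting the abuse $\id \times H_0^\cD(-)_d$ applied to $\cD/d$-valued data.'' The functor $\rL(\id \times H_0^\cD(-)_d)$ is a derived functor on $\bD^-(\Mod(\cD\times\cD))$, computed with projective resolutions there, and it is not the same thing as ``$\rL H_0^\cD(-)_d$ in the second slot pointwise'' — as the remark following the proposition in the paper stresses, there is no equivalence $\bD(\Mod\cD) \simeq \bD(\Mod k)^\cD$ at the level of ordinary derived categories. One has to show that the functor $N \mapsto \bigl(c \mapsto \rB_{\cD/d}(N_{(c,-)}|_{\cD/d})\bigr)$ is exact, sum-preserving, agrees with $\id \times H_0^\cD(-)_d$ on $H_0$, and kills higher homology of the projective generators $k\bigl(\cD(c_1,-)\times\cD(c_2,-)\bigr)$ of $\Mod(\cD\times\cD)$. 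The paper sets up precisely this criterion as the first half of its proof. Your outline is in the right spirit and uses the same underlying mechanism (bar complex as $S(\hat 1)\otimes^L_{\cD/d}(-)$), but both of these verifications are necessary and neither follows from what you wrote.
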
  

\begin{remark}
		The reason we must derive the product functor $\id \times H_0^{\cD}(-)_d$ in the statement of Proposition \ref{FIhomologyinterpretation}  instead of using the functor  $$\id \times  \rL H_0^{\cD}(-)_d : \bD(\Mod \cD)^{\cD} \to \bD(\Mod(k))^\cD,$$ is that for ordinary  (unenhanced)  derived categories there is no equivalence $\bD(\Mod \cD) \simeq \bD(\Mod(k))^\cD$.
However, if $N$ is a $\cD \times \cD$ module such that the $\cD$ module  $N_{(c,-)}$ is  $\rL H_0^{\cD}(-)_d$  acyclic for all $c \in \cD$  then  $N$ is $\rL(\id \times H_0^{\cD}(-)_d)$ acyclic.  So we may compute $\rL(\id \times H_0^{\cD}(-)_d) $  exactly how we would compute  $\id \times  \rL H_0^{\cD}(-)_d$.

We also note that from a more sophisticated point of view,  Proposition \ref{FIhomologyinterpretation} follows immediately by interpreting  $\cD$ module homology as the homology of $$ \cone(\left(\rL j_! j^* M)_d \to M_d \right),$$  where $j: \cD_{< d} \to \cD$ is the inclusion of the subcategory spanned by objects   $< d$ and $j^*, j_!$ are the restriction and left Kan extension functors. Then  $(\rL j_! j^* M)_d$ can be computed as a homotopy colimit over $\cD_{<d}/d$,  which is modelled by a bar construction.
\end{remark}

Before proving  Proposition \ref{FIhomologyinterpretation} we introduce some definitions.  

\subsection{General Bar complexes}

Let $\cJ$ be a small category containing a terminal object,  and such that there are no  morphisms in $\cJ$ from a terminal object to a non-terminal object.

\begin{defn}	\label{S(hat1)}
Define $S(\hat 1_\cJ)$ to be the $\cJ\op$ module,  $$S(\hat 1)_x  = \begin{cases} k &\text{if } x \text{ is terminal }  \\ 0  &\text{otherwise} \end{cases}.$$  Where for $f \in \cJ$,  $S(\hat 1)_{f}$ is $\id_{k}$ if $f$ is a morphism between two terminal objects, and the $0$ map otherwise.  
\end{defn}

Write $\rO\rb(\cJ)$ for the subcategory of $\cJ$ consisting of the set of objects of $\cJ$ and identity maps.  Note $\rO\rb(\cJ)   = \rO\rb(\cJ\op)$.   
Via the adjunction  $$(\Mod k)^{\rO\rb(\cJ)}  \adjoint{\rm Free}{ \Res } (\Mod k)^{\cJ\op},$$ we may resolve  $S(\hat 1_\cJ)$ using the associated monad $\bot = {\rm Free} \circ {\rm Res}$ as in  \cite[\S8.6]{weibel1995introduction}.  We obtain a simplicial abelian $\cJ\op$ module $F$ whose $r$ simplices are $$x \in \cJ \op \mapsto k \{ x \to x_r \to x_{r-1} \to \dots  \to^{f_1} x_0 ~|~ x_0 \text { is terminal}\}.$$
Let $\tilde F$ be the normalized chain complex associated to $F$.  Then $\tilde F \to S(\hat 1)$  is a projective resolution, and so the derived functor of $S(\hat 1_{\cJ}) \otimes_{\cJ} -$  is modelled by $\tilde F \otimes_{\cJ} -$.   For any $\cJ$ module $M$,  we have that $\tilde F \otimes_{\cD} M$  is a chain complex, in degree $r$ it is $$(\tilde F \otimes_{\cD} M)_r = \bigoplus_{x_{r} \to^{f_r} x_{r-1} \to \dots \to x_0 ~|~  x_0 \text{ terminal, } f_i \neq \id_{x_i}}  M_{x_r}$$

\begin{defn}We define $\rB\ra\rr_{\cJ}(M) :=  (\tilde F \otimes_{\cD} M)$.\end{defn}  It is clear that when $\cJ$ is a poset with top element $\hat 1$, we have  $\rB\ra\rr_{\cJ}(M)  \iso \rB_{\cJ}(M)$.  As a warning, note that if there is an equivalence of categories  $f: \cJ' \simto \cJ$,   we do not have $\rB\ra\rr_{\cJ}(M) \iso \rB\ra\rr_{\cJ'}(f^*M)$.  However, because there is an isomorphism of $(\cJ')\op$ modules $f^* S(\hat 1_\cJ) \iso S(\hat 1_{\cJ'})$, both complexes compute the same derived functor, hence there is a quasi-isomorphism  $$\rB\ra\rr_{\cJ}(M) \simeq \rB\ra\rr_{\cJ'}(f^*M).$$

\begin{defn}Using $\rB\ra\rr_{\cJ}$  we may extend our construction of chain complexes to any small monoidal category $\cC$ satisfying the retraction-isomorphism property:
\begin{itemize}
	\item[$(\bdot)$] if $c \in \cC$ and  $c \to^f c' \to^g c$ is a retraction (i.e. $g \circ f = \id$), then $f$ is an isomorphism,
\end{itemize}
by defining  $$\rB\ra\rr_{d} (N) := (\id \times \rB\ra\rr_{\cC/d}(-))\circ \Res^\oplus (N)$$ for any $C$ module $N$.   Indeed, the over-category  $\cC/c$ always contains $\id_{c}: c \to c$ as a terminal object, and the condition $(\bdot)$ corresponds to the statement that any morphism from $\id_c$ to $c' \to^g c$ in $\cC/c$  is an isomorphism. 
\end{defn} 

When the over-categories of $\cC$ are posetal,  the discussion above implies that there is a quasi-isomorphism of complexes of $\cC$ modules  $\rB\ra\rr_{d}(N) \simeq \rB_d(N)$ for every $d \in \cC$.  

\subsection{Proof of Proposition \ref {FIhomologyinterpretation}}
With the general definitions of Bar complexes in hand,  we may prove  Proposition \ref {FIhomologyinterpretation}.  In fact we we may weaken our assumption that the overcategories of $\cD$ are posets, to the assumption that $\cD$ satsifies property $(\bdot)$.  
\begin{proof}[Proof of Proposition \ref {FIhomologyinterpretation}]
Let  $F: \Ch^-(\Mod( \cD \times \cD)) \to \Ch^-( \Mod \cD)$ be an exact sum-preserving functor from bounded below chain complexes $\cD \times \cD$ modules to  bounded below chain complexes of $\cD$ modules.  To show that $F$ models $\rL(\id \times H_0^{\cD}(-)_d)$ it suffices to prove that there is a natural isomorphism $H_0(F(N)) \iso \id \times H_0^{\cD}(-)_d$  for all $\cD \times \cD$ modules $N$,  and that for every $c_1, c_2 \in \cD$ we have that $H_i(F (k(\cD(c_1, -) \times \cD(c_2,-))) = 0$ for $i> 0$.    This follows from the fact that $\{k(\cD(c_1, -) \times \cD(c_2,-))\}_{c_1, c_2 \in \cD}$ form a collection of projective generators of $\Mod(\cD \times \cD)$, by the Yoneda lemma.

The equivalence between the abstract statement of the proposition and the concrete one follows from the fact that if $P_\bdot \to S(d)$ is a projective resolution of $S(d)$  and we define $C(-) := P_\bdot \otimes_{\cD} -$,  then $F:= \id \times C(-): \Mod(\cD \times \cD) \to \Mod(\cD)$  satisfies the above criteria.  

 Let $r: \cD/d \to \cD$ be the functor that forgets the morphism to $d$.  For any $\cD$ module $M$ there is a natural isomophism $$S(\hat 1) \otimes_{\cD/d} r^*M \iso \frac{M_d}{\{M_{g}(m)~|~ g: c \to d, m \in M_{c}  \text{ $g$ not an isomorphism} \}} \iso H_0^{\cD}(M)_d.$$
Further we have that $$r^*(k \cD(c,-)) = \bigoplus_{f: c \to d}  k(\cD/d)(f, -).$$ 
Because ${\rm Bar}_{\cD/d}(-)$ models $S(\hat 1) \otimes_{\cD}^{\rL} -$ and $k(\cD/d)(f, -)$ is a projective $\cD/d$ module by Yoneda,   it follows that $H_i({\rm Bar}_{\cD/d} (r^* (k \cD(c,-)) = 0$ for all $i > 0$.     
Therefore  $\id \times {\rm Bar}_{\cD/d}(-)$ satisfies the criteria to model $\rL(\id \times H_0^{\cD}(-)_d)$.  When $\cD/d$ is equivalent to a poset $Q$,  the quasi-isomorphism between ${\rm Bar}_{\cD/d}$ and ${\rm B}_{Q}$  finishes the proof of the proposition.
\end{proof}

\begin{remark}
Using the material in this section, we may extend the definition of the complexes $\rB_d(-)$ to an arbitrary small monoidal EI--category $\cD$,  by taking Proposition \ref{FIhomologyinterpretation} as a definition.   It is natural to ask: which results from \S\ref{sec:Category}  continue to hold in this level of generality?

  The answer is that Theorem \ref{generalprojprop}  holds for formal reasons.   However, at this level of generality, exactness of complexes does not imply rationality of Hilbert series.   Of course, exactness always encodes \emph{some} relation between the virtual $\Aut(c)$ representations $[M_c]$.  But this relation only implies a relation between the dimensions $\dim M_c$  if for every $d \in \cD$  the over-category  $\cD/d$  is \emph{directed}:  every endomorphism of an object in $\cD/d$ is the identity. (Directedness of $\cD/d$ is equivalent to $\Aut(c)$ acting freely on $\cD(c,d)$ for all $c$).    In fact,  all of \S\ref{sec:Poset} and \S\ref{sec:Category} generalize easily if we replace ``posetal" by ``directed" and ``poset" by ``skeletal directed category."    We chose not to work at that level of generality because posets are more familiar, and all of the examples we consider have posetal over-categories.  
\end{remark}

\section{Gr\"obner theory} \label{sec:Grobner}

  In this section, we use Sam--Snowden's Gr\"obner theory of categories to give a criterion for chain complexes such as $\rK_d(M)$ and $\rB_d(M)$ to be exact. For background see \cite[\S4]{sam2017grobner}.

 Let $(\cC, \sqcup)$ be a small monoidal category.  In this paper, we will only consider the case $\cC = \OSop$  (see Definition \ref{def:OSop}),  but in the interest of future applications work in a more general setting.   
 Fix $d \in \cC$,  and let $\bS(d): \cC \to \Set$ be the functor $c \mapsto \cC(d, c)$.  We assume that $\cC(d,c)$ is finite. 

 \begin{defn} An \emph{ordering} on  $\bS(d)$ consists of a total ordering $\leq$ on the set of functions $\cC(d,c)$ for every in $c$,  such that post-composition is strictly order preserving:    if $f_1 < f_2$  then $g \circ f_1 < g \circ f_2$ for all $g \in \cC(c,c')$.
\end{defn}  

We assume that there is an ordering on $\bS(d)$,   and let $\bP(d) := k \bS(d)$.     For any $c \in \cC$,  the $k$ module $\bP(d)_c$ is freely spanned by monomials $e_f$  for $f\in \cC(d,c)$.   

\begin{defn} The \emph{initial term} of an element $\sum_{f} \lambda_f e_f \in \bP(d)_c$ is $\lambda_{f_0} e_{f_0}$  where $f_0 \in \bP(d)_c$ is the largest element such that $\lambda_{f_0} \neq 0$.  
\end{defn}

\begin{defn} Let $J \subseteq \bP(d)$ be a $\cC$ submodule.  Let $\init(J)_c \subseteq \bP(d)_c$ be the subspace $\{\init(j) ~|~ j \in J_c\}$.
  By \cite[Theorem 4.2.1]{sam2017grobner} $c \mapsto \init(J)_c$ is a \emph{monomial submodule} of $\bP(d)$.  This means that  $\init(J)_c$ is spanned by the elements $e_f \in \bP(d)_c$ that it contains.   We say that $\init(J)$ is the \emph{monomial ideal} associated to $J$.  
\end{defn}

Let $c$ and $x$ be objects of $\cC$.  Let $\cQ$ be a subcategory of $\cC/c$, and let $\cB$ be an exact functor from $\cQ$ representations to chain complexes.     Consider the $\cQ$ representations  $J_{x \sqcup -}$ and $\init(J)_{x\sqcup -}$   defined by $(g: y \to c)\mapsto  J_{x\sqcup y}$ and $(g: y \to c) \mapsto \init(J)_{x \sqcup y}$  respectively.  In this context, we prove the following result.  

\begin{prop} \label{grobner}
If  $\cB(\init(J)_{x \sqcup -})$  is exact,  then $\cB(J_{x \sqcup -})$ is exact.  
\end{prop}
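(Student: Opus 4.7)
The plan is to realize $J_{x \sqcup -}$ as a filtered $\cQ$-representation whose associated graded is the monomial $\cQ$-representation $\init(J)_{x \sqcup -}$, and then to transport the hypothesis through the exact functor $\cB$ by a spectral sequence.

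First, for each object $(y \to c)$ of $\cQ$, the total order on $\cC(d, x \sqcup y)$ induces a finite increasing filtration of $\bP(d)_{x \sqcup y}$ by setting $F_{\leq h} := \mathrm{span}_k \{e_{h'} : h' \leq h\}$. Intersecting with $J$ yields a filtration $F_{\leq h} J_{x \sqcup y}$ whose successive subquotients are one-dimensional, spanned by the class of $e_h$ when $e_h \in \init(J)_{x \sqcup y}$ and zero otherwise (this is, by definition, what it means for $e_h$ to lie in $\init(J)_{x \sqcup y}$). Hence the associated graded of $F_{\leq \bdot} J_{x \sqcup y}$ is isomorphic to $\init(J)_{x \sqcup y}$.

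Second, I would promote this to a filtration of the $\cQ$-representation $J_{x \sqcup -}$. The strict order-preservation of post-composition, built into the definition of an ordering on $\bS(d)$, ensures that for every morphism $f: (y \to c) \to (y' \to c)$ of $\cQ$ the induced map sends $F_{\leq h} J_{x \sqcup y}$ into $F_{\leq (\id_x \sqcup f)\circ h} J_{x \sqcup y'}$, and in fact satisfies $\init(f \cdot j) = f \cdot \init(j)$ for every $j$. Consequently, for every pointwise-downward-closed sub-$\cQ$-set $U$ of the functor $y \mapsto \cC(d, x \sqcup y)$, the assignment $y \mapsto \{j \in J_{x \sqcup y} : \init(j) \in U_y\} \cup \{0\}$ is a sub-$\cQ$-representation of $J_{x \sqcup -}$. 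Ordered by inclusion, these sub-$\cQ$-representations exhaust $J_{x \sqcup -}$, and the associated graded is $\init(J)_{x \sqcup -}$ as a $\cQ$-representation.

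Finally, I would apply $\cB$. Because $\cB$ is exact, it sends this filtration to a filtration of $\cB(J_{x \sqcup -})$ by subcomplexes whose associated graded agrees with $\cB(\init(J)_{x \sqcup -})$. By hypothesis the latter is exact; a standard spectral sequence argument, or equivalently choosing a linear extension of the filtration and iterating the long exact sequence in homology (feasible since $\cC(d, x \sqcup y)$ is finite, so each per-$y$ filtration has finite length), then yields exactness of $\cB(J_{x \sqcup -})$. The main technical point is coordinating the per-$y$ leading-term filtrations into a single $\cQ$-functorial filtration, and this is exactly what the strict order-preservation axiom for $\bS(d)$ makes automatic.
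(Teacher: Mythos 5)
Your overall strategy is the same as the paper's: filter $J_{x\sqcup-}$ so that the associated graded is $\init(J)_{x\sqcup-}$, then apply the exact functor $\cB$ and run a spectral sequence (or iterate short exact sequences). But the execution has a genuine gap in how you coordinate the per-object filtrations into a filtration of the $\cQ$-representation.

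You propose to index your filtration by arbitrary pointwise-downward-closed sub-$\cQ$-sets $U$ of $y \mapsto \cC(d, x\sqcup y)$. This collection is a poset, not a chain, so ``the associated graded'' is not yet defined; and once you commit to a chain inside this poset (your ``linear extension''), it is no longer automatic that the associated graded is $\init(J)_{x\sqcup-}$ \emph{as a $\cQ$-representation}. Writing $T$ for the sub-$\cQ$-set with $kT = \init(J)_{x\sqcup-}$, the successive quotient $F_{U_{i+1}}/F_{U_i}$ is isomorphic to $k(T\cap U_{i+1})/k(T\cap U_i)$, and this quotient $\cQ$-module sends to zero exactly those transition maps whose image lands in $U_i$. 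For a general chain these ``killed'' transitions are nontrivial in $kT$ itself, so $\bigoplus_i F_{U_{i+1}}/F_{U_i}$ and $\init(J)_{x\sqcup-}$ are genuinely different $\cQ$-representations, and the hypothesis $\cB(\init(J)_{x\sqcup-})$ exact cannot be transported to the associated graded pieces. Relatedly, your appeal to finiteness is misplaced: the finiteness of each $\cC(d, x\sqcup y)$ bounds the per-object filtration, not the length of a chain of sub-$\cQ$-sets, which may be infinite if $\cQ$ is infinite.

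The paper resolves both problems at once by indexing the filtration by the single finite totally ordered set $\cC(d, x\sqcup c)$, where $c$ is the terminal target of $\cQ$, and placing $e_h \in F_{\leq f}\bP(d)_{x\sqcup y}$ over $(g : y\to c)$ precisely when $(\id_x \sqcup g)\circ h \leq f$. This is automatically a chain of finite length, and — crucially — a morphism $s: g \to g'$ of $\cQ$ sends the layer $(\id_x\sqcup g)\circ h = f$ into the same layer over $g'$, so the sub-$\cQ$-set $T$ splits into pieces over the layers and the associated graded is canonically $\init(J)_{x\sqcup-}$ as a $\cQ$-representation. This choice of index is the missing ingredient that makes your ``coordinating the per-$y$ leading-term filtrations'' work; without it, your chain-choosing step does not actually produce $\init(J)_{x\sqcup-}$ as associated graded.
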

\begin{proof}
Using the order on $\bS(d)$ we define a filtration of the $\cQ$ representation  $\bP(d)_{x \sqcup -}$, as follows.  Given $f : d \to x \sqcup c$,  let $F_{\leq f} \bP(d)_{x \sqcup -}$ be the following subrepresentation:   $$(g: y \to c) \mapsto  {\rm span}\{e_h ~|~  h: d \to x \sqcup y \text{ such that  } (\id_x \sqcup g) \circ h \leq f \}.$$
This is a subrepresentation because a morphism $g \to g'$  consists of a morphism $s: y \to y'$ such that $g' \circ s = g$,  and therefore  $(\id_x \sqcup g' )\circ (\id_x \sqcup s) \circ h = (\id_x \sqcup g) \circ h$.   If $f \leq f'$ then $F_{\leq f} \bP(d)_{x \sqcup -} \subseteq F_{\leq f'} \bP(d)_{x \sqcup -}$, so we obtain a filtration using the total order on $\cC(d, x \sqcup c)$.    
   
We define $F_{\leq f} J_{x \sqcup -}$  to be the intersection of $F_{\leq f} \bP(d)_{x \sqcup -}$ and $J_{x \sqcup -}$.  Then we have:

\begin{lem} The associated graded $\cQ$ representation is isomorphic to $\init(J)_{x \sqcup -}$. 
\end{lem} 
\begin{proof}
Define a map $(F_{\leq f} J_{x \sqcup -})_g \to (\init(J)_{x \sqcup -})_g$ by    $$\sum_h \lambda_h e_h \mapsto \begin{cases}    \lambda_{h_f} e_{h_f} &\text{if there exists $h_f: d \to x \sqcup y$  such that $(\id_x \sqcup g )\circ h_f = f$} \\ 0 & \text{otherwise.} \end{cases}$$   Because $\id_x \sqcup g$ strictly preserves the total order on functions, this yields a well-defined morphism of representations $$\bigoplus_{f \in \bP(d)_{x \sqcup c}} \frac{F_{\leq f} J_{x \sqcup -}}{F_{< f} J_{x \sqcup -}}\to \init(J)_{x \sqcup -}.$$
It is injective because for each $g$, the monomials $e_{h_f}$ are linearly independent for every $f$.    And it is surjective, because each $\lambda_h e_h \in \init(J)_{x \sqcup y}$ is hit by the map corresponding to $f = (\id_x \sqcup g) \circ h$.  
\end{proof}
Thus we have a filtration of $J_{x\sqcup -}$ whose associated graded is isomorphic to $\init(J)_{x\sqcup -}$.   Because  $\cB$ is an exact functor,  this filtration induces a filtration of the complex $\cB(J_{x\sqcup -})$ whose associated graded is $\cB(\init(J)_{x\sqcup -})$.  This filtration is finite, because we have assumed $\bP(d)_{x \sqcup c}$ has finitely many elements.  Therefore the spectral sequence associated to a filtered complex shows that if $\cB(\init(J)_{x\sqcup -})$ is exact,  then so is $\cB(J_{x\sqcup -})$.
\end{proof}

\section{Poset ideals associated to ordered languages}\label{sec:Languages}

Let $\Sigma  = \{a_1, \dots, a_{d}\}$ be a finite alphabet with $d$ letters.  

\begin{defn}Let $w \in \Sigma^*$ be a length $n$ word.   We consider $w$ as a function $w: [n] \to \Sigma$.   Given a set partition $p \in \rP(n)$ with $m$ blocks, order the blocks according to their smallest element to obtain an identification between $p$ and $[m]$.  We write $f_p:[n] \to [m]$ for the surjection that takes $i\in [n]$ to the block that contains it.    
   We say that $w$ \emph{factors through $p$}  if there exists a word $w_p: [m] \to \Sigma$ such that $w_p \circ f_p = w$.  Because $f_p$ is a surjection, $w_p$ is necessarily unique if it exists.  If $w$ factors through $p$, then we call $w_p$ the \emph{quotient word of $w$ by $p$}. 
\end{defn}

 Let $L \subseteq \Sigma^*$  be a regular language. Throughout we assume that $L$ satisfies the following property. \begin{itemize}\item[($*$)] Let $w_1,w_2,w_3 \in \Sigma^*$ and $a \in \Sigma$.  If $w_1 a w_2 w_3 \in L$,   then  $w_1 a w_2 a w_3 \in L$.   \end{itemize}

\begin{defn}We define an upward  ideal, $\bI(w,L) \subseteq \rP(n)$ associated to $w \in \Sigma^*$ and $L$ a language satisfying property $(*)$.   A partition $p$  is contained in $\bI(w,L)$ if and only if $w$ factors through $p$ and $w_p \in L$.  

\end{defn}
The subset $\bI(w,L)$ is upward-closed because of our assumption ($*$) on $L$.  If $w \not \in L$, then $\bI(w,L) = \emptyset$.  

\begin{ex}
Let $\Sigma = \{a,b\}$   let $w = abba.$  Then $w$ factors through the partitions $1|2|3|4$, $14|2|3$, $1|23|4$ and $14|23$.   The associated quotient words are $abba, abb, aba,$ and $ab$  respectively.  If $L$ is the language defined by the regular expression $ab^*a(a^* b^*)^*$, then $L$ satsifies property $(*)$  and we have that $I(w,L) = \{1|2|3|4, 1|23|4\}$.  

\end{ex}


Recall from \cite[\S 5.2]{sam2017grobner} that an \emph{ordered DFA} is a deterministic finite automaton $A$, together with a partial order on the set of states of $A$,  such that $x_1 \leq x_2$ if and only if there is a word $u \in \Sigma^*$ such that running $A$ with start state $x_1$ on input $u$  results in the final state $x_2$; symbolically we will write $x_1 \to^u x_2$.    A regular language $L$ is \emph{ordered} if and only if there is an ordered DFA that accepts it.  All the DFA's we consider will be \emph{connected}:  if $\alpha$ is the start state of $A$ and $\beta$ is some other state of $A$, then there is some word $u \in \Sigma^*$ such that $\alpha \to^u \beta$.  Any (ordered) DFA may be replaced by a connected (ordered) DFA accepting the same language, by discarding the states which are not connected to the start state.   It is convenient to introduce the following operation, in order to make inductive arguments.

\begin{defn}
	Let $A$ be an ordered DFA and let $\alpha$ be a state of $A$. Then we define the \emph{truncation of $A$ at $\alpha$},  denoted $A_{\geq \alpha}$  to be the following ordered DFA.   The (accept) states of $A_{\geq \alpha}$ are the (accept) states of $A$ that are $\geq \alpha$ in the partial order.  The transition function of $A_{\geq \alpha}$ is the restriction of the transition function of $A$ to $A_{\geq \alpha}$:  this is well defined  because $A$ is ordered.  The start state of $A_{\geq \alpha}$ is $\alpha$.
\end{defn}

Let $A$ be an ordered DFA accepting a language $L \subseteq \Sigma^*$, which satisifies property $(*)$.    Let $w_1, \dots, w_r \in \Sigma^*$ be words of length $\ell_1, \dots, \ell_r \in \bbN$, and put $w = w_1 \dots w_r$. Let $\ell = \sum_{t} \ell_t$ be the length of $w$.

There is an embedding of posets $\prod_{t = 1}^r \rP(\ell_t) \to  \rP(\ell)$,  constructed using the bijection $[\ell_1] \sqcup \dots \sqcup [\ell_r] \to [\ell]$  which maps $[\ell_t]$ to the interval $( \sum_{u = 1}^{t-1} \ell_u, \sum_{u = 1}^{t} \ell_u ]$.  Given partitions $q_t \in \rP(\ell_t)$  for $t = 1, \dots, r$  the associated element of $\rP(\ell)$ is $q_{1} \sqcup \dots \sqcup q_r$,  considered as a partition of $[\ell]$ via the bijection. Then $\prod_{t = 1}^r \rP(\ell_t)$ is isomorphic to its image in $\rP(\ell)$ (with subposet structure induced by $\rP(\ell)$),  so we identify both posets and write $\prod_{t = 1}^r \rP(\ell_t) \subseteq \rP(\ell)$.  


 \begin{defn}Let $\bJ(w,L)$ be the ideal  $\bI(w,L) \cap \prod_{t = 1}^r \rP(\ell_t)$.  \end{defn}

The key combinatorial result underlying Theorem \ref{FSop} is the following.

 \begin{thm}\label{maintechnical}\label{Languages}
Let $d \geq 1$ and let $\Sigma$ be an alphabet of size $d$.  Let $A$ be a connected, finite, ordered DFA, accepting a language $L \subseteq \Sigma^*$ which satsifies property $(*)$.    Let $w_1, \dots, w_r \in \Sigma^*$ be words of length $\ell_1, \dots, \ell_r$.  
Suppose that $r$ is greater than or equal to the length of $A$ (considered as a poset), $\ell_t \geq d$ for $t = 1, \dots, r-1$ and $\ell_r \geq d+1$.  Then $\rB_{(P(\ell_1), \dots, P(\ell_r))}(k\bJ(w,L))$ is exact.   
\end{thm}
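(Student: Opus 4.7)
The plan is to proceed by induction on the length $\ell(A)$ of $A$ as a poset. The intuition is that each interesting recursive step consumes one of the factors $w_t$ while strictly reducing $\ell(A)$, so the hypothesis $r \geq \ell(A)$ supplies exactly enough factors for the induction to terminate.

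For the base case $\ell(A) = 0$, the connectedness of $A$ forces $A = \{\alpha\}$ and the language is either $\emptyset$ (vacuous) or $\Sigma^*$. In the latter case $\bJ(w,L) = \prod_t (\rP(\ell_t))_{\geq q_t^*}$, where $q_t^* \in \rP(\ell_t)$ is the partition identifying equal-lettered positions of $w_t$; this is a principal ideal in the product poset. By Proposition \ref{product}, the complex $\rB_{(\rP(\ell_1), \ldots, \rP(\ell_r))}(k\bJ)$ is a tensor product of the $\rB_{\rP(\ell_t)}(k(\rP(\ell_t))_{\geq q_t^*})$, and since $\ell_r \geq d+1 > |\Sigma|$ forces a repeated letter in $w_r$ by pigeonhole (so that $q_r^* \neq \hat 1$), Proposition \ref{minimalelt} makes the $r^{\rm th}$ tensor factor exact and hence the whole complex exact.

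For the inductive step $\ell(A) \geq 1$, set $S_0 = \{a \in \Sigma : \delta(\alpha, a) = \alpha\}$; connectedness together with $\ell(A) \geq 1$ forces $|S_0| < d$. The argument then splits on whether $w_1 \in S_0^*$. \emph{Stationary case:} if $w_1 \in S_0^*$, then every $(w_1)_{p_1}$ keeps $A$ at $\alpha$, so $\bJ(w,L)$ factors as a product of ideals $(\rP(\ell_1))_{\geq q_1^*} \times \bJ((w_2, \ldots, w_r), L)$; Proposition \ref{product} turns this into a tensor product, and Proposition \ref{minimalelt} makes the first factor exact because $\ell_1 \geq d > |S_0|$ forces a repeat in $w_1$. \emph{Advancing case:} if $w_1 \notin S_0^*$, the ordered property of $A$ (antisymmetry of reachability) prevents return to $\alpha$ after the first non-$S_0$ letter of $(w_1)_{p_1}$ is read, so the state $\beta(p_1) := \delta^*(\alpha, (w_1)_{p_1})$ strictly exceeds $\alpha$ for every $p_1$ through which $w_1$ factors. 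For $r \geq 2$, Proposition \ref{filtration} reduces the claim to the exactness of the per-$p_1$ fiber $\rB_Q(k\bJ|_{p_1 \times Q})$ with $Q = \prod_{t \geq 2}\rP(\ell_t)$; the non-trivial fibers take the form $\rB_Q(k\bJ((w_2, \ldots, w_r), L_{\beta(p_1)}))$, where $L_{\beta(p_1)}$ is the language accepted by the truncation $A_{\geq \beta(p_1)}$. One checks that $A_{\geq \beta(p_1)}$ is a connected ordered DFA with $\ell(A_{\geq \beta(p_1)}) \leq \ell(A) - 1 \leq r - 1$, and that $L_{\beta(p_1)}$ inherits property $(*)$ from $L$ via the equivalence $v \in L_{\beta(p_1)} \Leftrightarrow (w_1)_{p_1} v \in L$; the inductive hypothesis then yields exactness of the fiber. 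The remaining edge case $r = 1$ forces $\ell(A) \leq 1$, and a direct analysis of length-one ordered DFAs shows that the first non-$S_0$ letter of $(w_1)_{p_1}$ coincides with that of $w_1$; hence $\bJ$ is either empty or the principal ideal $(\rP(\ell_1))_{\geq q_1^*}$, and Proposition \ref{minimalelt} finishes, using $\ell_1 \geq d+1$ to ensure $q_1^* \neq \hat 1$.

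I expect the main obstacle to be the bookkeeping in the advancing case: namely verifying that the truncation $A_{\geq \beta(p_1)}$ and the language $L_{\beta(p_1)}$ genuinely satisfy the hypotheses of the theorem (in particular that property $(*)$ is preserved) so that the inductive hypothesis applies. The conceptual heart of the argument is the dichotomy between stationary and advancing first factors, together with the observation that outside-$S_0$ letters produce \emph{irreversible} advancement in $A$; this is what permits the $r$ factors of $\bJ$ to be matched with the levels of $A$, and it is precisely this matching that makes the hypothesis $r \geq \ell(A)$ the natural one.
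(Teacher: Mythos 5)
Your proof is correct and follows essentially the same route as the paper: induction on the length of $A$, a base case where $\bJ(w,L)$ is a product of principal ideals, and a dichotomy at the start state between ``stationary'' ($w_1$ loops at $\alpha$, giving a direct tensor-factor argument via Propositions \ref{product} and \ref{minimalelt}) and ``advancing'' ($w_1$ leaves $\alpha$, giving a fiberwise reduction via Proposition \ref{filtration} to a truncated automaton).

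The one genuine difference is a bookkeeping convention. You use the standard poset-theoretic length (a chain $x_0<\dots<x_n$ has length $n$, so a single point has length $0$), whereas the paper's base case makes clear it is counting the number of elements in a maximal chain (``the length of $A$ is $1$ and $A$ consists of a single state''). Under your convention the hypothesis $r \geq \ell(A)$ is weaker by one, so you are proving a slightly stronger theorem, and the price is the $r=1$, two-state edge case, which the paper never encounters. Your handling of that edge case is correct, and it hinges on a genuinely additional observation not needed elsewhere: that for every $p_1\geq q_1^*$ the \emph{first} non-$S_0$ letter of $(w_1)_{p_1}$ agrees with the first non-$S_0$ letter of $w_1$, so that the landing state $\delta(\alpha,a)$ is independent of $p_1$ (this is what forces $\bJ(w_1,L)$ to be principal rather than a union of different principal ideals associated to different incomparable maximal states). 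For the paper's weaker claim the argument is cleaner because Proposition \ref{filtration} is always available; but your version shows the bound $r\geq \text{length}(A)$ can be sharpened by one, which is a nice refinement and perfectly consistent with the application in Proposition \ref{OSopcase}.
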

\begin{proof}
		We write $\cB_{i}$ for the functor $\rB_{(P(\ell_i), \dots, P(\ell_r))}$  from $\Rep \prod_{t = i}^r \rP(\ell_t)$ to chain complexes.  

		We  induct on the length of $A$.  In the base case, the length of $A$ is $1$  and $A$ consists of a single state.  If the state is a reject state, then $k\bJ(w,L) = 0$  so the statement is trivially true, so assume that the state is an accept state.   Then $A$ accepts every word and $\bJ(w,L)$ is the ideal consisting of all $q \in \prod_{i =t}^\ell \rP(\ell_t)$  such that $w$ factors through $q$.  
Let $p_t$ be the partition of $[\ell_t]$ associated to the function $w_t: [\ell_t] \to \Sigma$ (in other words, $i$ and $j$ are in the same block of $p_t$ if and only if the $i^{\rm th}$ letter of $w_t$ equals the $j^{\rm th}$ letter of $w_t$).   Then $
\bJ(w,l) = \prod_{t = 1}^r \rP(\ell_t)_{\geq p_t}$. so  $k\bJ(w,l) =k(\prod_{t = 1}^{r-1} \rP(\ell_t)_{\geq p_t})  \sqtimes k \rP(\ell_r)_{\geq p_r}$. Because $\ell_r > d$ and $|\Sigma| = d$,  the Pigeon-hole principle implies at least one letter of $\Sigma$ must be repeated in $w_r$.  Therefore $p_r \neq \hat 1_{\rP(\ell_r)}$  and so  by Proposition \ref{minimalelt}  the complex  $\rB(k \rP(\ell_r)_{\geq p_r})$ is exact.  Then by Proposition \ref{product},  $\cB_1(k\bJ(w,l))$ is exact.  

    For the inductive step assume that the length of $A$ is $\geq 2$, and let $\alpha$ be the starting state of $A$.   Consider the word $w_1$.  There are two cases:  either (1) $\alpha \to^{w_1} \alpha$,  or (2) $\alpha \to^{w_1} \beta$  for some state $\beta > \alpha$.    

	 The argument in case (1) is similar to the base case.   Let $p_1 \in \rP(n)$ be the partition associated to the function $w_1: [\ell_1] \to \Sigma$.  We claim that $$\bJ(w,L) = \rP(\ell_1)_{\geq p_1} \times \bJ(w_2 \dots w_r, L).$$
 Indeed, we may write an element of $q \in \rP(\ell_1) \times  \prod_{t=2}^r\rP(\ell_t)$ uniquely as $q_1\sqcup q'$ where $q_1 \in \rP(\ell_1)$ and $q' \in \prod_{t=2}^r\rP(\ell_t)$.  Then $w$ factors through $q$ if and only if $w_1$ factors through $q_1$ and $w':= w_2 \dots w_r$ factors through $q'$.    If  $w$ factors through $q$, then $w_q = ({w_1})_{q_1}   w'_{q'}$.  Because $({w_1})_{q_1}$ contains only letters which appear in $w_1$, and $\alpha \to^t \alpha$ for every letter of $w$,  we have that $\alpha \to^{({w_1})_{q_1}} \alpha$.   Therefore $A$ accepts $w_q$ if and only if $A$ accepts $w'_{q'}$.  Because $w_1$ factors through $q_1$ if and only if $q_1 \geq p_1$,  we have that $q \in \bJ(w,L)$ if and only if $q_1 \in \rP(n)_{\geq p_1}$ and $q' \in \bJ(w', L)$.   This establishes the claim.

So we have  $$k \bJ(w,L) = k \rP(\ell_1)_{\geq p_1} \sqtimes k\bJ(w_2 \dots w_r, L).$$  
The word $w_1$  only contains letters $a \in \Sigma$ which satsify $\alpha \to^{a} \alpha$.  Since $\alpha$ is not the only state of $A$, there are at most $d-1$ such letters.   Thus by the Pigeon-hole principle it follows that $w_1$ must contain a repeat letter and therefore $p_1 \neq \hat 1_{\rP(\ell_1)}$.    So Proposition \ref{minimalelt}  implies that  $\rB(k \rP(\ell_1)_{\geq p_1})$ is exact.   Therefore by Proposition \ref{product},  $\cB_1(k\bJ(w,L))$  is exact in case (1).

In case (2),  let $q_1 \in \rP(\ell_1)$, and consider the ideal of $\prod_{t = 2}^r \rP(\ell_t)$   $$I_{q_1} := \bJ(w,L) \cap \left(q_1 \times \prod_{t = 2}^r \rP(\ell_t)\right).$$   We note that $$k I_{q_1}  =  (k \bJ(w,L))|_{{q_1} \times \prod_{t = 2}^r \rP(\ell_t)},$$ so by Proposition \ref{filtration}, it suffices to show that the homology $\cB_2(k I_{q_1})$ vanishes for all $q_1$.  If $w_1$ does not factor through $q_1$, then $I_{q_1} = \emptyset$, and $kI_{q_1} = 0$ so $\cB_2(kI_{q_1}) = 0$.   So suppose that $w_1$ factors through $q_1$, and let $(w_1)_{q_1}$ be the quotient word.    Notice that $\alpha \to^{({w_1})_{q_1}} \beta'$ for some state $\beta' > \alpha$.  Indeed, if $a$ is the first letter of $w_1$  which causes $A$ to transition to a state distinct from $\alpha$,  then $(w_1)_{q_1}$ also contains the letter $a$, and so also transitions to a state distinct from $\alpha$.  

 Now let $A':= A_{\geq \beta'}$ be the truncation at $\beta'$.  Let $L'$ be the language recognized by $A'$.  We claim that $$I_{q_1} = \bJ(w', L'),$$ where $w' = w_2 \dots w_r$.    Indeed since $w_1$ factors through $q_1$, we have that for all $q' \in \prod_{t = 2}^{r} \rP(\ell_t)$,  $w$ factors through $q_1 \sqcup q'$ if and only if $w'$ factors through $q'$.  If $w'$ factors through $q'$,  then $w_{q_1 \sqcup q'} = ({w_1})_{q_1} w'_{q'}$.  So $w_{q_1 \sqcup q'}$ is accepted by $A$ if and only if $\alpha \to^{{w_1}_{q_1}}  \beta' \to^{w'_{q'}}  \gamma$, where $\gamma$ is an accept state of $A$.   This occurs if and only if $w'_{q'} \in L'$.  This shows the claim.

The length of $A'$ is less than the length of $A$, hence is $\leq r-1$.   The language $L'$ satsifies property $(*)$ because $L$ does and $u \in L'$ if and only if $w_1 u \in L$.  So the induction hypothesis gives that the homology of $\cB_2(k \bJ(w',L'))$ vanishes.  Thus the homology of $\cB_2(k I_{q_1})$ vanishes for all $q_1 \in \rP(\ell_1)$,  so we are done.  
\end{proof}

\section{$\OSop$ modules and Theorem \ref{FSop}} \label{sec:ThmProof}

In this section, we combine the results of the previous sections in order to prove Theorem \ref{FSop}.

\begin{defn}\label{def:OSop} Let $\OS$ be the category of \emph{ordered surjections} of \cite[\S8]{sam2017grobner}. The objects of $\OS$ are finite sets equipped with a total order.   A morphism from $X$ to $Y$ in $\OS$ is an \emph{ordered surjection}:  a surjection $f: X \onto Y$  such that if $y_1 < y_2$  then $\min f\inv(y_1)  < \min f\inv(y_2)$.  
\end{defn}

 $\FSop$  is equivalent to the subcategory spanned by the sets $[n] := \{1, \dots, n\}$ for $n \in \bbN$.  Similarly $\OSop$ is equivalent to the subcategory spanned by the sets $[n]$ equipped with their canonical order $1 \leq 2\leq \dots \leq n$.  A functor with domain $\FSop$  or $\OSop$ determined by its restriction to the corresponding  subcategory,  so we will freely pass between the ambient categories and their respective subcategories.  When we use a natural number $n \in \bbN$ to denote an object of $\OSop$ or $\FSop$,  we are referring to the object $[n]$.  

Disjoint union endows $\OSop$  with a monoidal structure  $$- \sqcup - :\OSop\times \OSop \to \OSop,$$  where we define the total order on $S \sqcup T$ by extending the order on $S$ and $T$  and declaring every element of $S$ to be less than every element on $T$.  
 There is an essentially surjective, monoidal functor  $\pi:\OSop \to \FSop$, given by forgetting the total ordering.  Thus we may restrict any $\FSop$ module $M$ to obtain an $\OSop$ module $\pi^* M$.  We now prove two propositions relating $\OSop$ and $\FSop$ modules.

\begin{prop}\label{agrees1}

    For $X \in \OSop$, the functor $\OS\op/X \to  \FSop/X$ is an isomorphism of categories.   Consequently, $\rK_X$ and $\rB_X$ are defined for $\OSop$ modules, and 
 for any $\FSop$ module $M$  there are natural isomorphisms  $\rB_X(\pi^*M) \iso \pi^*\rB_X (M)$ and $\rK_X(\pi^*M) \iso \pi^* \rK_X(M)$.
\end{prop}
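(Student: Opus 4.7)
The plan is to prove the isomorphism of categories first, and then reduce both naturality statements to direct unfoldings of definitions.

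First I would check that the functor $\OS\op/X \to \FS\op/X$ is bijective on objects. Given a finite-set surjection $f: X \onto Y$, there is a unique total order on $Y$ making $f$ into an ordered surjection: declare $y < y'$ whenever $\min f\inv(y) < \min f\inv(y')$ in $X$. Since distinct blocks of the fiber partition have distinct minima, this defines a total order; uniqueness follows from the very definition of ordered surjection. This gives a section-level bijection and in particular a bijection on objects.

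Next I would show the functor is full (faithfulness being automatic). Let $f_1 : X \onto Y_1$ and $f_2 : X \onto Y_2$ be ordered surjections and let $g : Y_2 \onto Y_1$ be any set-surjection with $g \circ f_2 = f_1$; the task is to show $g$ is ordered. Using $f_1\inv(y) = \bigsqcup_{y' \in g\inv(y)} f_2\inv(y')$, one gets the ``min commutes with min'' identity $\min_X f_1\inv(y) = \min_{y'\in g\inv(y)} \min_X f_2\inv(y')$, which combined with the orderedness of $f_2$ shows that the element $y^*\in g\inv(y)$ realizing the outer minimum is precisely $\min_{Y_2} g\inv(y)$, and satisfies $\min_X f_2\inv(y^*) = \min_X f_1\inv(y)$. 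Applying this to two elements $y < y'$ in $Y_1$ immediately yields $\min_{Y_2} g\inv(y) <_{Y_2} \min_{Y_2} g\inv(y')$, so $g$ is ordered. This establishes the isomorphism of categories.

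The remaining claims are then bookkeeping. Since the paper has already identified $\FS\op/X$ with the partition lattice $\rP(X)$, which is upper Cohen--Macaulay, the isomorphic category $\OS\op/X$ has the same property, and therefore $\rB_X$ and $\rK_X$ are defined for $\OS\op$ modules. For the naturality statements, I would unfold the definitions: $(\pi^* \rB_X(M))_c = \rB_{\FS\op/X}(M_{\pi(c)\sqcup -}|_{\FS\op/X})$, while $\rB_X(\pi^* M)_c = \rB_{\OS\op/X}((\pi^* M)_{c \sqcup -}|_{\OS\op/X})$. Since $(\pi^* M)_{c\sqcup Y} = M_{\pi(c)\sqcup \pi(Y)}$ and $\pi$ is monoidal, the two input representations correspond under the isomorphism $\OS\op/X \iso \FS\op/X$, so applying $\rB$ (respectively $\rK$) on either side produces canonically isomorphic complexes.

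The only substantive step is the fullness check---verifying that forgetting the order on the target of a surjection does not discard any constraint on the possible morphisms; every other step is a definitional unwinding.
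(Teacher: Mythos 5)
Your proof is correct and follows essentially the same route as the paper's: bijection on objects via the unique induced order $y < y'$ iff $\min f^{-1}(y) < \min f^{-1}(y')$, then fullness by checking compatibility of the induced orders, with faithfulness automatic. The only difference is one of detail — the paper states the fullness compatibility in one line, whereas you carry out the ``min of mins'' computation explicitly; both are fine.
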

\begin{proof}
Let $X \in \OSop$ be a set with a total order.  Given any surjection $f: X \onto Y$  there is a unique total order on $Y$ such that $f$ is an ordered surjection: define $y \leq y'$ if $\min f\inv(y) \leq \min f\inv(y')$.   And given any $X \onto Y_1 \onto Y_2$, the orders induced on $Y_1$ and $Y_2$ are compatible so that $Y_1 \onto Y_2$ is an ordered surjection.  Thus $\OS\op/X \to  \FSop/X$ is an isomorphism and the other statements follow immediately.
\end{proof}

\begin{prop}\label{agrees2}
	Let $N$ be an $\FSop$ module generated in degree $\leq d$.  Then $\pi^*N$ is generated in degree $\leq d$. 
\end{prop}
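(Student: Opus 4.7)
The plan is to deduce this directly from Proposition \ref{agrees1}. The key observation is that a morphism in $\OSop$ from an ordered set $X$ to an ordered set $Y$ is simply a surjection $Y \onto X$ in $\FS$ that respects the orders in the ordered-surjection sense, and Proposition \ref{agrees1} tells us that such an order on the codomain is uniquely determined by the order on the domain and the underlying surjection. So any generation statement over $\FSop$ should automatically ``lift'' to an ordered version.

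Concretely, I would begin by fixing $Y \in \OSop$ and an element $n \in (\pi^*N)_Y = N_{\pi(Y)}$. Since $N$ is generated in degree $\leq d$ as an $\FSop$ module, I can write
\[
n = \sum_i N_{f_i}(v_i),
\]
where $f_i \colon \pi(Y) \onto x_i$ is a surjection, $v_i \in N_{x_i}$, and $|x_i| \leq d$. The only thing to verify is that each summand is in the span of the image of $\pi^*N$-maps coming from $\OSop$-morphisms of the required codomain size.

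Here I invoke Proposition \ref{agrees1}: since $\pi(Y)$ is totally ordered (by the order on $Y$), each $x_i$ carries a unique total order making $f_i$ an ordered surjection. Let $X_i$ denote $x_i$ equipped with this order; then $f_i \colon Y \onto X_i$ is a morphism in $\OSop$ with $|X_i| = |x_i| \leq d$. Because $\pi$ only forgets the order, $\pi(X_i) = x_i$, so $v_i \in (\pi^*N)_{X_i}$, and the action of $f_i$ on $\pi^*N$ coincides with its action on $N$, giving $(\pi^*N)_{f_i}(v_i) = N_{f_i}(v_i)$. Summing yields $n = \sum_i (\pi^*N)_{f_i}(v_i)$, as required.

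There is no real obstacle to overcome: the entire content of the proof is the ``lifting of orders'' provided by Proposition \ref{agrees1}, and the only thing to be careful about is bookkeeping — namely, observing that the surjections used to generate $N$ in $\FSop$ can each be promoted canonically to ordered surjections in $\OSop$ without changing the codomain size, so the degree $\leq d$ bound is preserved on the nose.
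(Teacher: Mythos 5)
Your proof is correct and takes essentially the same approach as the paper. The paper phrases the key step as a factorization — any surjection $[m] \onto [l]$ factors as an ordered surjection $[m] \onto [l]$ followed by a bijection $[l] \to [l]$ — and concludes that the $\FSop$ submodule generated by $x \in N_l$ equals the $\OSop$ submodule generated by $\{\sigma x\}_{\sigma \in \rS_l}$; you reach the same conclusion by transporting the order from the source along each surjection (the content of Proposition~\ref{agrees1}), which is just the same observation packaged so that the codomain carries a non-canonical order rather than composing with a permutation to land back in the skeleton.
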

\begin{proof}
		 Let $l \in \bbN$ and let $x \in N_{l}$ be an element.  Then for any $m \in \bbN$, any surjection $f: [m] \onto [l]$ factors as an ordered surjection, followed by a bijection $[l] \to [l]$.  Thus the $\FSop$ submodule generated by $x$ equals the $\OSop$ module generated by $\{\sigma x\}_{\sigma \in \bS_l}$.  
\end{proof}

For $d \in \bbN$, let $\bP(d)$ be the free $\OSop$ module generated in degree $d$:  $\bP(d)_m := k \OSop(d,m)$.  The next proposition applies Theorem \ref{Languages} and Proposition \ref{grobner}  to prove a variant of Theorem \ref{FSop} for submodules of $\bP(d)$.

\begin{prop}\label{OSopcase}
	 Let $k$ be a field.  Let $J$ be an $\OSop$ submodule of $\bP(d)$.  Then there exists an $s \in \bbN$ such that $\rK_{\ell_1} \circ \dots \circ \rK_{\ell_r}(J)$ is exact for all $r \geq s$ and all $(\ell_t)_{t = 1}^r \in \bbN^{r}$ satisfying $\ell_t \geq d$  for $t = 1, \dots, r-1$ and $\ell_r \geq d+1$ .  
\end{prop}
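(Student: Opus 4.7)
The plan is to combine Sam--Snowden's Gröbner theory for $\OSop$ with the combinatorial vanishing result Theorem \ref{Languages}. By Proposition \ref{KBarcomparison} it suffices to prove the analogous statement for $\rB_{\ell_1}\circ\cdots\circ\rB_{\ell_r}(J)$ in place of the Koszul complex. By the discussion in \S\ref{subsec:prodcomposite} combined with Proposition \ref{agrees1}, this iterated complex coincides, objectwise at $x \in \OSop$, with the exact functor $\cB := \rB_{(\rP(\ell_1),\dots,\rP(\ell_r))}$ applied to the representation $J_{x\sqcup -}$ of the product $\rP(\ell_1)\times\cdots\times\rP(\ell_r)$. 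Equivalently, setting $c = [\ell_1]\sqcup\cdots\sqcup[\ell_r]$ and letting $\cQ \subseteq \OSop/c$ be the subcategory of morphisms of the form $\bigsqcup_t g_t$ with $g_t : y_t \to [\ell_t]$, this is $\cB$ evaluated on the $\cQ$ representation $J_{x\sqcup -}$. Fixing Sam--Snowden's ordering on $\bS(d) = \OSop([d],-)$ from \cite[\S8]{sam2017grobner}, Proposition \ref{grobner} reduces the problem to exactness of $\cB(\init(J)_{x\sqcup -})$ for the monomial submodule $\init(J)$.

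The monomial submodule $\init(J) \subseteq \bP(d)$ is the linearization of a subfunctor of $\bS(d)$, which, under the identification of $\bS(d)_{[n]}$ with surjective ordered words of length $n$ in alphabet $[d]$, corresponds to a subset $L\subseteq [d]^*$. Stability of $\init(J)$ under $\OSop$ translates precisely into the duplication property $(*)$ for $L$, since the $\OSop$ action on $\bP(d)$ is by duplication of letters. By Sam--Snowden's Gröbner theory for $\OSop$, $L$ is accepted by a finite connected ordered DFA $A$; I set $s$ equal to the length of $A$ as a poset.

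For fixed $x$, I apply Proposition \ref{PSet} to the set-valued functor underlying $\init(J)_{x\sqcup -}|_\cQ$ (its transition maps are injective, being pre-composition with ordered surjections) to decompose $\init(J)_{x\sqcup -}|_\cQ \simeq \bigoplus_\omega k I\langle \omega\rangle$, indexed by top-degree words $\omega \in L$ of length $|x|+\ell_1+\cdots+\ell_r$. Writing $\omega = u \cdot w_1 \cdots w_r$ with $|u| = |x|$ and $|w_t| = \ell_t$, a direct unwinding of the definitions identifies $I\langle \omega\rangle$ with the ideal $\bJ(w_1\cdots w_r, L_u)$ of Theorem \ref{Languages}, where $L_u := \{v \in [d]^* : uv \in L\}$ is the left-quotient language.

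The language $L_u$ inherits property $(*)$ from $L$, and is accepted by the truncation $A_{\geq \alpha_u}$ at the state $\alpha_u$ reached from the start of $A$ after reading $u$; this is a connected ordered DFA of length at most $s$. Theorem \ref{Languages} then gives exactness of $\cB(k I\langle \omega\rangle)$ for every $\omega$, provided $r\geq s$, $\ell_t\geq d$ for $t<r$, and $\ell_r\geq d+1$. Since $\cB$ is exact and preserves direct sums, summing over $\omega$ yields exactness of $\cB(\init(J)_{x\sqcup -})$, and then Proposition \ref{grobner} concludes. The main technical obstacle I expect is the careful language-theoretic translation in the second paragraph: verifying that Sam--Snowden's monomial submodules of $\bP(d)$ correspond exactly to languages satisfying $(*)$ and accepted by ordered DFAs, in a way compatible with the left-quotient construction needed to handle the shift variable $x$.
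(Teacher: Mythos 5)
Your proof matches the paper's argument in both strategy and all essential steps: reduce $\rK$ to $\rB$ via Proposition~\ref{KBarcomparison}, identify the iterated complex at each $x$ with $\rB_{(\rP(\ell_1),\dots,\rP(\ell_r))}(J_{x\sqcup -})$, reduce to the monomial case $\init(J)$ via Proposition~\ref{grobner}, use the Sam--Snowden O-lingual structure to get a connected ordered DFA $A$ (with $s$ the length of $A$), decompose via Proposition~\ref{PSet} into ideals $I\langle \omega\rangle$, identify each with $\bJ(w_1\cdots w_r, L')$ where $L'$ is the language of the truncation $A_{\geq\alpha_u}$ (your left-quotient description is equivalent), verify $(*)$, and invoke Theorem~\ref{Languages}. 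This is essentially the paper's proof; the only cosmetic differences are that you name the left-quotient language explicitly rather than defining $L'$ directly via the truncated DFA, and you phrase the product-poset identification slightly differently.
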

\begin{proof}

By \cite[Theorem 8.1.1]{sam2017grobner} the category $\OSop$ is \emph{Gr\"obner}: in particular the functor $\bS(d)$,  defined by $n \mapsto \OSop(d,n)$, carries an ordering satisfying the conditions of \S \ref{sec:Grobner}. (We will not use the particular definition of the ordering, so we do not recall it).  We let $\init(J)$ be the monomial ideal associated to $J$ with respect to this ordering.    

For $n \in \bbN$,  let $T_n \subseteq \init(J)_n$ be the set of $f \in \bS(d)_n$ such that $e_f \in \init(J)$.   Then $n \mapsto T_n$ is functor $\OSop \to \Set$, and there is a canonical morphism $k T \to \init(J)_n$  given by $e_f \mapsto e_f$.  This map is injective for any ring,  and because $k$ is a field it is surjective: if $\lambda e_f \in \init(J)$ is a nonzero element, then $e_f = \lambda\inv \lambda e_f \in \init(J)$.

Let $\Sigma = [d]$  and $\iota: \bigsqcup_{n \in \bbN} \OSop(n,d) \to \Sigma^*$   be the tautological embedding taking  $f: [n] \to [d]$ to its word.   In the terminology of Sam--Snowden \cite{sam2017grobner} $$\bigsqcup_{n \in \bbN} \OSop(n,d) = |\bS(d)| = |\cC_{[d]}|,$$  where $\cC = \OSop$.  In \cite[\S8.2]{sam2017grobner},  Sam--Snowden show that the embedding $\iota: |\bS(d)| \to \Sigma^*$ endows $\bS(d)$ with an \emph{O-lingual structure} \cite[Definition 6.2.1]{sam2017grobner}.  In particular this implies there is an ordered DFA $A$,  which accepts the language $L := \iota(\sqcup_{n} T_n) \subseteq \Sigma^*$  (we take $A$ to be connected).  We will prove the statement for $s$ equal to the length of $A$, considered as a poset. So fix $r \geq s$.

There is a quasi-isomorphism $\rK_{\ell_1} \circ \dots \circ \rK_{\ell_r} \simeq \rB_{\ell_1} \circ \dots \circ \rB_{\ell_r}$ by Proposition \ref{KBarcomparison}.   Fix $x \in \OSop$.  As discussed in \S\ref{subsec:prodcomposite}, it suffices show that $\rB_{(\OS\op/\ell_1, \dots ,\OSop/\ell_r)}(J_{x \sqcup -})$ is exact.  To do this, we identify $\rP(\ell_t)$ with the full subcategory of $\OSop/\ell_t$  spanned by the canonical ordered surjections $[\ell_t] \onto p$,  for every $p \in \rP(\ell_t)$.   Then $$\rB_{(\OS\op/\ell_1, \dots \OSop/\ell_r)}(J_{x \sqcup -}) \iso \rB_{(P(\ell_1), \dots, \rP(\ell_r))}(J_{x \sqcup -}),$$ and by Proposition \ref{grobner} it suffices to show that $$ \rB_{(P(\ell_1), \dots, \rP(\ell_r))}((\init J)_{x \sqcup -}) \iso  \rB_{(P(\ell_1), \dots, \rP(\ell_r))}(k T_{x \sqcup -})$$ is exact.    So we consider the functor $ T_{x \sqcup -}:  \rP(\ell_1) \times \dots \times \rP(\ell_r) \to \Set$    $$(q_1, \dots, q_r) \mapsto T_{x \sqcup q_1 \sqcup \dots \sqcup q_r}.$$   Since $\bS(d)_f$ is injective for all $\OSop$ morphisms $f$,  by Proposition \ref{PSet} we see that $T_{x \sqcup -} =  \bigsqcup_{u}  \rS I \langle u \rangle$ where the sum is over words $u: x \sqcup [\ell_1] \sqcup  \dots \sqcup [\ell_r] \to [d]$.

Fix $u: x \sqcup [\ell_1] \sqcup  \dots \sqcup [\ell_r] \to [d]$  and write $u = z w_1 \dots w_r$, where $z: x \to [d]$ and $w_t$ is a word on $\ell_t$ letters for $t = 1, \dots, r$.   Let $\alpha$ be the state obtained by running $A$ with input $z$   Let $L'$ be the language accepted by $A_{\geq \alpha}$.

Because $T$ is an $\OSop$ set, the languages  $L$ and $L'$ both satisfy property  ($*$) of \S\ref{sec:Languages}:  under the identification between words and functions,  the substitution $w_1 a w_2w_3 \mapsto w_1 a w_2 a w_3$ corresponds to precomposition with the obvious ordered surjection $[n] \onto [n-1]$.   Therefore the ideal $\bJ(w_1 \dots w_r, L') \subset \rP(\ell_1) \times \dots \times \rP(\ell_r)$ is defined.  

\begin{lem}The ideal $I\langle u \rangle$ equals the ideal $\bJ(w_1 \dots w_r, L')$.
\end{lem}
\begin{proof}  Let $\ell = \sum {\ell_t}$.   By definition, a partition $p \in \rP(\ell_1) \times \dots \times \rP(\ell_r)$  lies in $I\langle u\rangle$ if and only if there is an element $u' \in T_{x \sqcup p}$  such that for the the ordered surjection $f_p: [\ell] \onto p$  we have  $T_{\id_x \sqcup f_p}( u') = u$. This occurs if and only if (1)  there is an element of $u' \in \bS(d)_{x \sqcup p}$ satisfying  $\bS(d)_{\id_x \sqcup f}( u')$  and (2)  $u' \in T_{x \sqcup p}$.  These two conditions correspond to (1)   the word $w_1 \dots w_r$ factoring through $p$  and (2) the word $z(w_1 \dots w_r)_p$  lying in $L$. (Here $(w_1 \dots w_r)_p$ is the quotient word).   Since $L'$ is the language accepted by $A_{\geq \alpha}$,   $(2)$ holds if and only if $w_1 \dots w_r \in L'$.  
\end{proof}

Therefore, by Theorem \ref{maintechnical} we have that $\rB_{(\rP(\ell_1), \dots, \rP(\ell_r))}(k I_u)$ is exact.  Summing over $u$, we see that $\rB_{(\rP(\ell_1), \dots, \rP(\ell_r))}(kT_{x \sqcup -})$ is exact, completing the proof.
\end{proof}

The remainder of the Proof of Theorem \ref{FSop} is a d\'evissage.

\begin{proof}[Proof of Theorem \ref{FSop}]
Let $M$ be an $\FSop$ module that is a subquotient of one generated in degree $\leq d$. By Proposition \ref{agrees1} it suffices to show that there is an $s \in \bbN$ such that the complex $$ \rK_{\ell_1} \circ \dots \circ \rK_{\ell_r}( \pi^* M)$$ is exact for all $r \geq s$ and choices of $\ell_t \geq d$,  $t = 1, \dots, r-1$ and $\ell_r \geq d+1$.

  Embed $M$ into an $\FSop$ module $N$ which is finitely generated in degree $\leq d$.   By Proposition \ref{agrees2}, $\pi^* N$ is also generated in degree $\leq d$.     Choose a surjection  $p: G \onto  \pi^* N$  where $G = \bigoplus_{i = 1}^R \bP(n_i)$ for  $\{n_i\}_{i= 1}^R$ a finite list of natural numbers $n_i \leq d$.  We obtain the following commutative diagram, with exact rows.  

\begin{center}
\begin{tikzcd}
 0& \arrow[l] \pi^*N           & G \arrow[l,"p"]                         & \ker p \arrow[l]                   & \arrow[l] 0             \\
 0 &\arrow[l] \pi^*M \arrow[u] &J \arrow[l] \arrow[u] & J' \arrow[l] \arrow[u] & \arrow [l] 0
\end{tikzcd},
\end{center}
where $J = p^{-1}(\pi^* M)$ and $J' = \ker p \cap p^{-1}(\pi^* M)$. If $\cB$ is any exact functor from $\OSop$ modules to chain complexes,  we have that $\cB(\pi^* M)$ is exact if both $\cB(J)$ and $\cB(J')$ are.
Now filter $G$ by defining $F_I G = \bigoplus_{i = 1}^I \bP(n_i)$.  Intersecting with $J$ and $J'$  we obtain filtrations $F_IJ$ and $F_IJ'$.  To show that $\cB(\pi^* M)$ is exact, it suffices to show that the complexes obtained by applying $\cB$ to the associated graded modules $ F_I J/F_{I-1}J$ and $F_I J'/F_{I-1}J'$ are exact.  These associated graded modules are submodules  of $F_IG/F_{I-1} G = \bP(n_I)$.     By Proposition \ref{OSopcase} there exists an $s_I \in \bbN$  such that for all  $r \geq s_I$  and  $(\ell_t)_{t = 1}^{r} \in \bbN^{s}$ with $\ell_t \geq n_I$ and $\ell_{r} \geq n_I + 1$  the complex  $\rK_{(\ell_1, \dots, \ell_{r})} (F_I J/F_{I-1}J)$ is exact.   Identically, there exists an $s_I' \in \bbN$  such that for all $r' \geq s_I'$ and  $(\ell_t')_{t = 1}^{r'} \in \bbN^{s_I'}$ with $\ell_t' \geq n_I$ and $\ell_{r'}' \geq n_I+1$  the complex  $\rK_{(\ell_1, \dots, \ell_{r'})} (F_I J'/F_{I-1}J')$ is exact. Take $s$ to be $\max( \cup_{I} \{s_I, s_I'\})$.  Then  we conclude that $\rK_{(\ell_1, \dots, \ell_{r})}(M)$ is exact for all choices of $r \geq s$ ,  $\ell_t \geq d$ and $\ell_r \geq d+1$.  
\end{proof}


\part{Characters of $\FSop$ modules}

\section{The character of $\rK_d(M)$} \label{sec:CharacterofKd}
 
Let $M$ be a chain complex of $\FSop$ modules over a commutative ring $R$.  The purpose of this section is, when $R$ is a field of characteristic zero, to compute the virtual  $\rS_d \times \rS_n$ representation associated to $\rK_d(M)_n$, in terms of certain differential operators on the ring of symmetric functions.

\begin{defn}
	We write $\rS_\bdot$ to denote the groupoid $\sqcup_{n\in \bbN} \rS_n$.   Any $\FSop$ module $M$ determines a representation of $\rS_\bdot$  by $n \mapsto M_{[n]}$.  
\end{defn}

The category $\Ch(\Mod R)$ of chain complexes of $R$ modules is symmetric monoidal, using the Koszul braiding. Thus there is a symmetric monoidal  induction product $$\oast: \Ch(\Mod R)^{\rS_\bdot  } \times \Ch(\Mod R)^{ \rS_\bdot} \to  \Ch(\Mod R)^{ \rS_\bdot},$$ constructed by tensoring and applying the adjoint to the restriction functor  $$\Res^{\sqcup}: \Ch(\Mod R)^{\rS_\bdot \times \rS_\bdot} \leftarrow  \Ch(\Mod R)^{ \rS_\bdot}.$$  
Concretely,  if $M$ and $N$ are  $\rS_\bdot$ representations,  then  $$(M \oast N)_n = \bigoplus_{i + j = n} \Ind^{\rS_{n}}_{\rS_i \times \rS_{j}} M_i \otimes N_j.$$
Finally there is a composition product $$ \odot: \Ch(\Mod R)^{\rS_\bdot} \times \Ch(\Mod R)^{ \rS_\bdot} \to   \Ch(\Mod R)^{\rS_\bdot}.$$  defined by $$M \odot  N := \bigoplus_{r} M_r \otimes_{\rS_r}  N^{\oast r}$$ where $\rS_r$ acts on $N^{\oast r}$ in the way induced by the Koszul sign rule.    $\QED$  

Our first step will be to describe the underlying representation of $\rK_d(M)$ in terms of these operations and the top Whitney homology of the partition lattice.  

\begin{defn}
	For $b$ a set of size $\geq 1$, let $\cW_{b}$  be the degree $n-1$ homology group of  the pair $(\rN(\rP(b)), Z_{\rP(b)})$ with coefficients in $R$. 
 We consider $\cW_n := \cW_{[n]}$ to be a chain complex concentrated in homological degree $n-1$,  so that $\cW_n \in \Ch(\Mod R)^{\rS_n}$.  We define $\cW_0 = 0$ and $\cW := \bigoplus_{n \in \bbN} \cW_n \in \Ch(\Mod R)^{\rS_\bdot}$.  
\end{defn}

\begin{defn}Let $M$ be a $\rS_\bdot \times \rS_\bdot$ representation.  Then for an $\rS_\bdot$ representation $N$ we write $M \odot_2 N$ to denote the $\rS_\bdot \times \rS_\bdot$ representation, $$\bigoplus_{(n,r), r \geq 1}  M_{(n,r)} \otimes_{\rS_r}  N^{\oast r}$$  where the subscript $2$ indicates that we are performing the composition product with respect to the second factor. 
\end{defn}

\begin{prop}\label{Whitneyiso}
Let $M$ be a chain complex of an $\FSop$ modules.  Then the $\rS_\bdot \times \rS_\bdot$  complexes
$\bigoplus_{d \geq 1} \rK_d(M)$  and $$  \left(\Res^\sqcup   M \right) \odot_{2} \cW,$$  have  isomorphic underlying graded $\rS_\bdot \times \rS_\bdot$ representations.
\end{prop}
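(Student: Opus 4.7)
The plan is to identify the underlying graded $\rS_d$-representation of $\rK_d(M)_{[n]}$ with a direct sum indexed by set partitions of $[d]$, and then reorganize this sum to match the composition product with $\cW$.

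For each $p \in \rP(d)$, the principal filter $[p, \hat 1]$ in $\rP(d)$ factors canonically as a product of partition lattices $\prod_{b \in p} \rP(b)$, since a refinement $q \geq p$ amounts to a choice of partition of each block of $p$; this factorization is equivariant for the stabilizer $\prod_{b \in p} \rS_b \subseteq \rS_d$ of $p$. Each $\rP(b)$ is Cohen--Macaulay of rank $|b|-1$ with top Whitney homology $\cW_b$, so a K\"unneth-type identification---which follows from Proposition \ref{product} together with Proposition \ref{KBarcomparison} and the remark comparing $\rK_{P\times Q}$ with $\rK_{(P,Q)}$ applied to the product decomposition---gives a canonical $\prod_{b} \rS_b$-equivariant isomorphism
\[ H_{r(p)}(\rN[p,\hat 1], Z_{[p, \hat 1]}) \iso \bigotimes_{b \in p} \cW_b, \]
where $r(p) = d - |p| = \sum_{b \in p}(|b|-1)$. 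Substituting this into the explicit description of $\rK_d(M)$ recalled in \S\ref{sec:Category}, and noting that the residual $\rS_d$-action permutes the summands via its action on $\rP(d)$, yields an $\rS_n \times \rS_d$-equivariant isomorphism
\[ \rK_d(M)_{[n]} \iso \bigoplus_{p \in \rP(d)} M_{[n] \sqcup p} \otimes \bigotimes_{b \in p} \cW_b. \]

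To match the composition product, I will unfold $\cW^{\oast r}_d$, by the definition of the induction product, as
\[ \cW^{\oast r}_d \iso \bigoplus_{(B_1, \ldots, B_r)} \cW_{B_1} \otimes \cdots \otimes \cW_{B_r}, \]
summed over ordered set partitions of $[d]$ into $r$ nonempty blocks, with $\rS_d$ acting on $[d]$ and $\rS_r$ permuting positions. The $\rS_r$-action on the index set is free because the $B_i$ are pairwise distinct subsets, and its orbits are in bijection with unordered partitions $p \in \rP(d)$ with $|p|=r$. Since $\rS_r$ acts on $M_{n+r}$ through the embedding into $\rS_{n+r}$ permuting the last $r$ letters, the tensor product $M_{n+r} \otimes_{R[\rS_r]} \cW^{\oast r}_d$ collapses to $\bigoplus_{p:|p|=r} M_{[n] \sqcup p} \otimes \bigotimes_{b \in p} \cW_b$. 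Summing over $r \geq 1$ and $d \geq 1$ then recovers the $(n,d)$-bidegree of $(\Res^\sqcup M) \odot_2 \cW$, giving the desired isomorphism.

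The main technical subtlety will be tracking $\rS_n \times \rS_d$-equivariance and Koszul signs through the K\"unneth identification and the passage from ordered to unordered partitions---in particular, verifying that the K\"unneth isomorphism assembles consistently across different $p \in \rP(d)$ into the induction structure defining $\oast$, and that it is compatible with the non-trivial $\rS_d$-action on $\rK_d(M)_{[n]}$ that both permutes partitions and mixes blocks of the same size. Once a canonical ordering convention for blocks is fixed (say by least element), these compatibilities become routine bookkeeping.
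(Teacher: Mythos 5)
Your proposal is correct and takes essentially the same route as the paper: factor $[p,\hat 1]\iso\prod_{b\in p}\rP(b)$, apply a K\"unneth isomorphism to identify the Whitney homology with $\bigotimes_{b}\cW_b$, unfold $\cW^{\oast r}$ as a sum over ordered set partitions of $[d]$, and use freeness of the $\rS_r$-action on ordered partitions to collapse the $\otimes_{\rS_r}$ to a sum over unordered partitions. The one place where the paper differs slightly in execution is in the equivariance bookkeeping you defer at the end: rather than fixing a global ordering convention (which an element of $\rS_d$ will not preserve, complicating the check), the paper chooses an arbitrary labelling $\sigma_p$ of blocks for each $p$, observes that different choices differ by an element of $\rS_r$ and so give the same map into the $\rS_r$-coinvariants, and then, when checking compatibility with a given $\tau\in\rS_d$, picks the labellings of $p$ and $\tau(p)$ compatibly. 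This is the cleaner way to handle the subtlety you flagged.
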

\begin{proof}
First we describe $\left(\Res^\sqcup   M \right) \odot_{2} \cW$ more concretely.   We have an isomorphism of $\rS_d$ representations $$(\cW^{\oast r})_d = \bigoplus_{e_1 + \dots + e_r = d, e_i \geq 1}  \Ind_{\rS_{e_1} \times \dots \times \rS_{e_r}} ^{\rS_d}( \cW_{e_1} \otimes \dots \otimes  \cW_{e_r} )\iso  \bigoplus_{[d] = b_1 \sqcup \dots \sqcup b_r}  \cW_{b_1} \otimes \dots \otimes \cW_{b_r},$$
where the second sum is over partitions of $[d]$ into $r$ labelled blocks.  To construct this isomorphism, note that $\rS_d$ acts on the set of such ordered partitions, with orbits in bijection with  $\{(e_1, \dots, e_r) ~|~ e_i \geq 1, \sum_{i} e_i = e\}$  and each orbit contains a distinguished element with stabilizer $\rS_{e_1} \times \dots \times \rS_{e_r}$.   
Now,  $\rS_r$  acts on the right by relabeling the blocks and permuting the tensor factors by the Koszul sign rule.  And we have that $$(\left(\Res^\sqcup   M \right) \odot_{2} \cW)_{n,d} \iso \bigoplus_r  M_{[n] \sqcup [r]} \otimes_{\rS_r} \left( \bigoplus_{[d] = b_1 \sqcup \dots \sqcup b_r}  \cW_{b_1} \otimes \dots \otimes \cW_{b_r} \right).$$

Next, we have that $$\rK_d(M)_n = \bigoplus_{p \in \rP(d)}  M_{[n] \sqcup p} \otimes H_{\rr(p)}(\rN[p, \hat 1], Z_{[p, \hat 1]}; R).$$
Let $p \in \rP(d)$  be a partition with $r$ blocks. There is an isomorphism of posets $$\prod_{b \text{ block of $p$} }  \rP(b) \to [p, \hat 1],$$ given by disjoint union.  Choose an arbitrary labelling of the blocks of $p$ as $b_1, \dots, b_r$, giving a bijection $\sigma_p : p \to [r]$.    Since for any pair of posets $P,Q$ with top and bottom elements, we have that $Z_{P \times Q}  = \rN P \times Z_Q \cup Z_P \times  \rN Q$, and  the Kunneth formula  for pairs gives  \begin{align} {\rm Kun}_{\sigma_p}:~ H_{\rr(p)}(\rN([p, \hat 1]), Z_{[p, \hat 1]}) &\iso  H_{|b_{1}| - 1}( \rN\rP(b_1), Z_{\rP(b_1)}) \otimes \dots \otimes   H_{|b_{r}| - 1}(\rN \rP(b_r), Z_{\rP(b_r)}) \nonumber \\  &= \cW_{b_1} \otimes \dots \otimes \cW_{b_r}. \nonumber \end{align}  Here the coefficients are in $R$,  and we have used the fact that by the universal coefficient theorem $\cW_b$ is a free $R$ module.  For a different ordering of the blocks, the Kunneth isomorphism we obtain differs by a permutation of the tensor factors modified by a sign according to the Koszul sign rule.  Then summing the morphism $M_{\id_{[n] \sqcup \sigma_p}} \otimes {\rm Kun}_{\sigma_p}$ over all partitions  $p$ of $[d]$ with $r$ blocks,  we obtain an isomorphism $$\bigoplus_{p} M_{[n] \sqcup p}  \otimes H_{\rr(p)}(\rN([p, \hat 1]), Z_{[p, \hat 1]}) \iso   M_{[n] \sqcup [r]} \otimes_{\rS_r} \left( \bigoplus_{[d] = b_1 \sqcup \dots \sqcup b_r}  \cW_{b_1} \otimes \dots \otimes \cW_{b_r} \right).$$ This isomorphism does not depend on the choices of $\sigma_p$,  because any two choices differ by an element of $\rS_r$, and in the target we have quotiented by the action of $\rS_r$.  It is compatible with the action of  $\rS_n$ and $\rS_d$, and hence an isomorphism of representations.

 To check the compatibility with $\tau \in \rS_d$,  let $p \in \rP(d)$  and  label the blocks of $p$ and $\tau(p)$  such that $\tau$ maps the $i^{\rm th}$ block of $p$ to the $i^{\rm th}$ block of $\tau(p)$.     Then both pairs of maps $M_{\id_{[n] \sqcup \sigma_p}}, M_{\id_{[n] \sqcup \sigma_{\tau(p)}}}$ and ${\rm Kun}_{\sigma_p}$ ,  ${\rm Kun}_{\sigma_{\tau(p)}}$ are compatible with the action of $\tau$.  
\end{proof}

\subsection{Symmetric functions and Frobenius characters}\label{symmconventions}
We now specialize to $R = \bbQ$.  For the remainder of this paper, we will work with $\FSop$ modules over $\bbQ$.  Since all of the irreducible representations of $\rS_n$ are defined over $\bbQ$, all of our arguments and results carry over to $\FSop$ modules over any field of characteristic zero.

\begin{defn}  Let be $\Lambda$ be the ring of symmetric functions with $\bbQ$ coefficients, $\Lambda_n$ the subspace of symmetric functions of degree $n$, and $\hat \Lambda := \prod_{n \in \bbN}  \Lambda_n$  its completion with respect to the filtration by degree.

We use standard notation for symmetric functions. Let  $\lambda = 1^{m_1}2^{m_2} \dots$ be an integer partition.  We write $p_{\lambda}$ for the power sum monomial  $\prod_{i} p_i^{m_i}$.  Similarly $e_\lambda, h_\lambda$ and $s_\lambda$, denote respectively the elementary, homogeneous, and  Schur symmetric functions corresponding to $\lambda$.  An element of $f \in \hat \Lambda$ admits a unique expansion as a power series with respect to any of these collections of monomials.  For instance we may write $$ f = \sum_{\lambda} a_\lambda p_\lambda$$   for a unique vector  $(a_\lambda) \in \prod_{\lambda \text{ integer partition }} \bbQ.$

We set $\lambda!:= \prod_{i} m_i !$ and  $z_\lambda := \lambda! \prod_{i} (i)^{m_i}$  and $\sgn(\lambda) = \prod_{i  \text{ even } } (-1)^{m_i}$.   We define  $|\lambda| := \sum_{i} i m_i$  and $\rank(\lambda) := \sum_{i} m_i,$  and write $\sigma_\lambda$ to denote an element of the conjugacy class of $\rS_{|\lambda|}$ corresponding to $\lambda$.  Thus $\sgn(\lambda)$ is the trace of $\sigma_\lambda$ on the sign representations of $\rS_{|\lambda|}$.

The Hall inner product on $\Lambda$ is defined by $$\langle p_\lambda/z_\lambda  , p_\nu \rangle = \delta(\lambda,\nu) = \langle s_\lambda, s_\nu \rangle, $$ for $\delta(\lambda,\nu)$ the Kronecker delta.  The Hall inner product extends uniquely to a pairing $\langle -, - \rangle:  \Lambda \times \hat \Lambda  \to \bbQ$,  which induces an isomorphism $\hat \Lambda \to \Lambda^*$, given by $s \in \hat \Lambda \mapsto \langle -, s \rangle$.
    $\QED$ 

 \end{defn}

\begin{defn}
  Let $N$ be a finite dimensional $\rS_{n}$ representation over $\bbQ$.  Then its \emph{Frobenius character}  $\ch(N) \in \Lambda_n$  is defined to be $$\rc\rh(N):=\sum_{\lambda \vdash n}  \Tr(\sigma_{\lambda}, \ch(N)) \frac{p_\lambda}{z_\lambda} = \sum_{\lambda \vdash n} \mult_{\lambda}(N) s_\lambda.$$
Let $M$ be a finite dimensional representation of $\rS_{n_1} \times   \dots \times \rS_{n_r}$.  Then $$\ch(M) \in \Lambda_{n_1} \otimes \dots \otimes \Lambda_{n_r},$$ is uniquely defined by setting $\ch(\otimes_{i = 1}^r (N_i)) = \otimes_{i = 1}^r \ch(N_i)$  and extending bilinearly.  
For $D$ a bounded finite dimensional chain complex of $\rS_{n_1} \times   \dots \times \rS_{n_r}$  representations, we define  $$\ch(D) := \sum_{i} (-1)^i \ch(H_i(D)) = \sum_{i} (-1)^i \ch(D_i).$$
Finally, if $C$ is a chain complex of finite dimensional $(\sqcup_{n} \rS_n)^r$  representations which is bounded for each $(n_1, \dots n_r)$, we define $\ch(C) \in  \widehat{\Lambda^{\otimes r}} = \prod_{(n_1, \dots, n_r)}  \Lambda_{n_1} \otimes \dots \otimes \Lambda_{n_r}$  to be $\sum_{(n_1, \dots, n_r)}  \ch(C_{n_1, \dots, n_r})$.  
\end{defn}

The Frobenius character intertwines operations on chain complexes of $\rS_\bdot$ representations and operations on symmetric functions.  We have that $\ch(M \odot N) = \ch(M)[\ch(N)]$   where the square brackets denote plethysm \cite[A.2.6]{StanleyEnum}.  And $\ch(\Res^{\sqcup} M) = \Delta(\ch(M))$  where $\Delta: \hat \Lambda \to \widehat{\Lambda^{\otimes 2}}$ is the coproduct, characterized by $\Delta(p_i) = 1 \otimes p_i + p_i \otimes 1$ for $i \in \bbN$.  These compatibilities are standard for ordinary $\rS_n$ representations:  for a discussion of their extension to chain complexes see for instance \cite[\S5]{getzler1995operads}.  

\subsection{Character computation}

To translate Propostion \ref{Whitneyiso} into a statement about Frobenius characters, we need the following. 
\begin{prop}[Stanley \cite{stanley1982some}]
Let $\mu: \bbN_{\geq 1} \to \{1,0,-1\}$ be the arithmetic M\"obius function,  then
$$\ch(\cW) = w := \sum_{d \geq 1 } \frac{ \mu(d)}{d} \log( 1 + p_d).$$
\end{prop}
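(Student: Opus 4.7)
The plan is to apply the Hopf trace formula to compute $\Tr(\sigma \mid \cW_n)$ for each $\sigma \in \rS_n$, express the result as a M\"obius function on a fixed-point subposet, and then recognize the resulting generating series.  Since $\rP(n)$ is Cohen--Macaulay, $\cW_n$ is the unique nonvanishing relative homology group of the pair $(\rN \rP(n), Z_{\rP(n)})$, sitting in degree $n-1$.  The Hopf trace formula then gives
$$(-1)^{n-1}\Tr(\sigma \mid \cW_n) = \chi\bigl((\rN \rP(n))^\sigma, (Z_{\rP(n)})^\sigma\bigr).$$
Since $\sigma$ acts as a poset map, the fixed simplicial subcomplex is the order complex of the $\sigma$-fixed subposet $\rP(n)^\sigma$; by Remark \ref{CMcomparison} combined with Philip Hall's theorem identifying reduced Euler characteristics with M\"obius numbers, this relative Euler characteristic equals $\mu_{\rP(n)^\sigma}(\hat 0, \hat 1)$.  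The compensating $(-1)^{n-1}$ arising from the homological degree of $\cW_n$ then yields
$$\ch(\cW_n) = \sum_{\lambda \vdash n} \mu_{\rP(n)^{\sigma_\lambda}}(\hat 0, \hat 1)\, \frac{p_\lambda}{z_\lambda}.$$

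Next I would invoke Stanley's classical evaluation of $\mu_{\rP(n)^\sigma}(\hat 0, \hat 1)$ from \cite{stanley1982some}: this M\"obius function vanishes unless all cycles of $\sigma$ share a common length $d$; if $\sigma$ has cycle type $(d^m)$ with $n = md$, then
$$\mu_{\rP(n)^{\sigma}}(\hat 0, \hat 1) = (-1)^{m-1}(m-1)!\, d^{m-1}\, \mu(d),$$
with $\mu(d)$ the classical arithmetic M\"obius function.  Using $z_{(d^m)} = m!\, d^m$, the contribution of cycle type $(d^m)$ to $\ch(\cW_n)$ collapses to $\tfrac{(-1)^{m-1}\mu(d)}{md}\, p_d^m$.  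Summing over all $n \geq 1$ and factorizations $n = md$, grouping the resulting terms by $d$, and applying the Taylor expansion $\log(1+x) = \sum_{m \geq 1} \tfrac{(-1)^{m-1}}{m}\, x^m$ produces exactly $\sum_{d \geq 1} \tfrac{\mu(d)}{d}\log(1 + p_d) = w$.

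The hard part is Stanley's M\"obius computation itself.  A small example such as $\sigma = (12)(34) \in \rS_4$ shows that $\rP(n)^\sigma$ need not split as a direct product of simpler posets, and the natural candidate ``cone point'' (the partition grouping cycles by length) is in general not comparable to every $\sigma$-fixed partition.  Consequently the vanishing case cannot be handled by a naive cone-point reduction; one instead analyzes $\rP(n)^\sigma$ through the filtration by $\sigma$-orbits on blocks.  I would quote this step from \cite{stanley1982some} rather than reprove it in detail here, since the generating-function manipulation above is the point of the statement.
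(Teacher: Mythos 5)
Your proof is correct, and since the paper attributes this proposition to Stanley without giving an argument of its own, your write-up is a valid reconstruction of the underlying derivation rather than an alternative to anything in the text.

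A few points worth flagging. First, your reduction via the Hopf trace formula is sound, and the identification of the $\sigma$-fixed subcomplex of $\rN \rP(n)$ with $\rN(\rP(n)^\sigma)$ is justified exactly as you implicitly use it: a chain $p_0 < \dots < p_r$ can only be stabilized setwise by $\sigma$ if it is fixed pointwise, since $\sigma$ acts by a poset automorphism; this is what makes the Lefschetz count collapse to $\chi(\rN(\rP(n)^\sigma), Z_{\rP(n)^\sigma})$ with no orientation-sign corrections. Second, the sign bookkeeping works out: the paper's convention for the character of a chain complex contributes a factor $(-1)^{n-1}$ for $\cW_n$ sitting in degree $n-1$, and this exactly cancels the $(-1)^{n-1}$ from the Hopf trace formula, yielding $\ch(\cW_n) = \sum_{\lambda \vdash n} \mu_{\rP(n)^{\sigma_\lambda}}(\hat 0, \hat 1)\, p_\lambda/z_\lambda$ with no residual sign. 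Third, the M\"obius evaluation you quote, $\mu_{\rP(n)^\sigma}(\hat 0, \hat 1) = (-1)^{m-1}(m-1)!\, d^{m-1}\mu(d)$ for cycle type $d^m$ and $0$ otherwise, is the correct formula (I checked it against $\rS_2$, $\rS_3$, and $\sigma = (12)(34)$ and $\sigma = (12)$ in $\rS_4$). Strictly speaking this computation is due to Hanlon (``The fixed-point partition lattices''), though Stanley's \cite{stanley1982some} reproduces and uses it, so the citation is appropriate. Your final generating-function manipulation, using $z_{(d^m)} = d^m m!$ and the Taylor series for $\log(1+x)$, is correct and gives precisely $w = \sum_{d \geq 1} \tfrac{\mu(d)}{d}\log(1+p_d)$.

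Your judgment to quote rather than reprove the M\"obius evaluation is reasonable: the proposition is stated in the paper purely as a citation, so the real content you are adding is the reduction to Stanley/Hanlon's formula and the generating-function repackaging, which you carry out correctly.
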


The element $w$ is the plethysistic inverse of $\sum_{n \geq 1} h_n$.    

\begin{defn}
		We write $\del_n$ for the differential operator on   $\frac{\del}{\del p_n}: \hat \Lambda \to \hat \Lambda$, given by expressing $s \in \hat \Lambda$ as a power series in the $p_i$ and differentiating with respect to $p_n$.  We define $$D_n := \sum_{d |n }  \frac{\mu(d)}{d}  \del_{n/d},$$  where $\mu: \bbN \to \{+1, 0 , -1\}$ is the arithmetic M\"obius function.   
 To an integer partition $\lambda = 1^{m_1} 2^{m_2} \dots$ we associate  a differential operator, which is a polynomial in the $D_i$  defined by $${D \choose \lambda} := \prod_{i} {D_i \choose m_i}.$$
\end{defn}

The main theorem of this section describes $\ch(\rK_d(M))$ in terms of $\ch(M)$ and the differential operators  ${D \choose \lambda}$.

\begin{thm}\label{charactercomputation}
Let $M$ be a chain complex of $\FSop$ modules which is bounded and finite dimensional in each degree. Let $d \in \bbN, d \geq 1$.   Then there is an identity in $\widehat{\Lambda \otimes \Lambda } $$$\rc \rh(\rK_d(M)) =  \sum_{\lambda \vdash d}  {D \choose \lambda}  \rc \rh(M) \otimes p_\lambda  .$$   
\end{thm}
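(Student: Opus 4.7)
My plan is to deduce the formula from Proposition \ref{Whitneyiso} by reducing to a formal identity on $\widehat{\Lambda \otimes \Lambda}$ in which the operator $\sum_\lambda \binom{D}{\lambda} \otimes p_\lambda$ is recognized as a substitution operator.

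First, I would apply Proposition \ref{Whitneyiso} to obtain $\bigoplus_{d \geq 1} \rK_d(M) \iso \Res^{\sqcup}(M) \odot_2 \cW$ as underlying graded $\rS_\bdot \times \rS_\bdot$-representations. Taking Frobenius characters and using the standard compatibilities $\ch \circ \Res^\sqcup = \Delta$ and $\ch(M \odot N) = \ch(M)[\ch(N)]$ (applied here in the second factor), together with $\ch(\cW) = w$, I obtain
$$\sum_{d \geq 1} \ch(\rK_d(M)) \;=\; \Delta(\ch(M))[w]_2 \;\in\; \widehat{\Lambda \otimes \Lambda},$$
where $[w]_2$ denotes plethysm applied in the second tensor factor. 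The theorem will then reduce to the formal identity, for any $f \in \hat\Lambda$,
$$\Delta(f)[w]_2 \;=\; \sum_{\lambda} \binom{D}{\lambda}(f) \otimes p_\lambda$$
(sum over all integer partitions), by extracting its degree-$d$ component in the second factor.

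To prove this identity, write $p = (p_1, p_2, \ldots)$ and $q = (q_1, q_2, \ldots)$ for the power sums in the first and second tensor factors. The Taylor expansion gives $\Delta(f)(p, q) = f(p + q)$, and plethysm in the second factor substitutes $q_n \mapsto \sum_{m \geq 1} \tfrac{\mu(m)}{m} \log(1 + q_{nm})$; call this substituted element $W_n$. Thus $\Delta(f)[w]_2 = f(p_1 + W_1,\, p_2 + W_2,\, \ldots)$. The key observation I would use is that $\prod_{i \geq 1}(1 + q_i)^{D_i}$ realizes exactly this substitution. Using $D_i = \sum_{d \mid i} \tfrac{\mu(d)}{d} \partial_{i/d}$ and commutativity of the $\partial_j$, we have $(1 + q_i)^{D_i} = \exp(D_i \log(1 + q_i)) = \prod_{d \mid i} \exp\!\bigl(\tfrac{\mu(d)}{d} \log(1 + q_i)\,\partial_{i/d}\bigr)$; each factor is a translation shifting $p_{i/d}$ by $\tfrac{\mu(d)}{d} \log(1 + q_i)$. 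Composing over all $i$ and reindexing $i = jm$ gives the substitution $p_j \mapsto p_j + W_j$, matching the plethysm formula. Expanding formally, $\prod_i (1 + q_i)^{D_i} = \sum_\lambda \binom{D}{\lambda}\, q_\lambda$, which proves the identity.

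The main point to address will be the rigor of the infinite product $\prod_i (1 + q_i)^{D_i}$ as an operator on $\widehat{\Lambda \otimes \Lambda}$. This should pose no obstacle because, in each bidegree $\Lambda_n \otimes \Lambda_d$, only finitely many factors and only partitions $\lambda$ with $|\lambda| \leq d$ contribute, so everything reduces to a finite computation per bidegree. Extracting the $q$-degree-$d$ component of the identity then yields $\ch(\rK_d(M)) = \sum_{\lambda \vdash d} \binom{D}{\lambda}(\ch(M)) \otimes p_\lambda$, as desired.
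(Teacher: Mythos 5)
Your proof is correct and takes essentially the same approach as the paper: both start from Proposition \ref{Whitneyiso} to get $\sum_d \ch(\rK_d(M)) = \Delta(\ch(M))[w]_2$ and both hinge on the same M\"obius-reindexing computation. The only presentational difference is that you identify $\prod_i (1+q_i)^{D_i}$ directly as the substitution operator $p_j \mapsto p_j + W_j$ via the Taylor/translation interpretation, whereas the paper runs the identical calculation through formal bookkeeping variables $t_i$ before replacing them with the operators $\del_i$.
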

\begin{proof}

In terms of symmetric functions Proposition \ref{Whitneyiso} implies that
$$\sum_{d}  \rc\rh(\rK_d(M))  = \Delta(\ch(M)) [ w]_2.$$ 
The brackets $[-]_2$ denote plethysm in the second factor.
 
To calculate this expression, note that since $\Delta(p_i) = p_i \otimes 1 + 1 \otimes p_i$ for any $s \in \hat \Lambda$, expanding $s$ as a power series in the $p_\lambda $ we have  $$\Delta(s) = \sum_{\lambda \text{ integer partition }} \frac{ \del_{\lambda} (s)}{\lambda!} \otimes p_\lambda,$$ where if $\lambda = 1^{m_1} 2^{m_2} \dots$  then $\del_{\lambda} := \prod_{i} \del_i^{m_i}$.  
Therefore we have
$$\Delta(\ch(M)) = \sum_{\lambda \text{ integer partition }} \frac{ \del_{\lambda} (\ch(M))}{\lambda!} \otimes p_\lambda.$$  Since $p_\lambda[w] =\prod_{i}  (w[p_i])^{m_i}$ and $w[p_i] = \sum_{d \geq 1} \frac{\mu(d)}{d} \log(1 + p_{id})$  we  have $$  \sum_{d \geq 1}  \rc\rh(\rK_d(M)) =   \sum_{\lambda = 1^{m_1} 2^{m_2} \dots}  ~ \prod_{i \geq 1} \frac{\del_i^{m_i} (\ch(M))}{m_i !} \otimes  \prod_{i \geq 1} \left(\sum_{d \geq 1} \frac{\mu(d)}{d} \log(1 + p_{id})\right)^{m_i}.$$

 To rearrange this sum, we introduce formal variables $t_i$ which commute with $p_i$ and consider the formal expression $$\sum_{\lambda = 1^{m_1} 2^{m_2} \dots}  ~    \prod_{i \geq 1}  \frac{t_i^{m_i}}{m_i!} \left(\sum_{d \geq 1} \frac{\mu(d)}{d} \log(1 + p_{id})\right)^{m_i}  = \exp\left( \sum_{i}  t_i \left( \sum_d \frac{\mu(d)}{d}  \log(1 + p_{id})\right) \right)  $$  $$ = \exp \left(\sum_{n \geq 1} \log(1 + p_n) \sum_{d|n} \frac{\mu(d)}{d} t_{n/d} \right) = \prod_{n \geq 1} (1 + p_n)^{T_n}$$ where $T_n := \sum_{d|n}\frac{\mu(d)}{d}  t_{n/d}$ and the final expression is interpreted as a power series via the binomial theorem $(1+p_n)^{T_n} = \sum_{k} {T_n \choose k}  p_n^{k}$.  Expanding the product, we see that this expression equals  $$\sum_{\lambda = 1^{m_1} 2^{m_2} \dots } p_\lambda  \prod_{i} {T_i \choose m_i}. $$  Now, we may interpret the formal variables $t_i$  (resp. $p_i$) as the operators  $\widehat{ \Lambda \otimes \Lambda}\to \widehat {\Lambda \otimes \Lambda}$ defined by $a \otimes b \mapsto \del_i(a) \otimes b$ (resp. $a \otimes b \mapsto a \otimes p_i b$), because these  operators  commute with each other.  Applying the operator defined by our formal expression in $p_i,t_i$ to $\ch(M) \otimes 1,$  we obtain an equality $$ \sum_{d}  \rc\rh(\rK_d(M)) = \sum_{\lambda}  {D \choose \lambda}  \ch(M)\otimes p_\lambda.$$  Equating degree $d$ pieces in the second factor, we obtain the desired result.  
\end{proof}


\section{Differential equations and characters of bounded type}\label{sec:DiffEqs}  In this section, motivated by Theorem \ref{charactercomputation}, we first describe the solution space to a system of differential equations involving ${D \choose \lambda}$.   Then we introduce a notion of \emph{type} of $\FSop$ module and rephrase the results so far in  terms of the type.  We also prove Theorem \ref{characterexps}.   

 \subsection{Solution space to a system differential equations}  It is convenient to introduce generators of $\hat\Lambda$ adapted to $D_i$.  

\begin{defn}\label{def:yn}
  For $n \in \bbN$ define $$y_n := \sum_{k \geq 1} \frac{p_{nk}}{k} = \sum_{i }  - \log(1 - x_i^n).$$ 
\end{defn}

The elements $y_n$ satisfy several elementary identities: for example  $y_1 = \log( \sum_{n \geq 0} h_n)$ and $y_n = y_1[p_n]$.    The most important one follows from M\"obius inversion: 
 \begin{prop} \label{kronecker} For all $n,m  \geq 1$ we have  $D_n(y_m) =\delta(n,m),$ where $\delta(n,m)$ is the Kronecker delta.  \end{prop}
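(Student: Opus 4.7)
The plan is to verify the identity by direct computation, expanding both $D_n$ and $y_m$ according to their definitions, applying $\partial_{n/d}$ termwise to the series defining $y_m$, and collecting the result into a sum over divisors that is then evaluated by the classical Möbius inversion identity $\sum_{d|q}\mu(d) = [q=1]$.

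Concretely, I would first note that $\partial_{n/d}(p_{mk}/k) = \delta(n/d, mk)/k$, so the only contributing terms in $\partial_{n/d}(y_m)$ come from the unique $k$ (if any) with $mk = n/d$; this requires $m \mid n/d$, equivalently $dm \mid n$, and then $k = n/(dm)$. Substituting into the definition of $D_n$ yields
\[
D_n(y_m) \;=\; \sum_{\substack{d \mid n \\ dm \mid n}} \frac{\mu(d)}{d}\cdot\frac{dm}{n} \;=\; \frac{m}{n}\sum_{d \,:\, dm \mid n} \mu(d).
\]

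If $m \nmid n$ then the indexing set is empty and the sum is $0$, matching $\delta(n,m)$. Otherwise set $q := n/m$; the condition $dm \mid n$ becomes $d \mid q$, so the sum reduces to $\tfrac{m}{n}\sum_{d\mid q}\mu(d)$, which by the standard Möbius identity equals $\tfrac{m}{n}[q=1] = [n=m] = \delta(n,m)$. This completes the verification.

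There is no real obstacle here: the argument is a one-step unwinding of the definitions together with Möbius inversion. The only mild point to be careful about is ensuring that the termwise application of $\partial_{n/d}$ to the infinite series $\sum_k p_{mk}/k$ is justified, which is immediate because $\partial_{n/d}$ preserves the degree filtration on $\hat\Lambda$ and kills all but one term of the series.
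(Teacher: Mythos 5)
Your proof is correct and is essentially the same direct computation the paper uses: unwind the definitions, observe that only divisor chains $dm\mid n$ contribute, and reduce to the classical identity $\sum_{d\mid q}\mu(d)=\delta(q,1)$. The only difference is cosmetic indexing (the paper re-indexes $D_n$ via $d\mapsto n/d$ and writes $y_m=\sum_{m\mid e}(m/e)p_e$), so the two arguments are the same.
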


\begin{proof}
	We have $$y_m = \sum_{e,~ m|e}  (m/e) p_e. $$  So   $$D_n(y_m) = \sum_{d|n} \frac{\mu(n/d)}{n/d} \del_{d}  \sum_{e, ~ m|e} (m/e) p_e     = \sum_{e, ~ m|e|n} \frac{\mu(n/e)}{(n/e)}   (m/e).$$   Factoring out $m/n$ and applying M\"obius inversion for the poset $(\bbN_{\geq 1}, |)$  , we see that this sum is $1$ if $n = m$ and zero otherwise. 
\end{proof}

 A similar M\"obius inversion to Proposition \ref{kronecker} yields    $$p_n = \sum_{d \geq 1} \frac{\mu(d)}{d}  y_{nd}.$$

\begin{defn}Let $(\hat \Lambda)^{\rm pow}_i$ denote the closure of the span of the degree $i$ monomials in the power sum symmetric functions.  Thus an arbitrary element of $(\hat \Lambda)^{\rm pow}_i$ takes the form  $$\sum_{(m_1, m_2, \dots),  \sum m_j  = i} a_{(m_1, \dots)} \prod_{j} p_j^{m_j}, $$ for some choice of $a_{(m_1, \dots)} \in  \bbQ$.    We also write $(\hat \Lambda)^{\rm pow}_{\leq i} := \bigoplus_{j \leq i} (\hat \Lambda)^{\rm pow}_{j} $.
\end{defn}

Our main theorem of this section describes the space of solutions to a system of differential equations.

\begin{thm}\label{solutionspace}
  	Let $j_1 \geq j_2 \geq \dots \geq j_r \in \bbN$.  Then the space of solutions in $\hat \Lambda$ to the system of linear differential equations  \begin{equation}\label{mainequations}{ D \choose \lambda_1} \dots {D \choose \lambda_r}  s  = 0,    \text{ for all } (\lambda_1, \dots, \lambda_r) \text{ integer partitions such that } |\lambda_t| \geq j_t,\end{equation}   is the subspace  $$\sum_{t = 1}^r  (\hat \Lambda)^{\rm power}_{t-1}   \left\{ \exp(\sum_{i \geq 1} a_i y_i)~\bigg|~ A = 1^{a_1} 2^{a_2} \dots,~ |A| < j_t \right\}.$$
\end{thm}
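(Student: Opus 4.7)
The plan is to pass to the $y$-coordinates. The relation $y_n = \sum_k p_{nk}/k$ is an upper-triangular change of topological generators (with inverse $p_n = \sum_d (\mu(d)/d)\,y_{nd}$ by M\"obius inversion) that identifies $\hat\Lambda$ with the completed power series ring $\bbQ[[y_1, y_2, \ldots]]$; under this identification Proposition~\ref{kronecker} combined with the Leibniz rule (since $D_n$ is a derivation) gives $D_n = \partial/\partial y_n$. Consequently, writing $e_A := \exp(\sum_n a_n y_n)$ for $A = 1^{a_1}2^{a_2}\cdots$, the exponentials are simultaneous eigenvectors: $\binom{D}{\lambda}(e_A) = \binom{A}{\lambda}\,e_A$ with $\binom{A}{\lambda} := \prod_i \binom{a_i}{m_i(\lambda)}$, which vanishes iff $\lambda \not\leq A$ componentwise as multiplicity vectors.

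For the inclusion $W \subseteq V$, I fix $s = p_\pi e_A$ with $|\pi| = t-1$ and $|A| < j_t$, and apply the multivariate Leibniz identity $\binom{D}{\mu}(fg) = \sum_{\nu \leq \mu}\binom{D}{\nu}(f)\binom{D}{\mu-\nu}(g)$ iteratively to obtain
\[
\prod_{u=1}^r \binom{D}{\mu_u}(p_\pi e_A) \;=\; e_A \sum_{\nu_u \leq \mu_u} \Bigl(\prod_u \binom{D}{\nu_u}(p_\pi)\Bigr) \prod_u \binom{A}{\mu_u - \nu_u}.
\]
Two independent constraints force every summand to vanish. First, each $\binom{D_n}{m_n}$ with $m_n \geq 1$ is a polynomial in $D_n$ with zero constant term, and $D_n = \sum_{d \mid n}(\mu(d)/d)\,\partial/\partial p_{n/d}$ lowers the $p$-grading by at least one; hence $\prod_u \binom{D}{\nu_u}(p_\pi) = 0$ unless $|\{u : \nu_u \neq \emptyset\}| \leq |\pi| = t-1$. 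Second, $\binom{A}{\mu_u - \nu_u} = 0$ unless $|\nu_u| \geq |\mu_u| - |A| \geq j_u - |A|$, and since $|A| < j_t \leq j_u$ for $u \leq t$, this forces $\nu_u \neq \emptyset$ for $u = 1, \ldots, t$. Combining, $t \leq |\{u : \nu_u \neq \emptyset\}| \leq t - 1$, a contradiction.

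For the inclusion $V \subseteq W$ I induct on $r$. In the base case $r = 1$ with $j_1 = j$, the specialization $\lambda = (n^k)$ with $nk \geq j$ converts the hypothesis into the linear ODE $\binom{D_n}{\lceil j/n\rceil}\,s = 0$ in each variable $y_n$; its characteristic polynomial has integer roots $\{0, 1, \ldots, \lceil j/n\rceil - 1\}$, and since $\lceil j/n\rceil = 1$ for $n \geq j$ only finitely many $y_n$ admit non-trivial exponentials. Solving these ODEs simultaneously forces $s$ to be a \emph{finite} $\bbQ$-linear combination $s = \sum_A c_A\,e_A$ with $a_n < \lceil j/n\rceil$. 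The remaining conditions $\binom{D}{\lambda}\,s = 0$ for $|\lambda| \geq j$ reduce to $\sum_A c_A \binom{A}{\lambda} = 0$, and since the matrix $\bigl(\binom{A}{\lambda}\bigr)$ is upper-triangular in the containment order on multiplicity vectors with ones on the diagonal, we conclude $c_A = 0$ for $|A| \geq j$.

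The main obstacle is the inductive step $r \geq 2$. The approach is to run the same single-variable reduction using the full $r$-fold product $\prod_{u=1}^r \binom{D_n}{\lceil j_u/n\rceil}$, which yields a finite decomposition $s = \sum_A P_A\,e_A$ where each $P_A \in \hat\Lambda$ is polynomial in $y_n$ of degree $|\{u : j_u > n a_n\}| - 1$. The inductive hypothesis then provides, for every $|\mu_r| \geq j_r$, that $\binom{D}{\mu_r}\,s$ lies in $V_{r-1}^{(j_1, \ldots, j_{r-1})} = W_{r-1}$; expanding $\binom{D}{\mu_r}(P_A e_A)$ via Leibniz and matching against the explicit basis of $W_{r-1}$ indexed by pairs $(\pi', A')$ with $|\pi'| < r - 1$ and $|A'| < j_{|\pi'|+1}$ produces (after using the linear independence of distinct $e_A$'s) a linear system on the $P_A$'s. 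The substantive technical heart of the proof is to solve this system and verify that its solutions are exactly the $P_A \in \Lambda_{t-1}$ with $t$ the smallest index satisfying $|A| < j_t$, thereby placing $s$ in $W_r$.
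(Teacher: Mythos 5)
Your argument for $W \subseteq V$ is correct (modulo a notational slip: when you write $|\pi| = t-1$ for the number of power-sum factors you should say $\rank(\pi) = t-1$, since in the paper's conventions $|\pi|$ denotes $\sum_i i m_i$). The passage to $y$-coordinates, the Vandermonde form of the Leibniz rule for $\binom{D}{\mu}$, the eigenvector identity $\binom{D}{\lambda}(e_A) = \binom{A}{\lambda}e_A$, and the counting contradiction (at least $t$ operators with vanishing constant term hitting a rank $t-1$ element) together give a valid proof, and your base case for $V \subseteq W$ is also fine.

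However, the inductive step of $V \subseteq W$ is a genuine gap, and you flag it yourself: you obtain a decomposition $s = \sum_A P_A e_A$, note that $\binom{D}{\mu_r}s \in W_{r-1}$, and then say ``the substantive technical heart of the proof is to solve this system.'' The difficulty is real. The single-variable reductions only bound the $y_n$-degree of $P_A$ by $\#\{u : j_u > n a_n\}$, and for small $a_n$ this exceeds $t(A,J)$; summing over $n$, the total degree bound is far too weak. The constraint from the induction, that $\binom{D}{\mu_r}(P_A e_A)/e_A$ have degree $< t(A,J')$ for every $\mu_r$ with $|\mu_r| \ge j_r$, is also not enough by itself: for a typical $\mu_r$ the operator lowers degree by at least one, so $\deg P_A \le t(A,J')$ is all you get cheaply, while you need the strict inequality $\deg P_A < t(A,J') + [\,j_r > |A|\,]$. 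Closing this requires a more careful choice of partitions than ``take $\mu_r$ arbitrary,'' and you have not specified one. The paper sidesteps the induction entirely. Its Lemma~\ref{keycomputation} gives a uniform criterion, valid for every $r$ at once, for when $\exp(\sum a_i y_i)\bigl(\sum_m c_m\, m\bigr)$ solves~\eqref{mainequations}: in the forward direction one picks, for a degree-$d$ monomial $m = y_{i_1}\cdots y_{i_d}$ with $|A| \ge j_{d+1}$, partitions $\lambda_t = i_t^{\kappa_t}$ (for $t\le d$, with $\kappa_t > a_{i_t}$ and $\kappa_t i_t \ge j_t$) together with $\lambda_t = A$ (for $t > d$), and checks that this particular product of operators does not annihilate $s$; in the converse direction one observes that each $\lambda_t$ with $|\lambda_t| \ge j_t > |A|$ must contain a factor $\binom{D_{i_t}}{m_{i_t,t}}$ with $m_{i_t,t} > a_{i_t}$, and thus $(D_{i_t} - a_{i_t})$ divides the operator and strictly lowers the polynomial degree of the coefficient of $e_A$, so $d+1$ such factors kill a degree-$d$ coefficient. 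You would do well to replace your inductive step with an argument of this form.
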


\begin{ex}
		Let $r = 2$,  $j_1 = 3$ and $j_2 = 2$.  Then Theorem \ref{solutionspace} states that the space of solutions to \eqref{mainequations} is the closure of the span of $$\{1, \exp(y_1), \exp(y_2), \exp(2y_1)\} \cup \{p_k, p_k \exp(y_1)\}_{k \in \bbN}.$$
\end{ex}

The proof of Theorem \ref{solutionspace} is straightforward, but occupies some space.  We defer the the proof until the end of this section, and first explain the consequences of Theorem \ref{solutionspace}.  

\subsection{Spaces of characters of type $< J$}  

To summarize the results so far, we  introduce a notion of type for $\FSop$ modules.   
\begin{defn}\label{def:Type}
		Let $M$ be an $\FSop$ module let $J$ be the integer partition $j_1 \geq j_2 \geq \dots \geq j_r$.  We say that $M$ has \emph{type $< J$}    if  $$\rK_{\ell_r} \circ \dots \circ \rK_{\ell_1}(M)$$ is exact for all choices of $\ell_t \geq j_t$.     
\end{defn}

\begin{remark}
	By Proposition \ref{symmonoidal} and the quasi-isomorphism $\rK_d \simto \rB_d$,   we may equivanlently define  $M$ to have type $< J$ if $\rK_{\ell_1} \circ \dots \circ \rK_{\ell_r}(M)$  is exact for all choices of $\ell_t \geq j_t$.  
\end{remark}


 In terms of the type,  Theorem \ref{FSop} states that if $M$ is a subquotient of an $\FSop$ module generated in degree $d$,  then there exists an $r \geq 0$ such that  $M$ has type $< (d+1)d^{r-1}$.  (Here $(d+1)d^{r-1}$ is the partition $d+1 \geq d \geq  \dots \geq d$,  with $d$ repeated $r-1$ times).    
Theorem  \ref{charactercomputation}  implies that if $M$ has type $< J$,  then $s = \ch(M)$  satisfies the system of differential equations  \eqref{mainequations}.  (Note that the collection of operators $\{{D \choose \lambda}\}_{\lambda \text{ partition}}$ all commute).     Theorem \ref{solutionspace} describes the space of solutions to this system of equations, and thus constrains characters of $\FSop$ modules of type $<J$.  We introduce some terminology in order to restate these results in a convenient way.

\begin{defn}\label{def:VAr}Let $A = 1^{a_1} 2^{a_2} \dots$  be an integer partition and $r \in \bbN, r> 1$.  We define $\cV_{A, r}\subseteq \hat \Lambda$ to be the subspace $$\cV_{A, r} :=  \exp(\sum_{i} a_i y_i) (\hat \Lambda )^{\rm pow}_{< r}.$$    By convention $\cV_{A,0} = 0$.  
Note that $\cV_{A,r}$ is $1$ dimensional for $r = 1$  and infinite dimensional for $r > 1$. 
\end{defn}

Let $J = j_1 \geq \dots \geq j_r$  be an integer partition.   Theorem \ref{charactercomputation} and Theorem \ref{solutionspace}  combine to show:  

\begin{thm}\label{VAreform} If $M$ is an $\FSop$ module of type $< J$ for $J = j_1 \geq \dots  \geq j_r$,  then
$$\ch(M) \in \bigoplus_{A \text{ integer partition } }  \cV_{A, t(A,J)} \subseteq  \hat \Lambda,$$ where $t(A,J)$  is the number of  $i \in \{1, \dots, r\}$  such that $j_i > |A|$.  
\end{thm}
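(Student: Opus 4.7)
The plan is to combine Theorem~\ref{charactercomputation} (iterated) with Theorem~\ref{solutionspace}: the type hypothesis forces a family of $\rK$--complexes to be exact; taking Frobenius characters translates this exactness into precisely the system of differential equations whose solution space Theorem~\ref{solutionspace} computes; then one repackages that solution space in the form $\bigoplus_A \cV_{A,t(A,J)}$.

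First I would iterate Theorem~\ref{charactercomputation}, using Proposition~\ref{symmonoidal} and the quasi-isomorphism $\rK_d\simeq\rB_d$ to reorder and compose the functors freely, to show that for any $\ell_1,\dots,\ell_r\in\bbN_{\geq 1}$,
\[
\ch\bigl(\rK_{\ell_1}\circ\cdots\circ\rK_{\ell_r}(M)\bigr)
=\sum_{\lambda_t\vdash\ell_t}
\tbinom{D}{\lambda_1}\cdots\tbinom{D}{\lambda_r}\ch(M)\otimes p_{\lambda_1}\otimes\cdots\otimes p_{\lambda_r}
\]
in $\widehat{\Lambda^{\otimes(r+1)}}$. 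Here each $\binom{D}{\lambda_t}$ acts on the first tensor factor, and the operators commute. Since $M$ has type $<J$, this complex is exact for all $\ell_t\geq j_t$, so its Frobenius character vanishes. The tensors $p_{\lambda_1}\otimes\cdots\otimes p_{\lambda_r}$ are linearly independent in $\Lambda^{\otimes r}$, so extracting coefficients yields $\binom{D}{\lambda_1}\cdots\binom{D}{\lambda_r}\ch(M)=0$ for every tuple $(\lambda_t)$ with $|\lambda_t|\geq j_t$; this is exactly the system \eqref{mainequations}.

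Second, I invoke Theorem~\ref{solutionspace} to deduce that $\ch(M)$ lies in
\[
\sum_{t=1}^{r}(\hat\Lambda)^{\rm pow}_{t-1}\bigl\{\exp(\textstyle\sum_i a_i y_i)\;\big|\;A=1^{a_1}2^{a_2}\cdots,\ |A|<j_t\bigr\}.
\]
To rewrite this in the desired form, I fix a partition $A$ and collect the terms contributed by it. Because $j_1\geq\cdots\geq j_r$, the inequality $|A|<j_t$ holds precisely for $t\in\{1,\dots,t(A,J)\}$. Summing the corresponding subspaces gives
\[
\exp\bigl(\textstyle\sum_i a_i y_i\bigr)\cdot\sum_{t=1}^{t(A,J)}(\hat\Lambda)^{\rm pow}_{t-1}
=\exp\bigl(\textstyle\sum_i a_i y_i\bigr)(\hat\Lambda)^{\rm pow}_{<t(A,J)}
=\cV_{A,t(A,J)},
\]
so the solution space from Theorem~\ref{solutionspace} equals $\sum_A \cV_{A,t(A,J)}$, which contains $\ch(M)$.

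The main obstacle is the remaining claim that this sum is \emph{direct}, as the $\bigoplus$ in the statement asserts. The key tool is Proposition~\ref{kronecker}: since each $D_n$ is a derivation with $D_n y_m=\delta(n,m)$, we obtain $D_n\exp(\sum_i a_iy_i)=a_n\exp(\sum_i a_iy_i)$, so $D_n$ acts on $\cV_{A,r}/\cV_{A,r-1}$ by the scalar $a_n$. Given a finite putative relation $\sum_A f_A=0$ with $f_A\in\cV_{A,r_A}$, I would pick an integer $n$ at which two competing partitions $A,A'$ disagree, apply $(D_n-a_{A',n})^{r_{A'}}$ to kill $f_{A'}$ while preserving the top-rank part of $f_A$ (up to a nonzero scalar and lower-rank correction), and peel off the terms one by one. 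Reducing to the finite case is immediate because for any fixed degree $N$ of $\hat\Lambda$ only finitely many $A$ contribute nontrivially, as the lowest-degree term of $\exp(\sum_i a_i y_i)$ already involves $p_i$ for each $i$ with $a_i\neq 0$. Executing this eigenvalue--and--rank-filtration argument carefully is the technical heart of the theorem.
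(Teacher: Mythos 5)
Your proof is correct and follows the paper's argument essentially verbatim: apply Theorem~\ref{charactercomputation} (iterated, with the reordering remark after Definition~\ref{def:Type}) to translate exactness of $\rK_{\ell_1}\circ\cdots\circ\rK_{\ell_r}(M)$ for all $\ell_t\geq j_t$ into the system~\eqref{mainequations}, then invoke Theorem~\ref{solutionspace} and regroup the solution space by the partition $A$. Your eigenvalue-and-rank-filtration sketch for the directness of $\bigoplus_A\cV_{A,t(A,J)}$ is a more explicit version of the one-line remark buried in the paper's proof of Theorem~\ref{solutionspace} (``linear independence of finitely many exponentials of distinct orders''), and it is a sound way to prove that claim.

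One small imprecision: your justification for reducing to finitely many $A$ -- arguing from ``the lowest-degree term of $\exp(\sum_i a_i y_i)$'' -- does not quite hold as stated, since the lowest-degree term of $\exp(\sum_i a_i y_i)$ is the constant $1$ regardless of $A$, so no constraint on $A$ is extracted this way. This does not affect your conclusion, however, because finiteness is automatic here: $t(A,J)=0$ whenever $|A|\geq j_1$ and $\cV_{A,0}=0$ by convention, so only the finitely many partitions with $|A|<j_1$ contribute nonzero summands to $\bigoplus_A\cV_{A,t(A,J)}$.
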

\begin{proof}
	By Theorem \ref{charactercomputation}, if $M$ has type $< J$  then  $$ 0 =\ch(\rK_{\ell_1} \circ \rK_{\ell_2} \dots \circ \rK_{\ell_r}(M)) = \sum_{\lambda_1 \vdash \ell_1,  \dots, \lambda_r \vdash \ell_r} \left(  \prod_{i=1}^r  {D \choose \lambda_i} \ch(M) \right)\otimes p_{\lambda_1} \otimes \dots \otimes p_{\lambda_r},$$ for all $\ell_t \geq j_t$.  Since the terms $p_{\lambda_1} \otimes \dots \otimes p_{\lambda_r}$ are linearly independent,  $\ch(M)$ satisfies the equations \eqref{mainequations}.  Since $\bigoplus_{A \text{ integer partition } }  \cV_{A, t(A,J)}$ is simply an alternate description of the solution space of Theorem \ref{solutionspace}, the result follows.  
\end{proof}

 In particular, we have the following immediate corollary, in the case $J = d$.  

\begin{cor} \label{onehit} If $M$ is an $\FSop$ module and $\rK_d(M)$ is exact,  then $\ch(M)$  is contained in the finite dimensional subspace of $\hat \Lambda$ spanned by $\exp(\sum_{i} a_i y_i)$ for all integer partitions $A = 1^{a_1} 2^{a_2} \dots$  such that $|A| < d$.  
\end{cor}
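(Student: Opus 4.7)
The plan is to derive this as a direct specialization of Theorem \ref{VAreform} to the one-part partition $J = (d)$, so that $r = 1$ and $j_1 = d$. For this $J$ and any integer partition $A = 1^{a_1} 2^{a_2} \dots$, the quantity $t(A, J)$ defined in Theorem \ref{VAreform} counts the indices $i \in \{1\}$ with $j_i > |A|$, so $t(A, J) = 1$ precisely when $|A| < d$ and $t(A, J) = 0$ otherwise.

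Next I would unpack Definition \ref{def:VAr} at $r = 1$. Since $(\hat \Lambda)^{\rm pow}_{<1} = (\hat \Lambda)^{\rm pow}_0 = \bbQ$ is one-dimensional (spanned by the empty monomial), we have $\cV_{A, 1} = \bbQ \cdot \exp(\sum_i a_i y_i)$, while $\cV_{A, 0} = 0$ by convention. Consequently the direct sum $\bigoplus_A \cV_{A, t(A, J)}$ appearing in the conclusion of Theorem \ref{VAreform} collapses to exactly the $\bbQ$-span of $\{\exp(\sum_i a_i y_i) : A = 1^{a_1} 2^{a_2} \dots \text{ with } |A| < d\}$, which is the subspace described in the statement. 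Its finite dimensionality follows immediately, since only finitely many integer partitions $A$ satisfy $|A| < d$ (the dimension being at most $\sum_{k=0}^{d-1} p(k)$).

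Recognizing that the hypothesis that $\rK_d(M)$ is exact is exactly the $J = (d)$ instance of the type-$<J$ condition needed to invoke Theorem \ref{VAreform}, the corollary then follows at once. The argument is purely a matter of unpacking definitions once Theorem \ref{VAreform} is in hand, so there is no genuine obstacle; the only thing to check is the correct identification of $\cV_{A, 1}$ and $\cV_{A, 0}$ at $r = 1$.
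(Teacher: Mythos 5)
Your reduction to Theorem \ref{VAreform} with $J=(d)$, and the unpacking of $\cV_{A,1}$ and $\cV_{A,0}$, match the paper's own derivation exactly: the paper gives no proof beyond pointing at "the case $J = d$" of Theorem \ref{VAreform}. However, your final step --- "the hypothesis that $\rK_d(M)$ is exact is exactly the $J=(d)$ instance of the type-$<J$ condition" --- is not literally correct, and this is a real gap. By Definition \ref{def:Type}, $M$ having type $<(d)$ requires that $\rK_\ell(M)$ be exact for \emph{every} $\ell \geq d$, not merely $\ell = d$, and the proof of Theorem \ref{VAreform} genuinely uses this: Theorem \ref{charactercomputation} turns exactness of $\rK_d(M)$ only into the equations ${D \choose \lambda}\ch(M)=0$ for $\lambda \vdash d$, whereas Theorem \ref{solutionspace} at $r=1$, $j_1=d$ needs these equations for all $\lambda$ with $|\lambda|\geq d$. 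The extra equations are not formal consequences of the ones you have: for example $s=y_{d+1}$ satisfies ${D\choose\lambda}s=0$ for every $\lambda\vdash d$ (since $D_i y_{d+1}=0$ for $i\leq d$ by Proposition \ref{kronecker}, and ${D_i\choose m_i}$ has vanishing constant term when $m_i>0$), yet $D_{d+1}y_{d+1}=1\neq 0$, and $y_{d+1}$ does not lie in the asserted span.

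As written, your argument therefore proves the corollary under the stronger hypothesis that $\rK_\ell(M)$ is exact for all $\ell\geq d$. To recover the statement exactly as phrased you would need the additional implication that $\rK_d(M)$ exact forces $\rK_\ell(M)$ exact for all $\ell>d$, or at least that ${D\choose\lambda}\ch(M)=0$ for $|\lambda|>d$ follows from the $\FSop$ structure once it holds for $|\lambda|=d$. This is plausible --- the Stirling recurrence $s(d+1,k)=s(d,k-1)+d\,s(d,k)$ suggests a cone-type relation among $\rK_{d+1}(M)$, $\rK_d(\Sigma M)$, and $\rK_d(M)$ --- but it is not supplied, either by you or by the paper's one-line derivation. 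Since the gap is inherited from the source, the most charitable reading is that the hypothesis of Corollary \ref{onehit} is intended to be "type $<(d)$," under which your proof is complete.
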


In fact the statement of  Corollary \ref{onehit} is sharp, in the sense that the characters of projective $\FSop$ modules generated in degree $d$ span the subspace.

In contrast, when $J = j_1 \geq \dots \geq j_r$ for $r > 1$ the space of solutions to (\ref{mainequations}), is no longer finite dimensional.   For instance the space  $\cV_{0,1}$  consists of arbitary linear combinations of the power sum symmetric functions. 

   However,  for any $d > 0$  the image of  $\cV_{A,r}$ under the ring homomorphism  $$\hat \Lambda \to  \widehat{\bbQ[p_1, p_2, \dots, p_d]}$$ setting $p_i = 0$ for $i > d$  is finite dimensional.   In the next section, we combine this fact together with the  constraint that the irreducible representations appearing in a finitely generated $\FSop$ module have bounded rank, to produce finite dimensional spaces of $\FSop$ characters.  

Finally, in the special case $J = d^{s-1} (d+1)^1$,  Theorem \ref{VAreform}  implies Theorem \ref{characterexps},  as follows.

\subsection{From symmetric functions to class functions}\label{sec:translation}
We show how to recover the results of the introduction,  stated in terms of class functions,  from the results of the body of the paper, stated in terms of symmetric functions.

There is a one-to-one correspondence between elements of $\hat \Lambda$ and class functions on $\sqcup_{n} \rS_n$,  under which $\ch(M)$ corresponds to the character of $M$, $\sigma \mapsto \Tr(\sigma, M)$.  If $s \in \hat \Lambda$, then $s$ corresponds to the class function $\sigma_{\lambda} \mapsto \langle p_\lambda, s \rangle$,  where $\sigma_{|\lambda|} \in \rS_{|\lambda|}$ is an element in the conjugacy class of the integer partition $\lambda.$

The following fact allows us to translate into the language of the introduction.  
\begin{prop}\label{translation}
		Let $A = 1^{a_1} 2^{a_2} \dots$ and $\nu = 1^{\ell_1} 2^{\ell_2} \dots$ integer partitions.   Then symmetric function $\frac{p_\nu}{z_{\nu}} \exp(\sum_{i} a_i y_i)$  corresponds to the class function  $$ {\bbX \choose \nu} A^{\bbX - \nu }  =  \prod_{n \geq 1}  {\bbX_n \choose \ell_n } \left(\sum_{d |n}  d  a_d \right)^{\bbX_n - \ell_n}.$$
\end{prop}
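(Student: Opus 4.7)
The plan is a direct computation: I will expand $\frac{p_\nu}{z_\nu}\exp(\sum_i a_i y_i)$ in the power sum basis, pair against $p_\lambda$ via the Hall inner product, and check that the answer matches $\binom{\bbX}{\nu}A^{\bbX-\nu}$ evaluated on $\sigma_\lambda$.

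First, I would rewrite $\sum_i a_i y_i$ in the power sum basis. Using the definition $y_i = \sum_{k\geq 1} p_{ik}/k$ and reindexing by $n = ik$, one finds
\[
\sum_{i\geq 1} a_i y_i \;=\; \sum_{n\geq 1}\frac{b_n}{n}\,p_n, \qquad \text{where } b_n := \sum_{d\mid n} d\, a_d.
\]
Since the $p_n$ are algebraically independent, exponentiating factors through each variable and the Taylor expansion gives
\[
\exp\!\Bigl(\sum_i a_i y_i\Bigr) \;=\; \prod_{n\geq 1} \sum_{k_n\geq 0} \frac{b_n^{k_n}}{k_n!\, n^{k_n}}\, p_n^{k_n} \;=\; \sum_{\mu} \frac{1}{z_\mu}\Bigl(\prod_n b_n^{k_n}\Bigr) p_\mu,
\]
where the sum is over partitions $\mu = 1^{k_1} 2^{k_2}\cdots$ and we use $z_\mu = \prod_n k_n!\, n^{k_n}$.

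Next, I multiply by $p_\nu/z_\nu$, using $p_\nu p_\mu = p_{\nu+\mu}$ (the partition with multiplicities $\ell_n + k_n$), and apply the Hall pairing $\langle p_\lambda, p_\tau\rangle = z_\lambda \delta_{\lambda,\tau}$. Writing $\lambda = 1^{m_1} 2^{m_2}\cdots$, only the term with $k_n = m_n - \ell_n$ survives, and only when $m_n \geq \ell_n$ for every $n$; in that case
\[
\Bigl\langle p_\lambda,\; \tfrac{p_\nu}{z_\nu}\exp(\sum a_i y_i)\Bigr\rangle \;=\; \frac{z_\lambda}{z_\nu\, z_\mu}\prod_n b_n^{m_n-\ell_n}.
\]
The ratio $z_\lambda/(z_\nu z_\mu) = \prod_n \tfrac{m_n!\, n^{m_n}}{\ell_n!\, n^{\ell_n}\,(m_n-\ell_n)!\, n^{m_n-\ell_n}} = \prod_n \binom{m_n}{\ell_n}$, so the pairing equals $\prod_n \binom{m_n}{\ell_n}\, b_n^{m_n-\ell_n}$. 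If some $m_n < \ell_n$ the pairing is $0$, which agrees with the binomial coefficient convention $\binom{m_n}{\ell_n}=0$ for $m_n<\ell_n$.

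Finally, since $\bbX_n(\sigma_\lambda) = m_n$ and $\sum_{d\mid n} d a_d = b_n$ by definition, the right-hand side $\binom{\bbX}{\nu}A^{\bbX-\nu}(\sigma_\lambda) = \prod_n \binom{m_n}{\ell_n} b_n^{m_n-\ell_n}$ matches the pairing exactly. (The edge cases $\ell_n = 0$ with $m_n = 0$ and $b_n = 0$ are handled by the conventions $0^0=1$, $\binom{0}{0}=1$, which match the empty-product conventions used above.) The only real obstacle is careful bookkeeping of indices between the two sides; no step requires more than the identity $\langle p_\lambda, p_\tau\rangle = z_\lambda\,\delta_{\lambda,\tau}$ and the rearrangement converting $y_n$ to power sums.
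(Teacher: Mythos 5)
Your proof is correct and takes essentially the same approach as the paper: both compute the coefficient of $p_\lambda/z_\lambda$ (the paper phrases this via the change of variables $t_n = p_n/n$; you phrase it as the Hall pairing $\langle p_\lambda, -\rangle$, which is the same thing), arriving at $\prod_n \binom{m_n}{\ell_n} b_n^{m_n-\ell_n}$ by the same factorization of $\exp(\sum_i a_i y_i)$ over the independent variables $p_n$.
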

\begin{proof}
 Let $\lambda$ be an integer partition.  We wish to determine $\langle p_\lambda, \frac{p_\nu}{z_{\nu}} \exp(\sum_{i} a_i y_i) \rangle,$ in other words the coefficient of $p_\lambda/z_\lambda$ in the power series expansion of  $\frac{p_\nu}{z_{\nu}} \exp(\sum_{i} a_i y_i).$  To do this, it is convenient to introduce the variables $t_n := p_n/n$,  so that if $\lambda = 1^{m_1} 2^{m_2} \dots$ we have $\frac{p_{\lambda}}{z_\lambda} = \prod_i  \frac{t_i^{m_i}}{m_i!},$  and $$\exp(\sum_{i \geq 1} a_i y_i) = \prod_{n \geq 1}  \exp( t_n \sum_{d|n} d a_d ) = \left(\prod_{n \geq 1}  \sum_{m = 0}^{\infty} \frac{(t_n)^{m}}{m!} (\sum_{d|n} d a_d)^m\right).$$
Now, if  $\nu = 1^{\ell_1} 2^{\ell_2} \dots$  we have $$\frac{p_\nu}{z_{\nu}} \exp(\sum_{i} a_i y_i) = \prod_{n \geq 1} \left(   \sum_{m = 0}^{\infty} \frac{t_n^{\ell_n}}{\ell_n !} \frac{(t_n)^{m}}{m!} (\sum_{d|n} d a_d)^m\right). $$
So the coefficient of $\prod_n  \frac{t_n^{m_n}}{m_n!}$ is  $$\prod_{n}   {m_n \choose \ell_n}\left(\sum_{d|n} d a_d \right)^{m_n - \ell_n},$$ as desired.
\end{proof}

\begin{proof}[Proof of Theorem \ref{characterexps}]
Let $M$ be an $\FSop$ module of class $(d,s)$.  Then, by definition  $M$ has type $ < d^{s-1} (d+1)^1$.   Thus by Theorem \ref{VAreform} $$\ch(M) \in \bigoplus_{ A  \vdash d } \cV_{A,1}  \oplus \bigoplus_{A \text{ integer partition, } |A| < d  } \cV_{A, s}.$$ Concretely this means that $\ch(M)$ is an infinite sum of terms proportional to $p_{\nu}\exp(\sum_i a_i  y_i)$, with restrictions on the $\nu$ and $a_i$ appearing.    Using Proposition \ref{translation} to translate,  we obtain Theorem \ref{characterexps}.
\end{proof}

\subsection{Proof of Theorem \ref{solutionspace}} 
\begin{proof}[Proof of Theorem \ref{solutionspace}]
Using the transition matrix between $\{p_i\}$ and $\{y_i\}$, we may write any $s \in \hat \Wedge$ as a power series in the variables $\{y_i\}_{i \geq 1}$:  $$s=\sum_{(d_1,d_2, \dots) \in \bbN^{\oplus \infty}}  c_{d_1,d_2\dots}\prod_{i} y_i^{d_i}.$$
We cut down the space of possible solutions to (\ref{mainequations}) in several steps.

 We first note that if $s$ is a solution to \eqref{mainequations} and  $(d_i) \in \bbN^{\oplus \infty}$ satisfies $\sum_{i \geq j_1}  d_i  \geq r$  then  $c_{d_1, d_2, \dots} = 0$.   To see this apply the operator $D':= \prod_{i} \frac{D_{i}^{d_i}}{d_i!}$  to $s$.  On the one hand, the constant term of $D'(s)$ is $c_{d_1,d_2 \dots}$.  On the other hand by our hypothesis on $(d_i)$ we may choose a monomial $y_{i_1} y_{i_2} \dots y_{i_r}$ dividing $\prod_{i} y_i^{d_i}$ with $i_t \geq j_1$.  Then $D_{i_1} \dots D_{i_r}$ divides $D'$  and taking $\lambda_t = i_t$  we see that (\ref{mainequations}) implies that $D_{i_1} \dots D_{i_r}(s) = 0$ hence $D'(s) = 0$.   

Thus assuming that $s$ satisfies (\ref{mainequations})  we may write  \begin{equation} \label{highdegreeexpansion} s = \sum_{w} w f_w(y_1, \dots, y_{j_1 -1})\end{equation}  where the sum is over monomials $w$ of degree  $<d$ in the variables $y_{j_1}, y_{j_1 + 1} \dots$.   (Throughout this proof, if  $\prod_{i} y_i^{d_i}$ is a monomial  we define its degree to be $\deg(m) := \sum_{i} d_i$).   Now consider the operators ${ D_i \choose j_1}^r$  for $i = 1, \dots, j_1-1$.    Equation (\ref{mainequations}) implies that ${ D_i \choose j_1}^r (s)$ equals zero.  Because $D_i$ only affects the $y_i$ variable, we have that $${D_i \choose j_1}^r f_w = 0, \text{ for all $i = 1, \dots, j_1-1$.}$$  These $j_1-1$ equations have the following consequence.

\begin{lem}\label{quallemma}
 The function $f_w$ is a linear combination of functions of the form $$\exp(\sum_{i = 1}^{j_1 - 1} a_i y_i)\prod_{i = 1}^{j_1 - 1} y_i ^{d_i},$$ where $(a_i) , (d_i) \in \bbN^{j_1 - 1}$,  and  $a_i < j_1$ and $d_i < r$ for all $i$.  
\end{lem}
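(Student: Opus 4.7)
The plan is to recognize that, by Proposition~\ref{kronecker}, each $D_i$ acts on the closed subring $\bbQ[[y_1,\dots,y_{j_1-1}]] \subseteq \hat\Lambda$ as the formal partial derivative $\partial/\partial y_i$ (since $D_i(y_j)=\delta_{ij}$ and $D_i$ is a derivation). Thus the hypotheses ${D_i \choose j_1}^r f_w = 0$ become constant-coefficient linear PDEs in the $y_i$: namely $P(\partial_{y_i}) f_w = 0$ for all $i = 1,\dots,j_1-1$, where up to a nonzero scalar $P(x) = \prod_{a=0}^{j_1-1}(x-a)^r$. The operators $P(\partial_{y_i})$ commute, so the problem reduces to solving the one-variable version and then showing that the joint kernel on $\bbQ[[y_1,\dots,y_{j_1-1}]]$ is the tensor product of the single-variable kernels.

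First I would handle a single variable. Claim: the kernel of $(\partial-a)^r$ on $\bbQ[[y]]$ is exactly the $r$-dimensional space $\mathrm{span}\{y^d e^{ay}: 0\le d<r\}$. Indeed, the substitution $g = e^{ay}h$ gives $(\partial-a)^r g = e^{ay}\partial^r h$, and in formal power series $\partial^r h = 0$ forces $h$ to be a polynomial of degree $<r$. Next, since $(x-a)^r$ and $(x-b)^r$ are coprime in $\bbQ[x]$ for $a\ne b$, Bezout yields operators $u(\partial),v(\partial)$ such that $u(\partial)(\partial-a)^r + v(\partial)(\partial-b)^r = \mathrm{id}$; iterating this gives the decomposition $\ker P(\partial) = \bigoplus_{a=0}^{j_1-1}\ker(\partial-a)^r$, i.e.\ the $j_1 r$-dimensional space
\[
V := \mathrm{span}\bigl\{\, y^d e^{ay} \,:\, 0\le d<r,\ 0\le a<j_1\,\bigr\}.
\]

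Next I would extend to multiple variables by induction on the number $k$ of variables, starting from the case $k=1$ just established. Assume $f_w \in \bbQ[[y_1,\dots,y_k]]$ is annihilated by every $P(\partial_{y_i})$. Applying the one-variable result in $y_1$ with coefficients in $\bbQ[[y_2,\dots,y_k]]$, and fixing a basis $\{e_\alpha\}$ of $V\subseteq \bbQ[[y_1]]$, we get a unique expansion
\[
f_w \;=\; \sum_{\alpha} \phi_\alpha(y_2,\dots,y_k)\, e_\alpha(y_1),
\]
the uniqueness coming from the fact that distinct exponentials times polynomials in $y_1$ are linearly independent over $\bbQ[[y_2,\dots,y_k]]$. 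Now applying $P(\partial_{y_i})$ for $i\ge 2$ and matching coefficients of $e_\alpha(y_1)$ gives $P(\partial_{y_i})\phi_\alpha = 0$ for each $\alpha$. By induction each $\phi_\alpha$ lies in the span of $\prod_{i=2}^{k} y_i^{d_i} e^{a_i y_i}$ with $d_i<r$ and $a_i<j_1$, which combined with the expansion in $y_1$ gives the desired form for $f_w$.

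The only place where anything nontrivial happens is the one-variable statement, which is standard but must be carried out carefully in the formal power series ring (to ensure that $e^{ay}$ makes sense, that $\partial^r h = 0$ forces $h$ polynomial, and that different $e^{ay}$ are linearly independent). The multi-variable step is then a routine separation-of-variables argument using commutativity of the $D_i$.
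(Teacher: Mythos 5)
Your proof is correct and follows essentially the same strategy as the paper: establish the one-variable kernel statement for $\binom{\partial}{j}^r$ in the formal power series ring (using the factorization $\binom{\partial}{j}^r \propto \prod_{a=0}^{j-1}(\partial-a)^r$), then separate variables and iterate over $y_1,\dots,y_{j_1-1}$. The only cosmetic difference is the direction of expansion — the paper collects the $y_1$-coefficient functions attached to monomials in the remaining variables and then rearranges, while you expand $f_w$ directly in the $y_1$-solution basis with coefficients in $\bbQ[[y_2,\dots,y_k]]$ — and you spell out the Bezout/CRT decomposition and the relevant linear-independence fact that the paper leaves implicit.
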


For each $A = 1^{a_1} 2^{a_2} \dots $  let $\cV_{A,r} \subseteq \hat \Lambda$  be the subspace of functions of the form   \begin{equation} \exp(\sum_{i} a_i y_i) \left(\sum_{m} c_{m} m\right),\end{equation}   where the sum is over monomials $m$ of degree $<r$ and $c_{m} \in\bbQ$.  Then (\ref{highdegreeexpansion}) and Lemma \ref{quallemma}    imply the crude statement that if $s$ is a solution to (\ref{mainequations}) then $$s \in \sum_{A = (a_i) \in \{0, \dots, j_1-1\}^r}  \cV_{A,r}.$$  Note that the linear independence of finitely many exponentials of distinct orders implies that the right hand sum is in fact a direct sum.  And for any $A =1^{a_1} 2^{a_2} \dots$,  the action of $\{D_i\}_{i \in \bbN}$  preserves $\cV_{A,r}$.    This means that the solution space to the system (\ref{mainequations})  is a sum over $(a_i) \in \{0, \dots, j-1\}^r$  to the space of solutions in $\cV_{A,r}$.  We compute these spaces of solutions in the following lemma.

\begin{lem}
\label{keycomputation}
Let  $(a_i) \in \bbN^{\oplus \infty}$.   Let $\sum_{m} c_m m$,  $c_m \in \bbQ$ be a sum over monomials in the $y_i$ of bounded degree.    By convention we set $j_{t} = 0$ if $t> r$.  

 The symmetric function  $s := \exp(\sum a_i y_i) (\sum_{m} c_m m)$  is a solution to  (\ref{mainequations})  if and only if for every monomial $m$ appearing in the sum with $c_m \neq 0$ we have  $\sum_{i} i a_i  < j_{\deg(m) +1}$.   
\end{lem}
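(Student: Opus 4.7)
The plan is to convert the problem into one about polynomial differential operators via the conjugation identity
\[
e^{-\sum_i a_i y_i}\, D_n\, e^{\sum_i a_i y_i} \;=\; D_n + a_n,
\]
which follows from Proposition \ref{kronecker} and the fact that each $D_n$ is a derivation. Since ${D \choose \lambda_t}$ is a polynomial in the commuting operators $D_i$, this gives
\[
\prod_{t=1}^r {D \choose \lambda_t}\,s \;=\; e^{\sum_i a_i y_i}\, \prod_{t=1}^r \prod_i {D_i + a_i \choose n_{t,i}}\, P,
\]
where $\lambda_t = 1^{n_{t,1}} 2^{n_{t,2}} \cdots$ and $P := \sum_m c_m m$. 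Since the exponential is a unit in $\hat\Lambda$, the lemma reduces to showing that $\prod_{t,i} {D_i + a_i \choose n_{t,i}}$ annihilates $P$ for every valid $(\lambda_t)$ if and only if every monomial $m$ with $c_m \neq 0$ satisfies $|A| < j_{\deg(m)+1}$.

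Next I would establish a combinatorial criterion for annihilation of a single monomial. Since the factors $\{{D_i + a_i \choose n_{t,i}}\}_{t,i}$ for distinct $i$ commute and act in separate variables, one has $\prod_{t,i}{D_i + a_i \choose n_{t,i}}\,m = \prod_i q_i(D_i)\,y_i^{d_{m,i}}$ where $q_i(D) := \prod_t {D + a_i \choose n_{t,i}}$ and $m = \prod_i y_i^{d_{m,i}}$. A univariate polynomial $q(D)$ satisfies $q(D)y^d = 0$ (as a polynomial in $y$) exactly when $D^{d+1}$ divides $q(D)$. Since the roots of ${D + a_i \choose n}$ are the integers $-a_i, -a_i+1, \ldots, n-a_i-1$, the root $D=0$ has multiplicity $1$ if $n > a_i$ and $0$ otherwise; hence the multiplicity of $q_i$ at $D=0$ equals $N_i := |\{t : n_{t,i} > a_i\}|$. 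Therefore $\prod_{t,i}{D_i + a_i \choose n_{t,i}}\,m = 0$ if and only if $N_i > d_{m,i}$ for some $i$.

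The backward direction is then a per-monomial counting argument. Fix a monomial $m$ of $P$ with $|A| < j_{\deg(m)+1}$ and any valid $(\lambda_t)$, and suppose for contradiction that $N_i \leq d_{m,i}$ for all $i$. Set $T := \{t : \exists i,\; n_{t,i} > a_i\}$; for $t \notin T$ we have $|\lambda_t| = \sum_i i\,n_{t,i} \leq \sum_i i a_i = |A|$, so the constraint $|\lambda_t| \geq j_t$ forces every $t$ with $j_t > |A|$ to lie in $T$. Hence $|\{t : j_t > |A|\}| \leq |T| \leq \sum_i N_i \leq \deg(m)$, and by monotonicity of $(j_t)$ (together with the convention $j_t = 0$ for $t > r$) this yields $j_{\deg(m)+1} \leq |A|$, contradicting the hypothesis. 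Linearity in $P$ completes the backward direction.

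The main obstacle is the forward direction, where one must prevent cancellation between the contributions of different monomials. The key observation is that the set $\cB := \{m \in \mathrm{supp}(P) : |A| \geq j_{\deg(m)+1}\}$ is upward closed in the componentwise partial order on multi-exponents, because $m' \geq m$ forces $\deg(m') \geq \deg(m)$ and hence $j_{\deg(m')+1} \leq j_{\deg(m)+1}$. Assuming $\cB \neq \emptyset$, I pick $m_0 \in \cB$ componentwise-maximal in $\cB$; upward closure then guarantees $m_0$ is also componentwise-maximal in $\mathrm{supp}(P)$. Writing $D_0 := \deg(m_0)$, I construct $(\lambda_t)$ tailored to $m_0$ by partitioning $\{1, \ldots, D_0\}$ into blocks $T_i$ of size $d_{m_0,i}$ (indexed by $i$ in the support of $m_0$), setting $n_{t,i} = a_i + \max(1,\lceil (j_t - |A|)/i \rceil)$ when $t \in T_i$ and $n_{t,i'} = a_{i'}$ for $i' \neq i$, and taking $\lambda_t = A$ for $t > D_0$. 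A direct verification shows $|\lambda_t| \geq j_t$ (using $|A| \geq j_{D_0+1}$ when $t > D_0$) and $N_i = |T_i| = d_{m_0,i}$ for every $i$. By the monomial criterion the operator annihilates every $m \in \mathrm{supp}(P)$ except those with $m \geq m_0$ componentwise, and by maximality of $m_0$ the only such $m$ is $m_0$ itself, whose image is nonzero. Hence $\prod_t {D \choose \lambda_t}\,s \neq 0$, so $s$ is not a solution.
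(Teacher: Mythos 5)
Your proof is correct, and it is a genuinely different (and in my view cleaner) organization of the same underlying computations. The paper does the conjugation computations $\bigl(D_k - a_k\bigr)\bigl(m\exp(\sum_i a_i y_i)\bigr) = \exp(\sum_i a_i y_i)\,D_k(m)$ one factor at a time and in-line; you isolate the identity $e^{-\sum a_iy_i}D_n e^{\sum a_iy_i} = D_n + a_n$ up front, which reduces the whole lemma to a polynomial-ODE statement about $\prod_{t,i}\binom{D_i+a_i}{n_{t,i}}P$. From there you extract a single sharp criterion (the operator kills a monomial $m$ iff some $N_i > d_{m,i}$, where $N_i$ is the multiplicity of $0$ as a root of $q_i$) that drives both implications; the paper instead gives two somewhat separate ad hoc arguments. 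You also handle cancellation differently: the paper picks a monomial of maximal \emph{total degree} in $\mathrm{supp}(P)$ and observes that any other monomial must be strictly smaller in some coordinate, whereas you pick a componentwise-maximal $m_0$ in the ``bad'' set $\cB$ and use upward closure of $\cB$ to conclude $m_0$ is componentwise-maximal in all of $\mathrm{supp}(P)$. Both choices prevent cancellation; yours makes the mechanism (``annihilated iff not $\geq m_0$'') especially transparent and matches the shape of your $N_i$ criterion. Your backward direction, while phrased as a counting argument about $|T|$ and $|\{t : j_t > |A|\}|$, is logically equivalent to the paper's ``each $\lambda_t$ contributes a factor $(D_{i_t}-a_{i_t})$ and $d+1 > \deg(m)$'' argument.

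One small omission in the forward direction: your construction produces partitions $\lambda_1,\dots,\lambda_{D_0}$ (plus $\lambda_t = A$ afterward), and the $N_i = d_{m_0,i}$ count requires using \emph{all} of $\{1,\dots,D_0\}$. When $D_0 > r$ this is more factors than \eqref{mainequations} allows, and restricting to $\{1,\dots,r\}$ would give $N_i \le d_{m_0,i}$ with possible strict inequality, spoiling the ``only $m_0$ survives'' argument. You need to say, as the paper does, that $\prod_{t=1}^{\max(D_0,r)}\binom{D}{\lambda_t}(s)\neq 0$ forces $\prod_{t=1}^{r}\binom{D}{\lambda_t}(s)\neq 0$, since the larger product has the smaller one as a factor. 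This is a one-sentence fix, not a structural flaw.
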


Assuming the Lemma, we have that the space of solutions is spanned by functions of the form $ \exp(\sum_i a_i y_i) (\sum_{m} c_m m) $   where the sum is over monomials $m$ in the $y_i$ of degree $t-1$  and $(a_i) \in \bbN^{\oplus \infty}$  satisfies $\sum_i i a_i < j_{t}$.  Since the change of coordinates between $p_i$ and $y_i$  is linear,  this completes the proof.  
\end{proof}

\begin{proof}[Proof of Lemma \ref{quallemma}]
 Set $j := j_1$ and $f := f_w$. 
Note that the space of solutions to the one-variable differential equation ${{\del_u} \choose j}^r  = 0$ in the ring $\bbQ[[u]]$ is spanned by the functions $u^{d} \exp(a u)$  for  $a,d \in \bbN$,  $d < r$ and  $a < j$,  since  ${{\del_u} \choose j}^r$  is proportional to $\del_u^r( \del_u - 1)^r \dots (\del_u - j + 1)^r$. 

To obtain the multivariable statement from the single variable one,  first expand $f(y_1, \dots, y_j)$ as $\sum_{m} m  f_m(y_1)$ where $m$ is a monomial in $y_2, \dots, y_j$.    Then each $f_m(y_1)$ satisfies the single variable equation, and hence is a linear combination of $y_1^{d_1} \exp(a_1 u)$  for $a_1 < j$ and $d_1 < r$.  So we can write $$f = \sum_{m}  m \sum_{a_1,d_1 } c_m(a_1,d_1) y_1^{d_1} \exp(a_1 y_1)  = \sum_{a_1,d_1} y^{d_1} \exp(a_1 y_1)  f_{a_1, d_1}(y_2, \dots, y_{j-1}), $$ where $c_m(a_1,d_1) \in \bbQ$ is some choice of coefficients; the sum is over $a_1$  (resp. $d_1$)  ranging from $0$ to $j-1$   (resp. $r -1$);  and $f_{a_1,d_1}(y_2, \dots y_j) = \sum_{m}  c_m(a_1,d_1) m$.   Applying this same process to the variable $y_2$ in the function $f_{a_1,d_1}$  and continuing on like this, expanding out the resulting sums,  we obtain the result.  
\end{proof}

\begin{proof}[Proof of Lemma \ref{keycomputation}]
First suppose that there is a monomial $m$ in the sum such that $\sum_{i} i a_i \geq j_{\deg(m)+1}$.  We may assume that $m$ has maximal degree, since any monomial of degree $\geq \deg(m)$ will also satisfy the inequality.  Write $m = \prod_{i} y_i^{d_i}$ and $d = \deg(m)$.   

  We will choose a sequence of partitions $\lambda_1, \dots, \lambda_{\max(r,d)}$  such that $|\lambda_{t}| \geq j_t$ and $\prod_{t = 1}^{\max(r, d)} {D \choose \lambda_t}$ does not annihilate $s$.  This suffices for the forward direction, because it shows that $\prod_{t = 1}^{r} {D \choose \lambda_t}$ does not annihilate $s$.       Let $\nu := 1^{a_1} 2^{a_2} \dots$,  and write $\prod_{i} y_i ^{d_i}$  as  $y_{i_1} \dots y_{i_d}$.    For $1 \leq t \leq d$  we choose  $\lambda_t = i_t^{\kappa_t}$   where $\kappa_t \in \bbN $ is sufficiently large that $\kappa_t > a_{i_t}$ and $\kappa_t i_t \geq j_t$.  If  $ d < t \leq r$,  we set  $\lambda_t := \nu$.    In both cases, by construction we have $|\lambda_t| \geq j_t$,  since $|\lambda_t| = \kappa_t i_t \geq j_t$ and $|\lambda_t| = \sum_{i} i a_i \geq j_{d+1} \geq j_t$ respectively.  

That the operator corresponding to this choice of $(\lambda_t)_{t = 1}^{\max(r,d)}$  does not annihilate $s$ follows from two claims:
\begin{enumerate}
\item    ${D \choose \lambda_1} \dots {D \choose \lambda_d} (s)$ is a nonzero scalar times $\exp(\sum_{i} a_i y_i)$.

\item   $\exp(\sum_{i} a_i y_i)$  is an eigenvector for ${D \choose \nu}$ with nonzero eigenvalue.  
\end{enumerate}

To see claim (1), we factor $${D \choose \lambda_1} \dots {D \choose \lambda_d} = {D_{i_1} \choose \kappa_{1}} \dots {D_{i_d} \choose \kappa_{d}} $$  as $D'  \prod_{i} (D_i - a_i)^{d_i}$,  where $D'$  is a nonzero scalar times a product of terms of the form $(D_i - b_i)$  for $b_i \neq a_i$.     Let $n = \prod_{i} y_i^{e_i}$ be a monomial distinct from $m$. Then by maximality of the degree of $m$ there exists an $i_0 \geq 1$ such that $e_{i_0} < d_{i_0}$, so the factor $(D_{i_0}- a_{i_0})^{d_{i_0}}$  annihilates $n \exp( \sum_{i} a_i y_i)$.    Therefore we have that ${D \choose \lambda_1} \dots {D \choose \lambda_d} (s) ={D \choose \lambda_1} \dots {D \choose \lambda_d} (c_m m \exp(\sum_i a_i y_i)).$     Further, we have that $\left(\prod_{i} (D_i - a_i)^{d_i}  m \exp(\sum_{i} a_i y_i) \right)$  is a nonzero scalar times $\exp(\sum_{i} a_i y_i)$.  And because $(D - b_k)\exp(\sum_{i} a_i y_i) = (b_k - a_k) \exp(\sum_{i} a_i y_i)$  we see that  $\exp(\sum_{i} a_i y_i)$ is an eigenvector of $D'$ with nonzero eigenvalue, establishing the first claim.

To see the second claim, notice that ${D \choose \nu}$ is a nonzero scalar times a product of operators of the form $(D_i - b_i)$ for $b_i < a_i$.

For the converse, let $\prod_{i} y_i^{d_i}$ be a monomial of degree $d$ and suppose that $\sum_{i} i a_i < j_{d+1}$.  (Note that this implies $d< r$).  Let $(\lambda_1, \dots, \lambda_{d+1})$ be a sequence of integer partitions  with $|\lambda_t| \geq j_{t}$.  We show that  \begin{equation}\label{ann} \left(\prod_{t = 1}^{d+1} {D \choose \lambda_t} \right) \exp(\sum_{i} a_i y_i) (\prod_{i}y_{i}^{d_i}) = 0.\end{equation} The converse follows from this statement via linearity and the fact that if the product of the first $d$ operators annihilates $s$  then so does the full product.

To see that equation (\ref{ann}) holds, note that for each $\lambda_t = 1^{m_{1,t}} 2^{m_{2,t}} \dots$  there must exist an $i$ such that $m_{i,t} > a_{i}$,  since otherwise we would have $$|\lambda_t|  = \sum_{i} i m_{i,t} \leq \sum_{i} i a_i < j_{d+1} \leq j_t, $$ contradicting the hypothesis on $\lambda_t$.  For each $t$ choose such an $i$, and denote it by $i_t$.  Then $\prod_{t = 1}^{d+1} (D_{i_t} - a_t)$  is a factor of  $\left(\prod_{t = 1}^{d+1} {D \choose \lambda_t} \right)$.  Because $(D_{k} - a_k)  (m \exp(\sum_{i} a_i y_i) = \exp(\sum_{i} a_i y_i) D_k( m)$,  we see that each factor in the product $\prod_{t = 1}^{d+1} (D_{i_t} - a_t)$ reduces  the degree of the coefficent of $\exp(\sum_{i} a_i y_i)$ by one.  Since there are $d+1$ factors and  $\prod_{i} y_i^{d_i}$ has degree $d$  we have that $$\left(\prod_{t = 1}^{d+1} (D_{i_t} - a_t) \right) \exp(\sum_{a_i} y_i) (\prod_{i}y_{i}^{d_i}) =0,$$ finishing the proof of the converse.  
\end{proof}

\section{Characters of bounded rank}\label{sec:characterspaces}

In \S\ref{sec:CharacterofKd} and \S\ref{sec:DiffEqs} we examined the constraints on the characters of $\FSop$ modules of type $< J$  for $J$ an integer partition.   The purpose of this section is to study an additional constraint on the characters of $\FSop$ modules. 

\begin{defn}\label{def:rank}
		Let $M$ be an $\rS_\bdot$ module over $\bbQ$.   Let $k \in \bbN$. We say that $\rank(M) \leq k$ if, in the decomposition of $M_n$ into irreducibles,  every irreducible representation appearing corresponds to a Young diagram with $\leq k$ rows.   We say that an $\FSop$ module has rank $\leq k$ when its underlying $\rS_\bdot$ representation does.   
\end{defn}

\begin{defn}\label{def:Fk}
We write $\cF_{\leq k} \subseteq \hat \Lambda$ for the closure of the span of Schur functions $s_{\lambda}$  such that the Young diagram of $\lambda$ has $\leq k$ rows.   
\end{defn}

Notice that an $\rS_\bdot$ representation $M$ has rank $\leq k$ if and only if $\ch(M) \in \cF_{\leq k}$. The reason that the rank is useful is the following folklore proposition, which appears in \cite[Theorem 4.1]{proudfoot2017configuration}.

\begin{prop}\label{rankprop}
	Let $M$ be an $\FSop$ module over $\bbQ$.  If $M$ is a subquotient of a module generated in degree $\leq d$, then $\rank(M) \leq d$.  
\end{prop}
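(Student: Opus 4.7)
The plan is to reduce to a direct computation for the projective $\FSop$ modules $\bP(e) := \bbQ[\FSop(e,-)] = \bbQ[\FS(-,e)]$ for $e \leq d$, and then invoke a Pieri/Kostka argument.

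First, I would reduce to the projective case. Since we work over $\bbQ$, Maschke's theorem implies that any $\rS_n$-subquotient of a representation $N$ has every irreducible constituent already appearing in $N$; in particular, $\rank(N'/N'') \leq \rank(N)$ whenever $N'' \subseteq N' \subseteq N$. So it suffices to bound the rank of some $\FSop$ module $N$ finitely generated in degree $\leq d$ that has $M$ as a subquotient. By the universal property, such an $N$ is a quotient of a direct sum $\bigoplus_i \bP(e_i) \otimes_{\rS_{e_i}} V_i$ with $e_i \leq d$ and $V_i$ some $\rS_{e_i}$ representation. Since over $\bbQ$ every $V_i$ embeds as a summand of some $\bbQ[\rS_{e_i}]^{\oplus m_i}$, the tensor product $\bP(e_i) \otimes_{\rS_{e_i}} V_i$ is a summand of $\bP(e_i)^{\oplus m_i}$. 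Thus the problem reduces to showing that $\bP(e)$ has rank $\leq e$ for every $e \leq d$.

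Second, I would compute $\bP(e)_n$ explicitly as an $\rS_n$-representation. A surjection $f : [n] \onto [e]$ is determined by the ordered tuple of fibers $(f^{-1}(1), \dots, f^{-1}(e))$, and the $\rS_n$-orbits on $\FS([n],[e])$ are indexed by compositions $\alpha = (a_1, \dots, a_e)$ of $n$ into $e$ positive parts, with the orbit corresponding to $\alpha$ having stabilizer the Young subgroup $\rS_\alpha = \rS_{a_1} \times \cdots \times \rS_{a_e}$. Taking Frobenius characters and using $\ch(\Ind_{\rS_\alpha}^{\rS_n} \bbQ) = h_{a_1} \cdots h_{a_e}$, this yields
$$\ch(\bP(e)_n) = \sum_{\alpha = (a_1, \dots, a_e),\ a_i \geq 1,\ \sum a_i = n} h_{a_1} h_{a_2} \cdots h_{a_e}.$$

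Third, I would apply the Pieri rule. Each product $h_{a_1} \cdots h_{a_e}$ expands in the Schur basis with nonnegative coefficients given by Kostka numbers $K_{\mu,\alpha}$, which count semistandard Young tableaux of shape $\mu$ with entries in $\{1, \dots, e\}$ and content $\alpha$. Because columns of an SSYT are strictly increasing and entries are drawn from an $e$-element set, every column has length $\leq e$, i.e.\ $\ell(\mu) \leq e$. Hence $\ch(\bP(e)_n) \in \cF_{\leq e} \subseteq \cF_{\leq d}$, which proves $\rank(\bP(e)) \leq e$, and combining with the first step yields $\rank(M) \leq d$.

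There is no serious obstacle here; the argument is standard symmetric-function theory. The only points requiring care are (a) the reduction from arbitrary generators to the free modules $\bP(e)$, which rests on semisimplicity of $\bbQ[\rS_e]$, and (b) the observation that ``subquotient'' preserves bounded rank, which again uses Maschke's theorem.
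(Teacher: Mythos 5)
Your proof is correct, and it is the standard elementary argument: reduce via semisimplicity of $\bbQ[\rS_n]$ to the free modules $\bP(e)$, compute $\ch(\bP(e)_n) = \sum_\alpha h_\alpha$ over compositions $\alpha$ of $n$ into $e$ parts, and observe the Kostka numbers $K_{\mu,\alpha}$ vanish when $\ell(\mu) > e$. The paper itself does not reprove this proposition---it cites \cite[Theorem 4.1]{proudfoot2017configuration}---but the introduction explicitly describes this very strategy (``they follow from decomposition of the free $\FSop$ module generated in degree $d$ into irreducibles''), so you are giving the intended argument.
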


Therefore, by Theorem \ref{FSop}  and Theorem \ref{VAreform},  if $M$ is a subquotient of an $\FSop$ module generated in degree $d$   there exists an $r \in \bbN$ such that $\ch(M) \in \hat \Lambda$ lies in the subspace 

$$ \cF_{\leq d} \cap \left(\bigoplus_{ A  \vdash d } \cV_{A,1}  \oplus \bigoplus_{A \text{ integer partition, } |A| < d  } \cV_{A,r} \right). $$
In this section, we prove that this intersection is finite dimensional, and characterize it using the Hall inner product.    


\begin{defn}Write $\epsilon_k: \hat \Lambda \to \widehat {\bbQ[p_1, \dots, p_k]}$ for the specialization homomorphism $p_i \mapsto p_i$ for $i \leq k$  and  $p_i \mapsto 0$ for $i > k$.
\end{defn}

We prove the following theorem over the course of this section.  

\begin{thm} \label{intersect}
Let $A_i, i \in I$ be a finite collection of distinct integer partitions, and $r_i \in \bbN, i \in I$ be a finite collection of  natural numbers (not necessarily distinct). Let $k \in \bbN$ and $k \geq \max_{i}(|A_i|)$.  Then we have
$$\cF_{\leq k} \cap \bigoplus_{i \in I} \cV_{A_i, r_i} = \bigoplus_{i \in I} \cF_{\leq k} \cap \cV_{A_i, r_i}.$$
Further,  given any element $s \in \hat \Lambda$, there is a unique element $\pi_{k}(s) \in \cF_{\leq k}$  such that $$\epsilon_k(\pi_k(s)) = \epsilon_k(s).$$  And if $s \in \cV_{A_i, r_i}$, then $\pi_k(s) \in \cV_{A_i, r_i}$.  
\end{thm}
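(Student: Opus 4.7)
The plan begins by establishing the classical fact that $\epsilon_k$ restricts to a vector space isomorphism $\epsilon_k \colon \cF_{\leq k} \to \widehat{\bbQ[p_1,\ldots,p_k]}$. Both sides are naturally graded by total degree: in degree $n$, $\cF_{\leq k}$ has dimension equal to the number of partitions of $n$ with at most $k$ rows, while $\bbQ[p_1,\ldots,p_k] \cap \Lambda_n$ has dimension equal to the number of partitions of $n$ with all parts of size at most $k$; these counts agree via conjugation of partitions. Injectivity in each graded piece is verified by a triangularity argument on the transition matrix from $\{s_\lambda\}$ to $\{p_\mu\}$ over the relevant partitions. From this isomorphism, setting $\pi_k := (\epsilon_k|_{\cF_{\leq k}})^{-1} \circ \epsilon_k$ yields a well-defined linear projection $\hat\Lambda \to \cF_{\leq k}$ with kernel $\ker(\epsilon_k)$, giving $\hat\Lambda = \cF_{\leq k} \oplus \ker(\epsilon_k)$ as well as the existence and uniqueness of $\pi_k(s)$ claimed in the theorem.

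The main step, and the chief obstacle, is showing that $\pi_k$ preserves each $\cV_{A,r}$ under the assumption $|A| \leq k$. The hypothesis enters through the key observation that $\exp(\sum_i a_i y_i) \in \cF_{\leq |A|} \subseteq \cF_{\leq k}$: this follows from $\exp(a y_1) = \prod_j(1-x_j)^{-a} = \sum_\lambda s_\lambda \cdot s_\lambda(1^a)$ together with the vanishing $s_\lambda(1^a) = 0$ when $\lambda$ has more than $a$ rows, and from the fact that plethysm by $p_i$ sends $\exp(a y_1)$ to $\exp(a y_i)$. Introduce the conjugated operator $T := \exp(-\sum_i a_i y_i) \circ \pi_k \circ \exp(\sum_i a_i y_i)$; then the desired preservation is equivalent to $T$ preserving the power-sum rank filtration $(\hat\Lambda)^{\rm pow}_{<r}$. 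Two properties of $T$ drive the proof: first, $T$ commutes with $D_n$ for each $n \leq k$, since $\pi_k$ does (as $\epsilon_k$ commutes with $\del_j$ for $j \leq k$); second, for any $y$-monomial $y^\nu$ with some exponent $\nu_n \geq 1$ for $n > k$, one has $\epsilon_k(\exp(\sum_i a_i y_i)\, y^\nu) = 0$ (since $\epsilon_k(y_n) = 0$ when $n > k$), and hence $T(y^\nu) = 0$.

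Using these two properties, together with the characterization of $(\hat\Lambda)^{\rm pow}_{<r}$ as the joint kernel of all $r$-fold products $D_{n_1}\cdots D_{n_r}$, one verifies by induction on rank that $T$ preserves $(\hat\Lambda)^{\rm pow}_{<r}$: products with all $n_t \leq k$ are handled directly by the commutation, while products involving some $n_t > k$ are analyzed using the annihilation property together with the arithmetic fact that $a_n = 0$ for all $n > k$ when $|A| \leq k$ (so the exponential factor does not interact with $D_n$ for such $n$). Once preservation of each $\cV_{A_i, r_i}$ by $\pi_k$ is established, the direct-sum identity follows formally: given $s \in \cF_{\leq k} \cap \bigoplus_i \cV_{A_i, r_i}$, decompose $s = \sum_i s_i$ with $s_i \in \cV_{A_i, r_i}$ and apply $\pi_k$ to obtain $s = \pi_k(s) = \sum_i \pi_k(s_i)$ with each $\pi_k(s_i) \in \cV_{A_i, r_i} \cap \cF_{\leq k}$; the directness of $\sum_i \cV_{A_i, r_i}$, established in the proof of Theorem \ref{solutionspace} via the linear independence of the exponentials $\exp(\sum_j a_{i,j} y_j)$ for distinct $A_i$, forces $s_i = \pi_k(s_i) \in \cF_{\leq k}$.
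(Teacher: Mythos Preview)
Your construction of $\pi_k$ and the final deduction of the direct-sum identity are fine and match the paper.  The substantive gap is in the preservation step, specifically the case ``products involving some $n_t>k$.''  Your two ingredients do not control this case: the annihilation property $T(y^\nu)=0$ for $\nu$ with a part $>k$ constrains the \emph{input} of $T$, not its output, and the fact that $a_n=0$ for $n>k$ only yields $[D_n,E]=0$, not $[D_n,\pi_k]=0$.  In fact $\pi_k$ does \emph{not} commute with $\partial_j$ for $j>k$ (for example $\epsilon_k\circ\partial_j\neq\partial_j\circ\epsilon_k$ on $p_j$), and hence $[D_n,T]=E^{-1}[D_n,\pi_k]E\neq 0$ for $n>k$ in general.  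So from $D_nT(s)=T(D_ns)$ for $n\le k$ you cannot conclude that $D^{(\mu)}T(s)=0$ when $\mu$ has a part exceeding $k$; the characterization of $(\hat\Lambda)^{\rm pow}_{<r}$ as the joint kernel of all $r$-fold $D$-products genuinely requires all $n$.  (A concrete instance: for $k=1$ one checks $D_2 h_n=\tfrac12(h_{n-2}-h_{n-1})\neq 0$, so $D_2$ does not annihilate $\cF_{\le 1}$, and hence $D_2T(s)$ is not forced to vanish.)

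The paper circumvents this by working dually through the Hall pairing rather than through differential operators.  It identifies $\cF_{\le k}=((e_{k+1},e_{k+2},\dots)\Lambda)^\perp$ and $\cV_{A,r}=((u_1,u_2,\dots)^r\Lambda)^\perp$, so the preservation statement becomes the purely algebraic assertion (Proposition~\ref{cyclicideal}) that the image of $(u_1,u_2,\dots)^r$ in $\Lambda/(e_{k+1},\dots)\cong\bbQ[p_1,\dots,p_k]$ is exactly $(u_1,\dots,u_k)^r$.  That in turn reduces to showing $(u_1,u_2,\dots)+(e_{k+1},\dots)$ is a proper ideal, which the paper does by exhibiting an explicit evaluation homomorphism $\kappa$ (traces of powers of a permutation-type operator with cycle structure $A$).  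Your observation that $\exp(\sum_i a_i y_i)\in\cF_{\le|A|}$ is, under the Hall pairing, exactly this properness statement for $r=1$; but lifting from $r=1$ to general $r$ requires the ideal-theoretic step, which your differential-operator framework does not supply.  A smaller point: your justification of $\exp(\sum_i a_i y_i)\in\cF_{\le|A|}$ via plethysm is incomplete as stated, since $f\mapsto f[p_i]$ does not preserve $\cF_{\le a}$ (e.g.\ $h_1[p_2]=p_2\notin\cF_{\le 1}$); one needs instead the factorization $\exp(y_i)=\prod_{\zeta^i=1}\bigl(\sum_n h_n\zeta^n\bigr)\in\cF_{\le i}$ and then $\cF_{\le a}\cdot\cF_{\le b}\subseteq\cF_{\le a+b}$.
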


Because the space $\epsilon_k(\cV_{A,r})$ is finite dimensional, Theorem \ref{intersect} implies that $\cF_{\leq k} \cap \cV_{A,r}$ is finite dimensional, since the elements of this intersection are uniquely determined by their image in $\epsilon_k(\cV_{A,r})$.  In particular the elements $\pi_k(p_\nu \exp(\sum_{i} a_i y_k))$  where $p_{\nu}$ ranges over degree $< r$ monomials in $p_1, \dots, p_k$  form a basis of $\cF_{\leq k} \cap \cV_{A,r}$,  because $\epsilon_k(p_\nu \exp(\sum_{i} a_i y_k))$ is a basis for $\epsilon_k(\cV_{A,r})$. 

Under the correspondence between class functions and symmetric functions,  the specialization $\epsilon_k$  corresponds to restricting the class function to elements $\sigma \in \rS_n$  whose cycles all have length $\leq k$.  So Theorem \ref{intersect} provides unique lift from restricted class functions to class functions.  

Assuming Theorem \ref{intersect}  we can prove Theorem \ref{finitedim}.
\begin{proof}[Proof of Theorem \ref{finitedim}]
Let $(d,s) \in \bbN^2$.  If $M$ has type $(d,s)$ then by Proposition \ref{rankprop}  we have that the rank of $M$ is $\leq d$ and $M$ has type $<J$ for $J =  d^{s-1} (d+1)$.  Define $$\cU_{d,s} :=   \bigoplus_{ A  \vdash d } \cF_{\leq d} \cap \cV_{A,1}  \oplus \bigoplus_{A \text{ integer partition, } |A| < d  } \cF_{\leq d} \cap \cV_{A,s} .$$

By Theorem \ref{intersect}  and Theorem \ref{VAreform} we have that $\ch(M) \in \cU_{d,s}$.  Since  $\pi_k(p_\nu \exp(\sum_{i} a_i y_k))$  where $p_{\nu}$ ranges over degree $< r$ monomials in $p_1, \dots, p_k$  form a basis of $\cF_{\leq k} \cap \cV_{A,r}$,  we  see that $\cU_{d,s}$ has dimension $p(d) + { d+ s - 1 \choose s-1} \sum_{i < d} p(i)$  where $p(i)$ is the number of integer partitions of $i$.  
\end{proof}

To prove Theorem \ref{intersect} and describe the projection operator $\pi_k : \hat \Lambda \to \cF_{\leq k}$, we characterize the spaces $\cF_{\leq k}$ and $\cV_{A,r}$  using the Hall inner product. 

\begin{defn}
		If $W$ is a subspace of $\Lambda$,  we write $W^{\perp} \subseteq \hat \Lambda$ for the \emph{perpendicular space} with respect to the Hall inner product,  characterized by the property  $t \in W^{\perp}$  if and only if $\langle w, t \rangle = 0$ for all $w \in W$.  
\end{defn}  

We record the following general fact from linear algebra.  

\begin{prop}\label{representable}
	Let $W \subseteq \Lambda$.  The Hall inner product induces an isomorphism  $W^{\perp} \to (\Lambda/W)^*$,  by $t \mapsto \langle -, t\rangle$.  In particular, there is a unique element of $W^\perp$  corresponding to every linear function $L: \Lambda/W \to \bbQ$  namely  $$\sum_{\lambda \text{ integer partition}  }  L([p_\lambda])~ p_\lambda.$$  where $[p_\lambda]$ denotes the equivalence class of $p_\lambda$ mod $W$.  
\end{prop}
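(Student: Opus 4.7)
The proof is a formal consequence of the isomorphism $\Phi: \hat\Lambda \to \Lambda^*$, $s \mapsto \langle -, s\rangle$ already recorded in the Conventions subsection. My plan is to factor the desired isomorphism through this identification. First, observe directly from the definition of $W^\perp$ that $\Phi$ restricts to an isomorphism from $W^\perp$ onto the annihilator $\mathrm{Ann}(W) = \{L \in \Lambda^* : L|_W = 0\}$: the condition $\langle w, t \rangle = 0$ for all $w \in W$ is exactly the condition that $\Phi(t)$ vanishes on $W$. Second, the standard dualization of the short exact sequence $0 \to W \hookrightarrow \Lambda \twoheadrightarrow \Lambda/W \to 0$ gives a canonical isomorphism $q^*: (\Lambda/W)^* \xrightarrow{\sim} \mathrm{Ann}(W)$, where $q: \Lambda \to \Lambda/W$ is the quotient map. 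Composing $\Phi^{-1}|_{\mathrm{Ann}(W)}$ with $q^*$ yields the desired isomorphism $(\Lambda/W)^* \xrightarrow{\sim} W^\perp$; unwinding, it sends $L$ to the unique $t$ with $\langle s, t\rangle = L([s])$ for all $s \in \Lambda$.

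For the explicit formula, I would use the dual basis property $\langle p_\lambda/z_\lambda, p_\nu\rangle = \delta(\lambda,\nu)$: writing $t = \sum_\lambda c_\lambda (p_\lambda/z_\lambda) \in \hat\Lambda$, pairing against $p_\nu$ reads off $c_\nu = \langle p_\nu, t\rangle$, so the element corresponding to $L$ is uniquely determined by requiring $c_\nu = L([p_\nu])$. There is essentially no obstacle here: the only point that warrants any care is that $\Lambda$ is infinite-dimensional, so one must check that $\Phi$ really is an isomorphism onto the full algebraic dual $\Lambda^*$ (and that arbitrary formal sums $\sum_\lambda c_\lambda p_\lambda$ make sense as elements of $\hat\Lambda$); but this has already been established in the Conventions subsection using the grading of $\Lambda$ by degree, so the proposition follows.
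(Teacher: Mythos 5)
Your proof is correct, and it is the standard linear algebra argument: the paper itself offers no proof (it calls the statement ``the following general fact from linear algebra''), so there is no competing route to compare against. Your factorization through $\operatorname{Ann}(W) \subseteq \Lambda^*$ via the isomorphism $\hat\Lambda \iso \Lambda^*$ and the dualized short exact sequence is exactly what is needed, and your attention to the issue of identifying $\hat\Lambda$ with the \emph{full} algebraic dual $\Lambda^*$ is the one non-trivial point, which you correctly dispose of using the grading.

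One thing worth flagging: the explicit formula you derive, namely
$$t = \sum_{\lambda} L([p_\lambda]) \, \frac{p_\lambda}{z_\lambda},$$
does not literally match the formula printed in the statement, which omits the $1/z_\lambda$. Your version is the right one. Indeed, since the paper's convention is $\langle p_\lambda/z_\lambda, p_\nu \rangle = \delta(\lambda,\nu)$, for $t = \sum_\lambda c_\lambda p_\lambda/z_\lambda$ one has $\langle p_\nu, t\rangle = c_\nu$, so requiring $\langle p_\nu, t\rangle = L([p_\nu])$ forces $c_\nu = L([p_\nu])$ and hence the $z_\lambda$ denominators. Equivalently one may write $t = \sum_\lambda L([p_\lambda/z_\lambda]) \, p_\lambda$. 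The formula as printed (without the normalization) would instead produce the functional $p_\nu \mapsto z_\nu L([p_\nu])$, which is not $L$. This is a typo in the proposition; it does not propagate into the paper's later use of the result (Definition \ref{def:pik} invokes only ``the element corresponding to'' the functional, not the explicit expansion), but it is a genuine discrepancy you should note when comparing your computation to the statement.
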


The next two propositions characterize $\cF_{\leq k}$  and $\cV_{A,r}$  as the perpendicular spaces to two different ideals in $\Lambda$.  The first is certainly known, but we do not have a reference  so we include the proof.  

\begin{prop}\label{perpideal}
	 Let $(e_{k+1},e_{k+2} \dots) \Lambda$ be the ideal of $\Lambda$ generated by $e_i$ for $i > k$.  Then $$ \cF_{\leq k} = ((e_{k+1},e_{k+2} \dots) \Lambda)^{ \perp},$$ with respect to the Hall inner product.  
\end{prop}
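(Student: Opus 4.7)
The plan is to express $(e_{k+1}, e_{k+2}, \dots)\Lambda$ in terms of the Schur basis and then take its perpendicular using the orthonormality relation $\langle s_\lambda, s_\mu\rangle = \delta_{\lambda\mu}$.

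First I would show that $(e_{k+1}, e_{k+2}, \dots)\Lambda$ coincides with the kernel of the specialization map $\pi \colon \Lambda \to \Lambda_k$ that restricts a symmetric function to its image in the ring $\Lambda_k$ of symmetric polynomials in $x_1, \dots, x_k$. Since $\Lambda \cong \bbQ[e_1, e_2, \dots]$ as a polynomial ring and $\pi(e_i) = e_i(x_1, \dots, x_k)$ vanishes for $i > k$ while $e_1, \dots, e_k$ are algebraically independent generators of $\Lambda_k$, the kernel of $\pi$ is exactly the ideal generated by $\{e_i\}_{i > k}$.

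Second, I would use the classical fact that the Schur polynomials $\{s_\mu(x_1, \dots, x_k) : \ell(\mu) \leq k\}$ form a basis of $\Lambda_k$, while $s_\mu(x_1, \dots, x_k) = 0$ whenever $\ell(\mu) > k$. Combined with the first step and the fact that $\{s_\mu\}_{\mu}$ is a basis of $\Lambda$, this yields the key identification
$$(e_{k+1}, e_{k+2}, \dots)\Lambda \;=\; \mathrm{span}\{s_\mu : \ell(\mu) > k\}.$$

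Finally, I would compute the perpendicular in $\hat\Lambda$. Writing an arbitrary $t \in \hat\Lambda$ as $t = \sum_\lambda c_\lambda s_\lambda$, the pairing $\langle t, s_\mu\rangle$ equals $c_\mu$, so $t$ annihilates $\mathrm{span}\{s_\mu : \ell(\mu) > k\}$ if and only if $c_\mu = 0$ for every $\mu$ with $\ell(\mu) > k$; this is precisely the defining condition for $t \in \cF_{\leq k}$. The only delicate point is to confirm that the ideal-theoretic description in the first step agrees with the subspace-theoretic description in the second, but this is immediate once the kernel of $\pi$ is expanded in the Schur basis. No step appears to be a serious obstacle.
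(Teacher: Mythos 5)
Your argument is correct, but it is a genuinely different route from the one in the paper. You identify the ideal $(e_{k+1}, e_{k+2}, \dots)\Lambda$ with the kernel of the specialization homomorphism $\pi \colon \Lambda \to \bbQ[x_1,\dots,x_k]^{\rS_k}$, and then use the classical fact that $s_\mu(x_1,\dots,x_k)$ vanishes exactly when $\ell(\mu) > k$ (and that the surviving $s_\mu(x_1,\dots,x_k)$ are linearly independent) to conclude that the ideal is precisely $\mathrm{span}\{s_\mu : \ell(\mu) > k\}$; orthonormality of Schur functions then gives the perpendicular immediately. The paper instead proves the containment $\cF_{\leq k} \subseteq ((e_{k+1},\dots)\Lambda)^\perp$ directly, using the adjunction $\langle w e_i, s_\lambda\rangle = \langle w, s_{\lambda/1^i}\rangle$ and the Pieri rule (the vertical strip $1^i$ cannot be removed from $\lambda$ if $\ell(\lambda) < i$), and then upgrades this to equality by a dimension count in each graded piece (partitions of $n$ with $\leq k$ columns, transposed, match those with $\leq k$ rows). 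Your route is arguably cleaner in that it produces the explicit Schur-basis description of the ideal and gets both inclusions simultaneously, avoiding the Pieri rule and the separate counting argument; the paper's route avoids invoking the finite-variable specialization and stays inside the purely algebraic framework of skew Schur functions, which fits the style of the surrounding section. Either is a complete proof.
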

\begin{proof}
For $\lambda$ an integer partition, we write $\rank(\lambda)$ for the number of rows in the Young diagram of $\lambda$. 
Write $t = \sum_{\lambda} b_\lambda s_\lambda$ for $b_\lambda \in \bbQ$.    Let $w \in \Lambda$ and $i > k$.  By \cite[7.15.4]{StanleyEnum}  $$\langle w e_i , t\rangle = \sum_{\lambda} b_\lambda  \langle w e_i, s_\lambda \rangle = \sum_{\lambda} b_\lambda  \langle w,  s_{\lambda/1^i} \rangle. $$
 If $t \in \cF_{\leq k}$ then $b_\lambda = 0$ for all $\lambda$ of rank $> k$.  By the (transpose of) the Pieri rule \cite[7.15.9]{StanleyEnum}  we have that  $s_{\lambda/1^i}$ is zero if $\lambda$ does not have rank $\geq i$.  Hence $\langle w e_i, t \rangle = 0$, and so $ \cF_{\leq k} \subseteq (e_{k+1}, \dots ) \Lambda^{\rm \perp}$.    

For the converse note that the degree $n$ component of the ideal $(e_{k+1}, \dots ) \Lambda$ is spanned by the set of $e_\lambda$ where $\lambda \vdash n$ is an integer partition with $> k$ columns in its Young diagram. Thus its perpendicular space in $\hat \Lambda_n$  has the same dimension as the number of integer partitions of $n$ with $\leq k$ columns.  Taking a transpose, we see that this is the same as the dimension of the degree $n$ component of $\cF_{\leq k}$.   Thus the containment  $ \cF_{\leq k} \subseteq (e_{k+1}, \dots ) \Lambda^{\rm \perp}$ is an equality.  
\end{proof}

\begin{prop}\label{Uperp}
Let $A = 1^{a_1} 2^{a_2} \dots$  and let $r \in \bbN$,  $r > 1$.   Define $u_n := p_n - \sum_{i|n} i a_i$ for all $n \geq 1$.    Then  Let $(u_1, u_2, \dots  )^{r} \Lambda$ be the $r^{\rm th}$ power of the ideal generated by the $u_n$.  Then $$\cV_{A,r} = \left((u_1, u_2, \dots  )^{r} \Lambda\right)^{\perp},$$ with respect to the Hall inner product.    

\end{prop}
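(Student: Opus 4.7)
The plan is to exploit the Hall-duality between multiplication by $p_n$ and the differential operator $n\partial_n$, then conjugate by the exponential $E := \exp(\sum_i a_i y_i)$ to strip off the constant terms in $u_n$. The starting point is the standard adjunction $\langle p_n f, g\rangle = \langle f, n\partial_n g\rangle$ for $f \in \Lambda$, $g \in \hat\Lambda$, which one verifies directly on the basis $\{p_\lambda\}$ of $\Lambda$. Writing $\epsilon_n := n\partial_n$ and $c_n := \sum_{i\mid n} i a_i$, multiplication by $u_n = p_n - c_n$ is Hall-adjoint to $\epsilon_n - c_n$, so by the non-degeneracy statement in Proposition \ref{representable}, an element $f \in \hat\Lambda$ lies in $\bigl((u_1,u_2,\dots)^r\Lambda\bigr)^\perp$ if and only if $\prod_{t=1}^r(\epsilon_{n_t} - c_{n_t})(f) = 0$ for every tuple $(n_1,\dots,n_r)$ with $n_t \geq 1$.

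I would next use the identity $\epsilon_n = \sum_{m\mid n} m D_m$, which follows by applying both sides to the generators $y_{m'}$ via Proposition \ref{kronecker}; in particular $D_m E = a_m E$ yields $\epsilon_n E = c_n E$. Since $E$ is a unit in $\hat\Lambda$ with inverse $\exp(-\sum_i a_i y_i)$, every $f\in\hat\Lambda$ admits a unique factorization $f=E g$. Because each $\epsilon_n$ is a derivation, the Leibniz rule gives $(\epsilon_n - c_n)(E g) = E \cdot \epsilon_n(g)$, and iterating yields $\prod_{t} (\epsilon_{n_t} - c_{n_t})(E g) = E \cdot \epsilon_{n_1}\cdots \epsilon_{n_r}(g)$. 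Invertibility of $E$ thus reduces the membership condition to $\epsilon_{n_1}\cdots\epsilon_{n_r}(g) = 0$ for all tuples $(n_1, \dots, n_r)$.

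To finish, M\"obius inversion applied to $\epsilon_n = \sum_{m\mid n} m D_m$ gives $nD_n = \sum_{d\mid n}\mu(d)\epsilon_{n/d}$, so the linear spans of $r$-fold products of the $\epsilon_n$'s and of the $D_m$'s coincide, and the vanishing condition is equivalent to $D_{m_1}\cdots D_{m_r}(g)=0$ for all tuples $(m_1, \dots, m_r)$. Because $\{y_m\}_{m\geq 1}$ and $\{p_m\}_{m\geq 1}$ are two algebraically independent sets of generators of $\hat\Lambda$ related by a degree-preserving linear change of variables, the subspace $(\hat\Lambda)^{\rm pow}_{<r}$ equals the closure of the span of degree $<r$ monomials in the $y_m$. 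Reading $D_m$ as $\partial_{y_m}$, a short induction on $r$ identifies elements annihilated by every $r$-fold $D$-product with exactly this closure, so $f = E g$ lies in the perpendicular space if and only if $g\in(\hat\Lambda)^{\rm pow}_{<r}$, i.e.\ $f\in\cV_{A,r}$.

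The main technical obstacle is this final identification in the completed ring, where one must rule out potential cancellations among infinitely many $y$-monomials of degree $\geq r$ under repeated application of the $D_m$. The induction is clean once the generator set $\{y_m\}$ is used in place of $\{p_m\}$: if every $D_m g$ lies in $(\hat\Lambda)^{\rm pow}_{<r-1}$, then examining the coefficient of each $y$-monomial in $D_m g$ forces the coefficient in $g$ of any degree $\geq r$ monomial containing a $y_m$ factor to vanish, and letting $m$ range yields $g \in (\hat\Lambda)^{\rm pow}_{<r}$.
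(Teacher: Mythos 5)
Your proof is correct, and its engine is the same as the paper's: conjugating by $E = \exp(\sum_i a_i y_i)$, powered by the Hall adjunction between multiplication by $p_n$ and $n\,\partial/\partial p_n$. The difference is which side of the pairing you conjugate on. The paper moves $E$ to the $\Lambda$-side: the adjoint of multiplication by $E$ is the substitution $p_n\mapsto p_n+\sum_{d\mid n}da_d$, which carries $u_\lambda$ to $p_\lambda$, and the orthonormality of $\{p_\lambda/z_\lambda\}$ then yields the biorthogonality relation $\langle u_\lambda/z_\lambda,\, p_\nu E\rangle=\delta(\lambda,\nu)$ of Lemma \ref{Ucomputation}; perpendicularity to $(u_1,u_2,\dots)^r$ drops out from expanding $t\in\hat\Lambda$ in the basis $\{p_\nu E\}$. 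You instead keep $E$ on the $\hat\Lambda$-side, write $f = Eg$, and use the Leibniz identity $(\epsilon_n - c_n)(Eg) = E\,\epsilon_n(g)$ (with $\epsilon_n = n\,\partial/\partial p_n$ and $c_n=\sum_{d\mid n}da_d$) to transport the operators through $E$; after M\"obius-inverting to the $D_m$, you read off the kernel of the $r$-fold products in the $y$-coordinates, where $D_m$ acts as $\partial/\partial y_m$. Both are sound. The paper's route is slightly more compact because the biorthogonality lemma packages the calculation into a dual-basis statement that does further work elsewhere, while yours spells out the differential-equation picture directly. The cancellation worry you flag is handled correctly by your induction, since in the $\{y_\lambda\}$-monomial expansion the coefficient of each monomial in $D_{m_1}\cdots D_{m_r}(g)$ is a positive multiple of a single coefficient of $g$, so no cross-cancellation among distinct monomials can occur.
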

\begin{proof}
 Any element $t \in \hat \Lambda$  can be expanded uniquely as $$t = \sum_{\lambda \text{ integer partition}} c_\lambda p_\lambda \exp(\sum_{i} a_i y_i)$$ for $c_\lambda \in \bbQ$, because the lowest degree term of $p_\lambda \exp(\sum_{i} a_i y_i)$ is $p_\lambda$.   Then $t \in \hat \Lambda$ if and only if $c_\lambda = 0$ for all $\lambda$ whose Young diagrams have $\geq r$ rows.  

	For $\lambda =1^{m_1} 2^{m_2} \dots$  put $u_\lambda := \prod_{i} u_i^{m_i}.$

\begin{lem}\label{Ucomputation}
We have that $$ \left\langle \frac{u_\lambda}{z_\lambda},~ p_\nu  \exp(\sum_{i} a_i y_i)) \right \rangle =  \delta(\lambda,\nu) ,$$ where $\delta(\lambda,\nu)$ is the Kronecker delta.  
\end{lem}
\begin{proof}
    First  notice that for any $\lambda,\nu$  we have  $$\langle \del_i (p_\lambda/z_\lambda) ,   p_\nu \rangle  =  \langle p_{\lambda}/z_\lambda , (p_i/i) p_\nu \rangle,$$  using the fact that $\langle p_\lambda/z_\lambda ,  p_\nu  \rangle = \delta(\lambda, \nu)$.  By linearity, it follows that the operator $\del_i$ and multiplication by $p_i/i$ are adjoint.  Thus for any pair of symmetric functions $f \in \Lambda, g \in \hat \Lambda $  we have that $$\langle f , g \exp(\sum_{d \geq 1} a_d y_d) \rangle = \langle f , g \exp(\sum_{d \geq 1} a_d d \sum_{d|k} p_k/k)  \rangle =  \langle \exp( \sum_d a_d d \sum_{d|k} \del_k)  f, g \rangle.$$  Taylor expanding with respect to $p_k$, we see that for any $k \in \bbN$, the operator $\exp(\del_k)$  acts on $\Lambda$ by the ring homomorphism $p_k \mapsto p_k + 1$ and $p_n \mapsto p_n$ for $d \neq k$.   Thus we have that $\exp(\sum_{d} d a_d \sum_{k} \del_k )$  acts on $\Lambda$ by the substitution $p_{n} \mapsto p_{n} + \sum_{d|n} d a_d$.  In particular we have that $\exp(\sum_{d} d a_d \sum_{k} \del_k ) u_\lambda = p_\lambda$.  So  $$\langle u_\lambda /z_\lambda, p_\nu \exp(\sum_{d} a_d y_d) \rangle = \langle p_\lambda /z_\lambda, p_\nu \rangle = \delta(\lambda,\nu),$$ as desired.  
\end{proof}
Applying Lemma \ref{Ucomputation} to the expansion of $t$,  we see that $t \in \cV_{A,r}$ if and only if $\langle u_{\lambda}/z_{\lambda}, t \rangle = 0$ for all $\lambda = 1^{m_1} 2^{m_2} \dots$ such that $\sum_{i} m_i \geq r$.    Rescaling by $z_\lambda$ we see that $t \in \cV_{A,r}$ if and only if every degree $\geq r$ monomial in the $u_i$ pairs to zero with it.   Taking linear combinations of monomials finishes the proof.
\end{proof}

Since sums of subspaces correspond to intersections of perpendicular spaces,  Propositions \ref{perpideal} and \ref{Uperp} imply.
\begin{prop}\label{ADKperp}
Let $A = 1^{a_1}2^{a_2} \dots$, 
	let $u_n = p_n -  \sum_{i|n} i a_i$, and let $r \in \bbN$.  Then  $$\cV_{A,r} \cap \cF_{\leq k}  =  {((u_1, u_2, \dots)^r + (e_{k+1}, e_{k+2}, \dots) )}^{\perp}.$$
\end{prop}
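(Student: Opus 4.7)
The plan is to reduce Proposition \ref{ADKperp} to the elementary linear-algebra fact that perpendicular-spaces convert sums of subspaces into intersections of perpendiculars, and then apply Propositions \ref{perpideal} and \ref{Uperp}.

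More precisely, first I would record the general observation: for any two subspaces $W_1, W_2 \subseteq \Lambda$, we have $W_1^{\perp} \cap W_2^{\perp} = (W_1 + W_2)^{\perp}$ inside $\hat\Lambda$. The inclusion $\supseteq$ is immediate since $W_i \subseteq W_1 + W_2$ implies $(W_1 + W_2)^{\perp} \subseteq W_i^{\perp}$ for $i = 1, 2$. For the inclusion $\subseteq$, if $t \in W_1^{\perp} \cap W_2^{\perp}$, then for any $w = w_1 + w_2$ with $w_i \in W_i$, bilinearity of the Hall pairing gives $\langle w, t \rangle = \langle w_1, t\rangle + \langle w_2, t\rangle = 0$, so $t \in (W_1 + W_2)^{\perp}$.

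Applying this with $W_1 = (e_{k+1}, e_{k+2}, \dots)\Lambda$ and $W_2 = (u_1, u_2, \dots)^{r}\Lambda$, I would then invoke Proposition \ref{perpideal} to identify $W_1^{\perp} = \cF_{\leq k}$, and Proposition \ref{Uperp} to identify $W_2^{\perp} = \cV_{A,r}$. Combining these identifications with the general fact yields
\[
\cV_{A,r} \cap \cF_{\leq k} \;=\; W_1^{\perp} \cap W_2^{\perp} \;=\; (W_1 + W_2)^{\perp} \;=\; \bigl((u_1, u_2, \dots)^{r} + (e_{k+1}, e_{k+2}, \dots)\bigr)^{\perp},
\]
which is the claim. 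There is no real obstacle here; the content of the proposition is carried by Propositions \ref{perpideal} and \ref{Uperp}, and the only thing to verify is the formal manipulation above, which requires no assumption beyond the bilinearity of the Hall pairing on $\Lambda \times \hat\Lambda$.
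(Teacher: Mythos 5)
Your proof is correct and follows exactly the paper's own reasoning: the paper derives Proposition \ref{ADKperp} from Propositions \ref{perpideal} and \ref{Uperp} together with the observation that sums of subspaces correspond to intersections of perpendiculars. You simply spell out the bilinearity argument that the paper leaves implicit.
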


Next we need

\begin{prop}\label{powbasis}
The ring homomorphism $$\iota_k: \bbQ[p_1, \dots, p_k] \to \Lambda/(e_{k+1}, \dots ),$$  $\iota_k(p_i) := p_i$ is an isomorphism. 
In particular, the images of $\{p_{\lambda}/z_\lambda \}_{\lambda = 1^{m_1} \dots k^{m_k}}$  in $\frac{\Lambda}{(e_{k+1}, \dots )}$ form a basis.   Here $\lambda$ ranges over partitions whose associated Young diagrams have $\leq k$ columns.
\end{prop}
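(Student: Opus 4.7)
The plan is to exploit the fact that $\Lambda$ admits two presentations as a polynomial ring: one in the $p_i$ and one in the $e_i$. From the second presentation, $\Lambda \iso \bbQ[e_1, e_2, \dots]$, and the ideal $(e_{k+1}, e_{k+2}, \dots)$ is generated by a subset of these polynomial generators; quotienting therefore gives
\[
\Lambda/(e_{k+1}, e_{k+2}, \dots) \iso \bbQ[e_1, \dots, e_k].
\]

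Next I would show surjectivity of $\iota_k$ using Newton's identities $n e_n = \sum_{i=1}^{n} (-1)^{i-1} e_{n-i}\, p_i$ (with the convention $e_0 = 1$). These let one solve recursively for each $e_n$ as a polynomial in $p_1, \dots, p_n$ and $e_1, \dots, e_{n-1}$, and hence by induction on $n$ as a polynomial in $p_1, \dots, p_n$ alone. Applying this for $n = 1, \dots, k$ shows that $e_1, \dots, e_k$ lie in the image of $\iota_k$; since these generate the target ring, $\iota_k$ is surjective.

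Finally, to get injectivity I would compare graded dimensions, where $p_i$ and $e_i$ are both taken to have degree $i$. Both $\bbQ[p_1, \dots, p_k]$ and $\bbQ[e_1, \dots, e_k]$ are polynomial rings on generators of degrees $1, 2, \dots, k$, so their degree-$n$ components each have dimension equal to the number of integer partitions of $n$ with all parts $\leq k$. A surjection of graded vector spaces with equal finite-dimensional pieces in every degree must be an isomorphism, so $\iota_k$ is a graded ring isomorphism. The basis statement about $\{p_\lambda/z_\lambda\}$ for $\lambda = 1^{m_1} \cdots k^{m_k}$ then follows because $\{p_\lambda\}$ for such $\lambda$ is the usual monomial basis of $\bbQ[p_1, \dots, p_k]$ and the $z_\lambda$ are nonzero rationals. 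There is no significant obstacle here; the argument is a direct assembly of the standard polynomial structure of $\Lambda$ with the classical Newton identities.
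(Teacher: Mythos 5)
Your proof is correct and takes essentially the same route as the paper: both identify $\Lambda/(e_{k+1},\dots)$ with $\bbQ[e_1,\dots,e_k]$ (the symmetric functions in $k$ variables) and then observe that $p_1,\dots,p_k$ generate this ring. The paper simply cites this last fact as well-known, whereas you fill in the standard argument via Newton's identities and a graded dimension count.
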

\begin{proof}
We have that $\frac{\Lambda}{(e_{k+1}, \dots )} = \bbQ[e_1, \dots, e_k] \iso \bbQ[x_1, \dots, x_k]^{\rS_k}$ is isomorphic to ring of symmetric functions in $k$ variables. It is well known that the power sums $p_1,\dots, p_k$ form an independent set of generators of this ring.    
\end{proof} 

Now Propositions \ref{representable}, \ref{perpideal} and \ref{powbasis}  allow us to define $\pi_k(s)$.  

\begin{defn}\label{def:pik} Let $s \in \hat \Lambda$. We define $\pi_k(s) \in \cF_{\leq k}$  to be the element corresponding (as in Proposition \ref{representable}) to the  linear function, $\frac{\Lambda}{(e_{k+1}, \dots )} \to \bbQ$ defined by    $p_\lambda/z_\lambda \mapsto  \langle p_\lambda/z_\lambda, s \rangle$ for all integer partitions $\lambda = 1^{m_1} \dots k^{m_k}$, whose Young diagrams have $\leq k$ columns.  
\end{defn}

Because $\langle p_\lambda/z_\lambda, t \rangle$ is precisely the coefficient of $p_\lambda$ in the power-sum expansion of $t$,  it is clear that $s$ and $\pi_k(s)$ have the same image under $\epsilon_k$. And by Propositions \ref{representable}, \ref{perpideal} and \ref{powbasis}, $\pi_k(s)$ is the only element  of $\cF_{\leq k}$  with this property.  

To actually compute $\pi_k(s)$ as a sum of power functions, as in Proposition \ref{representable},  we need to know how to write $p_{\nu}$ as a sum of monomials in $p_1, \dots, p_k$  under the specialization  $\Lambda \to\frac{\Lambda}{(e_{k+1}, \dots )} \iso \bbQ[x_1, \dots, x_k]^{\rS_k}$.


\begin{prop}\label{cyclicideal}
	Let $A = 1^{a_1} 2^{a_2} \dots$ be an integer partition and $r,k \in \bbN$  and let $u_n$ denote $p_n - \sum_{d|n} d a_d$.    Suppose that $k \geq n$.   Let $\overline{ (u_1,u_2, \dots )^r}$ be the image of $(u_1,u_2 \dots )^r$ in $\Lambda/(e_{k+1}, \dots)$.  Then $\iota_k\inv\left(\overline{ (u_1, u_2,\dots )^r}\right) = (u_1, \dots, u_k)^r$.
\end{prop}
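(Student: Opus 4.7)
The plan is to leverage the isomorphism $\iota_k$ of Proposition \ref{powbasis} to reduce the problem to its $r = 1$ case, and then identify $(u_1, \dots, u_k)$ as the kernel of an evaluation map. Because $\iota_k$ is a ring isomorphism, preimages of ideals commute with taking $r$th powers: $\iota_k^{-1}(L^r) = (\iota_k^{-1}(L))^r$ for any ideal $L \subseteq \Lambda/(e_{k+1}, e_{k+2}, \dots)$. Applied to $L = \overline{(u_1, u_2, \dots)}$, this reduces the statement to the identity $\iota_k^{-1}(\overline{(u_1, u_2, \dots)}) = (u_1, \dots, u_k)$. The inclusion $\supseteq$ is immediate because $\iota_k(u_n) = \bar u_n$ for $n \leq k$, so the remaining task is to show that $\iota_k^{-1}(\bar u_n) \in (u_1, \dots, u_k)$ for every $n > k$.

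To do this, I would observe that $(u_1, \dots, u_k)$ is the kernel of the $\bbQ$-algebra homomorphism $\mathrm{ev} : \bbQ[p_1, \dots, p_k] \to \bbQ$, $p_i \mapsto \sum_{d \mid i} d\, a_d$, since these $k$ relations cut the polynomial ring down to $\bbQ$. Thus it suffices to prove that $\mathrm{ev}(\iota_k^{-1}(\bar u_n)) = 0$ for every $n$, which I would accomplish by factoring $\mathrm{ev}$ through $\iota_k$ via evaluation at a well-chosen point. Specifically, let $\alpha = (\alpha_1, \dots, \alpha_k) \in \bar{\bbQ}^{\,k}$ be the multiset consisting of the eigenvalues of a permutation of cycle type $A$ on its defining representation---each $d$th root of unity appearing with multiplicity $a_d$---padded with zeros. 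This fits in $k$ entries under the hypothesis (read as $k \geq |A| = \sum_d d\, a_d$), and the standard character-theoretic computation
$$\sum_{i=1}^{k} \alpha_i^n \;=\; \sum_d a_d \sum_{\zeta^d=1} \zeta^n \;=\; \sum_{d \mid n} d\, a_d$$
gives $\mathrm{ev}_\alpha(\bar p_n) = \sum_{d \mid n} d\, a_d$, where $\mathrm{ev}_\alpha: \Lambda/(e_{k+1}, \dots) \to \bar{\bbQ}$ is the homomorphism induced by specializing $x_i = \alpha_i$ under $\Lambda/(e_{k+1}, \dots) \cong \bbQ[x_1, \dots, x_k]^{\rS_k}$.

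With this setup, the composition $\mathrm{ev}_\alpha \circ \iota_k$ agrees with $\mathrm{ev}$ on the generators $p_1, \dots, p_k$, hence on all of $\bbQ[p_1,\dots,p_k]$. Then $\mathrm{ev}_\alpha(\bar u_n) = \sum_i \alpha_i^n - \sum_{d \mid n} d\, a_d = 0$ yields $\mathrm{ev}(\iota_k^{-1}(\bar u_n)) = 0$, which finishes the argument. The only real obstacle is spotting the eigenvalue interpretation of the numbers $\sum_{d \mid n} d\, a_d$; once that is in hand, the hypothesis $k \geq |A|$ is exactly what is required to realize $\mathrm{ev}$ as an honest evaluation at $k$ coordinates in $\bar{\bbQ}$, and the rest is formal.
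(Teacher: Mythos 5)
Your proof is correct, and it is essentially the paper's own argument. The paper constructs the required ring homomorphism $\kappa : \Lambda/(e_{k+1},\dots) \to \bbQ$ killing each $\bar u_n$ by setting $p_n \mapsto \Tr(T^n, \bbQ S)$ for the operator $T$ on the linearized $\bbZ$-set $S = \bigsqcup_d (\bbZ/d \times [a_d])$, and even includes a remark that this $\kappa$ can be described equivalently as evaluation at the roots of unity obtained by diagonalizing $T$ — which is exactly your $\mathrm{ev}_\alpha$. The only genuine (minor) difference is in the final step: the paper shows $\overline{(u_1,u_2,\dots)}$ is proper and invokes maximality of $(u_1,\dots,u_k)$, whereas you identify $(u_1,\dots,u_k) = \ker(\mathrm{ev})$ and deduce the containment $\iota_k^{-1}(\overline{(u_1,u_2,\dots)}) \subseteq \ker(\mathrm{ev}_\alpha \circ \iota_k) = (u_1,\dots,u_k)$ directly; this is a small streamlining but not a different route. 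You also correctly read the paper's hypothesis "$k \geq n$" as the intended "$k \geq |A|$", which is what the dimension count $\dim \bbQ S = |A| \leq k$ (equivalently, fitting the eigenvalue tuple in $k$ slots) actually requires.
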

\begin{proof}
 	It suffices to show that $\iota_k\inv\left(\overline{ (u_1,u_2 \dots )}\right) = (u_1, \dots, u_k)$,  where $\overline{ (u_1,u_2, \dots )}$ is the image of ${ (u_1,u_2, \dots )}$ in $\Lambda/(e_{k+1}, \dots)$.

Now $\iota_k\inv\left(\overline{ (u_1,u_2 \dots )}\right)$ clearly contains $(u_1, \dots, u_k)$ which is a maximal ideal.  So it suffices to prove that $\overline{ (u_1,u_2 \dots )}$ is a proper ideal of $\Lambda/(e_{k+1}, \dots)$.   To do this, we construct a nontrivial homomorphism $\Lambda/(e_{k+1}, \dots) \to \bbQ$  under which which $u_i$ vanishes.  

Consider the $\bbZ$-set  $S:=\bigsqcup_{d \geq 1} ( \bbZ/d \times [a_d])$.   Linearizing it we obtain an $|A|$ dimensional vector space $\bbQ S$  and an operator $T: \bbQ S \to \bbQ S$  defined to be the action of $+1 \in \bbZ$. 
 Then there is a specialization homomorphism  $\kappa: \Lambda \to \bbQ$ defined by $$p_n \mapsto \Tr(T^n , \bbQ S).$$  Since $\bbQ S$ has dimension $|A| \leq k$  we have that $\wedge^i \bbQ S   = 0$ for all $i > k$.  Thus $\kappa(e_i) = \Tr(T, \wedge^i \bbQ S) = 0$  for all $i > k$.  So $\kappa$ factors through $\Lambda/(e_{k+1}, \dots)$.  Finally, $\kappa(p_n)$ is the number of fixed points of the action of $+n \in \bbZ$ on $S$.   If $d|n$ then $+n$ fixes all of the points of $\bbZ/d$, otherwise it fixes none of them.  So we see that the number of fixed points of $+n$ is $\sum_{d|n} d a_d$.  Therefore $\kappa(u_n) = \kappa(p_n - \sum_{d|n} d a_d) =0$. 

Thus $\kappa$ is the desired homomorphism with kernel $(u_1,u_2 \dots) + (e_{k+1}, \dots)$  and so we are done.  
\end{proof}

\begin{remark}
	The homomorphism $\kappa$  constructed in the proof of Proposition \ref{cyclicideal} can also be directly by  $p_n \mapsto p_n(\zeta_1, \zeta_2, \dots, 0, 0, \dots)$  where the $\zeta_i$ are the roots of unity obtained by diagonalizing $T$.   
\end{remark}

Proposition \ref{cyclicideal} has the following immediate consequence.  
\begin{prop}\label{quotientringcomputation}  Let $A = 1^{a_1} 2^{a_2} \dots$  and let $u_n = p_n - \sum_{i|n} ia_i$.  
There is an isomorphism $$\frac{\bbQ[u_1, \dots, u_k]}{(u_1, \dots, u_k)^r}  \to  \frac{\Lambda}{(u_1, u_2, \dots)^r + (e_{k+1}, e_{k+2}, \dots)}   $$ induced by the map  $u_i \mapsto u_i$  for $i = 1, \dots, k$.
\end{prop}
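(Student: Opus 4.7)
The plan is to derive this directly from Propositions \ref{powbasis} and \ref{cyclicideal} by a change of variables and passage to a quotient. First I would observe that since $u_i = p_i - \sum_{d\mid i} d a_d$ differs from $p_i$ by a scalar for each $i = 1, \dots, k$, the sets $\{p_1, \dots, p_k\}$ and $\{u_1, \dots, u_k\}$ generate the same subring of $\bbQ[p_1, \dots, p_k]$. Hence the isomorphism $\iota_k : \bbQ[p_1,\dots,p_k] \to \Lambda/(e_{k+1}, e_{k+2},\dots)$ of Proposition \ref{powbasis} can be rewritten as an isomorphism
\[
\tilde\iota_k \colon \bbQ[u_1, \dots, u_k] \xrightarrow{\sim} \Lambda/(e_{k+1}, e_{k+2},\dots), \qquad u_i \longmapsto u_i.
\]

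Next I would apply Proposition \ref{cyclicideal} (with the assumption $k \geq |A|$, which should be included in the hypothesis): it asserts that $\tilde\iota_k^{-1}\bigl(\overline{(u_1, u_2, \dots)^r}\bigr) = (u_1, \dots, u_k)^r$, where the ideal on the left is the image in $\Lambda/(e_{k+1}, \dots)$ of the ideal $(u_1, u_2, \dots)^r \subseteq \Lambda$. Since $\tilde\iota_k$ is an isomorphism, it descends to an isomorphism of quotients
\[
\frac{\bbQ[u_1,\dots,u_k]}{(u_1,\dots,u_k)^r} \xrightarrow{\sim} \frac{\Lambda/(e_{k+1},e_{k+2},\dots)}{\overline{(u_1,u_2,\dots)^r}} \;\iso\; \frac{\Lambda}{(u_1,u_2,\dots)^r + (e_{k+1},e_{k+2},\dots)},
\]
which is the required map and sends $u_i \mapsto u_i$ by construction.

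The argument is essentially bookkeeping: the real work — identifying the preimage of the ideal $\overline{(u_1,u_2,\dots)^r}$ under $\iota_k$ — has already been carried out in Proposition \ref{cyclicideal} via the construction of the evaluation homomorphism $\kappa$. The only point to be careful about is making sure that the translation $p_i \mapsto u_i$ is compatible with the polynomial ring structure, which is automatic because the $u_i$ are obtained from the $p_i$ by a lower-triangular (in fact diagonal-plus-constant) change of variables. No additional obstacle arises.
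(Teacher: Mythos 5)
Your proposal is correct and follows essentially the same route as the paper: both proofs lean on Proposition \ref{powbasis} to get the isomorphism $\bbQ[u_1,\dots,u_k] \iso \Lambda/(e_{k+1},\dots)$ (using that $u_i$ and $p_i$ differ by constants), and on Proposition \ref{cyclicideal} to identify the kernel when passing to the quotient by $(u_1,u_2,\dots)^r$. Your remark that the standing hypothesis $k \geq |A|$ needs to be carried along (it is stated in Proposition \ref{cyclicideal} but omitted from the statement here) is a fair observation.
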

\begin{proof}
	The morphism is surjective because its image equals the image of $\bbQ[p_1, \dots, p_k]$, and it is injective by Proposition \ref{cyclicideal}.
\end{proof}

\begin{prop}\label{ARpreserved}
	Let $A$ be an integer partition such that $|A|\leq k$.  If $s \in \cV_{A,r}$  then $\pi_k(s) \in  \cV_{A,r}$.  
\end{prop}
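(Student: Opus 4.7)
The strategy is to exploit the perpendicular-space characterization of $\cV_{A,r}$ and reduce, via Proposition \ref{cyclicideal}, to a pairing on the polynomial subring $\bbQ[p_1,\dots,p_k]$. The hypothesis $|A|\leq k$ enters exactly at the step where Proposition \ref{cyclicideal} is invoked.

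By Proposition \ref{Uperp}, $\cV_{A,r}=((u_1,u_2,\dots)^r\Lambda)^\perp$, so it suffices to show $\langle f,\pi_k(s)\rangle=0$ for every $f\in(u_1,u_2,\dots)^r\Lambda$. Unpacking Definition \ref{def:pik} via Proposition \ref{representable}, $\pi_k(s)$ represents the linear functional $L$ on $\Lambda/(e_{k+1},\dots)$ determined on the basis $\{[p_\lambda/z_\lambda]\}$ (indexed by partitions with at most $k$ columns, by Proposition \ref{powbasis}) by $L([p_\lambda/z_\lambda])=\langle p_\lambda/z_\lambda,s\rangle$. Extending by bilinearity of the Hall pairing yields the formula $\langle a,\pi_k(s)\rangle=\langle g,s\rangle$ whenever $a\in\Lambda$, $g\in\bbQ[p_1,\dots,p_k]$, and $[a]=[g]$ in $\Lambda/(e_{k+1},\dots)$.

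Next I would apply Proposition \ref{cyclicideal}: because $|A|\leq k$, the image of $(u_1,u_2,\dots)^r\Lambda$ in $\Lambda/(e_{k+1},\dots)$ coincides with the image of $(u_1,\dots,u_k)^r\subseteq\bbQ[p_1,\dots,p_k]$ under $\iota_k$. Hence for any $f\in(u_1,u_2,\dots)^r\Lambda$ I can find $g\in(u_1,\dots,u_k)^r$ with $[f]=[g]$ in the quotient. Combining this with the displayed formula above gives $\langle f,\pi_k(s)\rangle=\langle g,s\rangle$, which vanishes since $g\in(u_1,\dots,u_k)^r\subseteq(u_1,u_2,\dots)^r\Lambda$ and $s\in\cV_{A,r}=((u_1,u_2,\dots)^r\Lambda)^\perp$ by hypothesis.

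The only real subtlety is verifying the formula $\langle a,\pi_k(s)\rangle=\langle g,s\rangle$ under the hypotheses stated; this is a routine bilinearity argument from Proposition \ref{representable}, so there is no serious obstacle. Without the condition $|A|\leq k$ the argument would break at the application of Proposition \ref{cyclicideal}, reflecting the fact that the identification of the two ideals modulo $(e_{k+1},\dots)$ genuinely requires enough power-sum variables to realize $A$.
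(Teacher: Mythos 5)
Your proof is correct and follows essentially the same route as the paper's: reduce via Proposition \ref{Uperp} to showing the pairing with $(u_1,u_2,\dots)^r$ vanishes, use the fact that $\pi_k(s)\in\cF_{\leq k}$ pairs through the quotient $\Lambda/(e_{k+1},\dots)$, invoke Proposition \ref{cyclicideal} to replace a general element of $(u_1,u_2,\dots)^r$ by one in $(u_1,\dots,u_k)^r\subseteq\bbQ[p_1,\dots,p_k]$, and then apply the defining property $\langle g,\pi_k(s)\rangle=\langle g,s\rangle$ for $g\in\bbQ[p_1,\dots,p_k]$. The only difference is that you spell out the bilinearity step a bit more explicitly than the paper does.
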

\begin{proof}
Let $A = 1^{a_1}2^{a_2} \dots$ and let $u_i = p_i -  \sum_{i} i a_i$.   By Proposition \ref{Uperp}, it suffices to show that $\langle b , \pi_k(s) \rangle = 0$  for every $b \in (u_1,u_2 \dots )^r$.   Consider $\iota_k \inv(b) \in \bbQ[p_1, \dots, p_k]$.  By Proposition \ref{cyclicideal} it lies in the ideal $(u_1, \dots, u_k)^r$.  By Propositions \ref{Uperp} and \ref{representable}, it suffices to show that  $\langle m, \pi_k(s) \rangle = 0$,  where $m$ is any degree $>r$ monomial in $u_1, \dots, u_k$.     Because $m$ is a linear combination of monomials in $p_1, \dots, p_k$,  by the definition of $\pi_k(m)$  we have that $\langle m, \pi_k(s) \rangle = \langle m, s \rangle = 0$ . 
\end{proof}

\begin{proof}[Proof of Theorem \ref{intersect}]
We have already constructed $\pi_k$ and shown that it is unique.  Proposition \ref{ARpreserved}  states that if $\pi_k$ preserves the subspace $\cV_{A,r}$  when $k \geq |A|$.   It remains to show that $\cF_{\leq k} \cap \bigoplus_{i \in I} \cV_{A_i, r_i} = \bigoplus_{i \in I} \cF_{\leq k} \cap \cV_{A_i, r_i}$.    Suppose that we have $s_i \in \cV_{A_i, r_i}$  such that $\sum_{i \in I} s_i \in \cF_{\leq k}$.    The operator $\pi_k$ is projection onto the subspace $\cF_{\leq k}$, so we have $\sum_{i \in I} s_i = \pi_k(\sum_{i \in I} s_i) = \sum_{i} \pi_k(s_i)$.  By linear independence of $\cV_{A_i, r_i}$  and the fact that $\pi_k(s_i) \in \cV_{A_i,r_i}$  it follows that $\pi_k(s_i) = s_i$ for all $i$.  Thus all of the $s_i$ are in $\cF_{\leq k}$.  This completes the proof.  
\end{proof}

Finally we include an alternate description of $\cF_{\leq k}$, that can be used to compute $\pi_k$ quickly some cases.
\begin{prop} \label{homogdeg}
	Let $(\hat \Lambda)^{\rm homog}_{\leq k}$ denote the closure of the span of monomials of degree $\leq k$ in the homogenous symmetric functions.   Then $\cF_{\leq k} = (\hat \Lambda)^{\rm homog}_{\leq k}$.  
\end{prop}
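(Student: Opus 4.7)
The plan is to verify the equality one graded piece at a time. Both $\cF_{\leq k}$ and $(\hat\Lambda)^{\rm homog}_{\leq k}$ are, by definition, closures of spans of homogeneous elements inside the degree-complete ring $\hat\Lambda = \prod_n \Lambda_n$, and each $\Lambda_n$ is finite dimensional. Therefore it suffices to show that for every $n$ the subspace of $\Lambda_n$ spanned by $\{s_\lambda : \lambda \vdash n,\ \rank(\lambda) \leq k\}$ coincides with the subspace spanned by $\{h_\lambda : \lambda \vdash n,\ \rank(\lambda) \leq k\}$; here $\rank(\lambda)$ denotes, as throughout the paper, the number of nonzero parts of $\lambda$, equivalently the number of rows in its Young diagram.

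For the inclusion $\cF_{\leq k} \subseteq (\hat\Lambda)^{\rm homog}_{\leq k}$ I would invoke the Jacobi--Trudi identity
\[
s_\lambda = \det\bigl(h_{\lambda_i - i + j}\bigr)_{i,j = 1}^{\rank(\lambda)},
\]
with the usual convention that $h_0 = 1$ and $h_m = 0$ for $m<0$. When $\rank(\lambda) \leq k$, expanding the determinant exhibits $s_\lambda$ as an integer linear combination of products of at most $k$ of the $h_i$'s, so $s_\lambda$ lies in the span of degree-$\leq k$ monomials in the $h_i$.

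For the reverse inclusion, I would use the Kostka expansion $h_\lambda = \sum_\mu K_{\mu\lambda}\, s_\mu$, where $K_{\mu\lambda}$ counts semistandard Young tableaux of shape $\mu$ with content $\lambda$. Any such SSYT uses only the values $1, \ldots, \rank(\lambda)$, and its entries strictly increase down each column, so $\mu$ can have at most $\rank(\lambda)$ rows. Hence if $\rank(\lambda) \leq k$ then every $\mu$ with $K_{\mu\lambda} \neq 0$ satisfies $\rank(\mu) \leq k$, and so $h_\lambda \in \cF_{\leq k}$.

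There is no substantive obstacle: both identities are classical, and the only care required is to match conventions between the size of the Jacobi--Trudi determinant, the number of parts of $\lambda$, and the number of rows in its Young diagram. The passage from the graded statement to the closure statement is automatic, since equality has been established on each finite-dimensional piece $\Lambda_n$.
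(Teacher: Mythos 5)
Your proof is correct, but it follows a genuinely different route from the paper's. The paper proves only one containment, namely $(\hat\Lambda)^{\rm homog}_{\leq k} \subseteq \cF_{\leq k}$, by the Pieri rule (which is essentially the Kostka expansion you use), and then closes the argument with a dimension count: in each graded degree $n$ both subspaces have dimension equal to the number of partitions of $n$ with at most $k$ rows, so the inclusion is an equality. You instead prove both containments directly, supplying the forward inclusion $\cF_{\leq k} \subseteq (\hat\Lambda)^{\rm homog}_{\leq k}$ via the Jacobi--Trudi determinant $s_\lambda = \det(h_{\lambda_i - i + j})_{i,j=1}^{\rank(\lambda)}$, whose expansion expresses $s_\lambda$ as an integer combination of $h$-monomials of degree at most $\rank(\lambda)$. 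Both approaches are short and classical. Your version has the small virtue of producing an explicit change-of-basis expression in each direction without relying on the enumerative coincidence (partitions with $\leq k$ rows vs.\ $\leq k$ columns under transposition); the paper's version is a line shorter because the dimension count absorbs one containment. One minor care in the Jacobi--Trudi step, which you handle implicitly via the conventions $h_0 = 1$, $h_{<0} = 0$: nonzero determinant terms may contain $h_0$ factors, so the resulting monomials in $h_1, h_2, \dots$ have degree $\leq \rank(\lambda)$ rather than exactly $\rank(\lambda)$, which is exactly what is needed.
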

\begin{proof}
		We need to show that the closure of the span $s_\lambda$ where $\lambda$ are integer partitions with $\leq d$ rows equals $(\hat \Lambda)^{\rm homog}_{\leq d}$.  By the Pieri rule \cite[7.15.9]{StanleyEnum},  we have that every monomial $h_i$ of length $\leq d$ is a sum of such $s_\lambda$,  showing one direction of the containment.  To prove equality, observe that in degree $n$ the two subspaces have the same dimension: the number of integer partitions of $n$ with $\leq d$ rows.   
\end{proof}

We conclude this section with an example.  

\begin{ex}
		Consider the space $\cV_{1,2}$, which is the closure of the span of $\exp(y_1)$  and $\{p_n \exp(y_1)\}_{n \geq 1}$.  Then because $$\exp(y_1) = \sum_{n \geq 0} h_n,$$ we have that $\pi_2(\exp(y_1)) = \exp(y_1)$  and $\pi_2(p_1 \exp(y_1)) = p_1 \exp(y_1)$.  We also have that $\pi_2(p_n \exp(y_1)) = 0$ for $n \geq 3$,  because $\epsilon_2(p_n\exp(y_1)) = 0$.    So the interesting case is $\pi_2(p_2\exp(y_1))$.  Observe that we have $$\sum_{n \geq 2} p_n \exp(y_1) = \sum_{n \geq 2} n h_n$$ by Newtons identity. Hence this element lies in $\cF_{\leq 2}$ and its image under $\epsilon_2$ agrees with $p_2 \exp(y_1)$.  So it must be $\pi_2(p_2 \exp(y_1))$. 
\end{ex}

It is also possible to compute $\pi_2(p_2 \exp(y_1))$ directly from Definition \ref{def:pik}.  We describe a systematic method for computing $\pi_k$ in \S \ref{piksubsec}.

\section{Generating functions valued in the dual character space}\label{sec:genfunctiondualring}

We begin with a summary of what we know so far.    First Theorem \ref{intersect}  and Theorem \ref{VAreform} immediately imply the following.

\begin{thm}
Let $J = j_1 \geq  \dots \geq j_r $ be an integer partition, and let $k \in \bbN$ satisfy $k \geq j_1$.
Then if $M$ is an $\FSop$ module of type $< J$  and rank $\leq k$, we have
   
 \begin{equation}\ch(M) \in \bigoplus_{A \text{ integer partition }} \cF_{\leq k} \cap \cV_{A, t(A,J)},  \label{sumofterms}\end{equation}
where $t(A,J) = \#\{ i \in \{1, \dots r\} ~|~ j_i > |A|\}$. 
\end{thm}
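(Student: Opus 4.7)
The plan is to combine Theorem \ref{VAreform} and Theorem \ref{intersect} directly, with one bookkeeping step to handle a finiteness issue. First I would apply Theorem \ref{VAreform}: the hypothesis that $M$ has type $<J$ gives $\ch(M) \in \bigoplus_{A} \cV_{A, t(A,J)}$, where the sum ranges over all integer partitions $A$. Separately, the hypothesis that $M$ has rank $\leq k$ says precisely that $\ch(M) \in \cF_{\leq k}$, by Definitions \ref{def:rank} and \ref{def:Fk}. Combining these two memberships gives
\[
\ch(M) \in \cF_{\leq k} \cap \bigoplus_{A} \cV_{A, t(A,J)},
\]
and the content of the theorem is to pull the intersection through the direct sum.

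The only wrinkle is that Theorem \ref{intersect} is stated for a \emph{finite} collection of partitions, whereas the sum above is nominally infinite. I would resolve this by observing that the sum is essentially finite. By the conventions of Definition \ref{def:VAr}, $\cV_{A, 0} = 0$, and by the definition of $t(A,J)$, we have $t(A,J) = 0$ whenever $|A| \geq j_1$. So the only nonzero summands are indexed by partitions $A$ with $|A| < j_1$, which is a finite set; moreover each such $A$ satisfies $|A| < j_1 \leq k$ by hypothesis, so the size condition $k \geq \max_i |A_i|$ of Theorem \ref{intersect} is automatic.

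Applying Theorem \ref{intersect} to this finite collection of partitions $\{A : |A| < j_1\}$ with $r_A := t(A,J)$ then yields
\[
\cF_{\leq k} \cap \bigoplus_{|A| < j_1} \cV_{A, t(A,J)} \;=\; \bigoplus_{|A| < j_1} \bigl(\cF_{\leq k} \cap \cV_{A, t(A,J)}\bigr).
\]
Re-indexing the right-hand side back to a sum over all integer partitions (the additional terms being zero by the same convention $\cV_{A,0}=0$) gives exactly \eqref{sumofterms}. There is no genuine obstacle here: the conceptual content has already been absorbed into Theorems \ref{VAreform} and \ref{intersect}, and the proof is just their assembly together with the observation that the rank cutoff $k \geq j_1$ is what makes the hypothesis of Theorem \ref{intersect} line up with the partitions $A$ that can contribute.
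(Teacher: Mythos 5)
Your proof is correct and is essentially the paper's argument: the paper simply asserts that this theorem is an immediate consequence of Theorems \ref{VAreform} and \ref{intersect}, which is precisely the assembly you carry out. The one step you spell out explicitly that the paper leaves implicit is the reduction to a finite sum via $t(A,J)=0$ (and hence $\cV_{A,t(A,J)}=0$) whenever $|A|\geq j_1$, which is the correct way to reconcile the infinite direct sum in Theorem \ref{VAreform} with the finiteness hypothesis of Theorem \ref{intersect}.
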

 In this section, we study each summand of \eqref{sumofterms} individually.  
Thus, for the remainder of this section, we fix an integer partition $A= 1^{a_1} 2^{a_2} \dots $,  and $r,k \in \bbN$ such that $|A| \leq k$.   We set $u_n := p_n - \sum_{d|n} d a_d$  and $u_\lambda := \prod_{i} u_i^{m_i}$  for $\lambda = 1^{m_1} \dots$ an integer partition.  We will consider $\cV_{A,r} \cap \cF_{\leq k}$  and the projection operator $\pi_k:  \cV_{A,r} \to \cV_{A,r} \cap \cF_{\leq k}$.  

Now,  Propositions \ref{ADKperp} and \ref{quotientringcomputation} show that there are isomorphisms.
\begin{equation}
\cV_{A,r} \cap \cF_{\leq k} \iso \left( \frac{\Lambda}{(u_1, u_2, \dots)^r + (e_{k+1}, e_{k+2}, \dots) } \right)^* \iso \left(\frac{\bbQ[u_1, \dots, u_k]}{(u_1, \dots, u_k)^r } \right)^*, \label{measuringisos}
\end{equation}
which take an element $s \in \cF_{\leq k} \cap \cV_{a,r}$ to the function $\langle -, s\rangle : \Lambda \to \bbQ,$  and to the restriction of this function to degree $<r$  monomials in $u_1, \dots, u_k$.

\subsection{An interpretation} We can interpret the isomorphisms of \eqref{measuringisos} in the following way, which we find clarifying.  Let $\ch(V) \in \hat \Lambda$ be the character of some sequence of $\rS_n$ representations $V$.  Then elements $m \in \Lambda$ correspond to different \emph{measurements} that we can perform on $V$,  by taking the inner product  $\langle m , \ch(V) \rangle$.    For example:

\begin{itemize} \item The element  $m = p_\lambda$ measures the trace of an element of $\rS_{|\lambda|}$  with cycle type $\lambda$,
$$ \langle p_\lambda, \ch(V) \rangle = \Tr(\sigma_\lambda, V_{|\lambda|}).$$

\item The element  $m = h_n$ measures the dimension of the space of $\rS_n$ invariants:    $$\langle h_n, \ch(V) \rangle =  \dim (V_n)^{\rS_n}.$$

\item The element $m = u_\lambda$  measures  the coefficient $c_\lambda$ in the unique expansion $$\ch(V)  = \sum_{\lambda \text{ integer partition}} c_\lambda  \frac{p_\lambda}{z_\lambda} \exp(\sum_{i \geq 1} a_i y_i),$$ by Lemma \ref{Ucomputation}.

\end{itemize}

In these terms, the first isomorphism  states that if $\ch(V) \in \cV_{A,r} \cap \cF_{\leq k}$ then the value of the measurement $m \in \Lambda$  only depends on its image in the quotient  $\frac{\Lambda}{(u_1, u_2, \dots)^r + (e_{k+1}, e_{k+2}, \dots) }$.  (Measurements that were independent for arbitrary characters are correlated for elements of $\cV_{A,r}\cap \cF_{\leq k}$).  And the second isomorphism shows that to determine the value of an arbitrary  measurement $m \in \Lambda$ we can proceed in two steps: (1) compute the measurements which are monomials in $u_1, \dots, u_k$,  (2) express the image of $m$ in the quotient ring as a linear combination of monomials in $u_1, \dots, u_k$.

Step (2) is nontrivial,  since we have to actually compute the isomorphism which Proposition \ref{quotientringcomputation} provides abstractly. One can use Gr\"obner bases to algorithmically reduce $m$ into a normal form, but this does not yield explicit formulas.   The purpose of this section is to introduce a strategy for accomplishing step (2) in the cases $m = u_\lambda$ and $m = h_n$, using generating functions. We use this strategy to construct an explicit basis for $\cV_{A,r} \cap \cF_{\leq k}$,  to describe how to compute $\pi_k$, and to prove Theorem \ref{GrowingRows}.

\subsection{A generating function identity}
 We let $R := \frac{\Lambda}{(u_1, u_2, \dots)^r + (e_{k+1}, e_{k+2}, \dots)}$ be the quotient ring, and write $f: \Lambda \to R$  for the quotient homomorphism. 

Let $\lambda$ be an integer partition.  We wish to express $f(u_\lambda)$ as a linear combination of $f(u_\nu)$  where  $\nu = 1^{\ell_1} \dots k^{\ell_k}$ has $\leq k$ columns  (in other words $u_\nu$ is a monomial in $u_1, \dots, u_k$).  
Our strategy is to first compute this expression for $f(u_n)$ and then use the homomorphism property to compute it for $f(u_\lambda)$.  Accordingly, we compute an expression for the $R$-valued generating function $$\sum_{n \geq 0}  \frac{f(u_n)}{n} t^n \in R[[t]].$$

We will also consider generating functions valued in  $\Lambda$  (i.e. elements of $\Lambda[[t]]$).  There is a specialization homomorphism $f: \Lambda[[t]] \to R[[t]]$, given by applying $f: \Lambda \to R$ coefficient wise.  Now, consider

$$\exp(-\sum_n (u_n t^n) /n) \in \Lambda[[t]].$$
We have 

\begin{align} \exp(-\sum_{n \geq 1} (u_n t^n) /n)  & = \exp(-\sum_{n \geq 1} p_n t^n/n) \exp(\sum_{n \geq 1} \sum_{i|n} i a_i  t^n/n) \nonumber
 = \exp(-\sum_{n \geq 1} p_n t^n/n) \exp(\sum_{e,i \geq 1}  a_i \frac{(t^i)^e}{e})
\\ & =  (1+ \sum_{n \geq 1} (-1)^n e_n t^n) \exp(\sum_{i \geq 1} -a_i \log(1-t^i) ) =  \frac{1+ \sum_{n \geq 1} (-1)^n e_n t^n}{ \prod_{i \geq 1} (1 - t^i)^{a_i}}  \label{bigequation}
\end{align}
Where we have used the well-known identity, 
\begin{equation}(1+ \sum_{n \geq 1} (-1)^n e_n t^n) = \exp(-\sum_{n \geq 1} p_n t^n/n),\label{signexpid}\end{equation}
 which can be proved by writing $p_n = \sum_{j} x_j^n$  and using the power series expansion of $\log$.

Applying the specialization homomorphism $f: \Lambda[[t]] \to R[[t]]$  we obtain the identity:

$$\exp\left(-\sum_n \frac{f(u_n)}{n} t^n\right)   =\frac{1 - f(e_1) t + \dots + (-1)^k f(e_k) t^k} { \prod_{i \geq 1} (1 - t^i)^{a_i}} $$ 
Now, we wish to write express the numerator in terms of the variables $u_1, \dots, u_k \in R$.   Before doing this, we introduce some notation.  

\begin{defn}
	For $m \in \bbZ$, let $c_m$ be the coefficient of $t^m$ in $\prod_{i \geq 1}  (1 - t^{i})^{a_i}.$
Concretely $c_m$ is an alternating sum of products of binomial coefficients.  (We may also interpret it as the trace of an operator constructed as in the proof of Proposition \ref{cyclicideal}).
\end{defn}

\begin{defn}
	For $m \in \bbN$ we define $E_m \in \Lambda$ to be $E_m:=  - (-1)^m e_m + c_m$.   Notice that $E_0 = 0$ and $E_m = -(-1)^m e_m$ for $m > |A|$.    We extend this definition to integer partitions, by defining $E_\lambda := \prod_{i} E_i^{m_i}$ if  $\lambda  = 1^{m_1} 2^{m_2} \dots$.
\end{defn}

By definition, we have that $$ 1 + \sum_{n \geq 1} (-1)^n e_n t^n  = - \sum_{m \geq 0 } E_m t^m +  \prod_{i \geq 1} (1-t^i)^{a_i}.$$ Therefore:

$$\exp(-\sum_{n\geq 1}  u_n t^n/n) = 1 - \frac{\sum_{m \geq 1} E_m t^m}{\prod_{i \geq 1} (1- t^i)^{a_i}}.$$

 \begin{prop}\label{Ecomputation}
	 We have that  $$E_m=  - \sum_{n = 1}^m (-1)^n c_{m-n}  \sum_{\lambda \vdash n} \sgn(\lambda) \frac{u_\lambda}{z_\lambda}.$$  In particular $E_m \in (u_1, u_2, \dots) \Lambda$.   
\end{prop}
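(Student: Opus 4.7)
The plan is to extract the coefficient of $t^m$ from both sides of the generating-function identity
\[
\prod_{i \geq 1}(1-t^i)^{a_i} \cdot \exp\!\left(-\sum_{n \geq 1}\frac{u_n t^n}{n}\right) \;=\; 1 + \sum_{n \geq 1}(-1)^n e_n t^n
\]
already established as a rearrangement of equation \eqref{bigequation} in $\Lambda[[t]]$. The first factor on the left expands, by definition, as $\sum_{m \geq 0} c_m t^m$, so all that is needed is a concrete expansion of the exponential.

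For that I would invoke the standard exponential formula: for any sequence of commuting symbols $(a_n)$,
\[
\exp\!\left(\sum_{n \geq 1} \frac{a_n t^n}{n}\right) \;=\; \sum_\lambda \frac{a_\lambda}{z_\lambda}\, t^{|\lambda|},
\]
which is obtained by expanding the product $\prod_n \exp(a_n t^n/n)$ termwise and grouping contributions by the partition $\lambda = 1^{m_1}2^{m_2}\cdots$. Applying this with $a_n = -u_n$ gives
\[
\exp\!\left(-\sum_{n \geq 1}\frac{u_n t^n}{n}\right) \;=\; \sum_\lambda (-1)^{\rank(\lambda)}\frac{u_\lambda}{z_\lambda}\, t^{|\lambda|}.
\]
Convolving with $\sum c_m t^m$ and reading off the $t^m$ coefficient yields, for $m \geq 1$,
\[
(-1)^m e_m \;=\; c_m \;+\; \sum_{n=1}^m c_{m-n} \sum_{\lambda \vdash n} (-1)^{\rank(\lambda)} \frac{u_\lambda}{z_\lambda},
\]
the isolated $c_m$ coming from the $n=0$ term of the convolution (where the only partition is the empty one).

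Next I would use the definition $E_m = -(-1)^m e_m + c_m$ to rewrite this as
\[
E_m \;=\; -\sum_{n=1}^m c_{m-n} \sum_{\lambda \vdash n}(-1)^{\rank(\lambda)} \frac{u_\lambda}{z_\lambda},
\]
and then convert the sign via the elementary identity $(-1)^{\rank(\lambda)} = (-1)^{|\lambda|}\sgn(\lambda)$, which follows from
\[
|\lambda| - \rank(\lambda) \;=\; \sum_i (i-1) m_i \;\equiv\; \sum_{i\text{ even}} m_i \pmod{2}.
\]
Pulling the factor $(-1)^n$ for $\lambda \vdash n$ out of the inner sum produces the stated formula. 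The final assertion $E_m \in (u_1,u_2,\dots)\Lambda$ is then immediate, since every $u_\lambda$ with $\lambda \vdash n \geq 1$ has at least one factor $u_i$. The entire argument is formal manipulation in $\Lambda[[t]]$, so there is no real obstacle; the only subtle point is tracking the three sources of signs ($(-1)^m e_m$, $(-1)^{\rank(\lambda)}$ from the exponential, and the conversion to $\sgn(\lambda)$) so that they reassemble into the claimed expression.
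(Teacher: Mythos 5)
Your proof is correct and follows essentially the same route as the paper: extract the $t^m$ coefficient from $\exp(-\sum u_n t^n/n)\prod_i(1-t^i)^{a_i} = 1 + \sum_{n\geq 1}(-1)^n e_n t^n$ and unwind the definition of $E_m$. The only cosmetic difference is that the paper quotes $\exp(-\sum v_n t^n/n) = \sum_n (-1)^n\sum_{\lambda\vdash n}\sgn(\lambda)\,u_\lambda/z_\lambda$ as a single known identity, whereas you derive it in two steps (the exponential formula giving $(-1)^{\rank(\lambda)}$ and then the conversion $(-1)^{\rank(\lambda)} = (-1)^{|\lambda|}\sgn(\lambda)$), which amounts to the same thing.
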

\begin{proof}
By \eqref{bigequation} we have: 
 $$\exp(-\sum_{ n \geq 1}  u_n t^n /n) \prod_{i \geq 1} (1- t^i)^{a_i} = 1 - \sum_{n \geq 1} (-1)^n e_n t^n.$$
We have  the well-known identity $$\exp(-\sum_{n \geq 1} v_n t^n / n) = \sum_n (-1)^n \sum_{\lambda \vdash n}   \sgn(\lambda) \frac{v_\lambda}{z_\lambda},$$  for any family of independent variables $v_n$.  (In particular, for $v_n = p_n$ this identity follows from \eqref{signexpid} and the character of the sign representation).  

Thus the coefficient of $t^m$ in $\exp(-\sum u_n t^n / n)  \prod_{i} (1-t^{i})^{a_i}$  is  $$c_m + \sum_{n \geq 1} c_{m-n} (-1)^n  \sum_{\lambda \vdash n} \sgn(\lambda) \frac{u_\lambda}{z_\lambda}.$$  Subtracting $c_m$ and multiplying by $-1$, we obtain the result.
\end{proof}

Since $|A| \leq k$  we have that $E_m = \pm e_m$ for all $m > k$.   Hence $f(E_m) = 0$ for all $m \geq k$,  and specializing we obtain:

$$\exp\left(-\sum_{n}  f(u_n) t^n/n \right) = 1 - \frac{\sum_{m = 1}^k f(E_m) t^m}{\prod_{i \geq 1} (1- t^i)^{a_i}}.$$ Therefore we have that  $$ \sum_{n} \frac{f(u_n)}{n} t^n = - \log\left(  1 - \frac{\sum_{m = 1}^k E_m t^m}{\prod_{i \geq 1} (1- t^i)^{a_i}} \right).$$  Because the numerator $\sum_{m=1}^k f(E_m) t^m$ is nilpotent of order $r$ we obtain a finite Taylor expansion 

\begin{align} \sum_{n} \frac{f(u_n)}{n} t^n  & =  \frac{\sum_{m = 1}^k f(E_m) t^m}{\prod_{i \geq 1} (1- t^i)^{a_i}} +\frac{(\sum_{m = 1}^k f(E_m) t^m)^2}{ 2 \prod_{i \geq 1} (1- t^i)^{2a_i}} + \dots + \frac{(\sum_{m = 1}^k f(E_m) t^m)^{r-1}}{ (r-1) \prod_{i \geq 1} (1- t^i)^{(r-1)a_i}} \nonumber \\
& =  \sum_{\lambda \in {\rm Part}(r,k) } f(E_\lambda)  t^{|\lambda|} \frac{   (\rank(\lambda) -  1)!}{\lambda! \left (\prod _{i} (1 - t^i)^{a_i} \right)^{\rank(\lambda)}},  \label{genfunctionid}
\end{align}
where the sum in the second line is over all integer partitions whose Young diagrams have $\leq k$ columns and $<r$ rows.   In the passage to the second line we have used the multinomial identity $$\left(\sum_{i = 1}^k  x_i \right)^n = \sum_{\lambda = 1^{m_1} \dots k^{m_k}, ~ \sum_i m_i = n}  {n \choose m_1, \dots, m_k}  x_\lambda.$$

Equation \eqref{genfunctionid} is our desired expression for $\sum_{n}  \frac{f(u_n)}{n} t^n$.   The remaining subsections are independent of each other.  In the next two subsections  we apply \eqref{genfunctionid} to construct a basis for $\cV_{A,r} \cap \cF_{\leq k}$,  then describe  how to use \eqref{genfunctionid} to compute the operator $\pi_k$.   After that, we prove Theorem \ref{GrowingRows} using a related identity.  Finally, we discuss the relationship between characters of $\FSop$ and $\FI_r$ modules.

\subsection{A convenient basis for $\cV_{A,r} \cap \cF_{\leq k}$}

\begin{defn}Let ${\rm Part} (r,k)$ be the set of integer partitions whose Young diagrams have $ \leq k $ columns and $< r$ rows.
\end{defn}

From the point of view of character functions, the elements $\pi_k(p_\nu \exp(\sum_{i} a_i y_k))$  for $\nu \in {\rm Part}(r,k)$ are a natural basis for $\cV_{A,r}$.  However, we do not know a simple expression for these elements.  In this subsection we construct a different basis, better adapted to Equation \eqref{genfunctionid}.


\begin{prop}\label{Ebasis}
		   There is an isomorphism $$R \iso \frac{\bbQ[E_1, \dots, E_k]}{(E_1,  \dots, E_k )^r}$$ defined by $E_i \mapsto f(E_i)$ for $i = 1, \dots, k$.  Consequently, the set $\{f(E_\nu)\}_{\nu \in \rP\ra\rr\rt(r,k)}$ is a basis for $R$.
\end{prop}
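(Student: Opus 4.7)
The plan is to establish the isomorphism in three steps: well-definedness, surjectivity, and an injectivity argument by dimension count. The heavy lifting will be done by Proposition~\ref{Ecomputation}, Proposition~\ref{quotientringcomputation}, and the generating function identity \eqref{genfunctionid}, so this is essentially an assembly job.

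First, I would define the ring homomorphism $\phi: \bbQ[E_1, \dots, E_k] \to R$ by $E_i \mapsto f(E_i)$. Proposition~\ref{Ecomputation} shows each $E_i$ lies in the ideal $(u_1, u_2, \dots)\Lambda \subseteq \Lambda$. Consequently, any monomial of total degree $\geq r$ in the $E_i$'s lies in $(u_1, u_2, \dots)^r$ and so is sent to $0$ by $f$. Therefore $\phi$ descends to a ring homomorphism
\[
\bar\phi: \frac{\bbQ[E_1, \dots, E_k]}{(E_1, \dots, E_k)^r} \longrightarrow R.
\]

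Next, for surjectivity: by Proposition~\ref{quotientringcomputation}, $R$ is generated as a $\bbQ$-algebra by $f(u_1), \dots, f(u_k)$, so it suffices to exhibit each $f(u_n)$ as a polynomial in $f(E_1), \dots, f(E_k)$. This is precisely what equation~\eqref{genfunctionid} provides. Its right hand side is a $\bbQ[[t]]$-linear combination of the elements $f(E_\lambda)$ for $\lambda \in \rP\ra\rr\rt(r,k)$ (the denominators $\prod_i (1-t^i)^{a_i \cdot \rank(\lambda)}$ are invertible in $\bbQ[[t]]$). Extracting the coefficient of $t^n$ expresses $f(u_n)/n$ as an explicit $\bbQ$-linear combination of products $f(E_\lambda)$, all of which lie in the image of $\bar\phi$. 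Hence $\bar\phi$ is surjective.

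Finally, for injectivity I would compare dimensions. Both sides are finite-dimensional $\bbQ$-vector spaces: the source has the basis $\{E_\nu\}_{\nu \in \rP\ra\rr\rt(r,k)}$ of monomials of degree $<r$ in variables indexed by $\{1,\dots,k\}$, and by Proposition~\ref{quotientringcomputation} the target $R$ is isomorphic to $\bbQ[u_1, \dots, u_k]/(u_1, \dots, u_k)^r$, which has the analogous basis $\{u_\nu\}_{\nu \in \rP\ra\rr\rt(r,k)}$. Both have dimension $|\rP\ra\rr\rt(r,k)|$, so the surjection $\bar\phi$ must be an isomorphism. The basis statement then follows at once by transporting $\{E_\nu\}$ across $\bar\phi$.

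The only step requiring care is surjectivity, and even there the main content is not new: it amounts to reading off that every coefficient in the power series expansion of the right hand side of \eqref{genfunctionid} is a polynomial with rational coefficients in the $f(E_\lambda)$. Given the identity \eqref{genfunctionid} this is routine, so I do not expect a genuine obstacle.
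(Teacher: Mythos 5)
Your proof is correct, and its overall skeleton matches the paper's: well-definedness from $E_m \in (u_1,u_2,\dots)\Lambda$ (Proposition~\ref{Ecomputation}), surjectivity from the $E_m$'s generating $R$, and injectivity by comparing dimensions against $\bbQ[u_1,\dots,u_k]/(u_1,\dots,u_k)^r$ via Proposition~\ref{quotientringcomputation}. The one place where you take a genuinely different route is the surjectivity step. The paper does not invoke the full power series identity~\eqref{genfunctionid}; instead it reads off from Proposition~\ref{Ecomputation} only the congruence
\[
E_m \equiv \frac{u_m}{m} + \sum_{1 \le n < m} c_{m-n}\,\frac{u_n}{n} \mod (u_1,u_2,\dots)^2,
\]
so that the transition matrix from $(E_m)$ to $(u_m)$ is upper-triangular with nonzero diagonal modulo the square of the augmentation ideal, and then appeals to the general fact that in $\bbQ[u_1,\dots,u_k]/(u_1,\dots,u_k)^r$ any family $x_1,\dots,x_k$ with $x_i \equiv u_i \bmod (u_1,\dots,u_k)^2$ (after an invertible linear change) generates the ring. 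This is a Nakayama-style local-ring argument and is self-contained. Your argument reaches the same conclusion by expanding the right side of~\eqref{genfunctionid} as a $t$-power series and reading off $f(u_n)$ as an explicit polynomial in the $f(E_\nu)$; this is valid since~\eqref{genfunctionid} is established earlier in the section (and its derivation only uses $E_m \in (u_1,\dots)$, so there is no circularity), but it is heavier machinery than strictly needed. In exchange, your route gives an explicit inverse formula, whereas the paper's argument is shorter and more conceptual. Either is fine.
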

\begin{proof}
	We already know,  by Proposition \ref{quotientringcomputation}, that $u_i \mapsto f(u_i)$ for $i = 1, \dots, k$ gives an isomorphism $\frac{\bbQ[u_1, \dots, u_k]}{(u_1, \dots, u_k)^r} \to R$.    Since $E_m \in (u_1, \dots)$  we have that $f(s) = 0 $ for all $s \in (E_1, \dots, E_k)^r$  thus the homomorphism is well defined.  

To see that it is surjective, note that by Proposition \ref{Ecomputation} $$E_m \equiv u_m/m +\sum_{1 \leq n < m} c_{m - n} u_n/n  \mod  (u_1,u_2 \dots  )^2 .$$  Using this equation, by upper-triangularity we may write any $f(u_m)$ as a linear combination of $f(E_m)$ modulo $(u_1, \dots, u_k)^2$.  Since any collection of elements $x_1, \dots, x_k \in \bbQ[u_1, \dots, u_k]/(u_1, \dots, u_k)^r$  satisfying  $x_i \equiv u_i \mod (u_1, \dots, u_k)^2$ generate the ring,  it follows that these linear combinations of $f(E_m)$ generate.  Thus the homomorphism is surjective, and comparing dimensions it is also injective.  
\end{proof}

Since by Proposition \ref{Ebasis} the elements $\{f(E_\nu)\}_{\nu \in {\rm Part}(r,k)}$ form a basis of $R$, we may make the following definition.  

\begin{defn}
	We define $\{L_\nu\}_{\nu \in \rP\ra\rr\rt(r,k)}$  to be the basis of $\cV_{A,r} \cap  \cF_{\leq k}$  which is dual to $\{f(E_\nu)\}_{\lambda \in {\rm Part}(r,k)}$.  
\end{defn}

 Thus   $L_\nu$ is the unique element of $\cV_{A,r} \cap  \cF_{\leq k}$ satisfying $\langle E_\lambda,  L_\nu\rangle = \delta(\lambda,\nu)$ for all integer partitions $\lambda, \nu\in {\rm Part}(r,k)$.      We will now an explicit epxression for $L_\nu$.   

\begin{defn}
		Let $B = 1^{b_1} 2^{b_2} \dots$ be an integer partition.  For $n \in \bbZ$  we define $g_{B}(n)$ be the coefficient of $t^n$ in 	$\prod_{i}  (1-t^i)^{-b_i}$.  In particular $g_B(n) = 0$ for $n < 0$.    For $n \geq 0$ we have  $$g_B(n)  =  \sum_{\lambda \vdash n,~ \lambda  = 1^{m_2} 2^{m_2}\dots}  ~\prod_{i \geq 1} \multichoose{ b_i}{m_i }.$$
\end{defn}

The function $g_B(n)$ equals a quasi-polynomial of degree $\rank(B)$, for all $n \geq 0$.  


\begin{defn}
		If $\nu = 1^{\ell_1} 2^{\ell_2} \dots$  and $\nu' = 1^{\ell_1'} 2^{\ell_2'} \dots$  we write $\nu' + \nu$  for the partition $1^{\ell_1 + \ell_1'} 2^{\ell_2 + \ell_2'} \dots$, making the set of partitions into a commutative monoid  (corresponding to multiplication of monomials).  For $n \in \bbN$ write $n \cdot \nu$  for the $n$-fold addition of $\nu$ in this monoid.  
\end{defn}

\begin{defn}
  Define $H_\nu(n):=  n~ g_{\rank(\nu) \cdot A}( n - |\nu|)  \frac{(\rank(\nu) - 1)!}{\nu!}$
\end{defn}

By Equation \eqref{genfunctionid}, we have that $f(u_n) = \sum_{\nu \in \rP\ra\rr\rt(r,k), |\nu|> 0} H_\nu(n) f(E_\nu).$  So for $\lambda = n_1 \geq n_2 \geq \dots  $  we have that 
 $$f(u_\lambda) = \prod_{i} \left(\sum_{\nu \in \rP\ra\rr\rt(r,k)} H_\nu(n_i) f(E_\nu) \right)  = \sum_{\nu \in \rP\ra\rr\rt(r,k)}  f(E_\nu) \sum_{\nu = \sum_{i} \nu_i,  ~|\nu_i| > 0  }  \prod_{i \geq 1} H_{\nu_{i}}(n_i),$$
Pairing with $L_\nu$  we get  a formula for $\langle u_\lambda, L_\nu \rangle$.  By Lemma \ref{Ucomputation}  we obtain: 

\begin{prop}
There is an explicit formula for $L_\nu$.  
$$L_\nu =  \exp(\sum_{i} a_i y_i) \sum_{\lambda = n_1 \geq n_2 \geq \dots }  \frac{p_\lambda}{z_\lambda} \ \sum_{\nu = \sum_{i} \nu_i, |\nu_i| > 0} \prod_{i}  H_{\nu_i}(n_i),$$ where the last sum is over all finite tuples of integer partitions $(\nu_1, \nu_2, \dots, \nu_l)$  with $|\nu_{i}| > 0$  and $\nu = \sum_i \nu_i $.  

\end{prop}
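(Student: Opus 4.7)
The plan is to compute the coordinates of $L_\nu$ in the basis $\{\frac{p_\lambda}{z_\lambda} \exp(\sum_i a_i y_i)\}_\lambda$ of $\cV_{A,r}$ directly from the definition $L_\nu$ is dual to $f(E_\nu)$ under the pairing induced by the Hall inner product and the specialization $f \colon \Lambda \to R$. All the work has effectively been done already: Lemma \ref{Ucomputation} converts Hall-inner-products with $u_\mu$'s into coefficient extractions, and equation \eqref{genfunctionid} expresses $f(u_n)$ in the basis $\{f(E_\nu)\}$.

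First I would expand $L_\nu$ in the power-sum basis. Because $L_\nu \in \cV_{A,r}$, we may write $L_\nu = \exp(\sum_i a_i y_i) \sum_\lambda c_\lambda(\nu) \frac{p_\lambda}{z_\lambda}$ for unique scalars $c_\lambda(\nu) \in \bbQ$. Applying Lemma \ref{Ucomputation} gives $\langle u_\lambda, L_\nu \rangle = c_\lambda(\nu)$, so it suffices to compute $\langle u_\lambda, L_\nu \rangle$. On the other hand, by Proposition \ref{Ebasis} we may write $f(u_\lambda) = \sum_\mu \alpha_\mu f(E_\mu)$ in $R$, and then the defining property of $L_\nu$ (dual to $f(E_\nu)$) yields $\langle u_\lambda, L_\nu \rangle = \alpha_\nu$. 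So the problem reduces to expanding $f(u_\lambda)$ in the $f(E_\mu)$-basis and extracting the coefficient of $f(E_\nu)$.

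Next I would use equation \eqref{genfunctionid} to carry out this expansion. Extracting the coefficient of $t^n$ in both sides of \eqref{genfunctionid}, and recognizing $g_{\rank(\nu) \cdot A}(n - |\nu|)$ as the coefficient of $t^{n-|\nu|}$ in $\prod_i (1 - t^i)^{-\rank(\nu) a_i}$, yields the single-part identity
\[
f(u_n) = \sum_{\nu \in \rP\ra\rr\rt(r,k)} H_\nu(n)\, f(E_\nu).
\]
For $\lambda = n_1 \geq n_2 \geq \dots \geq n_\ell$, since $f$ is a ring homomorphism we have $f(u_\lambda) = \prod_i f(u_{n_i})$. Expanding the product of these $\ell$ sums and collecting by the value of $\nu_1 + \cdots + \nu_\ell$ gives
\[
f(u_\lambda) = \sum_{\nu} f(E_\nu) \sum_{\substack{(\nu_1, \dots, \nu_\ell) \\ \nu = \sum_i \nu_i,\, |\nu_i| > 0}} \prod_i H_{\nu_i}(n_i),
\]
where the requirement $|\nu_i| > 0$ comes from the fact that the sum in \eqref{genfunctionid} ranges only over $\nu$ with $\rank(\nu) \geq 1$, equivalently $|\nu| \geq 1$. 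The coefficient of $f(E_\nu)$ on the right is therefore the desired value $c_\lambda(\nu)$. Substituting back into the expansion of $L_\nu$ completes the proof.

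There is no real obstacle here: the argument is a direct unwinding of definitions once equation \eqref{genfunctionid} is available. The only mild bookkeeping issue is keeping track that the tuples $(\nu_1, \dots, \nu_\ell)$ in the inner sum are ordered tuples indexed by the parts of $\lambda$ (so the constraint is a composition-like decomposition of $\nu$, not a partition), which matches the form asserted in the statement.
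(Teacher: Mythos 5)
Your proposal is correct and follows essentially the same route as the paper: extract the coefficient of $t^n$ in \eqref{genfunctionid} to get $f(u_n) = \sum_{|\nu|>0} H_\nu(n) f(E_\nu)$, multiply using that $f$ is a ring homomorphism to expand $f(u_\lambda)$ in the $f(E_\nu)$-basis, pair with $L_\nu$, and read off the coefficients via Lemma \ref{Ucomputation}. You have merely made explicit a couple of steps that the paper leaves implicit, such as the fact that the duality defining $L_\nu$ lets one read $\langle u_\lambda, L_\nu\rangle$ off the expansion of $f(u_\lambda)$ in $R$.
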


\begin{ex}
		Let $\nu$ be the partition $3 \geq 2$.  Then $L_\nu$ is $ \exp(\sum_{i} a_i y_i)  $ times $$   \sum_{n \geq 1} \frac{p_n}{n} H_{3 \geq 2}(n)  + \sum_{n > m \geq 1} \frac{p_n p_m}{nm}(H_3(n)H_2(m) + H_2(n)H_3(m)) + \sum_{n\geq1} \left(\frac{p_n}{n}\right)^2 H_3(n)H_2(n).$$ 
\end{ex}

\subsection{Computing the operator $\pi_k$}\label{piksubsec}

Let $\lambda$ be an integer partition.  We discuss how to compute $\pi_k(p_\nu \exp(\sum_{i} a_i y_k))$.   

We assume that $p_\nu \exp(\sum_{i} a_i y_k) \in \cV_{A,r}$,  or equivalently that $\rank(\nu)< r$.  If the Young diagram of $\lambda$ has $>k$ columns,  then $\epsilon_k(p_\nu) = 0$  and thus $ \pi_k(p_\nu \exp(\sum_{i} a_i y_k)) = 0.$

Therefore, suppose that $\nu \in {\rm Part}(r,k)$.  By Lemma \ref{Ucomputation} the coefficients $b_{\lambda,\nu}$ in the expansion  $$\pi_k(p_\nu \exp(\sum_{i} a_i y_i))  = \sum_{\lambda}  b_{\lambda, \nu} \frac{p_{\lambda}}{z_\lambda} \exp(\sum_{i} a_i y_i),$$ are given by $\langle  u_\lambda, \pi_k(p_\nu \exp(\sum_{i} a_i y_k)) \rangle$.   Let \begin{equation}f(u_\lambda) = \sum_{\nu \in {\rm Part}(r,k) } \overline b_{\lambda, \nu} f(u_{\nu}) \label{expansioneq} \end{equation}  be the unique expansion $f(u_\lambda)$ in $R$.    By Proposition \ref{ADKperp},  since $\pi_k(p_\nu \exp(\sum_{i} a_i y_i)) \in \cF_{\leq k} \cap \cV_{A,r}$, we have that   $\langle  u_\lambda, \pi_k(p_\nu \exp(\sum_{i} a_i y_k)) \rangle$  only depends on $f(u_\lambda)$.  And by the definition of $\pi_k$, we have that for every $\psi \in \rP\ra\rr\rt(n,k)$  that $$\langle u_\psi,  \pi_k(p_\nu \exp(\sum_{i} a_i y_i)) \rangle = \langle u_\psi,  p_\nu \exp(\sum_{i} a_i y_i) \rangle = \delta(\psi,\nu).$$  Applying this to \eqref{expansioneq} we obtain $$b_{\lambda, \nu} = \overline b_{\lambda, \nu}.$$
The coefficients $\overline b_{\lambda, \nu}$  can be computed from $\{\overline b_{n,\nu}\}_{n \in \bbN}$  by multiplication. And Equation \eqref{genfunctionid}  can be used to compute $\overline b_{n,\nu}$.  However, the equation for $\overline b_{\lambda, \nu}$ one obtains in this way is unwieldy.  So we will just carry out this computation in an example.  

\begin{ex}
  	Suppose that $k = 2,r = 3$ and $a_1 = 1, a_i = 0$ for $i > 1$.   Then $c_0 = 1$, $c_1 = -1$ and $c_i = 0$ for $i > 1$.  So $f(E_1) = - f(u_1)$,  $f(E_2) = f(u_2)/2 - f(u_1)^2/2 + f(u_1)$, and $f(E_i) = 0$ for $i >3$. Then we have$$\frac{\sum_{m = 1}^k f(E_m) t^m}{\prod_{i \geq 1} (1- t^i)^{a_i}} =  -f(u_1) t+ \frac{(f(u_2)+ f(u_1)^2)t^2}{2(1-t)} .
$$
So by \eqref{genfunctionid} we compute that $\sum_{n \geq 1} \frac{f(u_n)}{n} t^n $ is
$$
f(u_1) (-t) + f(u_2)\frac{t^2}{2(1-t)} + f(u_1^2) \left( t^2 + \frac{t^2}{2(1-t)}\right) + f(u_1u_2) \frac{-t^2}{2(1-t)} + f(u_2^2)\frac{t^4}{4(1-t)^2}
$$ 
Thus for $n \geq 4$  we have that $$f(u_n) = \frac{n}{2} f(u_2) + \frac{n}{2}  f(u_1^2) - \frac{n}{2} f(u_1u_2) + \frac{n(n+1)}{4} f(u_2^2).$$  Which computes $\overline b_{n, \nu}$ for all $\nu$ and $n \geq 4$.    To compute $\overline b_{\lambda , \nu}$ for the remaining cases, $\lambda = n \geq m$, simply multiply $f(u_n)f(u_m)$.  
\end{ex}

\subsection{Proof of Theorem \ref{GrowingRows}}

We use a generating function argument to prove Theorem \ref{GrowingRows}.  

\begin{proof}[Proof of Theorem \ref{GrowingRows}]
Fix an integer partition $\lambda$ and consider the generating fuction  
 $$F_\lambda(t) = \sum_{k \geq 0}  s_{(k,\lambda)} ~ t^{k + |\lambda| } \in \Lambda[[t]].$$
The following identity appears in the theory of $\FI$ modules:  \begin{equation}F_\lambda(t) = q_\lambda(t) + \sum_{\nu~|~  \lambda - \nu \in \rV\rS} (-1)^{|\lambda| - |\nu|} s_\nu  t^{|\nu|}  \sum_{m \geq 0} h_m t^m, \label{FItransition}\end{equation}  where the sum is over all $\nu$ such that the Young diagram of $\lambda$ minus the Young diagram of  $\nu$ is a vertical strip, and $q_\lambda(t) \in \Lambda[t]$  has degree $\leq |\lambda|$ (in both $\Lambda$ and $t$).   In \cite[\S 5]{sam2016gl} these polynomials also go under the name $q_\lambda$, although they use the coordinates $t_i := p_i/i$ for the ring of symmetric functions. (They show that this identity corresponds to a resolution in the quotient of $\Mod \FI/\Mod \FI^{\rm tors}$  and interpret $q_\lambda$ in terms of local cohomology in  \cite[\S 7.4]{sam2016gl}).  

Now an integer partition $A$ and $r,k \in \bbN$ with $k \geq |A|$.   We use the notation $R, E_\lambda$ from above, relative to these choices if parameters.  From here on we consider the image of  our generating functions 
under the specialization  $\Lambda[[t]] \to R[[t]].$  We have that $$ s_\nu  t^{|\nu|} \sum_{m \geq 0} h_m t^m = \frac{s_\nu  t^{|\nu|} }{1 + \sum_{m \geq 1} (-1)^m e_m t^m} = \frac{s_\nu  t^{|\nu|}}{\prod_{i}(1 - t^i)^{a_i} - \sum_{m = 1}^k E_m t^m}. $$ factoring out $\prod_{i}(1 - t^i)^{a_i}$ and applying a geometric series expansion we have that this equals   $$\frac{s_\nu  t^{|\nu|}}{\prod_{i}(1 - t^i)^{a_i}}  \sum_{n = 0}^{r-1} \left(\frac{ \sum_{m = 1}^k E_m t^m}{\prod_{i}(1 - t^i)^{a_i}} \right)^n.$$
For any $u \in  \cV_{A,r} \cap \cF_{\leq k}$, applying $\langle - , u \rangle$ to this expression  produces a rational function in $\bbZ[[t]]$.  Thus $  \sum_{m \geq 0} \langle  s_\nu h_m, u \rangle t^{m + |\nu|}$ is rational and therefore so is $  \sum_{m \geq |\nu|} \langle  s_{(m, \nu)}, u \rangle t^{m + |\nu|}$  by (\ref{FItransition}).   The denominator of this function is of degree $ \leq r |A|$ and its roots are roots of unity of order $\leq |A|$.  

 Finally if $M$ is an $\FSop$ module of type $(d,s)$,  then $\ch(M)$ is a finite sum of $u_{A,r,k} \in \cV_{A,r} \cap \cF_{\leq k}$   for $|A| \leq d$  and $r \leq s$.  So the generating function $\sum_{m \geq |\nu|} \langle  s_{(m, \nu)}, \ch(M) \rangle t^{m + |\nu|}$ is rational and its denominator has  degree $\leq sd$, and the roots of the denominator are roots of unity of order $\leq d$.
\end{proof}

\begin{remark}
In practice, the strategy of the proof of Theorem \ref{GrowingRows} can be used to make computations.  For example, fix $A$ an integer partition,  and $d,k \in \bbN$ with $d > |A|$.  Then the proof of Theorem \ref{GrowingRows} yields a formula for $\sum_{n} \langle h_n, L_\lambda \rangle t^n$.  Namely it is  $$\sum_{n} \langle h_n, L_\lambda \rangle t^n = \frac{t^{|\lambda|}  \rank(\lambda)!}{\lambda! \left(\prod_{i} (1-t^i)^{a_i} \right)^{\rank (\lambda) + 1}}.$$ 
\end{remark}

\subsection{Relationship with $\FI_d$ module characters}

Sam--Snowden \cite{sam2018hilbert} characterized the Hilbert series of modules over the category $\FI_d$.  They express these characters as polynomials in the elements $\sigma_n := \sum_{k \geq 0} {n \choose k}  h_n$,  and observe that $\sigma_n$ takes the form $q_n(T_1, T_2, \dots) \exp(y_1)$  where $q_n$ is a polynomial in $T_k := \sum_{n \geq 0}  {n \choose k} \frac{p_n}{n}.$  In fact, $q_n$ is the $n^{\rm th}$ complete ordinary Bell polynomial.  For $\lambda = 1^{m_1} 2^{m_2} \dots$,  define $\sigma^\lambda := \prod_{i} \sigma_i^{m_i}$.  Then if $k \in \bbN$  and $\lambda$ satsifies $\rank(\lambda) \leq k$ we have that $$\sigma^{\lambda} \sigma_0^{k - \rank(\lambda)} \in \cF_{\leq k} \cap \cV_{1^k, |\lambda| + 1}$$ 
Sam--Snowden show that these elements are linearly independent.  Thus as $\lambda$ ranges over partitions with $\leq  k$ rows and $|\lambda| < r$,  we obtain a collection of linearly independent elements in $\cF_{\leq k} \cap \cV_{1^k, r}.$  Since the dimension of this space is the number of partitions with $\leq k$ rows and $< r$ columns, which is strictly larger than the number of $\sigma^{\lambda} \sigma_0^{k - \rank(\lambda)},$ this collection of elements does not span.     

Sam--Snowden compute the character of an $\FI_d$ module to be a sum of $s_\nu \sigma^{\lambda} \sigma_0^{k - \rank(\lambda)}$. We do not know if it is possible to use such elements to construct a basis of $\cF_{\leq k'} \cap \cV_{1^k, r'}$ for some $r',k'$.

\section{Further Questions}

We list some questions related to our work.  Many of these questions stem from the fact that we still do not understand the representation theory of $\FSop$ modules.    

\subsection{Behavior of the type under kernels and cokernels}   For applications, it would be useful to have more sophisticated methods of proving that a specific $\FSop$ module is of class $(d,s)$.  

There are two numerical invariants associated to an $\FI$ module $M$,  the stable degree $\delta(M)$ and the local degree $h^{\rm max}(M)$, introduced and used to great effect in \cite{church2018linear}.   The type of an $\FSop$ module is roughly analogous to these invariants.  Given a morphism of $\FSop$ modules $f: N_1 \to N_2$, is it possible to bound the type of $\ker f$ and $\coker f$  in terms of the type of $N_1$ and $N_2$   (or other invariants)?


\subsection{Grothendieck group of $\FSop$ modules} When do two finitely generated $\FSop$ modules have the same character?  In other words, what is the kernel of the map of Grothendieck groups:  $\rK_0(\FSop) \to \prod_{n}  \rK_0(\rS_n)$?

\subsection{Are $\FSop$ module characters cycle bounded?}  

   We say that a symmetric function  is \emph{cycle bounded} if it is a linear combination of finitely many functions of the form $$\exp(\sum_{i \geq 1} a_i y_i)\sum_{\lambda \text{ integer partition}} c_\lambda p_\lambda,$$  where only finitely many $c_\lambda$ are non-zero and  $(a_1,a_2, \dots) \in \bbN^{\oplus \infty}$.  Is the character of every finitely generated $\FSop$ module cycle bounded?  If not, what is an example of a finitely generated $\FSop$ module that is not cycle bounded?  

 We note that projective $\FSop$ modules are cycle bounded, and cycle bounded characters are closed under sums, induction product and Kronecker product.  And if $t$ is cycle bounded, then $s_\lambda[t]$ is also cycle bounded.    On the other hand,  Sam--Snowden show in \cite{sam2018hilbert} that when $d > 1$ there are $\FI_d$ modules that are not cycle-bounded.  

\subsection{Characters of $\FSop$ modules in characteristic $> 0$} What are the consequences of Theorem \ref{FSop} for fields of positive characteristic?   Is it possible to carry out a similar combinatorial analysis for the characters of $\FSop$ modules in this case?

\subsection{Other combinatorial categories}  The arguments we use to prove Theorem \ref{FSop} are mostly general. The specific combinatorics of $\FSop$ only enters in \S\ref{sec:Languages}.    We are interested in extensions of the method of proof of Theorem \ref{FSop}, to other categories studied by Sam--Snowden \cite{sam2017grobner}.  See for instance Conjecture \ref{VIconjecture}.   For which combinatorial categories does the method of proof of Theorem \ref{FSop} work?

\printbibliography

\end{document}